\DeclarePairedDelimiter\floor{\lfloor}{\rfloor}
\newcommand{\rd}{\,\mathrm{d}}
\numberwithin{equation}{section}
\newtheorem{theorem}{Theorem}[section]
\newtheorem{lemma}[theorem]{Lemma}
\newtheorem{corollary}[theorem]{Corollary}
\newtheorem{proposition}[theorem]{Proposition}
\newtheorem{definition}[theorem]{Definition}
\newtheorem{remark}[theorem]{Remark}
\def\bu{{\bf u}}
\def\bx{{\bf x}}
\def\by{{\bf y}}
\def\bz{{\bf z}}
\def\cI{\mathcal{I}}
\def\cJ{\mathcal{J}}
\def\cK{\mathcal{K}}
\def\cL{\mathcal{L}}
\def\cR{\mathcal{R}}
\def\cF{\mathcal{F}}
\def\pv{\textnormal{p.v.}}
\def\sgn{\textnormal{sgn}}
\def\supp{\textnormal{supp\,}}
\def\diam{\textnormal{diam\,}}
\def\dist{\textnormal{dist\,}}
\newcommand{\R}{\mathbb{R}}
\newcommand{\Rd}{{\R^d}}
\newcommand{\PP}{\mathcal P}
\newcommand{\MM}{\mathcal M}
\newcommand{\Ptwo}{\PP_2(\R^d)}
\newcommand{\Pd}{\PP(\R^d)}
\newcommand{\Md}{\MM(\R^d)}
\title[From radial symmetry to fractal behavior]{From radial symmetry to fractal behavior of aggregation equilibria for repulsive-attractive potentials}
\author{Jos\'e A. Carrillo$^{\dagger}$, Ruiwen Shu$^{\dagger}$}
\date{\today}
\subjclass[2020]{}
\address[$\dagger$]{Mathematical Institute, University of Oxford, Oxford OX2 6GG, UK}
\begin{document}

\maketitle

\begin{abstract}
   For the interaction energy with repulsive-attractive potentials, we give generic conditions which guarantee the radial symmetry of the local minimizers in the infinite Wasserstein distance.
    As a consequence, we obtain the uniqueness of local minimizers in this topology for a class of interaction potentials. We introduce a novel notion of concavity of the interaction potential allowing us to show certain fractal-like behavior of the local minimizers. We provide a family of interaction potentials such that the support of the associated local minimizers has no isolated points and any superlevel set has no interior points.
\end{abstract}

\hspace{0.83cm}{\footnotesize Keywords: Energy minimization, aggregation equation, radial symmetry, fractal behavior}

\section{Introduction}

Explaining the intricate behavior of coherent structures of active/passive media composed by many interacting agents in mathematical biology and technology has attracted lots of attention in the applied mathematics community. These questions are ubiquitous in collective behavior of animal species, cell aggregates by chemical cues or adhesion forces, granular media and self-assembly of particles, see for instance \cite{MR3143990,To04,Carillo09_Kinetic_Attraction-Repulsion,MR2256869} and the references therein. In many of these models, particular solutions emerge from consensus of movement while their relative positions are determined based only on attraction and repulsion effects~\cite{DCBC,MR3090592}. These equilibrium shapes at the continuum level can be characterized by probability measures $\rho$ for which the balance of attractive and repulsive forces hold. This is equivalent to finding probability measures $\rho$ such that
\begin{equation}\label{eq:basic}
    \nabla W\ast \rho = \int_\Rd \nabla W(\bx-\by)\rho(\by)\rd{\by}= 0 \quad\mbox{ on supp}(\rho) \,,
\end{equation}
with $W\colon\Rd\to (-\infty,\infty]$ being an attractive-repulsive interaction potential between the particles. The richness of the shapes of the support of these aggregation equilibria is quite surprising even for simple potentials~\cite{PhysRevE.84.015203}. Finding particular configurations satisfying \eqref{eq:basic} is a challenging problem due to its highly nonlinear nature since the support of the measure itself is part of the problem and the regularity of the potential plays a key role. These configurations appear naturally as the steady states for the mean-field dynamics associated to the particle system
\begin{equation}\label{1storder}
    \frac{\rd}{\rd{t}}\bx_i=-\frac{1}{N} \sum_{j\neq i}\nabla W(\bx_i-\bx_j), \quad i=1,\cdots,N\,.
\end{equation}
Notice that the system of ODEs \eqref{1storder} is the finite dimensional gradient flow of a discrete interaction energy. Its formal mean-field limit follows the nonlocal partial differential equation 
\begin{equation}\label{eq}
\partial_t \rho + \nabla\cdot (\rho \bu) = 0\,,
\end{equation}
usually referred to as the aggregation equation, where the transport velocity field $\bu(t,\bx)$ is given by
\begin{equation}
\bu(t,\bx) = -\int_\Rd \nabla W(\bx-\by)\rho(t,\by)\rd{\by}\,.
\end{equation}
The aggregation equation \eqref{eq} is the $2$-Wasserstein gradient flow of the \emph{total potential energy}
\begin{equation}\label{energy}
E[\rho] = \frac{1}{2}\int_\Rd\int_\Rd W(\bx-\by)\rho(\by)\rd{\by}\rho(\bx)\rd{\bx}\,.
\end{equation}
In the sequel, we will denote by $V = V[\rho] = W*\rho$ the interaction potential generated by the particle density $\rho$. Notice that 
$\rho(\bx)$ is a steady state of \eqref{eq} if
$\bu(\bx)$ satisfies \eqref{eq:basic},
or equivalently $V[\rho]$ is constant on each of the connected components of $\supp\rho$, modulo regularity of the velocity field.

We emphasize that even if the interaction potential $W$ is radially symmetric, it is quite challenging to prove or disprove radial symmetry of global or local minimizers of the interaction energy \eqref{energy}. Notice that despite of the fact that the energy is rotationally invariant for radial functions with radially symmetric interaction potentials, the uniqueness of global minimizers of the interaction energy, modulo translations, is not known except for particular cases \cite{F35,lopes2017uniqueness} using the linear interpolation convexity (LIC). 

Nothing is essentially known about uniqueness for local miminizers. Actually, in order to discuss about local minimizers of the interaction energy \eqref{energy}, we obviously need to specify the topology in probability measures that we use to measure the distance. Here, we follow previous works \cite{BCLR2,CCP15,CDM16} that showed that transport distances between probability measures are the right tool to deal with this variational problem. We remind the reader the main properties of transport distances in Section 2, in particular, the infinity Wasserstein distance $d_\infty$ plays an important role in order to write Euler-Lagrange conditions for local minimizers \cite{BCLR2,CDP19}. 

The problem of finding global minimizers of the interaction energy, modulo translations, for particular potentials is a classical problem in potential theory \cite{F35,ST97}. More precisely, for the repulsive logarithmic potential with quadratic confinement $W(\bx)=\tfrac{|\bx|^2}2-\ln |\bx|$ in 2D, it is known~\cite{F35} that the unique global minimizer is the characteristic function of a suitable Euclidean ball. When the repulsive singularity at zero is stronger than Newtonian but still locally integrable, regularity results for $d_\infty$-local minimizers have been obtained \cite{CDM16} and the uniqueness is known for particular power-law potential cases again \cite{CV1,CV2,lopes2017uniqueness}. The existence of compactly supported global minimizers of the interaction energy for generic interaction potentials was obtained in \cite{CCP15,SST15} based on the condition of H-stability of interaction potentials. 

Some qualitative properties of the support of the minimizers are known depending on the smoothness/singularity of the potential at the origin.
Interaction potentials which are at least $C^2$ smooth at the origin generically lead to minimizers concentrated on Dirac points for which particular geometric constraints and explicit forms are known \cite{CFP,LM21}. 
The dimensionality of the support of the $d_\infty$-local minimizers was estimated in terms of the repulsive singularity strength of weakly singular interaction potentials at the origin in \cite{BCLR2} showing that the more repulsive the potential at zero gets the larger the support of the $d_\infty$-local minimizers is. We refer to weakly singular repulsive-attractive potentials as potentials with a repulsive singularity at the  origin behaving between Newtonian singularity and smooth quadratic behavior at the origin.
Other related problems include singular anisotropic potentials~\cite{MRS19,CMMRSV,CMMRSV2}, in which the explicit form of the global minimizers are known in particular cases, and interaction energies with constraints \cite{BCT18,CT2,frank2016aliquid,frank2019proof}. Explicit stationary solutions of \eqref{eq:basic} are known for some power-law potentials~\cite{CH}: attractive power is an even integer and weakly singular at the origin. They were expected to be indeed global minimizers supported by strong numerical evidence~\cite{gutleb2020computing}. 

As a conclusion, a key problem not covered by the current literature is to find sufficient conditions by variational methods for radial symmetry or the break of radial symmetry of $d_\infty$-local minimizers for weakly singular repulsive-attractive potentials. In order to attack these issues, our first objective is to further exploit and refine the convexity properties of the interaction energy to study the radial symmetry and uniqueness of $d_\infty$-local minimizers of \eqref{energy}. 
The crucial assumption on the interaction energy is the LIC property, which basically means that $E$ is convex along the linear interpolation between any two measures $\rho_0$ and $\rho_1$. It is well-known that the global minimizer is unique and radially-symmetric under the LIC assumption for certain particular functionals~\cite{Lieb81,lopes2017uniqueness,MRS19}.

The main goal of the first part of this work, Sections 3 to 5, is to find sufficient conditions leading to radial symmetry of local minimizers of \eqref{energy} and its consequences.
Assuming the LIC property, we first show that any $d_\infty$-local minimizer of the interaction energy \eqref{energy} is radially-symmetric, see Theorem \ref{thm_Wirs}. Then, by imposing an extra assumption on the sign of $\Delta^2 W$, we obtain the uniqueness of $d_\infty$-local minimizers modulo translations, see Theorem \ref{thm_min0}. As a test case of our theory, we apply our results to the power-law potentials $W(\bx) = \frac{|\bx|^a}{a}-\frac{|\bx|^b}{b}$, and identify the ranges of $a$ and $b$ for which we show the radial symmetry and uniqueness of $d_\infty$-local minimizers, see Theorem \ref{thm_ab}. In particular, we prove that some of the steady states given by the explicit formula in \cite{CH} are the global minimizers of the interaction energy \eqref{energy}, and also their unique $d_\infty$-local minimizer, see also related results in one dimension \cite{frank2021minimizers}. This also confirms accurate numerical simulations of equilibrium measures \cite{gutleb2020computing}.

We emphasize that Sections 3-5 together lead to the first results in the literature proving radial symmetry and uniqueness of $d_\infty$-local minimizers of the interaction energy \eqref{energy} for a general family of interaction potentials. The importance of showing radial-symmetry and uniqueness is not only from the variational viewpoint but also from the evolutionary viewpoint of gradient flows associated to the interaction energy. The connection to the long time asymptotics of the corresponding aggregation equation~\cite{BertozziLaurent,BertozziBrandman,BertozziCarrilloLaurent,BertozziLaurentRosado,BalagueCarrilloLaurentRaoul,BCLR2,CDFLS} is not explored in this work although there are still important open problems under different assumptions on the interaction potential. Nevertheless, we remark that the radial symmetry of steady states and the gradient flow structure of the aggregation equation are crucial properties for showing precise long time asymptotics both for the aggregation equation with particular interaction potentials in \cite{BertozziLaurentLeger,CV1,CV2,shu2021newtonian} and also for the aggregation-diffusion equations in \cite{CHVY19,CHMV18,kaib,DYY20,shu2020equilibration,shu2020tightness}.

The next main question that we want to address in this work is to give sufficient conditions to allow for non-radial local/global minimizers and even fractal behavior on the structure of their support. The radial symmetry is broken for interaction potentials at least $C^2$ smooth at the origin. The support of their stationary states consists of finite number of isolated Dirac points with some conditions \cite{CFP}, and some particular configurations such as simplices appear as asymptotic limits for power-law potentials \cite{LM21}.
Stationary states with complex structure have been reported in the numerical literature and by studying the stability/instability of Delta ring solutions \cite{BalagueCarrilloLaurentRaoul,SunUminskyBertozzi,BKHUB15}. The dimensionality of the support of stationary states was estimated in \cite{BCLR2} as mentioned earlier. In this quest, we can wonder if fractal behavior of the support of the minimizers appears among the natural family of power-law potentials. It was numerically observed in \cite{BCLR2} that stationary states for power-law potentials seem not to show fractal behavior, i.e., the dimension of the support of the steady states seems to be an integer. We show in Section 6, Theorem \ref{thm_1d}, that this is in fact the case in one dimension, i.e. fractal behavior is not possible at least in one dimension for some power-law potentials.

The main result of the second part of this work is to give a generic family of potentials for which we have fractal-like behavior. In order to achieve this, Section 7 introduces a novel notion of concavity of the interaction potential $W$ allowing us to show certain fractal behavior on superlevel sets of $d_\infty$-local minimizers of the interaction energy \eqref{energy}. This notion of concavity is based on negative regions of the Fourier transform of the potential, in contrast to the LIC property and a slightly stronger notion of convexity, the Fourier-LIC (FLIC) property defined in Section 2. More precisely, the main result, Theorem \ref{lem_concave}, asserts that if the interaction potential is infinitesimal-concave, then any superlevel set of $d_\infty$-local minimizers of the interaction energy \eqref{energy} does not contain interior points. Section 8 provides explicit constructive examples of infinitesimal-concave potentials in any dimension based on a careful modification of power-law kernels in Fourier variables, see Theorem \ref{thm_cons1}. We finally show in Corollary \ref{cor_frac} that for these potentials the behavior of the support of the corresponding $d_\infty$-local minimizers of the interaction energy \eqref{energy} is almost fractal, in the sense that the interior of any superlevel set is empty, and moreover the support does not contain isolated points. A related idea of breaking the symmetry of minimizers of the interaction energy \eqref{energy} via a finite number of unstable Fourier modes for the uniform distribution on the sphere was described in \cite{von2012soccer}.

Finally, Section 9 provides another constructive example in which a steady state is the uniform distribution on a Cantor set, see Theorems \ref{thm_Wkss} and \ref{thm_M}.
The main idea of this construction is to produce a potential in a recursive hierarchical manner that introduces some kind of concavity at a sequence of small scales.
These three sections 7-9 show all together that the behavior of the support of $d_\infty$-local minimizers of infinitesimal concave potentials can be hugely sophisticated. This is further corroborated by the numerical simulations in Section 10 illustrating by means of particle methods the intricate fractal-like structures of the steady states. One of the remaining open problems is to prove or disprove the fractal behavior of $d_\infty$-local minimizers for weakly singular power-law like interaction potentials in $d\ge 2$. 


\section{Preliminaries \& Convexity}

We write $\Pd$ for the set of Borel probability measures. Given $1\leq p<\infty$, we write $\mathcal{P}_p(\R^d)$ for the subset of $\Pd$ of measures with finite $p$th moment. The \emph{$p$th Wasserstein distance} $d_p(\mu,\nu)$ between two probability measures $\mu$ and $\nu$ belonging to $\mathcal{P}_p(\R^d)$ is
\begin{equation*}
	d_p(\mu,\nu) = \min_{\pi \in \Pi(\mu,\nu)} \left( \int_{\R^d\times \R^d} |\bx-\by|^p \rd \pi(\bx,\by) \right)^{1/p} ,
\end{equation*}
where $\Pi(\mu,\nu)$ is the set of transport plans between $\mu$ and $\nu$; i.e., $\Pi(\mu,\nu)$ is the subset of $\mathcal{P}(\Rd\times\Rd)$ of measures with $\mu$ as first marginal and $\nu$ as second marginal. We also define the \emph{$\infty$-Wasserstein distance} $d_\infty(\mu,\nu)$, whenever $\mu$ and $\nu$ are compactly supported, by
\begin{equation*}
	d_\infty(\mu,\nu) = \inf_{\pi\in\Pi(\mu,\nu)} \sup_{(\bx,\by) \in \supp\pi} |\by-\bx|,
\end{equation*}
where the $\supp$ denotes the support.

We will adopt the following notation for Fourier transform and its inverse
\begin{equation}
\cF[f](\xi) = \hat{f}(\xi) = \int_\Rd f(\bx)e^{-i\bx\cdot\xi}\rd{\bx},\quad \cF^{-1}[g](\bx)= \check{g}(\bx) = \frac{1}{(2\pi)^d}\int_\Rd g(\xi)e^{i\bx\cdot\xi}\rd{\xi}\,,
\end{equation} 
for all $\xi,\bx\in\Rd$. Then
$
\hat{\delta} = 1,\, \check{\delta} = (2\pi)^{-d}
$, 
and
\begin{equation}
\cF\left[\exp\left(-\frac{|\cdot|^2}{2}\right)\right](\xi) =(2\pi)^{d/2}\exp(-\frac{|\xi|^2}{2})\,.
\end{equation}

We will use in this work different notions of convexity for the interaction energy functional \eqref{energy}. For the sake of notational simplicity, we will denote by $E$ either the energy functional \eqref{energy} acting on probability measures or the bilinear form acting on signed measures associated to the interaction potential $W$. We also denote particle densities by $\rho$ whenever it is a probability measure and $\mu$ if it is a signed measure. For notational simplicity, we will use $\rho(\bx) \,\rd{\bx}$ as the integration against the measure $\rho$ no matter if it can be identified with a Lebesgue integrable function or not. In the sequel, we also use $C$ and $c$ to refer to generic positive constants.

We start by the simplest notion of \emph{linear interpolation convexity} (LIC): we say that the interaction energy $E$ is LIC, if for any probability measures $\rho_0,\rho_1\in\Ptwo$,  $\rho_0\ne \rho_1$, such that $E[\rho_0]<\infty$ and $E[\rho_1]<\infty$ with the same total mass and center of mass, the function $t\mapsto E[(1-t)\rho_0 + t \rho_1]$ is strictly convex, or equivalently, $E[\mu]> 0$ for every nonzero signed measure $\mu\in\Md$ with $\int_\Rd\mu(\bx)\rd{\bx}=\int_\Rd\bx\mu(\bx)\rd{\bx}=0$ and $E[|\mu|]<\infty$. Notice the equivalence by taking $\mu=\rho_0-\rho_1$. This notion of convexity has been classically used in statistical mechanics, see \cite{Lieb81}.

A stronger convexity property is the following: if for any $0<r<R<\infty$, there exists $c>0$ such that
\begin{equation}\label{flic}
E[\mu] \ge c\int_{r\le |\xi| \le R} |\hat{\mu}(\xi)|^2\rd{\xi}\,,
\end{equation}
then we say $E$ has \emph{Fourier linear interpolation convexity} (FLIC). It is straightforward to check that FLIC implies LIC for the interaction energy. As already mentioned in the introduction, these notions of convexity were used in \cite{lopes2017uniqueness}, the author showed that the interaction energy \eqref{energy} associated to the attractive potential $W(x)=\tfrac{|\bx|^a}{a}$ has the LIC property, for $2\le a \le 4$ and that the interaction energy \eqref{energy} associated to the repulsive potential $W(x)=-\tfrac{|\bx|^b}{b}$ has the FLIC property for $-d < b < 0$, since its Fourier transform is shown to be $c|\bx|^{-d-b}$ for some $c>0$. 

It is clear that LIC implies the uniqueness of global minimizers for the interaction energy \eqref{energy} in $\Ptwo$ with fixed total mass and center of mass. Observe also that the sum of an LIC potential and an (F)LIC potential is an (F)LIC potential. The positivity of the Fourier transform of the interaction potential, related to then FLIC convexity, has been used to prove uniqueness of minimizers for interaction energies with asymmetric potentials, see \cite{MRS19,CMMRSV} and it is also related to the concept of H-stability in statistical mechanics to detect phase transitions in aggregation-diffusion equations \cite{CGPS20,CG21} and the existence of compactly supported minimizers for the interaction energy \cite{CCP15,SST15}.

It is important to remark that linear interpolation convexity of the interaction energy as defined above is totally different from displacement convexity of \eqref{energy} in the optimal transportation sense, see \cite{McCann97}. In fact, the energy functional \eqref{energy} is strictly displacement convex as soon as the potential $W$ is strictly convex.

We now remind the reader about the necessary conditions for local minimizers of the interaction energy \eqref{energy} in \cite{BCLR2,CDP19}. The following conditions come from Euler-Lagrange variational arguments using the mass constraint, and they are related to obstacle-like problems as in \cite{CDM16}. Although further hypotheses on the interaction potential $W$ may be considered in various places below, we minimally assume that
\begin{equation}\label{eq:hyp}
\begin{gathered}
	\text{$W\colon\R^d\to (-\infty,\infty]$ is locally integrable (i.e., is in $L^1_\mathrm{loc}(\R^d)$),}\\ 
	\text{$W$ is bounded below, lower semicontinuous and symmetric (i.e., $W(x) = W(-x)$ for all $x\in\R^d$).}
\end{gathered}\tag{\textbf{H}}
\end{equation}
 From now on, we will assume without loss of generality that the interaction potential $W\geq 0$. Notice that these assumptions imply that $V=W*\rho\geq0$ is always a lower semicontinuous function for all $\rho\in\Pd$, see \cite[Lemma 2]{BCLR2}. The results in \cite[Proposition 1]{BCLR2}, together with \cite[Remark 2.3]{CDM16}, give
 
\begin{lemma}\label{lemel1}
If $\rho\in\Pd$ is a compactly supported local energy minimizer in the $d_\infty$-sense for the energy $E$ with interaction potential $W$ satisfying \eqref{eq:hyp}, then there exists $\epsilon_0>0$, such that for any $\bx\in\supp\rho$, 
\begin{equation}
V(\by) \ge V(\bx),\quad \mbox{a.e. }\by\in B(\bx;\epsilon_0).
\end{equation}
\end{lemma}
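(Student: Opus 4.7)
The plan is to argue by contradiction using a local perturbation. Let $2\epsilon_0$ be the radius of $d_\infty$-local minimality of $\rho$, fix $\bx \in \supp\rho$, and suppose the conclusion fails: the set $\{\by \in B(\bx;\epsilon_0): V(\by) < V(\bx)\}$ has positive Lebesgue measure. Intersecting with sublevel sets of $V$ (and truncating to $\{V<M\}$ for $M$ large when $V(\bx)=+\infty$) yields a $\delta>0$ and a measurable set $A\subset B(\bx;\epsilon_0)$ with $|A|>0$ on which $V\leq V(\bx)-2\delta$.

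The next step is to build an explicit competitor by moving a small fraction of $\rho$ near $\bx$ onto $A$. For $r\in(0,\epsilon_0)$ and $\alpha \in (0,1]$, set $m_r:=\rho(B(\bx;r))>0$ (positive since $\bx \in \supp\rho$) and let $\nu_A$ denote the uniform probability measure on $A$. Define
\begin{equation*}
\mu_{r,\alpha} := \alpha\bigl(-\rho|_{B(\bx;r)} + m_r\,\nu_A\bigr),\qquad \tilde\rho_{r,\alpha} := \rho + \mu_{r,\alpha}.
\end{equation*}
Then $\mu_{r,\alpha}$ has zero total mass, $\tilde\rho_{r,\alpha} \geq 0$, and any coupling of $\rho|_{B(\bx;r)}$ with $m_r\nu_A$, extended by the identity, yields a transport plan between $\rho$ and $\tilde\rho_{r,\alpha}$ moving no mass further than $r+\epsilon_0 < 2\epsilon_0$, so $E[\tilde\rho_{r,\alpha}] \geq E[\rho]$ by local minimality.

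Using the bilinearity of the interaction form,
\begin{equation*}
E[\tilde\rho_{r,\alpha}] - E[\rho] = \alpha\int_\Rd V\rd{\mu^{(1)}_r} + \alpha^2 E[\mu^{(1)}_r], \qquad \mu^{(1)}_r := -\rho|_{B(\bx;r)} + m_r\,\nu_A.
\end{equation*}
Lower semicontinuity of $V$ at $\bx$ gives $V\geq V(\bx)-\delta$ on $B(\bx;r)$ for $r$ small, which combined with the defining property of $A$ yields $\int V\rd{\mu^{(1)}_r} \leq -\delta\, m_r$. Since $W\geq 0$ the quadratic part is dominated by $E[\mu^{(1)}_r] \leq E[\rho|_{B(\bx;r)}] + m_r^2\, E[\nu_A] =: C_r < \infty$, so
\begin{equation*}
0 \leq E[\tilde\rho_{r,\alpha}] - E[\rho] \leq -\alpha\delta\, m_r + \alpha^2 C_r,
\end{equation*}
which is strictly negative for $\alpha$ small enough, yielding the desired contradiction.

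The main obstacle is ensuring that $C_r$ is finite, particularly when $W$ is singular at the origin. The bound $E[\rho|_{B(\bx;r)}] \leq E[\rho]<\infty$ follows from $W\geq 0$ together with the fact that any $d_\infty$-local minimizer has finite energy (otherwise every admissible $\tilde\rho$ with $E[\tilde\rho]<\infty$ would beat it), while $m_r^2 E[\nu_A]$ is controlled directly by $W \in L^1_\mathrm{loc}$ and the boundedness of $A$. The two-scale perturbation---first fix $r$ small enough for the LSC bound, then send $\alpha$ down against the fixed but possibly large $C_r$---is what lets the argument succeed uniformly, even if $\rho$ happens to carry an atom at $\bx$.
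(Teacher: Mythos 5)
The paper does not give its own proof of this lemma; it cites \cite[Proposition 1]{BCLR2} together with \cite[Remark 2.3]{CDM16}. Your argument is a correct, self-contained version of the standard local-perturbation proof that underlies those references: set $2\epsilon_0$ equal to the $d_\infty$-local-minimality radius, locate a positive-measure low-potential set $A\subset B(\bx;\epsilon_0)$, move an $\alpha$-fraction of $\rho|_{B(\bx;r)}$ onto $A$, and play the linear gain $-\alpha\delta m_r$ off against the quadratic cost $\alpha^2 C_r$. The two-scale parameter choice (fix $r$, then send $\alpha\to 0$), the transport-plan bound $d_\infty(\rho,\tilde\rho_{r,\alpha})\le r+\epsilon_0<2\epsilon_0$, and the use of $W\ge 0$ to dominate the cross term in $E[\mu_r^{(1)}]$ are all fine.

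Two small points deserve tightening. First, when $V(\bx)=+\infty$ the inequality ``$V\le V(\bx)-2\delta$ on $A$'' is vacuous as written; the argument still closes, but one should instead pick $M$ so that $\{V<M\}\cap B(\bx;\epsilon_0)$ has positive measure (using $V\in L^1_{\mathrm{loc}}$), then invoke lower semicontinuity to get $V>2M$ on $B(\bx;r)$ for $r$ small, whereupon $\int V\,\rd\mu_r^{(1)}\le -M m_r$, with $M$ playing the role of $\delta$. Second, the claim that any $d_\infty$-local minimizer has finite energy is not purely formal: one must observe that a small mollification $\rho*\psi_\varepsilon$ is both $d_\infty$-close to $\rho$ and of finite energy (since $W\in L^1_{\mathrm{loc}}$ and $\rho$ is compactly supported), so a local minimizer with $E[\rho]=\infty$ would be strictly beaten. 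Your parenthetical remark gestures at this but benefits from being made explicit, because the bound $\int_{B(\bx;r)}V\,\rd\rho\le 2E[\rho]<\infty$ is essential for the first-variation estimate.
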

\begin{lemma}\label{lemel2}
If $\rho\in\Ptwo$ is a local energy minimizer in the $d_2$-sense for the energy $E$ with interaction potential $W$ satisfying \eqref{eq:hyp}, then $V$ is constant on $\supp\rho$ in the sense that $V(\bx)=C_\rho:=2 E[\rho]$ $\rho$-a.e., and
\begin{equation}\label{W2cond}
V(\bx) \ge C_\rho,\quad \mbox{a.e. } \bx \in\Rd\, . 
\end{equation}
\end{lemma}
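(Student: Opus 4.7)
The plan is to read off both assertions from the first variation of $E$ along linear perturbations that stay $d_2$-close to $\rho$. Given any probability measures $\mu_0,\mu_1$ with $\mu_0\ll\rho$ and essentially bounded density $d\mu_0/d\rho$, the competitor $\rho_\epsilon := \rho+\epsilon(\mu_1-\mu_0)$ is an admissible probability measure for $\epsilon>0$ small. By leaving $\rho-\epsilon\mu_0$ in place and transporting $\epsilon\mu_1$ to $\epsilon\mu_0$ along any plan $\pi\in\Pi(\mu_0,\mu_1)$, one obtains
\[
d_2(\rho_\epsilon,\rho)^2 \le \epsilon\int|\bx-\by|^2\,d\pi(\bx,\by),
\]
so $\rho_\epsilon$ sits in an arbitrarily small $d_2$-neighborhood of $\rho$ whenever $\mu_0,\mu_1$ live in a common bounded set. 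Expanding the bilinear form yields the key first-order identity
\[
E[\rho_\epsilon]-E[\rho] = \epsilon\Bigl(\int V\,d\mu_1 - \int V\,d\mu_0\Bigr) + \frac{\epsilon^2}{2}\,E[\mu_1-\mu_0].
\]

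The constancy of $V$ on $\supp\rho$ then follows from two-sided interior perturbations. For arbitrary $\bx_0,\bx_1\in\supp\rho$ and $r>0$, I would take $\mu_i := \rho|_{B(\bx_i,r)}/\rho(B(\bx_i,r))$; both choices $\pm\epsilon$ are admissible since the roles of $\mu_0$ and $\mu_1$ can be swapped, and local minimality forces the first-order term to vanish, giving
\[
\frac{1}{\rho(B(\bx_0,r))}\int_{B(\bx_0,r)}V\,d\rho = \frac{1}{\rho(B(\bx_1,r))}\int_{B(\bx_1,r)}V\,d\rho.
\]
Thus the $\rho$-averages of $V$ over small balls are independent of the base point in $\supp\rho$. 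Applying the Besicovitch differentiation theorem for the Radon measure $\rho$ to the $\rho$-integrable function $V$ (finiteness $\int V\,d\rho = 2E[\rho]<\infty$ is built in), I send $r\to 0$ to conclude that $V\equiv C$ $\rho$-a.e.\ for some constant $C$; integrating this equality against $\rho$ identifies $C=2E[\rho]=C_\rho$.

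For the global lower bound \eqref{W2cond} I would switch to one-sided external perturbations: fix $\by\in\R^d$ and $\bx_0\in\supp\rho$, set $\mu_0 := \rho|_{B(\bx_0,r_0)}/\rho(B(\bx_0,r_0))$, and let $\mu_1$ be normalized Lebesgue measure on $B(\by,r_1)$. Only the sign $+\epsilon$ is admissible now (since $\mu_1$ is typically not $\rho$-absolutely continuous), so local minimality yields only the one-sided bound $\int V\,d\mu_1 \ge \int V\,d\mu_0$. Sending $r_0\to 0$ and invoking Besicovitch turns the right-hand side into $V(\bx_0)=C_\rho$ for $\rho$-a.e.\ $\bx_0$, while sending $r_1\to 0$ on the left uses the classical Lebesgue differentiation theorem applied to $V=W*\rho\in L^1_{\text{loc}}(\R^d)$ (which follows from $W\in L^1_{\text{loc}}$ via Fubini). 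This gives $V(\by)\ge C_\rho$ for Lebesgue-a.e.\ $\by\in\R^d$. The main obstacle throughout is verifying that the perturbations are genuinely $d_2$-small and not merely $d_1$-small, but the explicit gluing estimate above settles this because, with $\mu_0,\mu_1$ supported in fixed balls, the right-hand side is $O(\epsilon)$, hence $d_2(\rho_\epsilon,\rho)=O(\sqrt\epsilon)\to 0$.
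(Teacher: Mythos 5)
The paper does not supply its own proof of Lemma \ref{lemel2}; it imports it from \cite{BCLR2} (Proposition~1) and \cite{CDM16} (Remark~2.3). Your reconstruction is the standard Euler--Lagrange first-variation argument for $d_2$-local minimizers, and the core mechanics are right: the competitor $\rho_\epsilon=\rho+\epsilon(\mu_1-\mu_0)$ is a probability measure for $\epsilon$ small when $\mu_0\ll\rho$ with bounded density; the coupling $(\mathrm{Id},\mathrm{Id})_\#(\rho-\epsilon\mu_0)+\epsilon\pi$ gives $d_2(\rho,\rho_\epsilon)^2\le\epsilon\int|\bx-\by|^2\,d\pi=O(\epsilon)$, so the perturbation is genuinely $d_2$-small; the two-sided perturbation with $\mu_i=\rho|_{B(\bx_i,r)}/\rho(B(\bx_i,r))$ combined with Besicovitch differentiation of the $\rho$-integrable function $V$ yields $V=C_\rho$ $\rho$-a.e.; and the one-sided perturbation gives the ball-average lower bound. (Your factor $\tfrac{\epsilon^2}{2}E[\mu_1-\mu_0]$ should be $\epsilon^2 E[\mu_1-\mu_0]$ with the paper's normalization of $E$, but this is inconsequential.)

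The one place that is not airtight is the final step for \eqref{W2cond}: you invoke the Lebesgue differentiation theorem after asserting ``$V=W*\rho\in L^1_{\mathrm{loc}}(\Rd)$, which follows from $W\in L^1_{\mathrm{loc}}$ via Fubini.'' That Tonelli computation reads $\int_K V\,d\bx=\int\bigl(\int_{K-\by}W\,d\bw\bigr)d\rho(\by)$, and under hypothesis \eqref{eq:hyp} alone there is no growth control on $W$ at infinity and no compact-support assumption on $\rho\in\Ptwo$, so the outer $\rho$-integral need not converge; in particular, the ball-average inequality by itself does not imply the pointwise a.e.\ bound when $V$ can take the value $+\infty$ on sets of positive measure. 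The clean fix, which stays entirely within your framework, is to argue by contradiction without passing through Lebesgue differentiation: if $A_R:=\{V<C_\rho\}\cap B(0,R)$ had positive Lebesgue measure for some $R$, take $\mu_1:=|A_R|^{-1}\chi_{A_R}$ (bounded, compactly supported, hence $E[\mu_1]<\infty$ by $W\in L^1_{\mathrm{loc}}$), and $\mu_0$ as before. Then $\int V\,d\mu_1<C_\rho=\int V\,d\mu_0$ because $0\le V<C_\rho$ on $A_R$, every quadratic term is finite ($\mu_0\le \rho/\rho(B(\bx_0,r_0))$ controls $E[\mu_0]$ and the cross term by $E[\rho]<\infty$), and the negative first-order term dominates for small $\epsilon>0$, contradicting $d_2$-local minimality. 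This also removes the need to send $r_0\to0$ on the $\supp\rho$ side, since $V=C_\rho$ $\rho$-a.e.\ already gives $\int V\,d\mu_0=C_\rho$ exactly.
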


In order to show the FLIC property of the interaction energy, one can reduce to show \eqref{flic} for compactly supported signed measures $\mu$. 

\begin{lemma}\label{lem:reduction}
Assume then interaction potential $W$ satisfies \eqref{eq:hyp} and that the associated interaction energy $E$ satisfies \eqref{flic} for compactly supported signed measures $\mu$, then the interaction energy $E$ is FLIC.
\end{lemma}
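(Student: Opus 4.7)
The plan is to reduce the general case to the compactly supported case by a truncation argument combined with dominated convergence on both sides of the inequality \eqref{flic}.

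Fix $0 < r < R < \infty$ and let $c > 0$ be the constant provided by the hypothesis for this annulus. Take a signed measure $\mu \in \Md$ with $E[|\mu|] < \infty$, and define the truncations $\mu_n := \mathbf{1}_{B(0,n)}\mu$. Each $\mu_n$ has compact support and satisfies $|\mu_n| \le |\mu|$ in the sense of total variation, so $E[|\mu_n|] \le E[|\mu|] < \infty$. By the assumption applied to $\mu_n$,
\begin{equation*}
E[\mu_n] \ge c\int_{r \le |\xi| \le R} |\hat{\mu}_n(\xi)|^2 \rd{\xi}.
\end{equation*}
The strategy is then to pass $n \to \infty$ in both sides.

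For the right-hand side, the trivial bound $|\hat{\mu}_n(\xi) - \hat{\mu}(\xi)| \le |\mu|(\Rd \setminus B(0,n))$, which tends to $0$ as $n \to \infty$, gives uniform convergence of $\hat{\mu}_n$ to $\hat{\mu}$ on $\Rd$. Since $|\hat{\mu}_n(\xi)|^2 \le |\mu|(\Rd)^2$ and the annulus $\{r \le |\xi| \le R\}$ has finite Lebesgue measure, dominated convergence yields
\begin{equation*}
\int_{r \le |\xi| \le R} |\hat{\mu}_n(\xi)|^2 \rd{\xi} \longrightarrow \int_{r \le |\xi| \le R} |\hat{\mu}(\xi)|^2 \rd{\xi}.
\end{equation*}

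For the left-hand side, write $E[\mu_n] = \tfrac12 \int\!\!\int \mathbf{1}_{B(0,n)\times B(0,n)}(\bx,\by)\,W(\bx-\by)\rd\mu(\bx)\rd\mu(\by)$. The indicator converges pointwise to $1$, while the integrand is dominated in absolute value by $W(\bx-\by)\rd|\mu|(\bx)\rd|\mu|(\by)$, which is an integrable positive measure by $E[|\mu|] < \infty$. Splitting the signed product measure $\mu \otimes \mu$ into its Hahn-Jordan components and applying the classical dominated convergence theorem to each, we conclude $E[\mu_n] \to E[\mu]$. Letting $n \to \infty$ in the inequality gives the desired FLIC bound for $\mu$.

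The step that requires the most care is justifying $E[\mu_n] \to E[\mu]$ for signed measures, but this is bought cheaply by the hypothesis $E[|\mu|] < \infty$, which provides the integrable majorant needed to apply dominated convergence. No genuine obstacle is expected beyond this bookkeeping, and no hypothesis beyond \eqref{eq:hyp} and the compactly supported case is needed.
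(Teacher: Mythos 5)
Your truncation $\mu_n = \mathbf 1_{B(0,n)}\mu$ is compactly supported, but it does \emph{not} satisfy the moment constraints that are built into the FLIC/LIC framework: for a generic $\mu$ with $\int\mu = 0$ and $\int\bx\,\mu = 0$, the cutoff $\mu_n$ typically has $\int\mu_n \neq 0$ and $\int\bx\,\mu_n\neq 0$. The estimate \eqref{flic} is only asserted (as part of the FLIC/LIC definitions, in parallel with the preceding definition of LIC) for signed measures with vanishing zeroth and first moments, so you are not entitled to apply the hypothesis to $\mu_n$. This is not a cosmetic issue: for attractive potentials that grow at infinity (e.g.\ $W\sim|\bx|^2$, which is precisely the case of interest), the quadratic form $E$ evaluated on measures with nonzero mass picks up contributions that destroy positivity, so the compactly-supported assumption in the lemma cannot mean ``all compactly supported signed measures.''

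The paper's proof is built around repairing exactly this defect. After truncating, it rescales the positive part ($\tilde\mu_N := \lambda_N\mu_+\chi_{[-N,N]^d} - \mu_-\chi_{[-N,N]^d}$ with $\lambda_N\to 1$) to restore zero mass, and then adds a carefully placed, compactly supported, antisymmetric pair of bumps $c_N\big(\psi(\cdot-\bx_N)-\psi(\cdot+\bx_N)\big)$ (with $c_N\to 0$) to restore zero first moment. Only then is the hypothesis applicable to the resulting $\mu_N$. The remaining work — which your dominated-convergence bookkeeping correctly anticipates for the simple truncation, but which must now also account for the rescaling and the added bumps — is to show $E[\mu_N]\to E[\mu]$ and $\int_{r\le|\xi|\le R}|\hat\mu_N|^2\to\int_{r\le|\xi|\le R}|\hat\mu|^2$; the latter is as elementary as your argument, but the former requires a non-obvious uniform bound on $\int\!\int W(\bx-\by)|\mu(\by)|\,\psi(\bx-\bx_1)\rd\by\rd\bx$, which the paper obtains by a clever use of the compactly-supported FLIC hypothesis itself along a linear interpolation curve. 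So the dominated-convergence passage is sound in spirit, but the crucial idea you are missing is that the truncation must be corrected to lie back inside the admissible class, and that correction is where most of the work happens.
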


\begin{proof}
To see this, assume that we already proved \eqref{flic} for compactly supported $\mu\in\Md$. Then let $\mu\in\Md$ be a signed measure with $E[|\mu|]<\infty$ and $\mu=\rho_0-\rho_1$ for some $\rho_0,\rho_1\in \Ptwo$ with the same center of mass. Then 
\begin{equation}\label{mumean}
\int_{\Rd} \mu(\bx)\rd{\bx} = \int_{\Rd} \bx\mu(\bx)\rd{\bx} = 0\,,
\end{equation}
and one can write $\mu=\mu_+-\mu_-$ as the positive and negative parts, with $m_\mu:=\int_{\Rd}\mu_+\rd{\bx}=\int_{\Rd}\mu_-\rd{\bx}>0$ and $E[\mu_+]<\infty,\,E[\mu_-]<\infty$.

Fix a compactly supported smooth non-negative radial function $\psi(\bx)$ with $\int_{\Rd}\psi(\bx)\rd{\bx}=1$. 
Let $N\in\mathbb{N}$ be sufficiently large, and let
\begin{equation}
\tilde{\mu}_N = \lambda_N\mu_+\chi_{[-N,N]^d}-\mu_-\chi_{[-N,N]^d},\quad \lambda_N:=\frac{\int_{[-N,N]^d}\mu_-\rd{\bx}}{\int_{[-N,N]^d}\mu_+\rd{\bx}}
\end{equation}
be a compactly supported signed measure. Since $\lim_{N\rightarrow\infty}\int_{[-N,N]^d}\mu_+\rd{\bx} = m_\mu>0$, $\lambda_N$ is well-defined for sufficiently large $N$ with $\lim_{N\rightarrow\infty}\lambda_N=1$, and we have $\int_{\Rd}\tilde{\mu}_N\rd{\bx}=0$. 

Then define 
\begin{equation}
\mu_N(\bx) = \tilde{\mu}_N(\bx) + c_N(\psi(\bx-\bx_N)-\psi(\bx+\bx_N)),\quad |\bx_N|=1,\,c_N\ge 0,\quad 2c_N\bx_N = -\int_{\Rd}\bx\tilde{\mu}_N(\bx)\rd{\bx}.
\end{equation}
Then $\mu_N$ is compactly supported, satisfies $\int_{\Rd} \mu(\bx)\rd{\bx}=0$, and
\begin{equation}\begin{split}
\int_{\Rd} \bx\mu(\bx)\rd{\bx} = & \int_{\Rd} \bx\tilde{\mu}_N(\bx)\rd{\bx} + c_N\int_{\Rd} \bx\psi(\bx-\bx_N)\rd{\bx}- c_N\int_{\Rd} \bx\psi(\bx+\bx_N)\rd{\bx} \\
= & \int_{\Rd} \bx\tilde{\mu}_N(\bx)\rd{\bx} + c_N\bx_N + c_N\bx_N = 0.
\end{split}\end{equation}
Also, $\lim_{N\rightarrow\infty}c_N=0$.
Then we will show that
\begin{equation}
|E[\mu]-E[\mu_N]| \le |E[\mu]-E[\mu\chi_{[-N,N]^d}]|+|E[\mu\chi_{[-N,N]^d}]-E[\tilde{\mu}_N]|+|E[\tilde{\mu}_N]-E[\mu_N]|
\end{equation}
converges to zero as $N\rightarrow\infty$. For the first term on the RHS, this is a consequence of $E[|\mu|]<\infty$ since 
\begin{equation}
\left|\iint_{(x,y)\notin [-N,N]^d\times [-N,N]^d}\!\!\!\!\!\!W(\bx-\by)\mu(\by)\mu(\bx)\rd{\by}\rd{\bx}\right| \le \iint_{(x,y)\notin [-N,N]^d\times [-N,N]^d}\!\!\!\!\!\!W(\bx-\by)|\mu(\by)||\mu(\bx)|\rd{\by}\rd{\bx}
\end{equation}
and the RHS converges to zero as $N\rightarrow\infty$.
For the second term, this is a consequence of $E[|\mu|]<\infty$ and $\lim_{N\rightarrow\infty}\lambda_N=1$, since $\tilde{\mu}_N=\mu\chi_{[-N,N]^d}+(\lambda_N-1)\mu_+\chi_{[-N,N]^d}$ and then
\begin{equation}\begin{split}
& |E[\mu\chi_{[-N,N]^d}]-E[\tilde{\mu}_N]| \\ & = \left|(\lambda_N-1)\int_{\Rd}\int_{\Rd}W(\bx-\by)\mu(\by)\chi_{[-N,N]^d}(\by)\rd{\by}\mu_+(\bx)\chi_{[-N,N]^d}(\bx)\rd{\bx} + (\lambda_N-1)^2E[\mu_+\chi_{[-N,N]^d}]\right|
\end{split}\end{equation}
with the last integral and $E[\mu_+\chi_{[-M,M]^d}]$ being finite.

For the third term, we first take a non-negative compactly-supported radial smooth function $\Psi$ with $\Psi(\bx-\bz)\ge \psi(\bx-\bx_1)$ for any $|\bx_1|=1$ and 
\begin{equation}\label{zcond}
|\bz|\le 2\frac{\int_{\Rd}|\bx \mu(\bx)|\rd{\bx}}{\int_{\Rd} |\mu|\rd{\bx}}.
\end{equation}
Notice that $E[\Psi]<\infty$. 
%
%
Take $M>0$ large enough so that $\int_{\Rd}|\mu|\chi_{[-M,M]^d}\rd{\bx}\ge \frac{1}{2}\int_{\Rd}|\mu|\rd{\bx}$. Then the FLIC property for compactly supported measures implies an estimate along a linear interpolation curve
\begin{align*}
    E\left[\frac{1}{2}\Lambda_M\Psi(\cdot-\bz_M)+\frac{1}{2}|\mu|\chi_{[-M,M]^d}\right] &\le \max\{E[\Lambda_M\Psi],E[|\mu|\chi_{[-M,M]^d}]\} \\
    &\le \max\left\{\left(\int_{\Rd}| \mu|\rd{\bx}\right)^2E[\Psi],E[|\mu|]\right\}=C
\end{align*}
with
\begin{equation}
\Lambda_M = \frac{1}{\|\Psi\|_{L^1}}\int_{\Rd}|\mu|\chi_{[-M,M]^d}\rd{\bx},\quad \bz_M = \frac{1}{\int_{\Rd}|\mu|\chi_{[-M,M]^d}\rd{\bx}}\int_{\Rd}\bx|\mu(\bx)|\chi_{[-M,M]^d}\rd{\bx}
\end{equation}
since both $\Lambda_M\Psi(\cdot-\bz_M)$ and $|\mu|\chi_{[-M,M]^d}$ have same total mass and center of mass. Also, since $\int_{\Rd}|\mu|\chi_{[-M,M]^d}\rd{\bx}\ge \frac{1}{2}\int_{\Rd}|\mu|\rd{\bx}$, $\bz_M$ satisfies \eqref{zcond}. This implies 
\begin{equation}
\int_{\Rd}\int_{\Rd}W(\bx-\by)|\mu(\by)|\chi_{[-M,M]^d}(\by)\rd{\by}\psi(\bx-\bx_1)\rd{\bx} \le C
\end{equation}
for any $M$ sufficiently large and $|\bx_1|=1$. Taking $M\rightarrow\infty$, we get
\begin{equation}
\int_{\Rd}\int_{\Rd}W(\bx-\by)|\mu(\by)|\rd{\by}\psi(\bx-\bx_1)\rd{\bx} \le C.
\end{equation}
Therefore, we obtain that
\begin{equation}\begin{split}
|E[\tilde{\mu}_N]-E[\mu_N]| \le & c_N \int_{\Rd}\int_{\Rd}W(\bx-\by)|\tilde{\mu}_N(\by)|\rd{\by}(\psi(\bx-\bx_N)+\psi(\bx+\bx_N))\rd{\bx} \\&+ c_N^2E[\psi(\bx-\bx_N)+\psi(\bx+\bx_N)] \\
\le & c_N C\int_{\Rd}\int_{\Rd}W(\bx-\by)|\mu(\by)|\rd{\by}\Psi(\bx)\rd{\bx} + 4c_N^2E[\Psi] \le C(c_N+c_N^2) \\
\end{split}\end{equation}
converges to zero as $N\rightarrow\infty$, since $\lim_{N\rightarrow\infty}c_N=0$.

Similarly one can show that $\lim_{N\rightarrow\infty}\int_{r\le |\xi| \le R} |\hat{\mu}_N(\xi)|^2\rd{\xi}=\int_{r\le |\xi| \le R} |\hat{\mu}(\xi)|^2\rd{\xi}$. In fact, we write
\begin{equation}\label{EmuL2}\begin{split}
& \left|\int_{r\le |\xi| \le R}  |\hat{\mu}(\xi)|^2\rd{\xi}-\int_{r\le |\xi| \le R} |\hat{\mu}_N(\xi)|^2\rd{\xi} \right| \le  \left|\int_{r\le |\xi| \le R} |\hat{\mu}(\xi)|^2\rd{\xi}-\int_{r\le |\xi| \le R} |\cF(\mu\chi_{[-N,N]^d})(\xi)|^2\rd{\xi}\right| \\
& + \left|\int_{r\le |\xi| \le R} |\cF(\mu\chi_{[-N,N]^d})(\xi)|^2\rd{\xi}-\int_{r\le |\xi| \le R} |\hat{\tilde{\mu}}_N(\xi)|^2\rd{\xi}\right| + \left|\int_{r\le |\xi| \le R} |\hat{\tilde{\mu}}_N(\xi)|^2\rd{\xi}-\int_{r\le |\xi| \le R} |\hat{\mu}_N(\xi)|^2\rd{\xi}\right|.
\end{split}\end{equation}
First notice that $\hat{\mu}, \cF(\mu\chi_{[-N,N]^d}), \hat{\tilde{\mu}}_N$ are uniformly bounded in $L^\infty(\Rd)$. The first term in \eqref{EmuL2} converges to zero as $N\rightarrow\infty$ since
\begin{equation}
|\cF(\mu-\mu\chi_{[-N,N]^d})(\xi)| = |\cF(\mu\chi_{([-N,N]^d)^c})(\xi)| \le \int_{([-N,N]^d)^c} |\mu|\rd{\bx} \rightarrow 0,\quad \text{ as }N\rightarrow\infty
\end{equation}
uniformly in $\xi\in\Rd$. The second term converges to zero since 
\begin{equation}
|\cF(\mu\chi_{[-N,N]^d}-\tilde{\mu}_N)(\xi)| = |(\lambda_N-1)\hat{\mu}_+(\xi)| \le |\lambda_N-1|\int_{\Rd} |\mu|\rd{\bx} \rightarrow 0,\quad \text{ as }N\rightarrow\infty.
\end{equation}
The third term converges to zero since 
\begin{equation}
|\cF(\tilde{\mu}_N-\mu_N)(\xi)| = |c_N\cF(\psi(\cdot-\bx_N)-\psi(\cdot+\bx_N))(\xi)| \le 2c_N\int_{\Rd} \psi\rd{\bx} \rightarrow 0,\quad \text{ as }N\rightarrow\infty.
\end{equation}

The compactly supported signed measure $\mu_N$ satisfying \eqref{mumean}, and therefore satisfies $\eqref{flic}$ by assumption. Then \eqref{flic} for $\mu$ follows by taking $N\rightarrow\infty$. 
\end{proof}

In order to obtain further consequences of the LIC convexity of the interaction energy, we need stronger assumptions on the potential $W$:
\begin{equation}\label{eq:hyp2}
\begin{gathered}
	\text{$W$ satisfies \eqref{eq:hyp} and $W$ is either continuous on $\mathbb{R}^d\backslash \{0\}$ with $W(0)=+\infty$ or $W\in C(\mathbb{R}^d)$.}\\ 
	\text{$\forall R>0$, there exists $C_R>0$ such that
$\frac{1}{|B(\bx;r)|}\int_{B(\bx;r)}W(\by)\rd{\by} \le C_R W(\bx), \forall |\bx|\le R,\,0<r\le R$.}
\end{gathered}\tag{\textbf{H-s}}
\end{equation}
These hypotheses can be verified for power law interaction potentials. An important consequence of the LIC convexity of the interaction energy is:

\begin{theorem}\label{thm_W2g}
Assume the interaction energy $E$ associated to a potential $W$ satisfying \eqref{eq:hyp2} is LIC and that there exists a global minimizer of $E$ in $\Ptwo$. Then given any $\rho\in\Ptwo$ satisfying the necessary condition \eqref{W2cond} for the $d_2$-local minimizer is the global minimizer.
\end{theorem}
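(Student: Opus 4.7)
The plan is to compare $\rho$ directly with a global minimizer $\rho_*$ (which exists by hypothesis) via a first-order condition along a linear interpolation. Using the translation invariance of $E$, I may replace $\rho_*$ by a translate so that it has the same center of mass as $\rho$; then $\rho-\rho_*$ is a signed measure with vanishing mass and first moment, so LIC gives $E[\rho-\rho_*]\ge 0$ (strict when $\rho\ne\rho_*$). Setting $\rho_t:=(1-t)\rho+t\rho_*$ and expanding the bilinear form,
\[
\phi(t):=E[\rho_t]=(1-t)^2 E[\rho]+(1-t)t\,M+t^2 E[\rho_*],\qquad M:=\int_{\Rd} V[\rho](\bx)\rho_*(\bx)\,\rd\bx.
\]
The convexity of $\phi$ is exactly the LIC statement $E[\rho-\rho_*]\ge 0$, and a direct calculation gives $\phi'(0)=M-C_\rho$. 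Once I show $M\ge C_\rho$, the convexity of $\phi$ yields $E[\rho_*]=\phi(1)\ge \phi(0)+\phi'(0)\ge E[\rho]$; combined with $E[\rho_*]\le E[\rho]$ from the minimality of $\rho_*$, this forces $E[\rho]=E[\rho_*]$, so $\rho$ is itself a global minimizer.

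The main obstacle is the inequality $M\ge C_\rho$. The condition \eqref{W2cond} only provides $V[\rho]\ge C_\rho$ Lebesgue-almost everywhere, while $\rho_*$ may be singular and charge the Lebesgue null exceptional set, so one cannot simply integrate the inequality against $\rho_*$. I bypass this by mollification: let $\eta_\varepsilon$ be a standard symmetric mollifier. Since $\eta_\varepsilon\ge 0$ has unit mass and $V[\rho]\ge C_\rho$ a.e., the mollified potential $V[\rho]*\eta_\varepsilon$ is bounded below by $C_\rho$ at \emph{every} point. A Fubini computation then gives
\[
C_\rho\le \int_{\Rd} (V[\rho]*\eta_\varepsilon)(\bx)\rho_*(\bx)\,\rd\bx=\iint_{\Rd\times\Rd} W^{(\varepsilon)}(\bx-\bz)\rho(\bz)\rho_*(\bx)\,\rd\bz\,\rd\bx,\qquad W^{(\varepsilon)}:=W*\eta_\varepsilon.
\]

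The final step is to send $\varepsilon\to 0$. The averaging assumption in \eqref{eq:hyp2} yields a locally uniform pointwise bound $W^{(\varepsilon)}(\bw)\le C\,W(\bw)$ independent of $\varepsilon$, while the continuity of $W$ on $\Rd\setminus\{0\}$ gives $W^{(\varepsilon)}\to W$ pointwise off the diagonal $\{\bx=\bz\}$. When $M<+\infty$ the diagonal has zero $(\rho\otimes\rho_*)$-measure (otherwise the singularity of $W$ at the origin would force $M=+\infty$), so dominated convergence passes the right-hand side to $M$; when $M=+\infty$ the inequality $M\ge C_\rho$ is immediate. Either way, $M\ge C_\rho$, which closes the argument.
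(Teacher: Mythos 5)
Your proof is correct in its main structure but takes a slightly different route from the paper's. Both proofs face the same obstruction: condition \eqref{W2cond} gives $V = W*\rho \ge C_\rho$ only Lebesgue-a.e., so it cannot be integrated directly against a possibly singular $\rho_*$. The paper mollifies the \emph{competitor}: it works with $\rho_1 = \rho_\infty * \psi_\alpha$, invokes Lemma~\ref{lem_psi} (which uses the averaging hypothesis in \eqref{eq:hyp2}) to get $E[\rho_1] < E[\rho]$ for small $\alpha$, and reaches a contradiction between the strict first-order inequality coming from LIC uniqueness and the nonnegativity of $\int V(\rho_1-\rho)$. You mollify the \emph{obstacle} instead, noting $V*\eta_\varepsilon \ge C_\rho$ pointwise, and pass to the limit in $\varepsilon$ by dominated convergence using essentially the same averaging bound $W*\eta_\varepsilon \le C_R W$. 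The two mollifications are dual ($\int (V*\eta_\varepsilon)\rho_* = \int V(\eta_\varepsilon*\rho_*)$ for a symmetric mollifier), but your route bypasses Lemma~\ref{lem_psi} and does not need the uniqueness of the minimizer.

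There is, however, one technical gap. Your convexity step writes $\phi(t) = (1-t)^2E[\rho] + (1-t)t\,M + t^2 E[\rho_*]$, differentiates to get $\phi'(0) = M - C_\rho$, and applies the subgradient inequality $\phi(1)\ge\phi(0)+\phi'(0)$. This is only valid when the cross term $M$ is finite. You observe that $M = +\infty$ makes $M\ge C_\rho$ trivial, but in that case $\phi(t)=+\infty$ for $t\in(0,1)$, the polynomial formula and the subgradient inequality are void, and $E[\rho_*]\ge E[\rho]$ no longer follows from them. The paper's argument avoids this automatically: interpolating with the smooth, compactly supported $\rho_1$ makes the cross term $\int V\rho_1$ finite since $V\in L^1_{\mathrm{loc}}$ and $\rho_1$ is bounded. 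You could repair the gap either by interpolating with $\rho_* * \eta_\varepsilon$ rather than $\rho_*$ (which essentially recovers the paper's argument), or by noting that $M=\infty$ would contradict the convexity form of LIC applied to the pair $(\rho,\rho_*)$, since a convex function finite at the endpoints must be finite on the interior of $[0,1]$.
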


To show this, we need a technical lemma where the stronger hypotheses are essential.
\begin{lemma}\label{lem_psi}
Consider the interaction energy $E$ associated to a potential $W$ satisfying \eqref{eq:hyp2} and suppose $\rho\in\Ptwo$ is compactly supported and satisfies $E[\rho]<\infty$. Given a non-negative radially decreasing smooth compactly supported mollifier $\psi$ supported on $B_1$ with $\int_{\Rd}\psi\rd{\bx}=1$. Denote $\psi_\alpha(\bx) = \frac{1}{\alpha^d}\psi(\frac{\bx}{\alpha})$, then $E[\rho*\psi_\alpha]<\infty$ for sufficiently small $\alpha>0$, and
\begin{equation}
\lim_{\alpha\rightarrow0+}E[\rho*\psi_\alpha]=E[\rho].
\end{equation}
\end{lemma}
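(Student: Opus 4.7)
The key identity is obtained by Fubini: setting $\Phi_\alpha := \psi_\alpha * \psi_\alpha$ (recall $\psi$ is radial, hence symmetric), one has
\begin{equation}
E[\rho*\psi_\alpha] = \tfrac{1}{2}\iint W(\bx-\by)\,(\rho*\psi_\alpha)(\bx)(\rho*\psi_\alpha)(\by)\rd\bx\rd\by = \tfrac{1}{2}\iint W_\alpha(\bx-\by)\,\rho(\rd\bx)\rho(\rd\by),
\end{equation}
where $W_\alpha := W * \Phi_\alpha$. Thus the entire question reduces to justifying a passage to the limit inside the double integral with the product measure $\rho\otimes\rho$ (which is finite), and my plan is to do so by dominated convergence with $CW$ as the dominator.

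\textbf{Step 1: the pointwise domination $W_\alpha \le CW$.} This is where hypothesis \eqref{eq:hyp2} enters crucially. Let $R_0$ be such that $\supp\rho\subset B_{R_0}$, and set $R=2R_0$. The mollifier $\Phi_\alpha$ is non-negative, radial, supported in $B_{2\alpha}$, and satisfies $\|\Phi_\alpha\|_\infty\le\|\psi_\alpha\|_\infty\|\psi_\alpha\|_{L^1}=\alpha^{-d}\|\psi\|_\infty$. Hence for any $\bz$ with $|\bz|\le R$ and $2\alpha\le R$,
\begin{equation}
W_\alpha(\bz) = \int_{B(\bz;2\alpha)} W(\by)\,\Phi_\alpha(\bz-\by)\rd\by \le \|\Phi_\alpha\|_\infty\,|B_{2\alpha}|\cdot\frac{1}{|B_{2\alpha}|}\int_{B(\bz;2\alpha)}W(\by)\rd\by \le C\, W(\bz),
\end{equation}
with $C= 2^d\omega_d \|\psi\|_\infty C_R$ independent of $\alpha$, by the second line of \eqref{eq:hyp2}. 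Applied to $\bz=\bx-\by$ with $\bx,\by\in\supp\rho$, this gives $W_\alpha(\bx-\by)\le CW(\bx-\by)$ and hence $E[\rho*\psi_\alpha]\le CE[\rho]<\infty$ for all small $\alpha$.

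\textbf{Step 2: pointwise convergence $\rho\otimes\rho$-a.e. and dominated convergence.} Since $\Phi_\alpha$ is an approximate identity and $W$ is continuous on $\Rd\setminus\{0\}$, one has $W_\alpha(\bz)\to W(\bz)$ for every $\bz\ne 0$. In the continuous case $W\in C(\Rd)$ this holds on all of $\Rd$. In the singular case $W(0)=+\infty$, the assumption $E[\rho]<\infty$ forces $\rho$ to be atomless (otherwise the diagonal $\{\bx=\by\}$ would carry positive mass for $\rho\otimes\rho$ and contribute $W(0)=+\infty$), so the set $\{\bx=\by\}$ is $\rho\otimes\rho$-null and $W_\alpha(\bx-\by)\to W(\bx-\by)$ holds $\rho\otimes\rho$-a.e. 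Combined with the integrable dominator $CW(\bx-\by)$ from Step 1 and the hypothesis $E[\rho]<\infty$, the Lebesgue dominated convergence theorem yields $\iint W_\alpha\,\rd(\rho\otimes\rho)\to \iint W\,\rd(\rho\otimes\rho)$, which is exactly $E[\rho*\psi_\alpha]\to E[\rho]$.

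\textbf{Main obstacle.} The heart of the argument is the uniform-in-$\alpha$ bound $W_\alpha\le CW$ in Step 1. A naive attempt would require $\Phi_\alpha$ to be radially decreasing so as to rewrite it via the layer-cake formula as a non-negative combination of ball indicators and invoke the averaging bound in \eqref{eq:hyp2} directly; but a convolution of radially decreasing functions is not in general radially decreasing. The trick is to simply dominate $\Phi_\alpha$ by the crudest possible envelope $\|\Phi_\alpha\|_\infty\chi_{B_{2\alpha}}$, whose mass is exactly of order $1$ thanks to the correct scaling $\|\Phi_\alpha\|_\infty\sim\alpha^{-d}$ and $|B_{2\alpha}|\sim\alpha^d$; the assumption \eqref{eq:hyp2} is then applied to a single ball centered at $\bz$ and kills the $\alpha$-dependence. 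Once this pointwise bound is secured, the atomless property implied by $E[\rho]<\infty$ in the singular case is the only other nontrivial observation, and dominated convergence finishes the argument.
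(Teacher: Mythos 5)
Your proof is correct, and the overall strategy — reduce to $\iint W_\alpha(\bx-\by)\,\rd(\rho\otimes\rho)$ with $W_\alpha=W*\Phi_\alpha$, establish the uniform domination $W_\alpha\le CW$ from the averaging hypothesis in \eqref{eq:hyp2}, handle the diagonal via $E[\rho]<\infty$, and finish by dominated convergence — is exactly the paper's. The only genuine difference is how you obtain the domination. The paper writes $\Psi_\alpha=\psi_\alpha*\psi_\alpha$ in layer-cake form, $\Psi_\alpha(\by)=\int_0^{\Psi_\alpha(0)}\chi_{B_{\Psi_\alpha^{-1}(h)}}(\by)\rd h$, and applies the mean-value bound of \eqref{eq:hyp2} to each ball $B_{\Psi_\alpha^{-1}(h)}$; you instead crudely envelope $\Phi_\alpha\le\|\Phi_\alpha\|_\infty\chi_{B_{2\alpha}}$ and apply \eqref{eq:hyp2} once, losing only a dimensional constant $2^d\omega_d\|\psi\|_\infty$. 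Both yield $W_\alpha\le C_R W$; yours is marginally simpler since it avoids layer-cake entirely and only needs $\Phi_\alpha$ to be bounded with compact support, not radially decreasing.

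One incorrect aside worth flagging: in your ``Main obstacle'' discussion you assert that ``a convolution of radially decreasing functions is not in general radially decreasing.'' This is false. If $f,g\ge 0$ are symmetric decreasing, so is $f*g$: by layer-cake, $f*g$ is a non-negative superposition of convolutions $\chi_{B_r}*\chi_{B_s}$ of centered balls, and $(\chi_{B_r}*\chi_{B_s})(\bx)=|B_r\cap B_s(\bx)|$ is radial and non-increasing in $|\bx|$. In particular $\Psi_\alpha=\psi_\alpha*\psi_\alpha$ \emph{is} radially decreasing, which is precisely what makes the paper's layer-cake route go through. Your bound works regardless, so the proof stands, but the stated motivation for preferring the crude envelope over the layer-cake decomposition is based on a wrong premise.
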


\begin{proof} Notice it is straightforward to show that $\rho*\psi_\alpha$ converges weakly to $\rho$ as $\alpha\rightarrow0+$, and conclude that $E[\rho]\le \liminf_{\alpha\rightarrow0+}E[\rho*\psi_\alpha]$. This lemma improves this $\liminf$ result to a limit.
Let $R>0$ be such that $\supp\rho\subseteq B_{R/2}$.
\begin{equation}
E[\rho*\psi_\alpha]=\frac{1}{2}\int_{\Rd}(W*\rho*\psi_\alpha)(\bx)(\rho*\psi_\alpha)(\bx)\rd{\bx}=\frac{1}{2}\int_{B_{R/2}}(W*\psi_\alpha*\psi_\alpha*\rho)(\bx)\rho(\bx)\rd{\bx}.
\end{equation}
Notice that $\Psi_\alpha=\psi_\alpha*\psi_\alpha$  is also a compactly supported (on $B_{2\alpha}$) non-negative radially decreasing smooth function with $\int_{\Rd}\Psi_\alpha \rd{\bx}=1$. Therefore, for any $\bx\in B_{R/2}$,
\begin{equation}
(W*\Psi_\alpha)(\bx) = \int_{B_{2\alpha}}W(\bx-\by)\Psi_\alpha(\by)\rd{\by}  = \int_{B_{2\alpha}}W(\bx-\by)\int_0^{\Psi_\alpha(\by)}\rd{h}\rd{\by} = \int_0^{\Psi_\alpha(0)}\!\! \int_{B_{\Psi_\alpha^{-1}(h)}}W(\bx-\by)\rd{\by}\rd{h},
\end{equation}
where $\Psi_\alpha^{-1}$ denotes the inverse function of $\Psi_\alpha$ as a decreasing of $r\in [0,2\alpha]$. By the assumptions \eqref{eq:hyp2}, if $\alpha \le R/2$, we have
\begin{equation}
\int_0^{\Psi_\alpha(0)}\!\! \int_{B_{\Psi_\alpha^{-1}(h)}}W(\bx-\by)\rd{\by}\rd{h} \le C_R\int_0^{\Psi_\alpha(0)} W(\bx)\int_{B_{\Psi_\alpha^{-1}(h)}}\!\!\!\rd{\by}\rd{h} = C_R W(\bx)\int_{B_{2\alpha}}\Psi_\alpha(\by)\rd{\by} = C_R W(\bx).
\end{equation}
This implies
\begin{equation}
E[\rho*\psi_\alpha]\le \frac{C_R}{2}\int_{B_{R/2}}(W*\rho)(\bx)\rho(\bx)\rd{\bx} = C_R E[\rho]
\end{equation}
i.e., $E[\rho*\psi_\alpha]$ is finite. 

Then write
\begin{equation}\label{rhopsi}
E[\rho*\psi_\alpha]=\frac{1}{2}\int_{\Rd}\int_{\Rd}(W*\Psi_\alpha)(\bx-\by)\rho(\by)\rd{\by}\rho(\bx)\rd{\bx}.
\end{equation}
We now use the continuity assumptions on $W$ in \eqref{eq:hyp2}. If $W$ is continuous on $\R^d$, then 
$(W*\Psi_\alpha)(\bx-\by)$ converges to $W(\bx-\by)$ at every $(\bx,\by)\in\Rd\times\Rd$. Otherwise in case $W(0)=+\infty$, by the continuity of $W$ away from 0, $(W*\Psi_\alpha)(\bx-\by)$ converges to $W(\bx-\by)$ at every $(\bx,\by)\in\Rd\times\Rd$ with $\bx\neq\by$. We also have that the diagonal set $x=y$ is negligible with respect to the product measure $\rho(\by)\rd{\by}\rho(\bx)\rd{\bx}$ since $E[\rho]<\infty$. Therefore we see that $(W*\Psi_\alpha)(\bx-\by)$ converges to $W(\bx-\by)$ almost everywhere with respect to the measure $\rho(\by)\rd{\by}\rho(\bx)\rd{\bx}$.

In both cases, since we proved that $(W*\Psi_\alpha)(\bx-\by)\le C_R W(\bx-\by)$ for any $\bx\ne \by$, the integral in \eqref{rhopsi} is dominated by the integral  $$\frac{1}{2}\int_{\Rd}\int_{\Rd}C_R W(\bx-\by)\rho(\by)\rd{\by}\rho(\bx)\rd{\bx} = C_R E[\rho],
$$ 
and the dominated convergence theorem gives the conclusion.
\end{proof}

\begin{proof}[Proof of Theorem \ref{thm_W2g}]
Notice that the global minimizer is unique once we fix the center of mass according to our discussion above. Let $\rho_\infty\in\Ptwo$ be the global energy minimizer with the same center of mass, and assume on the contrary that $\rho\ne \rho_\infty$.  Define $\rho_1=\rho_\infty*\psi_\alpha$ with $\psi_\alpha$ as defined in Lemma \ref{lem_psi}, and 
\begin{equation}
\rho_t = (1-t)\rho + t \rho_1
\end{equation}
as the linear interpolation curve, satisfying $\frac{\rd^2}{\rd{t}^2}E[\rho_t] \ge 0,\,\forall t\in (0,1)$. Notice that $E[\rho_\infty] < E[\rho]$ since $\rho\ne \rho_\infty$ taking into account the uniqueness of global minimizer. By Lemma \ref{lem_psi}, $E[\rho_1]<E[\rho]$ if $\alpha$ is sufficiently small. Then we see that
\begin{equation}
\frac{\rd}{\rd{t}}\Big|_{t=0}E[\rho_t] < 0\,.
\end{equation}
On the other hand, notice that by the bi-linearity of $E$ and the necessary condition of the $d_2$-local minimizers in \eqref{W2cond},
\begin{equation}
\frac{\rd}{\rd{t}}\Big|_{t=0}E[\rho_t] = \int_\Rd V(\bx)(\rho_1(\bx)-\rho(\bx))\rd{\bx} = \int_\Rd V(\bx)\rho_1(\bx)\rd{\bx} - C_\rho m_0 \ge \int_\Rd C_\rho\rho_1(\bx)\rd{\bx} - C_\rho m_0 = 0
\end{equation}
where the inequality uses the fact that $\rho_1=\rho_\infty*\psi_\alpha$ is a smooth function and then the possible zero-measure exceptional set in \eqref{W2cond} does not contribute. This gives a contradiction.
\end{proof}

We now focus on defining properly the concept of steady state for the interaction energy $E$ and the aggregation equation \eqref{eq} we are dealing with. We will denote by $\partial V(x)$ the subdifferential of $V$ at the point $x\in\Rd$.

\begin{definition}\label{steadystate}
We say $\rho\in\Pd$ is a steady state of the interaction energy $E$ if $0\in \partial V (\bx)$ $\rho$-a.e.
\end{definition}

\begin{remark}\label{rmkss}
Assume that $\rho\in \Pd$ is a steady state with $V(\bx)\in C^1(\Rd)$, then $\bu=\nabla W\ast \rho$ is continuous on $\Rd$ and $\bu=0$ on $\supp\rho$. Moreover, $\rho$ is a stationary distributional solution to \eqref{eq}. 

Notice also that any $d_\infty$-local minimizer is a steady state of the interaction energy $E$.
\end{remark}


\section{Radial symmetry of $d_\infty$-local minimizers and steady states}

In this section, for the locally stable steady states (in the sense of Remark \ref{rmkss}) of the aggregation equation \eqref{eq}, we give sufficient conditions for their radial symmetry.  

\begin{theorem}\label{thm_Wirs}
Assume $d\ge 2$ and $W$ satisfies \eqref{eq:hyp}, $W$ is radially symmetric, and the interaction energy $E$ is LIC. Then every compactly supported $d_\infty$-local minimizer $\rho\in\Pd$ is radially symmetric. 

Furthermore, suppose $E$ is FLIC and $\rho\in\Pd$ is a compactly supported steady state of the interaction energy $E$ such that $\nabla^2 W*\rho$ is continuous and $\nabla^2 W*\rho\ge 0$ (i.e., being semi-positive-definite) on $\supp\rho$, then $\rho$ is radially symmetric.
\end{theorem}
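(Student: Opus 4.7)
The plan is to exploit the rotational invariance $E[R\rho] = E[\rho]$ (coming from the radial symmetry of $W$) together with the strict convexity encoded by LIC/FLIC, via a rotation-perturbation argument near the identity in $SO(d)$. For the first statement, suppose toward contradiction that a compactly supported $d_\infty$-local minimizer $\rho$ is not radially symmetric; after a translation, assume its center of mass is at the origin. The stabilizer $G = \{R \in SO(d) : R\rho = \rho\}$ is a proper closed subgroup of the connected Lie group $SO(d)$ (which has positive dimension since $d \geq 2$), hence of strictly lower dimension, so there exist $R$ arbitrarily close to the identity with $R\rho \neq \rho$. For such $R$, the interpolation $\rho_t := (1-t)\rho + tR\rho$ satisfies
\[
d_\infty(\rho_t, \rho) \leq d_\infty(R\rho, \rho) \leq \|R - I\|_{\mathrm{op}}\sup_{y \in \supp\rho}|y|,
\]
which can be made smaller than the $d_\infty$-local-minimizer radius by shrinking $\|R - I\|$. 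Both $\rho$ and $R\rho$ share the same mass and center of mass (the origin is $R$-fixed), so LIC gives $E[\rho - R\rho] > 0$, and the bilinear identity $E[\rho_t] = E[\rho] - t(1-t)E[\rho - R\rho] < E[\rho]$ for $t \in (0,1)$ contradicts local minimality.

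For the second statement $\rho$ is only a steady state, so I will replace linear interpolation with a second-order Taylor expansion in the rotation parameter. Suppose again $\rho$ is centered at the origin but not radially symmetric, and pick a skew-symmetric matrix $A$ such that $R_s := e^{sA}$ does not fix $\rho$ for small $s > 0$. The identity $E[R_s\rho] = E[\rho]$ and bilinearity give
\[
E[R_s\rho - \rho] = -\int V\,\rd(R_s\rho - \rho) = -\int \bigl(V(R_s y) - V(y)\bigr)\rd\rho(y).
\]
Continuity of $\nabla^2 W * \rho$ yields $V \in C^2$ in a neighborhood of the compact set $\supp\rho$, while the steady-state condition combined with continuity of $\nabla V$ and density of $\{\nabla V = 0\} \cap \supp\rho$ inside $\supp\rho$ gives $\nabla V \equiv 0$ on $\supp\rho$. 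Taylor expanding at $y \in \supp\rho$,
\[
V(R_s y) - V(y) = \tfrac{s^2}{2}(Ay)^\top\nabla^2 V(y)(Ay) + O(s^3) \geq -C s^3
\]
by positive semi-definiteness of $\nabla^2 V$ on $\supp\rho$, with remainder uniform in $y$. Integrating yields the upper bound $E[R_s\rho - \rho] \leq C s^3$.

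For the matching lower bound, since $R_s\rho - \rho$ has zero mass and zero center of mass, FLIC provides
\[
E[R_s\rho - \rho] \geq c\int_{r \leq |\xi| \leq R}\bigl|\hat\rho(R_s^\top\xi) - \hat\rho(\xi)\bigr|^2 \rd\xi.
\]
A linearization $\hat\rho(R_s^\top\xi) - \hat\rho(\xi) = s(A^\top\xi)\cdot\nabla\hat\rho(\xi) + O(s^2)$, combined with the choice of $A$ and the analyticity of $\hat\rho$ (which is entire by Paley--Wiener, as $\rho$ is compactly supported), allows one to select an annulus on which $(A^\top\xi)\cdot\nabla\hat\rho(\xi)$ has positive $L^2$-norm, yielding $E[R_s\rho - \rho] \geq c_0 s^2 - O(s^3)$. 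For small $s > 0$ this contradicts $E[R_s\rho - \rho] \leq C s^3$, giving the desired contradiction. I expect the main obstacles to be: (i) obtaining the uniform Taylor remainder on $\supp\rho$, handled by $C^2$-regularity on a neighborhood of the compact set; and (ii) upgrading ``$\hat\rho$ is not $SO(d)$-invariant'' to a quantitative positive $L^2$-norm of $(A^\top\xi)\cdot\nabla\hat\rho(\xi)$ on some annulus, for which entireness of $\hat\rho$ rules out flat vanishing.
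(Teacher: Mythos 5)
Your proof of the first statement is essentially the paper's: perturb by a nearby rotation with $R\rho\ne\rho$, use strict LIC along the interpolation segment, contradict $d_\infty$-local minimality. For the steady-state case, the upper bound from Taylor expansion and positive semi-definiteness of $\nabla^2V$ is also the paper's mechanism, but your stated remainder $O(s^3)$ is not warranted by the hypotheses: $\nabla^2V$ is only assumed continuous, so the uniform Taylor remainder on $\supp\rho$ is $o(s^2)$ rather than $O(s^3)$ (the latter would require $V\in C^3$), and your item~(i) misattributes the order. The fix is exactly what the paper does: for any $\varepsilon>0$, use uniform continuity of $\nabla^2V$ near the compact set $\supp\rho$ to bound the Hessian below by $-2\varepsilon I$ on a $\delta$-neighborhood; this gives $V(R_sy)-V(y)\ge-\varepsilon|R_sy-y|^2$ and hence an upper bound of order $\varepsilon s^2$, which contradicts the quadratic lower bound once $\varepsilon$ is chosen small relative to the FLIC-derived constant. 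Where you genuinely diverge from the paper is in producing that lower bound $E[R_s\rho-\rho]\gtrsim s^2$: you linearize $\hat\rho(R_s^{-1}\xi)-\hat\rho(\xi)$ and invoke Paley--Wiener analyticity of $\hat\rho$ to locate an annulus on which the infinitesimal generator $(A^\top\xi)\cdot\nabla\hat\rho(\xi)$ has positive $L^2$-mass. The paper instead proves Lemmas~\ref{lem_rot} and \ref{lem_rot3} by decomposing $\hat\rho$ in polar (resp.\ cylindrical) coordinates and applying an elementary torus Fourier-series estimate (Lemma~\ref{lem_rot2}) fiber by fiber, never invoking analyticity. Your route is shorter given the compact-support hypothesis; the paper's is more elementary and does not rely on entireness of $\hat\rho$. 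Both are valid.
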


This theorem is proven in several steps, and we first focus on the 2D case and prove by contradiction. We find better competitors in case of asymmetry for local minimizers or quantifying the behavior of the hessian at the boundary of the support for steady states. The multi-D cases can be similarly done by choosing a direction along which $\rho$ is not rotationally symmetric to reach contradiction, but it needs some further technical details. Without loss of generality, we may assume due to translational invariance that a given $d_\infty$-local minimizer / steady state has zero center of mass $\int_\Rd \bx\rho(\bx)\rd{\bx}=0$. Let $\cR_\theta f$ denote the rotation of a function $f$ by the angle $\theta$:
\begin{equation}
(\cR_\theta f)(\bx) := f(R_\theta^{-1}\bx),\quad R_\theta = \begin{pmatrix}
\cos\theta & -\sin\theta \\
\sin\theta & \cos\theta 
\end{pmatrix}\,.
\end{equation}

\begin{proof}[Proof of Theorem \ref{thm_Wirs}, local minimizer case, for $d=2$]
Assume on the contrary that a compactly supported $d_\infty$-local minimizer $\rho$ is not radially symmetric. Define 
\begin{equation}
\rho_\theta := \frac{1}{2}(\rho + \cR_\theta \rho)\,.
\end{equation}
By the radial symmetry of $W$, we have $E[\rho] = E[\cR_\theta \rho]$. Then by LIC, we see that $E[\rho_\theta]<E[\rho]$ since $\rho\ne \cR_\theta\rho$ if $|\theta|$ is small enough. On the other hand, notice that $d_\infty(\rho,\rho_\theta) \le R\theta$ where $R=\max_{\bx\in\supp\rho}|\bx|<\infty$. Therefore, by the definition of $d_\infty$-local minimizer, we have $E[\rho_\theta]\ge E[\rho]$ for $|\theta|$ small enough, leading to a contradiction.
\end{proof}

To prove the statement on steady states, we first give a lower bound of the linear interpolation convexity.
\begin{lemma}\label{lem_rot}
Assume $d=2$ and the interaction energy $E$ is FLIC. Assume $\rho\in\Pd$ with zero center of mass is not radially-symmetric. Then for small $\theta>0$,
\begin{equation}
E[\rho-\cR_\theta \rho] \ge c\theta^2\,.
\end{equation}
\end{lemma}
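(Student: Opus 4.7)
The plan is to reduce the lower bound on $E[\rho - \cR_\theta\rho]$ to a Fourier-side estimate via FLIC, and then obtain the $\theta^2$ behaviour by a Taylor expansion in the angular variable.

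First I would set $\mu_\theta := \rho - \cR_\theta \rho$ and check that it is an admissible test measure for FLIC. Total mass is zero because both $\rho$ and $\cR_\theta\rho$ are probability measures. For the first moment, since $\rho$ has zero center of mass and $R_\theta$ is linear,
\[
\int_\Rd \bx\,\cR_\theta\rho(\bx)\rd\bx = R_\theta \int_\Rd \by\,\rho(\by)\rd\by = 0,
\]
so $\int \bx\,\mu_\theta(\bx)\rd\bx = 0$. Finiteness of $E[|\mu_\theta|]$ follows from $|\mu_\theta|\le\rho+\cR_\theta\rho$ together with $E[\rho]<\infty$ and Cauchy--Schwarz on the bilinear form $E$, since $W$ is rotationally symmetric and so $E[\cR_\theta\rho]=E[\rho]$. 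Thus FLIC yields, for any fixed $0<r<R<\infty$,
\[
E[\mu_\theta] \;\ge\; c\int_{r\le|\xi|\le R}|\hat{\mu}_\theta(\xi)|^2\rd\xi.
\]

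Next I would translate the problem to Fourier side. Because $\cF$ intertwines with rotations, $\widehat{\cR_\theta\rho}(\xi)=\hat{\rho}(R_\theta^{-1}\xi)$, hence
\[
\hat{\mu}_\theta(\xi) = \hat\rho(\xi) - \hat\rho(R_\theta^{-1}\xi).
\]
Writing $\xi$ in polar coordinates $\xi=s(\cos\phi,\sin\phi)$ and setting $g(s,\phi):=\hat\rho(\xi)$, this becomes $\hat{\mu}_\theta(\xi)=g(s,\phi)-g(s,\phi-\theta)$. Since $\rho$ is not radially symmetric and $\cF$ is injective, neither is $\hat\rho$, so $\partial_\phi g\not\equiv 0$. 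By continuity of $\partial_\phi g$ (the measure $\rho$ is compactly supported or has enough moments so that $\hat\rho\in C^\infty$), there exist $0<r<R<\infty$, an open angular interval $I$, and $c_0>0$ such that $|\partial_\phi g(s,\phi)|\ge c_0$ for all $s\in[r,R]$ and $\phi\in I$.

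On this set, Taylor's formula in $\theta$ gives, uniformly,
\[
g(s,\phi)-g(s,\phi-\theta) \;=\; \theta\,\partial_\phi g(s,\phi) + O(\theta^2),
\]
with the implicit constant controlled by $\sup_{s\in[r,R],\phi}|\partial_\phi^2 g|$, which is finite. Hence for $\theta$ sufficiently small, $|\hat{\mu}_\theta(\xi)|^2 \ge \tfrac12\theta^2 c_0^2$ on the chosen annular sector, which has positive Lebesgue measure. Integrating and feeding the result back into the FLIC bound yields $E[\mu_\theta]\ge c\theta^2$ for small $\theta>0$.

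The main obstacle I anticipate is ensuring that $\hat\rho$ is smooth enough to carry out the Taylor expansion uniformly and to locate an annulus where $\partial_\phi g$ is nonvanishing. In the intended applications $\rho$ is compactly supported, so $\hat\rho$ is real analytic and $\partial_\phi^2 g$ is bounded on any compact set in $\xi$; the only thing to be careful about is that one cannot let the radii $r,R$ depend on $\theta$. Because FLIC provides the constant $c$ once $r$ and $R$ are fixed, and because the annular sector is chosen before letting $\theta\to 0$, this is not actually an issue---it just has to be ordered correctly in the write-up.
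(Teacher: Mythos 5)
Your argument is correct and reaches the same conclusion, but it takes a genuinely different technical route than the paper. Where you Taylor-expand $g(s,\phi)-g(s,\phi-\theta)$ in $\phi$ and use continuity of $\partial_\phi g$ to locate a compact annular sector on which $|\partial_\phi g|\ge c_0>0$, the paper instead treats $f(r,\cdot)=\hat\rho(r\cos\phi,r\sin\phi)$ as a mere $L^2(\mathbb{T})$ function and proves a separate lemma (Lemma~\ref{lem_rot2}) via Fourier series on the torus: one keeps finitely many angular modes capturing half of $\|f-\bar f\|_{L^2_{\mathbb T}}^2$ and uses $|1-e^{ik\theta}|\gtrsim |k\theta|$ for $|k\theta|$ small. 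Since the threshold for ``$\theta$ small'' then depends on $r$, the paper runs a nested-union argument $S=\bigcup_\delta S_\delta$ to extract a subset of positive measure on which that threshold is uniform. Your approach avoids the nested-set bookkeeping entirely and is shorter, but it requires $\hat\rho$ to be $C^2$ in the angular variable (hence needs, say, compact support or a finite second moment of $\rho$); the paper's Fourier-series route costs a bit more machinery but works for any $\rho\in\Pd$ with $\hat\rho$ merely bounded. In the theorem where the lemma is used, $\rho$ is compactly supported, so your version suffices. One small point worth tightening: the appeal to ``Cauchy--Schwarz on the bilinear form $E$'' to get $E[|\mu_\theta|]<\infty$ is not quite available, since positivity of the form is only guaranteed on the mean-zero/center-of-mass-zero subspace; a direct bound using $W\ge 0$, rotational invariance, and compact support of $\rho$ is cleaner (the paper itself omits this check).
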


To prove this, we first give a lemma:
\begin{lemma}\label{lem_rot2}
Let $f(x)$ be a function defined on the torus $\mathbb{T}$. Then for $|\theta| \le c_f$ being small,
\begin{equation}
\|f(\cdot) - f(\cdot-\theta)\|_{L^2_{\mathbb{T}}}^2 \ge c\theta^2 \|f-\bar{f}\|_{L^2_{\mathbb{T}}}^2,\quad \bar{f} = \frac{1}{|\mathbb{T}|}\int_{\mathbb{T}}f(x)\rd{x}
\end{equation}
where $c$ is an absolute constant.
\end{lemma}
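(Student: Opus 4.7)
The plan is to expand $f$ in Fourier series on $\mathbb{T}$ and control both norms by Parseval's identity. Writing $f(x) = \sum_{k\in\mathbb{Z}}\hat{f}_k\, e^{2\pi i kx/|\mathbb{T}|}$, the mean corresponds to the $k=0$ mode, so
\begin{equation}
\|f - \bar f\|_{L^2_{\mathbb{T}}}^2 = |\mathbb{T}|\sum_{k\ne 0}|\hat{f}_k|^2,
\end{equation}
while a shift by $\theta$ multiplies $\hat{f}_k$ by the unimodular factor $e^{-2\pi i k\theta/|\mathbb{T}|}$, yielding
\begin{equation}
\|f(\cdot)-f(\cdot-\theta)\|_{L^2_{\mathbb{T}}}^2 = |\mathbb{T}|\sum_{k\in\mathbb{Z}}|\hat{f}_k|^2\cdot 4\sin^2\!\bigl(\pi k\theta/|\mathbb{T}|\bigr).
\end{equation}
Thus the inequality is reduced to a purely Fourier-side estimate comparing the multiplier $4\sin^2(\pi k\theta/|\mathbb{T}|)$ with $\theta^2$, restricted to $k\ne 0$.

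Next I would restrict attention to the low modes $0<|k|\le K$ where $K$ is to be chosen. For any $k$ with $|k\theta|\le |\mathbb{T}|/2$, the elementary inequality $\sin x \ge 2x/\pi$ on $[0,\pi/2]$ gives
\begin{equation}
4\sin^2\!\bigl(\pi k\theta/|\mathbb{T}|\bigr) \;\ge\; \tfrac{16\, k^2\theta^2}{|\mathbb{T}|^2} \;\ge\; \tfrac{16\,\theta^2}{|\mathbb{T}|^2},
\end{equation}
since $k^2\ge 1$. Dropping all other modes I obtain the partial lower bound
\begin{equation}
\|f(\cdot)-f(\cdot-\theta)\|_{L^2_{\mathbb{T}}}^2 \;\ge\; \tfrac{16\,\theta^2}{|\mathbb{T}|}\!\sum_{0<|k|\le K}|\hat{f}_k|^2,
\end{equation}
provided $K\le |\mathbb{T}|/(2|\theta|)$.

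To close the argument I would pick $K=K_f$ large enough that the Fourier tail obeys $\sum_{|k|>K_f}|\hat{f}_k|^2 \le \tfrac{1}{2}\sum_{k\ne 0}|\hat{f}_k|^2$, which is possible since $f\in L^2_{\mathbb{T}}$ forces $|\hat{f}_k|\to 0$. Then for $|\theta|\le c_f:=|\mathbb{T}|/(2K_f)$ the admissibility condition $K\le |\mathbb{T}|/(2|\theta|)$ accommodates $K=K_f$, and the partial sum captures at least half of $\sum_{k\ne 0}|\hat{f}_k|^2$. Combined with the Parseval formula for $\|f-\bar f\|_{L^2_{\mathbb{T}}}^2$, this yields the claimed bound with an absolute constant (e.g.\ $c = 8/|\mathbb{T}|^2$).

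The only subtle point — not really an obstacle, but the reason the threshold $c_f$ must depend on $f$ — is that the multiplier $4\sin^2(\pi k\theta/|\mathbb{T}|)$ has zeros at the resonances $k\theta\in|\mathbb{T}|\mathbb{Z}$. Hence if $f$ concentrates Fourier mass near very high modes, arbitrarily small shifts could almost preserve it; using a cutoff $K_f$ determined by the Fourier tail of $f$ is precisely what bypasses this resonance issue.
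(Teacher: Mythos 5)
Your proof is correct and follows essentially the same route as the paper: expand $f$ in Fourier series, truncate to modes $|k|\le K_f$ carrying at least half the mean-zero $L^2$ mass, and bound the shift multiplier $|1-e^{ik\theta}|^2$ from below by $c\theta^2$ on those finitely many modes for $|\theta|\le c_f$. The only cosmetic difference is that you track Parseval constants explicitly and use $\sin x \ge 2x/\pi$ rather than the paper's $|1-e^{ik\theta}|^2 \ge \sin^2(k\theta)$; the structure of the argument is identical.
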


\begin{proof}
Notice that by Fourier series expansion, we can write
\begin{equation}
\|f(\cdot) - f(\cdot-\theta)\|_{L^2}^2 = \sum_{k\in\mathbb{Z},\,k\ne 0} |\hat{f}(k)(1-e^{ik\theta})|^2
\end{equation}
and
\begin{equation}
\|f-\bar{f}\|_{L^2}^2 = \sum_{k\in\mathbb{Z},\,k\ne 0} |\hat{f}(k)|^2 \,.
\end{equation}
Therefore, there exists $K=K_f$ such that
\begin{equation}
\sum_{k\in\mathbb{Z},\,k\ne 0, |k|\le K} |\hat{f}(k)|^2 \ge \frac{1}{2}\|f-\bar{f}\|_{L^2}^2\,.
\end{equation}
We observe that
$
|1-e^{ik\theta}|^2 \ge \sin^2(k\theta) \ge c\theta^2$, for all $|k|\le K$, if $|\theta|\le 0.1/K$. Then the conclusion follows.

\end{proof}

\begin{proof}[Proof of Lemma \ref{lem_rot}]
Due to the FLIC assumption \eqref{flic} of the interaction energy, it suffices to show that
\begin{equation}
\int_{R_1\le |\xi|\le R_2} |\hat{\rho}-\cR_{-\theta}\hat{\rho}|^2\rd{\xi} \ge c\theta^2
\end{equation}
for some $0<R_1<R_2$. Since $\rho$ is not radially symmetric, we have the same property for $\hat{\rho}$, which further implies
\begin{equation}
\int_0^\infty \|f(r,\cdot)-\bar{f}(r)\|_{L^2_{\mathbb{T}}}^2 r\rd{r} > 0,
\end{equation}
where we denote $f(r,\phi) = \hat{\rho}(r(\cos\phi,\sin\phi)^T)$, $\phi \in [0,2\pi]$, and $\bar{f}(r)=\frac{1}{2\pi}\int_{\mathbb{T}}f(r,\phi)\rd{\phi}$ is the angular average of $f$. Therefore, there exists $\epsilon>0$ such that
\begin{equation}
S:= \{r: \|f(r,\phi)-\bar{f}(r)\|_{L^2_{\mathbb{T}}}^2 > \epsilon\}
\end{equation}
has positive measure. We may assume $S\subseteq [R_1,R_2]$ for some $0<R_1<R_2$ by replacing with a subset of itself if necessary. 
For each $r\in S$, let $c_r$ denote the constant $c_f$ in Lemma \ref{lem_rot2} with $f = f(r,\cdot)$. We can express the set $S$ as a union of nested sets
\begin{equation}
S = \bigcup_{\delta>0} S_\delta,\quad S_\delta:=\{r\in S: c_r > \delta\}
\end{equation}
for any $\delta >0$, concluding that $|S_\delta|>0$ for some $\delta>0$. Therefore, using Lemma \ref{lem_rot2}, we infer that
\begin{equation}
\|f(r,\cdot)-f(r,\cdot-\theta)\|_{L^2_{\mathbb{T}}}^2 \ge c\theta^2 \|f(r,\cdot)-\bar{f}(r)\|_{L^2_{\mathbb{T}}}^2  \ge c\epsilon \theta^2,\quad \forall |\theta|\le \delta, r \in S_\delta\,,
\end{equation}
where $c$ is an absolute constant. Therefore, we finally obtain
\begin{equation}
\int_{R_1\le |\xi|\le R_2} |\hat{\rho}-\cR_{-\theta}\hat{\rho}|^2\rd{\xi} \ge 2\pi\int_{S_\delta} \|f(r,\cdot)-f(r,\cdot+\theta)\|_{L^2_{\mathbb{T}}}^2 r \rd{r} \ge c\epsilon |S_\delta| R_1 \theta^2
\end{equation}
 Then the conclusion follows.
\end{proof}

\begin{proof}[Proof of Theorem \ref{thm_Wirs}, steady state case, for $d=2$]

Assume on the contrary that $\rho$ is a steady state satisfying the assumptions but not radially symmetric. Define 
\begin{equation}
\rho_\theta := \frac{1}{2}(\rho + \cR_\theta \rho)\,.
\end{equation}
Then Lemma \ref{lem_rot} shows that for small $|\theta|$,
\begin{equation}\label{strictlin}
E[\rho_\theta] \le E[\rho] - c\theta^2
\end{equation}
by noticing that $\frac{\rd^2}{\rd{t}^2} E[(1-t)\rho+t \cR_\theta \rho] = 2E[\rho-\cR_\theta \rho]$.

Since $\rho$ is a steady state, $\nabla V=0$ on $\supp \rho$ by definition.   By the continuity of $\nabla^2 V$ and its semi-positive-definiteness on $\supp\rho$, for any $\epsilon>0$, there exists $\delta>0$ such that
\begin{equation}
\frac{1}{2}\nabla^2 V(\bx) \ge -\epsilon I_2,\quad \forall \bx \text{ with } \dist(\bx,\supp\rho)\le \delta\,.
\end{equation}
Here, we have used the classical notation for order of square matrices with $I_d$ being the identity matrix in dimension $d$. Therefore, if $\bx_1\in \supp\rho$ and $|\bx_2-\bx_1|\le \delta$, then
\begin{equation}
V(\bx_2) \ge V(\bx_1)-\epsilon |\bx_2-\bx_1|^2
\end{equation}
by Taylor expansion.

Notice that 
$$
E[\rho_\theta] = \frac{1}{2}E[\rho] + \frac{1}{4}\int_{\Rd} (\cR_\theta \rho)(\bx) V(\bx)\rd{\bx} \qquad \mbox{and} \qquad E[\rho] = \frac{1}{2}\int_{\Rd} \rho(\bx) V(\bx)\rd{\bx}
$$  
due to rotational symmetry of $W$. Then, we have
\begin{equation}\begin{split}
\int_{\Rd} (\cR_\theta \rho)(\bx) V(\bx)\rd{\bx} - 2E[\rho] = & \int_{\Rd} \Big((\cR_\theta \rho)(\bx)-\rho(\bx)\Big) V(\bx)\rd{\bx}= \int_{\Rd} \Big( \rho(R_\theta^{-1}\bx)-\rho(\bx)\Big) V(\bx)\rd{\bx} \\
= & \int_{\Rd} \rho(\bx) \Big(V(R_\theta \bx)-V(\bx)\Big)\rd{\bx} \ge -\epsilon (R\theta)^2 \int_{\Rd} \rho(\bx)\rd{\bx} = -\epsilon (R\theta)^2 \,,
\end{split}\end{equation}
if $R\theta<\delta$, where $R=\max_{\bx\in\supp\rho}|\bx|<\infty$. Taking $\epsilon=\frac{c}{R^2}$ where $c$ is given by \eqref{strictlin}, we conclude that
\begin{equation}
E[\rho_\theta] \ge E[\rho] - \frac{c}{4}\theta^2
\end{equation}
for $\theta$ small enough, contradicting \eqref{strictlin}.
\end{proof}

We now go back to the general $d\ge 3$ case. Denote $\bx = (x_1,x_2,\by)^T$. Since $\rho$ is not radially-symmetric, we may assume without loss of generality that the center of mass of $\rho$ is 0, and
\begin{equation}
\rho\ne \tilde{\cR}_\theta\rho,\quad \tilde{\cR}_\theta\rho(x_1,x_2,\by) := \rho(R_\theta^{-1}(x_1,x_2)^T,\by)
\end{equation}
for small $|\theta|$. The local minimizer case can be done similarly as above using $\rho_\theta = \frac{1}{2}(\rho+ \tilde{\cR}_\theta\rho)$ instead. To treat the steady state case, we next generalize Lemma \ref{lem_rot}.

\begin{lemma}\label{lem_rot3}
Assume $d\ge 3$ and the interaction energy $E$ is FLIC. Assume $\rho\in\Pd$ with zero center of mass satisfies $\rho\ne \tilde{\cR}_\theta\rho$. Then for small $\theta>0$, $E[\rho-\tilde{\cR}_\theta \rho] \ge c\theta^2$ for some $c>0$.
\end{lemma}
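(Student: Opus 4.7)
The plan is to mimic the proof of Lemma \ref{lem_rot} closely, treating $\tilde{\cR}_\theta$ as a rotation acting only on the first two Fourier coordinates while the remaining coordinates $\eta:=(\xi_3,\dots,\xi_d)\in\R^{d-2}$ serve as spectator parameters. A direct change of variables gives $\widehat{\tilde{\cR}_\theta\rho}(\xi) = \hat\rho(\tilde R_{-\theta}\xi)$, where $\tilde R_\theta$ rotates $(\xi_1,\xi_2)$ by $\theta$ and fixes $\eta$. One checks as in the 2D case that $\mu := \rho - \tilde{\cR}_\theta\rho$ is a signed measure with zero total mass and zero first moment (the rotation preserves center of mass), and $E[|\mu|]<\infty$ since $\rho$ is a probability measure with $E[\rho]<\infty$. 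Hence by the FLIC assumption \eqref{flic} it suffices to produce $0 < R_1 < R_2$ and $c>0$ such that
\begin{equation*}
\int_{R_1 \le |\xi| \le R_2} |\hat\rho(\xi) - \hat\rho(\tilde R_{-\theta}\xi)|^2 \rd\xi \ge c\theta^2.
\end{equation*}

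Next I would introduce polar coordinates $\xi = (r\cos\phi, r\sin\phi, \eta)$ in the first two variables, and set $g(r,\phi,\eta):=\hat\rho(r\cos\phi, r\sin\phi, \eta)$ with angular average $\bar g(r,\eta) := (2\pi)^{-1}\int_{\mathbb{T}} g(r,\phi,\eta)\rd\phi$. The hypothesis $\rho\neq \tilde{\cR}_\theta\rho$ for small $\theta>0$ precludes invariance of $\rho$ under the full one-parameter rotation group, so $g(r,\cdot,\eta)$ is nonconstant in $\phi$ on a set of positive Lebesgue measure in $(r,\eta)$. A standard layer-cake argument then produces $\epsilon>0$ and a measurable set $S\subseteq [R_1,R_2]\times B$ (with $R_1>0$ and $B\subset\R^{d-2}$ bounded) of positive Lebesgue measure on which $\|g(r,\cdot,\eta)-\bar g(r,\eta)\|_{L^2_\mathbb{T}}^2 > \epsilon$.

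Then I would apply Lemma \ref{lem_rot2} parametrically in $(r,\eta)\in S$, with $f = g(r,\cdot,\eta)$, obtaining a threshold $c_{r,\eta}>0$ such that for $|\theta|\le c_{r,\eta}$,
\begin{equation*}
\|g(r,\cdot,\eta) - g(r,\cdot-\theta,\eta)\|_{L^2_\mathbb{T}}^2 \ge c\,\theta^2\,\|g(r,\cdot,\eta)-\bar g(r,\eta)\|_{L^2_\mathbb{T}}^2 \ge c\epsilon\theta^2.
\end{equation*}
Decomposing $S = \bigcup_{\delta>0} S_\delta$ with $S_\delta:=\{(r,\eta)\in S : c_{r,\eta}>\delta\}$ and picking $\delta>0$ with $|S_\delta|>0$, I would integrate this pointwise bound against the polar Jacobian $r$ over $S_\delta$ to obtain the required annular $L^2$ lower bound of order $\theta^2$, and conclude via FLIC.

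The one delicate point I expect is confirming that $S_\delta$ is genuinely measurable in $(r,\eta)$, since the constant $c_{r,\eta}$ is supplied only implicitly by the proof of Lemma \ref{lem_rot2}. Recalling from that proof that one can take $c_f = 0.1/K_f$ with $K_f$ the smallest integer for which the truncated Fourier series captures half the $L^2$-mass of $f-\bar f$, measurability of $K_{r,\eta}$ (and hence of $S_\delta$) follows from the joint measurability of the angular Fourier coefficients $\hat g_k(r,\eta)$ in $(r,\eta)$, which is a routine consequence of $\hat\rho\in C_b(\Rd)$. Beyond this, the argument is a purely cosmetic extension of the 2D case.
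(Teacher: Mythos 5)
Your proposal is correct and follows essentially the same route as the paper: reduce via FLIC to a Fourier annular $L^2$ lower bound, pass to polar coordinates in $(\xi_1,\xi_2)$ with $\eta$ as a spectator, extract a positive-measure set $S$ where the angular $L^2$-deviation exceeds some $\epsilon>0$, apply Lemma \ref{lem_rot2} parametrically, and decompose $S=\bigcup_\delta S_\delta$ to find a $\delta$ with $|S_\delta|>0$. The measurability remark at the end is a nice added detail that the paper passes over silently, but it does not change the argument's structure.
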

\begin{proof}

Due to the FLIC property of $E$, it suffices to show that
\begin{equation}
\int_{R_1\le |\xi|\le R_2} |\hat{\rho}-\cR_{-\theta}\hat{\rho}|^2\rd{\xi} \ge c\theta^2
\end{equation}
for some $0<R_1<R_2$.
The condition $\rho\ne \tilde{\cR}_\theta\rho$ implies the same property of $\hat{\rho}$, which implies
\begin{equation}
\int_0^\infty\int_{\mathbb{R}^{d-2}} \|f(r,\cdot,\eta)-\bar{f}(r,\eta)\|_{L^2_{\mathbb{T}}}^2 \rd{\eta}r\rd{r} > 0
\end{equation}
where we denote $f(r,\phi,\eta) = \hat{\rho}(r(\cos\phi,\sin\phi)^T,\eta)$, $\phi\in [0,2\pi]$ and $\bar{f}(r,\eta) = \frac{1}{2\pi}\int_{\mathbb{T}}f(r,\phi,\eta)\rd{\phi} $. Therefore, there exists $\epsilon>0$ such that
\begin{equation}
S:= \{(r,\eta): \|f(r,\phi,\eta)-\bar{f}(r,\eta)\|_{L^2_{\mathbb{T}}}^2 > \epsilon\}
\end{equation}
has positive measure. We may assume $S\subseteq \{(r,\eta):\sqrt{r^2+\eta^2}\in[R_1,R_2]\}$ for some $0<R_1<R_2$ by replacing with a subset of itself if necessary. 
For each $(r,\eta)\in S$, let $c_{(r,\eta)}$ denote the constant $c_f$ in Lemma \ref{lem_rot2} with $f = f(r,\cdot,\eta)$. Similarly as in the 2D case, we write $S$ as
\begin{equation}
S = \bigcup_{\delta>0} S_\delta,\quad S_\delta:=\{(r,\eta)\in S: c_{(r,\eta)} > \delta\}
\end{equation}
to conclude that $|S_\delta|>0$ for some $\delta>0$. Therefore, we obtain
\begin{equation}
\|f(r,\cdot,\eta)-f(r,\cdot-\theta,\eta)\|_{L^2_{\mathbb{T}}}^2 \ge c\theta^2 \|f(r,\cdot,\eta)-\bar{f}(r,\eta)\|_{L^2_{\mathbb{T}}}^2  \ge c\epsilon \theta^2,\quad \forall |\theta|\le \delta, (r,\eta) \in S_\delta\,,
\end{equation}
where $c$ is an absolute constant. As a consequence, we get
\begin{equation}
\int_{R_1\le |\xi|\le R_2} |\hat{\rho}-\tilde{\cR}_{-\theta}\hat{\rho}|^2\rd{\xi} \ge \int_{S_\delta} \int_{\mathbb{R}^{d-2}}\|f(r,\cdot,\eta)-f(r,\cdot+\theta,\eta)\|_{L^2_{\mathbb{T}}}^2 \rd{\eta}\,2\pi r \rd{r} \ge c\epsilon |S_\delta| R_1 \theta^2 \,.
\end{equation}
Then the conclusion follows.
\end{proof}

\begin{proof}[Proof of Theorem \ref{thm_Wirs}, for $d\ge 3$]
Similarly as above, we define
\begin{equation}
\rho_\theta := \frac{1}{2}(\rho + \tilde{\cR}_\theta \rho)
\end{equation}
Then Lemma \ref{lem_rot3} shows that for small $|\theta|$,
$E[\rho_\theta] \le E[\rho] - c\theta^2$,
by noticing that $\frac{\rd^2}{\rd{t}^2} E[(1-t)\rho+t \cR_\theta \rho] = 2E[\rho-\tilde{\cR}_\theta \rho]$. Similar to the $d=2$ case, we can use the steady state properties and the assumption on $\nabla^2 V$ to show that for any $c_1>0$,
\begin{equation}
E[\rho_\theta] \ge E[\rho] - \frac{c}{4}\theta^2
\end{equation}
if $|\theta|$ is small enough, leading to the contradiction.
\end{proof}

\begin{remark}[Failure of uniqueness and radial symmetry for $d_\infty$-local minimizers of 1D FLIC potentials]
For FLIC potentials, the uniqueness of global minimizer clearly implies its radial symmetry in any dimension. However, for $d_\infty$ local minimizers, although the radial symmetry is still true for $d\ge 2$, it is generally false for $d=1$. We provide an example of a $d_\infty$-local minimizer of a 1D FLIC potential that may fail to be unique and  radially-symmetric (i.e., in 1D, an even function). Define
\begin{equation}
W_\epsilon(x) = -\frac{x^2}{2}+\frac{|x|^3}{3} - \epsilon \Big(\phi(|x-1|)+\phi(|x+1|)\Big)
\end{equation}
where $\epsilon\ge 0$ and $\phi$ is a non-negative smooth even function supported on $[-1/2,1/2]$ with $\phi''(0)<0$. The result in \cite{lopes2017uniqueness} shows that the interaction energy associated to the potential $W_0$ is FLIC with 
\begin{equation}
\hat{W}_0(\xi) = c|\xi|^{-4},\quad \forall \xi\ne 0\,.
\end{equation}
Since $|\hat{\phi}(\xi)| \le C(1+|\xi|)^{-5}$, there holds $\hat{W}_\epsilon(\xi)>0,\,\forall \xi\ne 0$ if $\epsilon>0$ is small enough, and then $W_\epsilon$ is FLIC. 

Notice now that $W_\epsilon(x)$ has local minima at $x=\pm1$, with
\begin{equation}
W_\epsilon''(1) = W_\epsilon''(-1) = 1-\epsilon\phi''(0)=:\lambda > 1
\end{equation}
Also notice that $W_\epsilon''(0) = -1$.
It follows that for any $0<\alpha<\tfrac12$, $\rho_\alpha(x) = (\tfrac12-\alpha)\delta(x-\frac{1}{2}) + (\tfrac12+\alpha)\delta(x+\frac{1}{2})$ is a steady state for $W_\epsilon$, since 
\begin{equation}
(W_\epsilon*\rho_\alpha)(x) = (\tfrac12-\alpha)W_\epsilon(x-\tfrac{1}{2}) +(\tfrac12+\alpha) W_\epsilon(x+\tfrac{1}{2})
\end{equation}
satisfies $(W_\epsilon*\rho_\alpha)'(\tfrac{1}{2})=(W_\epsilon*\rho_\alpha)'(-\tfrac{1}{2})=0$. Also, since
\begin{equation}
(W_\epsilon*\rho_\alpha)''(\tfrac{1}{2}) = -(\tfrac12-\alpha)+(\tfrac12+\alpha)\lambda,\quad  (W_\epsilon*\rho_\alpha)''(-\tfrac{1}{2}) = (\tfrac12-\alpha)\lambda-(\tfrac12+\alpha)\,,
\end{equation}
we observe that both quantities are positive by taking $|\alpha|<\frac{\lambda-1}{2(\lambda+1)}$. This implies $\rho_\alpha$ is a $d_\infty$-local minimizer for every such $\alpha$ (which will be justified in the next paragraph), and this shows that the $d_\infty$-local minimizers of $W_\epsilon$ are non-unique and non-radially-symmetric in general.

To see that $\rho_\alpha$ is a $d_\infty$-local minimizer, we consider any alternate $\rho\ne \rho_\alpha$ with the same total mass and center of mass and $\beta:=d_\infty(\rho,\rho_\alpha)$ being small. Then $\supp\rho\subseteq [-\tfrac12-\beta,-\tfrac12+\beta]\cup [\tfrac12-\beta,\tfrac12+\beta]$ with 
$$
\int_{[-\tfrac12-\beta,-\tfrac12+\beta]} \rho\rd{x} = \tfrac12+\alpha\qquad \mbox{and} \qquad \int_{[\tfrac12-\beta,\tfrac12+\beta]} \rho\rd{x} = \tfrac12-\alpha. 
$$
Define $\rho_t = (1-t)\rho_\alpha+t\rho$, and we have $\frac{\rd^2}{\rd{t}^2}E[\rho_t] > 0$ for any $0\le t \le 1$ by the LIC property of $E$ (with the interaction potential $W_\alpha$). Denoting $V_\alpha:=W_\epsilon*\rho_\alpha$, we have
\begin{equation}\begin{split}
\frac{\rd}{\rd{t}} & \Big|_{t=0}E[\rho_t] =  \int_\R V_\alpha(x)(\rho(x)-\rho_\alpha(x))\rd{x}\\
= & \int_{[-\tfrac12-\beta,-\tfrac12+\beta]} V_\alpha(x)\big(\rho(x)-(\tfrac12+\alpha)\delta(x+\tfrac{1}{2})\big)\rd{x} + \int_{[\tfrac12-\beta,\tfrac12+\beta]} V_\alpha(x)\big(\rho(x)-(\tfrac12-\alpha)\delta(x-\tfrac{1}{2})\big)\rd{x}
\end{split}\end{equation}
Since $V_\alpha\in C^2$ has vanishing first derivative and positive second derivative at $\pm\tfrac12$, we have $V_\alpha(x)\ge V_\alpha(\pm\tfrac12)$ for $|x-(\pm\tfrac12)|\le\beta$ if $\beta>0$ is small enough. Then we see that the first integral in the last expression is non-negative since 
$$
\int_{[-\tfrac12-\beta,-\tfrac12+\beta]} \rho\rd{x} = \tfrac12+\alpha,
$$
and similar for the second integral. Therefore we get $\frac{\rd}{\rd{t}}|_{t=0}E[\rho_t]\ge 0$, which implies $\frac{\rd}{\rd{t}}E[\rho_t]\ge 0$ for any $0\le t \le 1$ since $\frac{\rd^2}{\rd{t}^2}E[\rho_t] > 0$. Therefore 
\begin{equation}
E[\rho]-E[\rho_\alpha]=\int_0^1\frac{\rd}{\rd{t}}E[\rho_t]\rd{t} \ge 0
\end{equation}
which shows that $\rho_\alpha$ is a $d_\infty$-local minimizer.
\end{remark}


\section{From radially-symmetric steady states to uniqueness of $d_\infty$-local minimizer}

We first show our main result concerning the uniqueness of minimizers as a consequence of their radial symmetry.
\begin{theorem}\label{thm_min0}
Assume $W$ satisfies \eqref{eq:hyp2}, $W\in C^4(\Rd\setminus\{0\})$ is radially symmetric, the interaction energy \eqref{energy} associated to the potential $W$ is LIC and 
\begin{equation}\label{thm_min_1}
\Delta^2 W(\bx) < 0,\quad \forall \bx \ne 0
\end{equation}
and
\begin{equation}\label{thm_min_2}
 \Delta W(\bx)>0,\quad \forall |\bx| \textnormal{ sufficiently large.}
\end{equation}
Let $\rho$ be a compactly supported $d_\infty$-local minimizer such that $\nabla W * \rho$ is continuous with zero center of mass. Then $\rho$ is the unique global minimizer of $E$ over $\Ptwo$ with zero center of mass.
\end{theorem}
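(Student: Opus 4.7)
The plan is to verify the hypotheses of Theorem~\ref{thm_W2g} for $\rho$: produce a global minimizer and then show $\rho$ satisfies the necessary condition~\eqref{W2cond} for a $d_2$-local minimizer, namely $V\ge C_\rho:=2E[\rho]$ a.e. First, Theorem~\ref{thm_Wirs} applies because $W$ is radial and $E$ is LIC, yielding that $\rho$ is radially symmetric; hence $V=W*\rho$ is radial, and I write $v(r):=V(\bx)$ for $|\bx|=r$. The continuity of $\nabla V$ combined with the steady-state identity $\nabla V=0$ on $\supp\rho$ (Remark~\ref{rmkss}) forces $v'(r)=0$ for every $r$ in the radial support $S:=\{|\bx|:\bx\in\supp\rho\}$, so $v$ is locally constant on $S$, and Lemma~\ref{lemel1} gives the uniform lower bound $v\ge v(R)$ in an $\epsilon_0$-neighborhood of each $R\in S$. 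The hypothesis $\Delta W>0$ for $|\bx|$ large rewrites as $(r^{d-1}W'(r))'>0$ eventually, so $W(r)\to\infty$; this confinement gives existence of a global minimizer in $\Ptwo$ with zero center of mass by the direct method (using \eqref{eq:hyp2}), uniqueness by LIC, and $v(r)\to\infty$ as $r\to\infty$.

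The heart of the proof is to upgrade these local properties to the global bound $v\ge C_\rho$, using the identity $\Delta^2 V=(\Delta^2 W)*\rho<0$ pointwise on $\R^d\setminus\supp\rho$ (since $\Delta^2 W(\bz)<0$ for $\bz\ne 0$ and $\bx-\by\ne 0$ whenever $\bx\notin\supp\rho$). Thus $\Delta V$ is strictly radially superharmonic on each component of the complement of $\supp\rho$. I first claim $S$ is a single interval $[R_-,R_+]$, allowing $R_-=0$. Suppose by contradiction there is an interior radial gap $(b,a)\subset[0,\infty)\setminus S$ with $b,a\in S$. At the boundary of the annulus $\{b<|\bx|<a\}$, the steady-state condition yields $v'(b)=v'(a)=0$, and Taylor expansion from the gap side of $v\ge v(b)=v(a)$ (Lemma~\ref{lemel1}) yields $v''(b^+),v''(a^-)\ge 0$; hence $\Delta V\ge 0$ on this boundary. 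The minimum principle applied to the strictly superharmonic function $\Delta V$ on the bounded annular gap then forces $\Delta V\ge 0$ throughout the gap. But then $v$ is radially subharmonic, $r^{d-1}v'(r)$ is non-decreasing on $[b,a]$, and since it vanishes at both endpoints it vanishes identically. Therefore $v$ is constant on the gap, so $\Delta V\equiv 0$ and $\Delta^2 V\equiv 0$, contradicting $\Delta^2 V<0$.

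Consequently $S=[R_-,R_+]$ and $v$ equals a single constant on $S$; the identity $\int_\Rd V(\bx)\rho(\bx)\rd{\bx}=2E[\rho]=C_\rho$ forces this constant to be $C_\rho$. The same minimum-principle strategy, applied to the outer region $\{|\bx|>R_+\}$ with boundary data $v'(R_+)=0$, $v''(R_+^+)\ge 0$, and $\Delta V(\bx)\to\Delta W(\bx)>0$ as $|\bx|\to\infty$ (so $\liminf_{r\to\infty}\Delta V(r)>0$), yields $\Delta V\ge 0$ on the outer region; hence $r^{d-1}v'(r)\ge R_+^{d-1}v'(R_+)=0$, so $v$ is non-decreasing and $v\ge C_\rho$ for $r>R_+$. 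On the inner ball $B_{R_-}$ (when $R_->0$), the minimum principle on $\overline{B_{R_-}}$ reduces to the single boundary value $\Delta V(R_-^-)=v''(R_-^-)\ge 0$, again giving $\Delta V\ge 0$; then $r^{d-1}v'(r)$ is non-decreasing on $(0,R_-)$ with $r^{d-1}v'(R_-)=0$, so $v'\le 0$, hence $v$ is non-increasing in $r$ and $v(r)\ge v(R_-)=C_\rho$ on $[0,R_-]$. Altogether $V\ge C_\rho$ a.e.\ on $\R^d$, and Theorem~\ref{thm_W2g} concludes that $\rho$ is the unique global minimizer. The main obstacle is the gap-elimination step, which requires the strict inequality $\Delta^2 W<0$ together with the precise vanishing/positivity data $v'=0$, $v''\ge 0$ at support-endpoints inherited from the steady-state and $d_\infty$-local-minimization conditions.
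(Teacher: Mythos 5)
Your overall strategy---radial symmetry via Theorem~\ref{thm_Wirs}, elimination of radial gaps in $\supp\rho$, and then verification of the $d_2$-optimality condition~\eqref{W2cond} so that Theorem~\ref{thm_W2g} applies---follows the same outline as the paper's proof. But there is a genuine gap in the gap-elimination step, which then propagates to your inner-ball and outer-region arguments. You assert $v''(b^+),v''(a^-)\ge 0$ ``by Taylor expansion from the gap side'' and then apply the minimum principle to $\Delta V$ on the annular gap, which requires $\Delta V$ to extend continuously to the boundary with nonnegative boundary values. Under the hypotheses of Theorem~\ref{thm_min0}, however, only $\nabla W*\rho\in C^0$ is available, so $v$ is merely $C^1$ up to $\partial(\supp\rho)$; nothing controls $v''$ at the endpoints. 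The profile is $C^4$ \emph{strictly inside} the gap because $W\in C^4(\Rd\setminus\{0\})$, but as $r\to b^+$ the quantity $\Delta V(r)=\int\Delta W(\bx-\by)\,\rho(\rd\by)$ can blow up: $\Delta W$ is permitted to be singular at the origin, and the hypothesis $\nabla W*\rho\in C^0$ does not keep $\Delta W*\rho$ bounded near $\partial(\supp\rho)$. So the boundary data needed for your minimum principle is not established, and the same objection applies at $r=a^-$, at $r=R_+^+$, and at $r=R_-^-$.

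The paper circumvents exactly this issue with Lemma~\ref{lem_d4} and Lemma~\ref{lem_d42}, which are stated for $f\in C^1([x_1,x_2])\cap C^4((x_1,x_2))$ and never invoke boundary values of $f''$. The device is the weighted identity
\[
\int_{x_1}^{x_2} e^{A(x)}(\cL f)(x)\,\rd x \;=\; e^{A(x_2)}f'(x_2)-\lim_{x\to x_1^+}e^{A(x)}f'(x)\;=\;0,
\]
which uses only $f'$ at the endpoints; the sign analysis of $g=\cL f$ forced by $\cL g<0$ then shows that one endpoint fails to be a local minimum of $f$, contradicting Lemma~\ref{lemel1}. In effect, your argument is the proof of the \emph{other} uniqueness result, Theorem~\ref{thm_min}, whose hypotheses explicitly include continuity of $\Delta W*\rho$; under the weaker regularity of Theorem~\ref{thm_min0} the weighted-integral lemmas are essential. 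A secondary, smaller point: the implication ``$\Delta W>0$ at infinity $\Rightarrow W(r)\to\infty$'' is false in general (e.g.\ for $d\ge 3$ the inequality $(r^{d-1}W')'>0$ is compatible with bounded $W$), so existence of a global minimizer cannot be deduced this way; the paper treats existence as a standing hypothesis entering through Theorem~\ref{thm_W2g}.
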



We want to take advantage on the fourth derivative assumption \eqref{thm_min_1} in order to apply maximum principle arguments for certain operators. We need some preliminary results in this direction.

\begin{lemma}\label{lem_d4}
Let $f\in C^1([x_1,x_2])\cap C^4((x_1,x_2))$. Assume
\begin{equation}
f'(x_1)=f'(x_2)=0,\quad (\cL^2f)(x)<0,\,\forall x\in (x_1,x_2),\quad \cL f := f'' + a(x)f'
\end{equation}
for some positive function $a(x)\in C((x_1,x_2])$. Then either $x_1$ or $x_2$ is not a local minimum point of $f$.
\end{lemma}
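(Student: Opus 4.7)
The plan is to set $g := \cL f = f'' + af'$ and exploit the Sturm--Liouville factorization $\cL u = e^{-A}(e^A u')'$, where $A$ is an antiderivative of $a$ on $(x_1, x_2]$ and $e^A > 0$. Under this factorization, the hypothesis $\cL^2 f < 0$ becomes $(e^A g')' < 0$ on $(x_1, x_2)$, so $e^A g'$ is strictly decreasing. Hence $g'$ either has constant sign on $(x_1, x_2)$ or changes sign exactly once from positive to negative, forcing $g$ itself to be either strictly monotone or unimodal with a single interior maximum.

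Next, from $(e^A f')' = e^A g$ together with $f'(x_1) = f'(x_2) = 0$, one extracts the integral identity $\int_{x_1}^{x_2} e^{A(x)} g(x)\,\rd x = 0$; here $e^A$ is bounded on $(x_1, x_2]$ even if $a$ blows up at $x_1$ (in the worst case $e^{A(x)} \to 0$ as $x \to x_1^+$), so $e^A f'$ extends continuously to $0$ at $x_1$ and the identity is valid as an improper integral. Since $\cL g < 0$ strictly forces $g \not\equiv 0$, the zero-integral identity forces $g$ to change sign on $(x_1, x_2)$.

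Combining the monotone-or-unimodal shape of $g$ with the sign-change requirement, the sign pattern of $g$ on $(x_1,x_2)$ must fall into one of three cases: (i) $g<0$ on $(x_1,z)$ and $g>0$ on $(z,x_2)$; (ii) the reverse of (i); (iii) $g<0$ on $(x_1, z_1)$, $g>0$ on $(z_1,z_2)$, and $g<0$ on $(z_2,x_2)$. I then antidifferentiate $e^A g$ from $e^A f'(x_1)=0$ to read off the sign of $f'$ in each case. In case (i), $e^A f'$ is strictly negative on $(x_1,x_2)$, so $f$ is strictly decreasing on $[x_1,x_2]$ and $x_1$ is not a local minimum. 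Case (ii) is symmetric and rules out $x_2$. In case (iii), the identity $\int_{x_1}^{x_2} e^A g = 0$ combined with $g<0$ on $(z_2,x_2)$ forces $\int_{x_1}^{z_2} e^A g > 0$; consequently $e^A f'$ begins at $0$, dips strictly negative on $(x_1,z_1)$, rises through $0$ at some $w\in(z_1,z_2)$ to a strictly positive value at $z_2$, and returns to $0$ at $x_2$. Thus $f'<0$ on $(x_1,w)$ and $f'>0$ on $(w,x_2)$, so $f$ is strictly decreasing just after $x_1$ and strictly increasing just before $x_2$, and neither endpoint is a local minimum.

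The step I expect to be the main technical obstacle is the bookkeeping in case (iii), namely locating the interior zero $w$ of $e^A f'$ strictly inside $(x_1,x_2)$; this rests on the accounting identity $\int_{x_1}^{z_2} e^A g = -\int_{z_2}^{x_2} e^A g > 0$, which reflects the strict negativity of $g$ on the final sign-change interval. A minor regularity caveat is the possible singularity of $a$ at $x_1$, but because $a > 0$ and $f'(x_1)=0$, the weighted quantity $e^A f'$ still extends continuously to $0$ at $x_1$, so the integral identity and the qualitative sign analysis are unaffected.
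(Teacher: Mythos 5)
Your proof is correct and follows the same three-step strategy as the paper: analyze the sign pattern of $g := \cL f$, use the integral identity $\int_{x_1}^{x_2} e^{A}g\,\rd x = 0$ (coming from $(e^A f')'=e^A g$ and the boundary conditions on $f'$) to force $g$ to change sign, and then integrate $e^A g$ to read off the sign of $f'$ near the endpoints; the case split $-,+$ / $+,-$ / $-,+,-$ and the endpoint conclusions match the paper's three bullets exactly.

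The one genuine variation is how you constrain the sign structure of $g$. You apply the Sturm--Liouville factorization a second time, to $g$ itself, to deduce that $e^A g'$ is strictly decreasing, so $g$ is strictly monotone or has a single interior maximum; this immediately caps the number of sign changes at two and rules out the pattern $+,-,+$. The paper instead gets the same information from the maximum principle: it supposes $g$ has three zeros $y_1<y_2<y_3$, shows $g>0$ on $(y_1,y_2)$ and $(y_2,y_3)$, and reaches a contradiction at the interior local minimum $y_2$, then re-uses the max principle in the two-zero case to fix the signs to $-,+,-$. Both are correct and of comparable length; your single monotonicity statement for $e^A g'$ is arguably a little cleaner because it replaces two separate maximum-principle applications with one structural fact, and it makes transparent why the pattern $+,-,+$ cannot occur.
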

\begin{proof}
Denote $g = \cL f$. By assumption $(\cL g)(x)<0,\,\forall x\in (x_1,x_2)$. This implies that there exist at most 2 points in $(x_1,x_2)$ where $g=0$. In fact, suppose $g(y_1)=g(y_2)=g(y_3)=0$ with $y_1<y_2<y_3$, then $(\cL g)(x)<0$ forces $g$ to be positive on $(y_1,y_2)$ and $(y_2,y_3)$ due to the classical maximum principle, and it follows that $g'(y_2)=0$, $g''(y_2)\ge 0$ which is a contradiction to $(\cL g)(y_2)<0$. 

Denote $A(x) = - \int_x^{x_2}a(y)\rd{y}$ which is a non-positive increasing continuous function defined on $(x_1,x_2]$, satisfying $A'(x)=a(x)$. Then notice that
\begin{equation}
\int_{x_1}^{x_2}e^{A(x)}(f''+a(x)f')\rd{x} = \int_{x_1}^{x_2}(e^{A(x)}f')'\rd{x} = e^{A(x_2)}f'(x_2) - \lim_{x\rightarrow x_1+}e^{A(x)}f'(x_1) = 0
\end{equation}
where the integral is interpreted as an improper integral, and the existence of the last limit follows from the assumption $f'(x_1)=0$ and the fact that $A(x)$ is negative and increasing for $x<x_2$. This requires that $g=f''+a(x)f'$ is positive at some point in $(x_1,x_2)$, and $g$ is negative at some other point in $(x_1,x_2)$, and therefore  there exists at least 1 point in $(x_1,x_2)$ where $g=0$.

Now we separate into the following cases:
\begin{itemize}
\item There exists 1 point $y$ in $(x_1,x_2)$ such that $g(y)=0$, and $g|_{(x_1,y)}>0$, $g|_{(y,x_2)}<0$. In this case, 
\begin{equation}
e^{A(x_2-\epsilon)}f'(x_2-\epsilon) = e^{A(x_2)}f'(x_2)-\int_{x_2-\epsilon}^{x_2}e^{A(x)}g(x)\rd{x} > 0
\end{equation}
if $0<\epsilon<x_2-y$. This implies that $x_2$ is not a local minimum point of $f$.
\item There exists 1 point $y$ in $(x_1,x_2)$ such that $g(y)=0$, and $g|_{(x_1,y)}<0$, $g|_{(y,x_2)}>0$. In this case, 
\begin{equation}
e^{A(x_1+\epsilon)}f'(x_1+\epsilon) = \lim_{x\rightarrow x_1+}e^{A(x)}f'(x_1)+\int_{x_1}^{x_1+\epsilon}e^{A(x)}g(x)\rd{x} = \int_{x_1}^{x_1+\epsilon}e^{A(x)}g(x)\rd{x} < 0
\end{equation}
if $0<\epsilon<y-x_1$. This implies that $x_1$ is not a local minimum point of $f$.
\item There exist 2 points $z_1<z_2$ in $(x_1,x_2)$ such that $g(z_1)=g(z_2)=0$. Then $(\cL g)(x)<0$ implies that $g|_{(x_1,z_1)}<0$, $g|_{(z_1,z_2)}>0$, $g|_{(z_2,x_2)}<0$ since otherwise one gets a contradiction at $z_1$ or $z_2$ similarly as done above for $y_2$. Then it follows as in the previous two cases that neither $x_1$ nor $x_2$ is a local minimum point of $f$.
\end{itemize}

\end{proof}

\begin{lemma}\label{lem_d42}
Let $f\in C^1([x_1,\infty))\cap C^4((x_1,\infty))$. Assume
\begin{equation}
f'(x_1)=0,\quad (\cL^2f)(x)<0,\,\forall x\in (x_1,x_2),\quad \cL f := f'' + a(x)f',\quad (\cL f)(x_2)>0
\end{equation}
for some $x_2>x_1$ and some positive function $a(x)\in C\big((x_1,x_2]\big)$. Then either $x_1$ is not a local minimum point of $f$, or $x_1$ is the global minimum point of $f$ on $[x_1,x_2]$.
\end{lemma}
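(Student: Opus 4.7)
The plan is to adapt the strategy of Lemma \ref{lem_d4} to the new right-endpoint condition $(\cL f)(x_2) > 0$ in place of $f'(x_2) = 0$. Set $g := \cL f$, so $(\cL g)(x) < 0$ on $(x_1, x_2)$ and $g(x_2) > 0$, and note that $g$ is continuous on $(x_1,x_2]$ because $f \in C^4((x_1,\infty))$ and $a \in C((x_1,x_2])$. With $A(x) := -\int_x^{x_2} a(y)\,\rd{y}$, which is non-positive and increasing on $(x_1, x_2]$, the identity $(e^{A} f')' = e^{A} g$ holds on $(x_1,x_2)$; since $f'(x_1) = 0$ and $e^{A(x)} \le 1$, one has $\lim_{x \to x_1^+} e^{A(x)} f'(x) = 0$, yielding
\[
e^{A(x_1 + \epsilon)}\, f'(x_1 + \epsilon) = \int_{x_1}^{x_1 + \epsilon} e^{A(x)} g(x)\,\rd{x}
\]
for every $\epsilon \in (0, x_2 - x_1)$. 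By the same maximum-principle argument as in Lemma \ref{lem_d4} (namely $(e^A g')' = e^A \cL g < 0$ together with Rolle's theorem), any two zeros $z_1 < z_2$ of $g$ in $(x_1, x_2)$ force $g > 0$ strictly on $(z_1, z_2)$; a third zero would then make $z_2$ an interior local minimum of $g$ with $g(z_2) = 0$ and $g > 0$ on both sides, forcing $g''(z_2)\geq 0$ and contradicting $\cL g(z_2) < 0$. Hence $g$ has at most two zeros on $(x_1, x_2)$, and these are isolated.

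Next I would use $g(x_2) > 0$ to prune the possible sign configurations. If $g$ had two zeros $z_1 < z_2$ in $(x_1, x_2)$, then $g > 0$ on $(z_1, z_2)$ by the above, and the same no-interior-local-min-at-$z_2$ principle (applied to $z_2$ as an endpoint of the zero set) together with the absence of further zeros forces $g < 0$ on $(z_2, x_2)$, contradicting $g(x_2) > 0$ by continuity. The analogous reasoning eliminates a single zero $y$ with sign pattern $g>0$ on $(x_1,y)$ and $g<0$ on $(y,x_2)$, a single tangential zero with $g$ of the same sign on both sides, and a purely negative $g$ on $(x_1,x_2)$. Only two configurations survive: (i) $g > 0$ on all of $(x_1, x_2)$, or (ii) $g$ has exactly one zero $y \in (x_1, x_2)$ with $g < 0$ on $(x_1, y)$ and $g > 0$ on $(y, x_2)$.

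Finally, I would conclude using the integrating-factor identity. In case (i), the right-hand side is strictly positive for every admissible $\epsilon$, so $f' > 0$ on $(x_1, x_2)$, and hence $f$ is strictly increasing on $[x_1, x_2]$ with $x_1$ its global minimum there. In case (ii), choosing $\epsilon \in (0, y - x_1)$ makes the integrand strictly negative, so $f'(x_1 + \epsilon) < 0$; hence $f$ is strictly decreasing on a right-neighborhood of $x_1$, and $x_1$ is not a local minimum. The only delicate point, rather than a true obstacle, is the possible blow-up of $a(x)$ as $x \to x_1^+$ (since $a$ is only continuous on $(x_1, x_2]$, so $A(x_1^+)$ may be $-\infty$), but this is absorbed by the bounds $e^{A(x)} \le 1$ and $f'(x_1) = 0$, which together force $\lim_{x \to x_1^+} e^{A(x)} f'(x) = 0$ whether or not $A$ blows up.
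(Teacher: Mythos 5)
Your proof is correct and takes essentially the same route as the paper's: analyze the possible sign configurations of $g = \cL f$ on $(x_1,x_2)$ using the no-interior-minimum argument for $\cL g < 0$ together with the new boundary information $g(x_2) > 0$, then integrate the identity $(e^{A} f')' = e^{A} g$ from $x_1$ to read off the sign of $f'$. The only cosmetic difference is that the paper first upgrades the bound to ``at most one zero of $g$'' directly (two zeros $y_1<y_2$ would force $g>0$ on both $(y_1,y_2)$ and $(y_2,x_2)$, giving a contradiction at $y_2$), whereas you carry along the ``at most two zeros'' bound from Lemma~\ref{lem_d4} and prune the extra configurations one by one.
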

\begin{proof}
Similar to the previous proof, there exists at most 2 points in $(x_1,x_2)$ where $g=\cL f = 0$. We claim that in fact there exists at most one point with the additional assumption $g(x_2)>0$. In fact, suppose $g(y_1)=g(y_2)=0$ with $y_1<y_2$, then $(\cL g)(x)<0$ forces $g$ to be positive on $(y_1,y_2)$ due to the classical maximum principle. Moreover, $g$ is also positive on $(y_2,x_2)$ since there are no other zeros of $g$ and $g(x_2)>0$, and it follows as above that $g'(y_2)=0$, $g''(y_2)\ge 0$ which is a contradiction to $(\cL g)(y_2)<0$. Now, we may separate into the following cases:
\begin{itemize}
\item There exists 1 point $y$ in $(x_1,x_2)$ such that $g(y)=0$, and $g|_{(x_1,y)}<0$, $g|_{(y,x_2)}>0$. Notice there is a change of sign on $g$ at the zero and $g(x_2)>0$, otherwise we arrive at a contradiction again. In this case, $x_1$ is not a local minimum point of $f$ proceeding as in the first two cases of Lemma \ref{lem_d4}. 
\item Assume now that $g|_{(x_1,x_2)}>0$. In this case
\begin{equation}
e^{A(x)}f'(x) = \lim_{x\rightarrow x_1+}e^{A(x)}f'(x_1) + \int_{x_1}^x e^{A(z)}g(z)\rd{z} = \int_{x_1}^x e^{A(z)}g(z)\rd{z}> 0,\quad \forall x\in (x_1,x_2)
\end{equation}
which implies that $f$ is increasing on $[x_1,x_2]$. Therefore $x_1$ is a global minimum point of $f$ on $[x_1,x_2]$.
\end{itemize}
\end{proof}

\begin{proof}[Proof of Theorem \ref{thm_min0}]
Assume first that $d\geq 2$. By Theorem \ref{thm_Wirs}, $\rho$ is radially-symmetric. 

Suppose there exists a subset of $(\supp\rho)^c$,
$S_1 = \{R_1 < |\bx| < R_2\}$
for some $0\le R_1<R_2$, with $\{|\bx|=R_1\}\subseteq \supp\rho$ and $\{|\bx|=R_2\}\subseteq \supp\rho$. Denote $(\cL V)(r) = V''(r) + \frac{d-1}{r}V'(r)$ as an operator on the radial direction, then $\Delta V(\bx) = (\cL V)(r)$, $\Delta^2 V(\bx) = (\cL^2 V)(r) < 0$ for any $R_1<r<R_2$. Since $\rho$ is a $d_\infty$-local minimizer, we have $V'(R_1)=V'(R_2)=0$ by Lemma \ref{lemel1} and the assumption that $V\in C^1$. Therefore Lemma \ref{lem_d4} shows that either $R_1$ or $R_2$ is not a local minimum point of $V(r)$, which contradicts Lemma \ref{lemel1}.

Suppose there exists a subset of $(\supp\rho)^c$,
$
S_2 = \{|\bx| < R_2\} 
$
for some $R_2>0$, with $\{|\bx|=R_2\}\subseteq \supp\rho$. Then $\Delta^2 V(\bx) = (\cL^2 V)(r) < 0$ for any $0<r<R_2$. Since $\rho$ is a $d_\infty$-local minimizer, we have $V'(R_2)=0$ by Lemma \ref{lemel1} and the assumption that $V\in C^1$, and $V'(0)=0$ by the radial symmetry of $V$. Therefore Lemma \ref{lem_d4} shows that either $0$ or $R_2$ is not a local minimum point of $V(r)$. By Lemma \ref{lemel1}, $R_2\in \supp V(r)$ is a local minimum point of $V(r)$, and thus $0$  is not a local minimum point of $V(r)$, which implies $\Delta V(0)\le 0$. Since $\Delta V$ is radially symmetric and $\Delta^2 V(\bx)<0$ for $|\bx|<R_2$, we have $\Delta V(\bx)<0$ for any $0<|\bx|<R_2$. This contradicts the fact that $V'(R_2)=0$ by integrating on $S_2$.

Therefore $\supp\rho$ is a ball, and denote its radius as $R_1\ge 0$. Since $\Delta V(\bx) = (\cL V)(r)>0$ for all $r$ large enough (due to the assumption \eqref{thm_min_2}), we apply Lemma \ref{lem_d42} on $[R_1,R_2]$ for large $R_2$, and obtain that $R_1$ is a global minimum of $V(r)$ on any $[R_1,R_2]$. Then the conclusion follows from Theorem \ref{thm_W2g}.

For the case $d=1$, one could argue similarly and show that there cannot be an interval $(x_1,x_2)\subseteq\supp\rho$ with $\{x_1,x_2\}\subseteq\supp\rho$, and conclude that $\supp\rho$ is an interval $[X_1,X_2]$. Then one could obtain similarly that $X_2$ is the global minimum point of $V$ on $[X_2,\infty)$, and $X_1$ is the global minimum point of $V$ on $(-\infty,X_1]$. Then Theorem \ref{thm_W2g} shows that $\rho$ is the unique global minimizer of $E$. The radial symmetry of $\rho$ is obtained by noticing that $\rho(-x)$ is also a global minimizer with zero center of mass, and therefore equal to $\rho(x)$.
\end{proof}

\begin{remark}\label{thm_min0_rem}
In the previous proof, if $d\ge 2$ and suppose we know that $\rho$ does not have an isolated Dirac mass at $0$, then in the case of $S_1$ we also can exclude the possibility of $R_1=0$, and therefore we may weaken the continuity assumption to `$\nabla W*\rho$ is continuous on $\Rd\backslash\{0\}$'.
\end{remark}

In case the interaction energy does not necessarily meet the FLIC property, we can obtain a similar result for stationary states under an additional regularity assumption: $\nabla W*\rho$ and $\Delta W*\rho$ are continuous.

\begin{theorem}\label{thm_min}
Assume $W$ satisfies \eqref{eq:hyp}, $W\in C^4(\Rd\setminus\{0\})$ is radially symmetric with
\begin{equation}
\Delta^2 W(\bx) < 0,\quad \forall \bx \ne 0
\end{equation}
and
\begin{equation}
 \Delta W(\bx)>0,\quad \forall |\bx| \textnormal{ sufficiently large.}
\end{equation}
Let $\rho$ be a compactly supported steady state in the sense of Definition \ref{steadystate}, with $\nabla W*\rho$ and $\Delta W*\rho$ being continuous, and $\Delta W*\rho\ge0$ on $\supp\rho$. Then the complement of $\supp\rho$ is connected.
If in addition, $\rho$ is radially-symmetric, or $d=1$, then $\rho$ satisfies the $d_2$-local minimizer condition \eqref{W2cond}.

If in addition, $W$ satisfies FLIC, then any compactly supported steady state $\rho$, with the regularity assumption that $\nabla^2 W*\rho$ is continuous and semi-positive definite on $\supp{\rho}$, is the global minimizer of the interaction energy.
\end{theorem}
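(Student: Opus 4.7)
The plan is to prove the three assertions in turn, each step building on the previous.

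\textbf{Part 1 (connectedness of $(\supp\rho)^c$).} I would argue by contradiction: suppose $U$ is a bounded connected component of $(\supp\rho)^c$. Continuity of $\nabla V=\nabla W*\rho$ combined with Definition~\ref{steadystate} forces $\nabla V=0$ $\rho$-a.e., and since $\{\nabla V=0\}$ is closed, this upgrades to $\nabla V\equiv 0$ on $\supp\rho$, in particular on $\partial U$. Since $\bx\ne\by$ whenever $\bx\in U$ and $\by\in\supp\rho$, the assumption $\Delta^2W<0$ on $\Rd\setminus\{0\}$ gives
\[
\Delta^2V(\bx)=\int_{\Rd}\Delta^2W(\bx-\by)\rho(\by)\,\rd\by<0,\qquad\forall\bx\in U,
\]
so $\Delta V$ is strictly superharmonic on $U$. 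Since $\Delta V$ is continuous with $\Delta V\ge 0$ on $\supp\rho\supseteq\partial U$, the strong minimum principle (the constant case being ruled out by $\Delta^2V<0$) yields $\Delta V>0$ throughout the interior of $U$, so $\int_U\Delta V\,\rd\bx>0$. On the other hand, integration by parts with a smooth cutoff $\phi_\epsilon\in C^\infty_c(U)$ approximating $\mathbf{1}_U$ from inside, combined with $\nabla V=0$ on $\partial U$ (which forces $\int_U\nabla V\cdot\nabla\phi_\epsilon\,\rd\bx\to 0$), produces $\int_U\Delta V\,\rd\bx=0$, a contradiction.

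\textbf{Part 2 ($d_2$-local minimizer condition).} With $(\supp\rho)^c$ connected and $\rho$ radially symmetric (or $d=1$), the support $\supp\rho$ is forced to be a closed ball $\bar B_R$ (respectively a closed interval $[X_1,X_2]$); connectedness of the support plus $\nabla V\equiv 0$ there forces $V\equiv C_\rho:=2E[\rho]$ on $\supp\rho$. It remains to prove $V\ge C_\rho$ outside. In the radial case, set $\cL V:=V''+\tfrac{d-1}{r}V'$, so that $r^{d-1}\cL V=(r^{d-1}V')'$, and observe that $\cL V=\Delta V$ is continuous on $[R,\infty)$ with $\cL V(R)\ge 0$, strictly superharmonic (in the radial sense) on $(R,\infty)$ by $\cL^2V<0$, and positive for $r$ sufficiently large by $\Delta W>0$ at infinity. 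The minimum principle on $[R,R']$ with $R'$ large, letting $R'\to\infty$, yields $\cL V\ge 0$ on $[R,\infty)$; then $r^{d-1}V'(r)$ is non-decreasing and vanishes at $r=R$, so $V'\ge 0$ and $V(r)\ge V(R)=C_\rho$ for $r\ge R$. The $d=1$ case is handled symmetrically on $(-\infty,X_1]$ and $[X_2,\infty)$.

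\textbf{Part 3 (global minimizer under FLIC).} For $d\ge 2$, the steady-state half of Theorem~\ref{thm_Wirs}, applied using the continuous, semi-positive-definite $\nabla^2 W*\rho$ together with the FLIC hypothesis, gives that $\rho$ is radially symmetric; this step is unnecessary when $d=1$. Part~2 then delivers $V\ge C_\rho$ pointwise (by continuity of $V$). For any competitor $\rho'\in\Ptwo$ with $E[\rho']<\infty$, translated via the translation-invariance of $E$ so as to share the center of mass of $\rho$, the linear interpolation $\rho_t=(1-t)\rho+t\rho'$ makes $t\mapsto E[\rho_t]$ convex by LIC (implied by FLIC), and
\[
\frac{\rd}{\rd t}\bigg|_{t=0}E[\rho_t]=\int_\Rd V(\bx)(\rho'-\rho)(\bx)\,\rd\bx=\int V\,\rd\rho'-C_\rho\ge 0,
\]
since $V\ge C_\rho$ and $\rho'$ is a probability measure. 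Convexity plus non-negative derivative at $t=0$ forces $E[\rho']=E[\rho_1]\ge E[\rho_0]=E[\rho]$, so $\rho$ is a global minimizer. The main technical difficulty is the integration-by-parts step in Part~1 for a possibly irregular boundary $\partial U$, which I would handle by approximating $U$ from inside by smooth subdomains and exploiting continuity of $\nabla V$ together with its vanishing on $\partial U$ to control the boundary flux.
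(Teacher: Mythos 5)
Your proof is substantially correct and follows the paper's strategy, but the organization differs in ways worth noting.

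\textbf{Part 1.} You work directly with a bounded connected component $U$ of $(\supp\rho)^c$, apply the strong minimum principle to the superharmonic function $\Delta V$ to obtain $\Delta V>0$ in $U$, and then contradict this with $\int_U\Delta V\,\rd\bx=0$ via integration by parts. The paper instead proceeds in two passes on the set $S=B_R\setminus\supp\rho$: it first shows $\Delta V\ge 0$ on all of $S$ (STEP~1), then applies the second part of Lemma~\ref{lem_el1} to $-V$ on a hypothetical inner component $S_1$ to conclude $V$ is constant there, contradicting $\Delta^2V<0$ (STEP~2). Your version is slightly more economical. Both arguments share the same genuine subtlety, which you flag: the final integration-by-parts (equivalently, the second part of Lemma~\ref{lem_el1}) requires some control on the boundary flux over a possibly irregular $\partial U$; in your version, $\int_U\nabla V\cdot\nabla\phi_\epsilon\,\rd\bx\to 0$ is not automatic unless the $\epsilon$-neighborhood of $\partial U$ has measure $O(\epsilon)$ (finite Minkowski content). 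The paper hides this point inside the unproven second part of Lemma~\ref{lem_el1}. So you have not introduced a new gap, but you have made explicit a gap the paper shares, and your proposed fix (approximation by smooth subdomains) would still need that quantitative control on the boundary.

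\textbf{Part 2.} Essentially identical to the paper's STEP~3, including the key observation that $r^{d-1}V'(r)$ is nondecreasing. Your statement that $(\supp\rho)^c$ connected plus radial symmetry forces $\supp\rho$ to be a closed ball is correct and is the same implicit reduction the paper makes.

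\textbf{Part 3.} Here you genuinely deviate: the paper invokes Theorem~\ref{thm_W2g} (which carries the extra hypothesis~\eqref{eq:hyp2} and the mollification machinery of Lemma~\ref{lem_psi} to cope with the a.e.\ inequality $V\ge C_\rho$), whereas you re-derive global minimality directly by linear interpolation. Your shortcut works precisely because Part~2 delivers the \emph{pointwise} bound $V\ge C_\rho$ (not merely a.e.), so there is no exceptional set to worry about and no need to mollify $\rho_\infty$; you also avoid presupposing existence of a global minimizer, since the inequality $E[\rho']\ge E[\rho]$ is established for all competitors $\rho'$. This is arguably cleaner than the paper's route, though to be fully rigorous you should note that the finiteness of the cross term $E[\rho,\rho']=\tfrac12\int V\,\rd\rho'$ is needed for the quadratic expansion of $t\mapsto E[\rho_t]$, which follows from $E[\rho],E[\rho']<\infty$ together with the LIC hypothesis (otherwise the interpolation energy would be infinite in the interior of $[0,1]$, contradicting convexity).
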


We first remark that the last part of the previous theorem is a direct consequence of Theorem \ref{thm_Wirs} and Theorem \ref{thm_W2g} together with the first part of it. To prove this result, we need the following lemmas, which are standard maximum principle arguments in elliptic theory.

\begin{lemma}\label{lem_el1}
Let $\Omega$ be a bounded open set, and $f\in C(\bar{\Omega})\cap C^2(\Omega)$. If $\Delta f\le 0$ on $\Omega$, then $\min_{\bx\in\bar{\Omega}}f(\bx) = \min_{\bx\in\partial\Omega}f(\bx)$.
Assume further $f\in C^1(\bar{\Omega})$ and $\Omega$ is connected, if $\Delta f\le 0$ on $\Omega$ and $\nabla f = 0$ on $\partial\Omega$, then $f$ is constant on $\bar{\Omega}$.
\end{lemma}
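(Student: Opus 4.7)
For the first statement, my plan is to apply the classical weak maximum principle via the standard perturbation trick. I consider $f_\epsilon(\bx) := f(\bx) - \epsilon |\bx|^2$ for small $\epsilon > 0$, so that $\Delta f_\epsilon = \Delta f - 2d\epsilon \le -2d\epsilon < 0$ on $\Omega$. An interior minimum point $\bx_*\in\Omega$ of the continuous function $f_\epsilon$ would satisfy $\nabla^2 f_\epsilon(\bx_*)$ positive semidefinite, and in particular $\Delta f_\epsilon(\bx_*) \ge 0$, contradicting the strict sign. Hence $f_\epsilon$ attains its minimum on $\bar\Omega$ only on $\partial\Omega$, and since $|f - f_\epsilon| \le \epsilon \max_{\bar\Omega}|\bx|^2$ (the max being finite by boundedness of $\Omega$), letting $\epsilon\to 0^+$ yields $\min_{\bar\Omega} f = \min_{\partial\Omega} f$.

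For the second statement, with the extra hypotheses $f\in C^1(\bar\Omega)$, $\nabla f=0$ on $\partial\Omega$, and $\Omega$ connected, my plan is to run an energy identity via integration by parts. Setting $M := \max_{\bar\Omega} f$ (finite by compactness) and $g := f - M \le 0$ on $\bar\Omega$, I have $\Delta g = \Delta f \le 0$ and $\nabla g|_{\partial\Omega}=0$, so formally
\begin{equation}
\int_\Omega |\nabla f|^2 \rd{\bx} = \int_{\partial\Omega} g\,\nabla f\cdot \nu \rd{S} - \int_\Omega g\,\Delta f \rd{\bx} = -\int_\Omega (f-M)\,\Delta f \rd{\bx} \le 0,
\end{equation}
because the boundary term vanishes and the integrand $(f-M)\Delta f \ge 0$ pointwise. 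This forces $\nabla f \equiv 0$ on $\Omega$, and together with connectedness of $\Omega$ and continuity up to the boundary this yields $f\equiv$ const on $\bar\Omega$.

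The main obstacle I anticipate is justifying the integration-by-parts step above without any \emph{a priori} regularity of $\partial\Omega$, since the divergence theorem normally requires at least a Lipschitz boundary. My workaround exploits the fact that $\nabla f$ is continuous on $\bar\Omega$ and \emph{vanishes} on $\partial\Omega$, so the vector field $g\nabla f$ extends continuously by zero to all of $\R^d$. Concretely, I would exhaust $\Omega$ from inside by a sequence of smooth subdomains $\Omega_n\subset\subset\Omega$ on whose boundaries $|\nabla f|\le \eta_n\to 0$ (using uniform continuity of $\nabla f$ near $\partial\Omega$), apply the divergence theorem on each $\Omega_n$, and pass to the limit: the boundary contribution vanishes by the uniform control on $|\nabla f|$, while the interior integrals converge by monotone/dominated convergence, recovering the displayed identity and closing the argument.
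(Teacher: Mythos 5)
The paper offers no proof of this lemma; it is stated with the remark that these are ``standard maximum principle arguments in elliptic theory,'' so there is no paper argument to compare against. Your proof of the first statement is correct: the perturbation $f_\epsilon = f - \epsilon|\bx|^2$ makes $\Delta f_\epsilon < 0$ strictly, which rules out an interior minimum by the second-derivative test, and letting $\epsilon\to0$ recovers the claim. This is exactly the standard weak minimum principle argument.

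For the second statement, the energy identity is the right mechanism, but the justification of the vanishing boundary term has a concrete gap. On each smooth exhausting domain you obtain
\begin{equation}
\int_{\Omega_n} \bigl(|\nabla g|^2 + g\,\Delta g\bigr)\rd\bx = \int_{\partial\Omega_n} g\,\partial_\nu g\,\rd S,
\end{equation}
and you propose to send the right-hand side to zero because $|\nabla g|\le\eta_n\to0$ on $\partial\Omega_n$. But the estimate you actually have is $\bigl|\int_{\partial\Omega_n} g\,\partial_\nu g\,\rd S\bigr|\le \|g\|_\infty\,\eta_n\,\mathcal H^{d-1}(\partial\Omega_n)$, and the surface measure $\mathcal H^{d-1}(\partial\Omega_n)$ is not controlled: for a general bounded open $\Omega$ with no assumed boundary regularity (and the paper does apply this lemma to sets of the form $\{|\bx|<R\}\setminus\supp\rho$ whose boundary can be quite irregular), the exhausting boundaries can have blowing-up area faster than $\eta_n$ decays, and nothing in ``uniform continuity of $\nabla f$'' prevents this. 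Relatedly, the interior term $\int_{\Omega_n} g\,\Delta g\,\rd\bx$ is nonnegative and nondecreasing in $n$, so it could diverge to $+\infty$; a priori you do not know $g\,\Delta g\in L^1(\Omega)$, and establishing that is essentially equivalent to controlling the boundary term. You would need either an additional geometric hypothesis (e.g.\ exhaustions with uniformly bounded perimeter, which is automatic for Lipschitz $\Omega$ but not in general), or a genuinely different argument — e.g.\ the strong minimum principle combined with the Hopf boundary point lemma at a boundary minimum with an interior tangent ball — to close this. As written, the claim ``the boundary contribution vanishes by the uniform control on $|\nabla f|$'' is not justified.
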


\begin{proof}[Proof of Theorem \ref{thm_min}]

Take $R$ large enough such that $\supp\rho \subseteq \{|\bx|<R\}$, and  by \eqref{thm_min_2}, we have
\begin{equation}\label{delV1}
\Delta V(\bx) = \int_{\Rd} \Delta W(\bx-\by)\rho(\by)\rd{\by} > 0,\quad \forall |\bx|=R
\end{equation}
for $R$ large enough. Denote $S = \{|\bx|<R\}\backslash \supp\rho$.

{\bf STEP 1}: Prove that $\Delta V \ge 0$ on $S$.
First notice that for any $\bx\in S$,
\begin{equation}\label{V4}
\Delta^2 V(\bx) = \int_{\Rd} \Delta^2 W(\bx-\by)\rho(\by)\rd{\by} < 0
\end{equation}
by \eqref{thm_min_1}. Then applying Lemma \ref{lem_el1} to $\Delta V$ which is continuous on $\bar S$ by assumption, we get
\begin{equation}
\inf_{\bx\in S} \Delta V(\bx) = \min_{\bx\in\partial S} \Delta V(\bx) \,.
\end{equation}
Notice that
$
\partial S= \partial(\supp\rho) \cup \{|\bx|=R\}$ and
\begin{equation}
\min_{\bx\in\partial(\supp\rho)} \Delta V(\bx) \ge 0\,,
\end{equation}
since $\Delta V\ge 0$ on $\supp\rho$ by assumption. Combined with \eqref{delV1}, we get
\begin{equation}
\min_{\bx\in\partial S} \Delta V(\bx) \ge 0\,,
\end{equation}
which implies the claim.

{\bf STEP 2}: Prove the connectivity of $S$.
Suppose on the contrary that $S_1$ is a connected component of $S$ whose closure does not intersect $\{|\bx|=R\}$. Then $\partial S_1\subseteq \partial(\supp\rho)$, which implies
\begin{equation}
\nabla V(\bx) = 0,\quad \forall \bx\in \partial S_1\,,
\end{equation}
by the continuity of $\nabla V$ since $\rho$ is a stationary state. Using the second part of Lemma \ref{lem_el1} since $-\Delta V\leq 0$ on $S_1$, we deduce that $V$ is constant on $S_1$. This fact contradicts \eqref{V4}.

{\bf STEP 3}: Prove the $d_2$-local minimizer condition \eqref{W2cond}. Assume that $d\geq 2$ and $\rho$ is radially symmetric. Due to the connectivity of $S$, $S$ has the form $\bar S = \{R_1 \le |\bx| \le R\}$
for some $0\leq R_1<R$. Let us denote for simplicity the radial potential generated by $\rho$ as $V(\bx)=V(r),\,r=|\bx|$, then it satisfies
\begin{equation}
V'(R_1) = 0,\quad \mbox{and} \quad \Delta V(r) \ge 0,\, r>R_1\,.
\end{equation}
This implies that $V(r)$ is increasing on $[R_1,\infty)$, which gives the desired conclusion. Indeed, to see this, notice that
\begin{equation}
\Delta V(r) = V''(r) + (d-1)\frac{V'(r)}{r} \ge 0,\, r> R_1\,.
\end{equation}
Multiplying by $r^{d-1}$ and integrating leads to $V'(r)\ge 0$. The 1D case without the radial symmetry assumption can be handled similarly.
\end{proof}


\section{Radial Symmetry \& Global minimizers for power-law potentials}

We apply the previous results to the power-law potential
\begin{equation}\label{Wab}
W(\bx) = \frac{|\bx|^a}{a} - \frac{|\bx|^b}{b}
\end{equation}
where $a>b>-d$ and with the convention $\tfrac{|x|^0}{0}:=\ln x$. 

Let $\rho\in\PP$ be compactly supported with zero center of mass. We define $\rho$ to be \emph{mild} if $d=1$ with $\nabla W*\rho$ continuous, or $d\ge 2$ with $\nabla W*\rho$ continuous on $\mathbb{R}^d\backslash \{0\}$.

\begin{theorem}\label{thm_ab}
Let $W$ be defined by \eqref{Wab}. If $(a,b)$ satisfies
\begin{equation}\label{thm_ab_1}
2\le a \le 4,\quad -d< b < 2
\end{equation}
Then any compactly supported $d_\infty$-local minimizer $\rho$ with $d\ge 2$  is radially-symmetric. If $(a,b)$ further satisfies
\begin{equation}\label{thm_ab_2}
 \left.\begin{split}
&a=2,\quad 2-d \le b < 4-d,\quad \text{when }d\ge 2 \\
&2\le a \le 3,\quad 1 \le b < 2,\quad \text{when }d=1 \\
\end{split}\right.
\end{equation}
Then the unique global minimizer $\rho_\infty$ is mild, and it is the unique mild $d_\infty$-local minimizer. 

If $a=2$ and $2-d<b<\min\{4-d,2\}$ or $a=4$ and $2-d<b<\bar b = (2+2d-d^2)/(d+1)$, then the global minimizer is given by the explicit formulas in \eqref{WCH} and \eqref{WCH2}.

\end{theorem}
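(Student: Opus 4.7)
The plan is to verify the hypotheses of the radial-symmetry and uniqueness results already proved (Theorems~\ref{thm_Wirs} and~\ref{thm_min0}) for the power-law potential $W$, and then identify the resulting minimizer with the explicit formulas from \cite{CH}. First I would check that for $2\le a\le 4$ and $-d<b<2$, as in \eqref{thm_ab_1}, the interaction energy is LIC. Since the sum of an LIC and an (F)LIC potential is (F)LIC, it suffices to treat the two summands of $W$ separately. For the attractive part $|\bx|^a/a$ with $2\le a\le 4$, this is Lopes' LIC result \cite{lopes2017uniqueness}. For the repulsive part $-|\bx|^b/b$, the Fourier transform, understood modulo polynomials of degree~$\le 1$ (which are annihilated by signed measures with vanishing mass and first moment), is a positive multiple of $|\xi|^{-d-b}$ throughout the range $-d<b<2$; combined with Lemma~\ref{lem:reduction} this extends Lopes' FLIC statement to the whole range. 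Theorem~\ref{thm_Wirs} then yields the radial symmetry of any compactly supported $d_\infty$-local minimizer for $d\ge 2$.

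To prove the uniqueness statement under \eqref{thm_ab_2}, I would apply Theorem~\ref{thm_min0}. Hypothesis~\eqref{eq:hyp2} is immediate from the homogeneity of power laws. A direct computation gives
\begin{equation*}
\Delta W(\bx) = (a+d-2)|\bx|^{a-2}-(b+d-2)|\bx|^{b-2},
\end{equation*}
which is strictly positive for $|\bx|$ large since $a>b$, verifying \eqref{thm_min_2}. A further differentiation yields
\begin{equation*}
\Delta^2 W(\bx) = (a+d-2)(a-2)(a+d-4)|\bx|^{a-4}-(b+d-2)(b-2)(b+d-4)|\bx|^{b-4}.
\end{equation*}
Under \eqref{thm_ab_2} with $d\ge 2$ and $a=2$ the first term vanishes, and a sign count on the three factors $b+d-2$, $b-2$, $b+d-4$ (respectively non-negative, negative, negative on $[2-d,4-d)$) yields $\Delta^2 W<0$ strictly in the interior of the parameter range. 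In the one-dimensional sub-case the analogous analysis of the coefficients $(a-1)(a-2)(a-3)$ and $(b-1)(b-2)(b-3)$ produces the required sign of $W''''$ in the interior of $2\le a\le 3$, $1\le b<2$. Theorem~\ref{thm_min0} therefore produces a unique mild $d_\infty$-local minimizer, which by Theorem~\ref{thm_W2g} coincides with the global minimizer $\rho_\infty$.

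For the explicit-formula statement, I would verify directly that the candidate densities given by \eqref{WCH} and \eqref{WCH2} are radial, compactly supported steady states for which $\nabla W\ast\rho$ has the regularity required for mildness and for which the $d_2$-local-minimizer condition \eqref{W2cond} holds. Theorem~\ref{thm_W2g} then promotes them to the unique global minimizer, which by the previous step must agree with $\rho_\infty$.

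The main obstacle is the behaviour at the boundary of the parameter range~\eqref{thm_ab_2}, where $\Delta^2 W$ ceases to be strictly negative and Theorem~\ref{thm_min0} cannot be applied verbatim. This occurs at the Newtonian endpoint $b=2-d$ (where $|\bx|^{2-d}$ is harmonic away from the origin and $\Delta^2W\equiv 0$ there), and at the 1D endpoint values $b=1$ and $a\in\{2,3\}$ where one of the cubic sign-count factors vanishes. For these cases I would perturb $(a,b)$ slightly into the interior of the admissible range, apply Theorem~\ref{thm_min0} to the perturbed potential, and pass to the limit using lower semicontinuity of the energy and stability of global minimizers under continuous perturbations of the interaction kernel; alternatively, for the boundary parameters covered by the explicit solutions of \cite{CH} one can check~\eqref{W2cond} directly on the candidate and invoke Theorem~\ref{thm_W2g}.
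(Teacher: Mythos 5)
Your overall architecture matches the paper's: establish FLIC to get radial symmetry via Theorem~\ref{thm_Wirs}, verify the sign conditions \eqref{thm_min_1}--\eqref{thm_min_2} to invoke Theorem~\ref{thm_min0}, and check \eqref{W2cond} directly on the explicit candidates from \cite{CH}. The sign counts on $\Delta W$ and $\Delta^2 W$ are carried out correctly, and you rightly flag the degeneracy on the boundary of the parameter set. However, two steps do not go through as written.

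First, the way you propose to dispose of the Newtonian endpoint $(a,b)=(2,2-d)$ is not adequate for the \emph{uniqueness} claim. Perturbing $(a,b)$ into the interior and passing to the limit says something about the limiting behaviour of the \emph{global} minimizers of the perturbed energies, but it gives you no control over an arbitrary $d_\infty$-local minimizer $\rho$ of the endpoint potential itself: $\rho$ is not a local minimizer of any perturbed energy, so Theorem~\ref{thm_min0} applied to the perturbation says nothing about $\rho$, and ``stability of global minimizers'' cannot rule out a spurious endpoint local minimizer. Checking \eqref{W2cond} on the candidate similarly only certifies that the ball is \emph{the} global minimizer, not that it is the unique $d_\infty$-local one. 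The paper instead handles $(a,b)=(2,2-d)$ head-on via a regularity-plus-classification route: \cite[Theorem 3.4(i)]{CDM16} shows any $d_\infty$-local minimizer is in $L^\infty$ with $V\in C^{1,1}$, and then the classification in \cite[Theorem 2.1]{shu2021newtonian} identifies it with the characteristic function of a ball. That additional input is genuinely needed and is not recoverable by perturbation.

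Second, applying Theorem~\ref{thm_min0} to a merely \emph{mild} minimizer with $d\ge 2$ is a gap: the definition of mild only gives continuity of $\nabla W*\rho$ on $\Rd\setminus\{0\}$, while Theorem~\ref{thm_min0} as stated requires $\nabla W*\rho$ continuous including at the origin. The paper bridges this by Remark~\ref{thm_min0_rem}, whose hypothesis is that $\rho$ has no isolated Dirac mass at $0$; that in turn is supplied by the dimensionality lower bound \cite[Theorem 1]{BCLR2} on $\supp\rho$. Your write-up skips both the Remark and the Dirac-exclusion step, so the application of Theorem~\ref{thm_min0} is not yet justified. A related (but minor) oversimplification: hypothesis \eqref{eq:hyp2} is not ``immediate from homogeneity'' -- the mean-value bound requires the case analysis in Lemma~\ref{lem_WHS}, in particular a separate treatment when $b\le 0$ -- and the FLIC statement for $-|\bx|^b/b$ with $2-d<b<2$ cannot simply be cited from \cite{lopes2017uniqueness} modulo polynomials; the paper proves it in Theorem~\ref{lem_L1} by an integration-by-parts argument on $\Delta^{-1}\mu$, with the Newtonian exponent treated separately.

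Finally, a small inaccuracy in the last step: for $a=4$ the pair $(a,b)$ lies outside \eqref{thm_ab_2}, so the explicit candidate \eqref{WCH2} cannot ``agree with $\rho_\infty$ by the previous step'' -- the uniqueness step doesn't apply there. For $a=4$ the paper does exactly what you propose for the explicit solutions: verifies \eqref{W2cond} directly (using that $\Delta W*\rho_\infty$ vanishes on $|\bx|=R$ and $\Delta W$ is radially increasing, so $\Delta W*\rho_\infty\ge 0$ outside) and applies Theorem~\ref{thm_W2g}, without invoking Theorem~\ref{thm_min0} at all, since $\Delta^2 W$ changes sign when $a=4$.
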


\begin{remark}
In one dimension, the ranges $-1<b<2$ for $a=2$ and $2<a<3$ for $b=2$ were discussed in \cite{frank2021minimizers}.
We now discuss the sharpness of the assumption \eqref{thm_ab_2} in the case $d\ge 2$. 

It is shown in \cite{BalagueCarrilloLaurentRaoul} that for $W$ given by \eqref{Wab} with $a\ge 2$ and $\frac{(3-d)a-10+7d-d^2}{a+d-3}=:b_{\max}<b<2$, then the Dirac Delta on a particular $(d-1)$-dimensional sphere is a $d_\infty$-local minimizer. Notice that $b_{\max}<4-d$ if $a>2$. Therefore the assumption $a=2$ in \eqref{thm_ab_2} cannot be improved within our framework, in which a necessary step to get the uniqueness of $d_\infty$-local minimizer is to show the connectivity of $(\supp\rho)^c$ for any $d_\infty$-local minimizer $\rho$, see Theorem \ref{thm_min}. 

In the case $d\ge 3$, the same reasoning also shows that the upper bound $4-d$ for $b$ cannot be improved within our framework, because for $a=2$ and $4-d=b_{\max}<b<2$ the Dirac Delta on a sphere is a $d_\infty$-local minimizer.
\end{remark}

To prove this theorem, we first analyze the FLIC property of power-law potentials to justify the radial symmetry of $d_\infty$-local minimizers. Then, using the radial symmetry, we show the mild property of the global minimizer $\rho_\infty$ with the range of parameters \eqref{thm_ab_2}. In the special case $a=2$, we obtain a better regularity result, $\nabla^2 W*\rho_\infty$ is continuous, by using the explicit formulas given by \cite{CH}. Finally we show the uniqueness of mild $d_\infty$-local minimizer by applying Theorem \ref{thm_min0}.

\subsection{The (F)LIC property and radial symmetry}

We first figure out the values of $(a,b)$ such that $W$ has FLIC. 
The author in \cite{lopes2017uniqueness} proved that $\frac{|\bx|^a}{a}$ has the LIC property, for $2\le a \le 4$. It is clear, also used in \cite{lopes2017uniqueness}, that $-\frac{|\bx|^b}{b}$ has the FLIC property for $-d < b < 0$, since its Fourier transform is $c|\bx|^{-d-b}$ for some $c>0$. We next extend the range of $b$.

\begin{theorem}\label{lem_L1}
The interaction energy $E$ associated to the potential $\frac{|\bx|^a}{a}-\frac{|\bx|^b}{b}$, with $-d < b < 2\leq a\leq 4$, except $0\leq b<1$ in one dimension, has the FLIC property. 

As a consequence, any compactly supported $d_\infty$-local minimizer of the interaction energy with $-d < b < 2\leq a\leq 4$ and $d\ge 2$ is radially symmetric. In particular, the unique global minimizer for the interaction energy with $-d < b < 2\leq a\leq 4$ and $d\ge 2$ is radially symmetric.
\end{theorem}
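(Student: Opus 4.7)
The plan is to exploit the ``sum of LIC and FLIC is FLIC'' observation from Section 2: the attractive piece $\frac{|\bx|^a}{a}$ with $2\le a\le 4$ is LIC by \cite{lopes2017uniqueness}, so everything reduces to showing that the repulsive piece $-\frac{|\bx|^b}{b}$ generates an FLIC energy for $-d<b<2$ (away from the 1D exception). The sub-range $-d<b<0$ is handled in \cite{lopes2017uniqueness} by observing that the distributional Fourier transform of $-\frac{|\bx|^b}{b}$ is a positive multiple of $|\xi|^{-d-b}$; combining Lemma~\ref{lem:reduction} with Parseval then delivers the FLIC bound on any annulus directly. The novel content is the range $0\le b<2$.

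For $0<b<2$ I would use the classical subordination representation
\begin{equation*}
|\bx|^b=C_{b,d}\int_0^\infty \bigl(1-e^{-t|\bx|^2}\bigr)\,t^{-b/2-1}\,\rd t,\qquad C_{b,d}>0,
\end{equation*}
and plug it into the bilinear form $\iint|\bx-\by|^b\mu(\bx)\mu(\by)\,\rd\bx\,\rd\by$ for $\mu$ compactly supported with $\int\mu=\int\bx\mu=0$. The constant $1$ in the integrand contributes $(\int\mu)^2=0$, leaving a negative weighted integral of the (positive-definite) Gaussian bilinear form $\iint e^{-t|\bx-\by|^2}\mu(\bx)\mu(\by)\,\rd\bx\,\rd\by$. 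Parseval for the Gaussian converts the inner bilinear form into a positive multiple of $\int e^{-|\xi|^2/(4t)}|\hat{\mu}(\xi)|^2\,\rd\xi$; a Fubini swap together with the identity $\int_0^\infty t^{-(b+d)/2-1}e^{-|\xi|^2/(4t)}\,\rd t=4^{(b+d)/2}\Gamma((b+d)/2)|\xi|^{-b-d}$ produces
\begin{equation*}
-\frac{1}{b}\iint|\bx-\by|^b\mu(\bx)\mu(\by)\,\rd\bx\,\rd\by \;=\; c'_{b,d}\int_{\Rd}|\hat{\mu}(\xi)|^2 |\xi|^{-b-d}\,\rd\xi,\qquad c'_{b,d}>0,
\end{equation*}
from which FLIC on annuli is immediate. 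Fubini is justified since $\mu$ having vanishing zeroth and first moments gives $\hat{\mu}(\xi)=O(|\xi|^2)$ near zero, while $|\hat{\mu}|$ is bounded globally, so the right-hand $\xi$-integral converges. The boundary case $b=0$ in $d\ge 2$ (the log potential) can then be recovered by sending $b\to 0^+$ in the FLIC inequality on each fixed annulus.

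The main obstacle will be the 1D exception $0\le b<1$: in that regime the integrability at infinity of $|\hat{\mu}|^2|\xi|^{-1-b}$ needed to close the Fubini swap is borderline, and the Bernstein-based argument does not yield FLIC cleanly without further regularity on $\mu$. Once FLIC is established on the claimed region, the consequences are immediate: Theorem~\ref{thm_Wirs} applied in $d\ge 2$ gives the radial symmetry of any compactly supported $d_\infty$-local minimizer, and since FLIC implies LIC the global minimizer over $\Ptwo$ is unique modulo translations, hence itself radially symmetric.
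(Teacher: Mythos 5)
Your proposal is correct, but it takes a genuinely different route from the paper. The paper proves the FLIC of the repulsive piece by writing $f=\Delta^{-1}\mu$, using the vanishing moments of $\mu$ to justify two integrations by parts, and then appealing to the positivity of $\cF[|\cdot|^{b-2}]$ (the Newtonian endpoint $b=2-d$, $d=1,2$, handled separately). You replace this with the Gaussian subordination identity $|\bx|^b=C_{b,d}\int_0^\infty(1-e^{-t|\bx|^2})t^{-b/2-1}\rd t$, which after Parseval on the heat kernel and a Tonelli swap gives the same Fourier-side representation $-\frac1b\iint|\bx-\by|^b\mu\mu=c'_{b,d}\int|\hat\mu|^2|\xi|^{-b-d}\rd\xi$ in one stroke for all $0<b<2$ and any dimension. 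This buys you a more uniform argument (no case split at the Newtonian exponent) at the cost of having to recover $b=0$ in $d\ge2$ by the limit $b\to0^+$, which you correctly flag as needing extra care (the constants behave well — $C_{b,d}/b\to1/(2\Gamma(1))$ — but you do need to mollify $\mu$ and invoke dominated convergence on both sides before extending via Lemma~\ref{lem:reduction}).

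Two small remarks. First, your worry that the Fubini swap is ``borderline'' in the 1D range $0\le b<1$ is not quite right for $b>0$: the integrand in the $(t,\xi)$ swap is non-negative, so Tonelli applies regardless, and the finiteness of the right-hand side $\int|\hat\mu|^2|\xi|^{-b-d}\rd\xi$ is then forced by the finiteness of the bilinear form on the left (compact support of $\mu$ plus local boundedness of $|\cdot|^b$ for $b>0$); no a priori decay of $\hat\mu$ is needed. In other words, your subordination argument already proves the FLIC for $0<b<1$ in $d=1$, which the paper excludes from this theorem and defers to an appendix that uses a Hilbert-transform computation — so your method is in fact slightly stronger here. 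Only the genuine endpoint $b=0$, $d=1$ sits outside both approaches at this stage, which is fine because it is excluded from the statement. Second, for the radial-symmetry consequence of the global minimizer you use the clean observation that LIC-uniqueness plus rotational invariance of $E$ forces radial symmetry; the paper instead cites the compact support of the global minimizer (from \cite{CCP15,SST15}) to invoke Theorem~\ref{thm_Wirs}. Both are valid; yours avoids the compactness citation for that particular claim.
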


\begin{proof}
The range $-d<b<0$ is done in \cite{lopes2017uniqueness}.
Since $W$ satisfies \eqref{eq:hyp}, then Lemma \ref{lem:reduction} allows us to reduce to the case of nonzero compactly supported signed measures $\mu\in\Md$, with 
\begin{equation}\label{lem_L1_1}
\int_{\Rd} \mu(\bx)\rd{\bx} = \int_{\Rd} \bx\mu(\bx)\rd{\bx} = 0\,.
\end{equation}
Notice that $\frac{|\bx|^a}{a}$ has the LIC property, for $2\le a \le 4$, as proven in \cite{lopes2017uniqueness}. Since $\mu$ is compactly supported, we are reduced to show the FLIC property for the repulsive part of the interaction potential, $U(\bx) = -\frac{|\bx|^b}{b}$. Then we separate into cases:

{\bf Case 1}: $2-d<b<2$.
Let 
$
f(\bx) = \Delta^{-1}\mu(\bx)
$.
Then  for $|\bx|$ large, by \eqref{lem_L1_1}, we have
\begin{equation}\label{ftail}
|f(\bx)| \le C|\bx|^{-d},\quad |\nabla f(\bx)| \le C|\bx|^{-d-1}\,.
\end{equation}
Since $b<2$, this suffices to justify the integration-by-parts below 
\begin{equation}
\int_{\Rd} \int_{\Rd} U(\bx-\by)\Delta f(\by)\rd{\by}\mu(\bx)\rd{\bx} 
=  \int_{\Rd} \int_{\Rd} \Delta U(\bx-\by) f(\by)\rd{\by}\mu(\bx)\rd{\bx} \,.
\end{equation}
Notice that
$
\Delta U(\bx) = - (b+d-2)|\bx|^{b-2}
$
has Fourier transform
$
\widehat{(\Delta U)}(\xi) = - (b+d-2)c|\xi|^{-d-b+2}
$,
where $c$ is a positive constant. Then
\begin{equation}
\int_{\Rd} \int_{\Rd} \Delta U(\bx-\by) f(\by)\rd{\by}\mu(\bx)\rd{\bx} 
= \int_{\Rd} \widehat{(\Delta U)}(\xi) \hat{f}(\xi)\bar{\hat{\mu}}(\xi)\rd{\xi} = (b+d-2)c\int_{\Rd} |\xi|^{-d-b} |\hat{\mu}(\xi)|^2\rd{\xi}\,. \end{equation}
Notice that $b+d-2>0$ for all the stated cases. Therefore the conclusion follows.

{\bf Case 2}: the Newtonian cases $b=2-d$, with $d=1,2$.
In this case, since $-U$ is the fundamental solution to the Laplacian (up to a positive constant multiple), we have
\begin{equation}\begin{split}
& \int_{\Rd} \int_{\Rd} U(\bx-\by)\mu(\by)\rd{\by}\mu(\bx)\rd{\bx} 
=  -c\int_{\Rd}   f(\bx)\Delta f(\bx)\rd{\bx} =  c\int_{\Rd}   |\nabla f(\bx)|^2\rd{\bx}\,,  \\
\end{split}\end{equation}
where the integration-by-parts is justified by \eqref{ftail}. Then, since $\nabla f$ is $L^1$ by \eqref{ftail}, we have
\begin{equation}
\widehat{\nabla f}(\xi) = - \frac{i\xi}{|\xi|^2}\hat{\mu}(\xi)\,,
\end{equation}
which implies
\begin{equation}
\int_{\Rd}   |\nabla f(\bx)|^2\rd{\bx} = \int_{\Rd} \frac{1}{|\xi|^2}|\hat{\mu}(\xi)|^2\rd{\xi}\,.
\end{equation}
Finally, we apply Theorem \ref{thm_Wirs} to deduce the radial symmetry of any compactly supported $d_\infty$-local minimizer of $E$
with $-d < b < 2\leq a\leq 4$ and $d\ge 2$.

Moreover, using \cite{CCP15,SST15} the global minimizers of $E$ in this range are compactly supported. Then, the last claim about global minimizers results directly from the FLIC property. 
\end{proof}

\begin{remark}
The cases $0\leq  b <1$ and $2\leq a\leq 4$ in one dimension are not covered by the previous arguments. 
In fact, the FLIC property is still true for these cases. However, these cases do not have the necessary properties for Theorem \ref{thm_min0} to be applicable, and thus, we postpone the proof to the appendix. 
\end{remark}

\subsection{Regularity properties}

We first show \eqref{eq:hyp2} for the power-law potentials \eqref{Wab}.

\begin{lemma}\label{lem_WHS}
For $W$ given by \eqref{Wab} with $a\ge 2$ and $-d<b<2$, there exists $C_1>0$ such that $W+C_1$ satisfies \eqref{eq:hyp2}.
\end{lemma}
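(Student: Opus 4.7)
The plan is to choose $C_1$ large enough that $\widetilde W := W + C_1$ is bounded below by a positive constant, and then verify each clause of \eqref{eq:hyp2} by direct inspection. The hypotheses in \eqref{eq:hyp} beyond boundedness below are immediate for \eqref{Wab}: local integrability holds on $\Rd$ since $-d<b$, and symmetry and lower semicontinuity are automatic. For the continuity dichotomy in \eqref{eq:hyp2}, when $-d<b\le 0$ the kernel $W$ is continuous on $\Rd\setminus\{0\}$ with $W(0)=+\infty$, and when $0<b<2$ the kernel $W$ is continuous on all of $\Rd$. To pick $C_1$ quantitatively: if $-d<b\le 0$ then $W\ge 0$ automatically (both summands are non-negative; for $b=0$ the radial profile $r\mapsto r^a/a-\ln r$ attains its minimum $1/a$ at $r=1$), while if $0<b<2$ then $W$ has radial minimum $1/a-1/b$ at $r=1$. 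So $C_1 := \max(0,\,1/b-1/a)+1$ yields $\widetilde W\ge 1$ on $\Rd$.

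The substantive step is the reverse-averaging bound
\begin{equation*}
\frac{1}{|B(\bx;r)|}\int_{B(\bx;r)}\widetilde W(\by)\rd\by \,\le\, C_R\,\widetilde W(\bx),\qquad |\bx|\le R,\ 0<r\le R.
\end{equation*}
I would decompose $\widetilde W = W_{\mathrm{att}} + W_{\mathrm{rep}} + C_1$ with $W_{\mathrm{att}}(\bx)=|\bx|^a/a$ and $W_{\mathrm{rep}}(\bx)=-|\bx|^b/b$. Since $B(\bx;r)\subseteq B(0;2R)$, the continuous piece $W_{\mathrm{att}}+C_1$ is bounded there by $(2R)^a/a + C_1$, so its contribution to the average is an $R$-dependent constant that is absorbed into the lower bound $\widetilde W(\bx)\ge 1$. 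When $0<b<2$, $W_{\mathrm{rep}}\le 0$ and therefore contributes non-positively to the average, so it can simply be dropped from the upper bound, closing this range.

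For $-d<b\le 0$, the key is an A1-type pointwise estimate on $W_{\mathrm{rep}}$, which I would prove by the standard two-regime split. If $|\bx|\ge 2r$, then on $B(\bx;r)$ one has $|\by|\in[|\bx|/2,3|\bx|/2]$, yielding the pointwise bound $|\by|^b\le 2^{-b}|\bx|^b$ for $b<0$, respectively $-\ln|\by|\le -\ln|\bx|+\ln 2$ for $b=0$, which the average then inherits. If $|\bx|<2r$, then $B(\bx;r)\subseteq B(0;3r)$, and a direct computation in polar coordinates yields $\int_{B(0;3r)}|\by|^b\rd\by = \omega_{d-1}(3r)^{b+d}/(b+d)$ (finite since $b>-d$) when $b<0$; dividing by $|B(\bx;r)|=\omega_{d-1}r^d/d$ gives an average of order $r^b$, and the relation $r\ge |\bx|/2$ combined with $b<0$ then compares this to $2^{-b}|\bx|^b$. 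The $b=0$ sub-case is parallel, with $\int_{B(0;3r)}(-\ln|\by|)\rd\by$ evaluating to a multiple of $r^d(1-\ln r)$, which after dividing and using $-\ln r\le -\ln|\bx|+\ln 2$ compares back to $-\ln|\bx|$ up to an $R$-dependent additive constant, again harmlessly absorbed by $\widetilde W\ge 1$. I anticipate this A1-type case split to be the only technically non-trivial point, but it is a classical Muckenhoupt-weight computation.
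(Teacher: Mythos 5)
Your proof is correct and follows essentially the same strategy as the paper's: add a constant to make the potential $\ge 1$, then verify the averaging inequality by the two-regime split on $r \lessgtr |\bx|/2$, with the key estimate being that the average of $|\by|^b$ over $B(\bx;r)$ is comparable to $|\bx|^b$ in both regimes. The only cosmetic differences are that for $0<b<2$ you drop the non-positive repulsive part directly while the paper invokes continuity and compactness of the ratio $\phi(\bx,r)$, and in the near-origin case you enlarge to $B(0;3r)$ by inclusion whereas the paper uses the rearrangement $\int_{B(\bx;r)}|\by|^b\rd\by\le\int_{B(0;r)}|\by|^b\rd\by$ for the radially decreasing profile; both choices lead to the same conclusion.
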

\begin{proof}
Take $C_1=\max\{-\inf W,0\}+1 > 0$, and then $W_1:=W+C_1$ is bounded from below by $1$. It suffices to show that for any $R>0$, there exists $C_R>0$ such that
$\frac{1}{|B(\bx;r)|}\int_{B(\bx;r)}W_1(\by)\rd{\by} \le C_R W_1(\bx), \forall |\bx|\le R,\,0<r\le R$.
 If $b>0$, then $W_1$ is continuous, and the function
\begin{equation}
\phi(\bx,r) = \left\{\begin{split}& \frac{\frac{1}{|B(\bx;r)|}\int_{B(\bx;r)}W_1(\by)\rd{\by}}{W_1(\bx)},\quad r\ne 0\\
& 1,\quad r=0
\end{split}\right.
\end{equation}
is defined for $\widebar{B(0;R)}\times [0,R]$ and continuous. Therefore $\phi$ achieves its maximum, and the conclusion follows.

If $b< 0$, with the constants $C$ depending on $R$,
\begin{equation}
\int_{B(\bx;r)}W_1(\by)\rd{\by} \le C\int_{B(\bx;r)}\rd{\by} + C\int_{B(\bx;r)}|\by|^b\rd{\by}
\end{equation}
for any $|\bx|\le R$ and $r\le R$. If $r\le \frac{|\bx|}{2}$, then $\frac{|\bx|}{2}\le |\by| \le \frac{3|\bx|}{2}$ in the last integral, and
\begin{equation}
\int_{B(\bx;r)}|\by|^b\rd{\by} \le C|\bx|^b\int_{B(\bx;r)}\rd{\by} \le CW_1(\bx)\int_{B(\bx;r)}\rd{\by}
\end{equation}
and the conclusion follows. If $r> \frac{|\bx|}{2}$, then
\begin{equation}
\int_{B(\bx;r)}|\by|^b\rd{\by} \le \int_{B(0;r)}|\by|^b\rd{\by} = C\int_0^r s^bs^{d-1}\rd{s} = Cr^{b+d}
\end{equation}
using the radially-decreasing property of $|\by|^b$ and the assumption $-d<b<0$. Therefore
\begin{equation}
\frac{1}{|B(\bx;r)|}\int_{B(\bx;r)}|\by|^b\rd{\by}\le Cr^b \le C|\bx|^b \le CW_1(\bx)
\end{equation}
and the conclusion follows.

The case $b=0$ (i.e., $-\frac{|\bx|^b}{b}:=-\ln |\bx|$) can be treated similarly as the $b<0$ case.
\end{proof}

Next we give the mild property of the global minimizer for power-law potentials.
\begin{lemma}\label{lem_rhoreg}
Assume $a\ge 2$, $2-d<b<2$ and $W$ be given by \eqref{Wab}. Assume $\rho\in\PP$ is compactly supported. If we have either of the following:
\begin{itemize}
\item $d=1$;
\item $d\ge 2$, and $\rho$ is radially-symmetric;
\end{itemize}
then $\rho$ is mild. In particular, the global minimizer $\rho_\infty$ with $(a,b)$ satisfying the assumption of Theorem \ref{lem_L1} is mild.
\end{lemma}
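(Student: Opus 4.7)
The plan is to split $W$ into its attractive and repulsive parts, $W_a(\bx) = |\bx|^a/a$ and $W_b(\bx) = -|\bx|^b/b$, and handle each separately. Since $a \ge 2$, the gradient $\nabla W_a(\bx) = |\bx|^{a-2}\bx$ is continuous on $\Rd$, so $\nabla W_a * \rho$ is continuous for any compactly supported $\rho$ by dominated convergence. All the work therefore concerns $\nabla W_b(\bx) = -|\bx|^{b-2}\bx$, whose magnitude is $|\bx|^{b-1}$ near the origin.

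For $d=1$, the constraint $2-d<b<2$ forces $b\in(1,2)$, so $b-1>0$ and $\nabla W_b(x) = -\sgn(x)|x|^{b-1}$ has both one-sided limits at the origin equal to zero. Hence $\nabla W_b$ is continuous on $\R$ and $\nabla W_b*\rho$ inherits continuity.

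For $d\ge 2$ with $\rho$ radial, I would exploit radial symmetry: $V_b = W_b*\rho$ is radial, so $V_b(\bx) = v(|\bx|)$, and the goal reduces to $v\in C^1((0,\infty))$. Disintegrate $\rho$ along radii as $\rho = \int_0^\infty \delta_{S^{d-1}_r}\, d\mu(r)$, where $\mu$ is the pushforward of $\rho$ by $|\cdot|$ and $\delta_{S^{d-1}_r}$ is the uniform probability on the sphere of radius $r$, to obtain $v(s) = -\tfrac{1}{b}\int_0^\infty K(s,r)\,d\mu(r)$ with $K(s,r) := \int_{S^{d-1}}|se_1-r\omega|^b\,d\sigma(\omega)$. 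In spherical coordinates, writing $\theta$ for the angle between $\omega$ and $e_1$, formal differentiation in $s$ produces an integrand proportional to $(s^2-2sr\cos\theta+r^2)^{b/2-1}(s-r\cos\theta)\sin^{d-2}\theta$. Away from the diagonal $\{s=r\}$ everything is smooth, and the only potentially problematic region is $s=r$ with $\theta\to 0$, where the integrand behaves like $\theta^{b+d-2}$. The strict inequality $b>2-d$ gives $b+d-2>0$, which ensures joint continuity of $\partial_s K(s,r)$ on $(0,\infty)\times[0,\infty)$; interchanging derivative with the $\mu$-integral on the compact support of $\mu$ then yields $v\in C^1((0,\infty))$, equivalent to $\nabla V_b$ being continuous on $\Rd\setminus\{0\}$.

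The "in particular" claim is immediate: Theorem~\ref{lem_L1} yields radial symmetry of $\rho_\infty$ in the stated parameter range, so the preceding part of the lemma applies. The main obstacle is the joint continuity of $\partial_s K$ near the diagonal $\{s=r\}$, because the relevant integrand is unbounded there. I plan to handle it by splitting the $\theta$-integral into a small neighborhood of $0$ and its complement: on the complement direct continuity is routine, and on the neighborhood the positive exponent $\theta^{b+d-2}$ (coming from $b>2-d$) produces a uniform integrable majorant, allowing dominated convergence to deliver the required continuity.
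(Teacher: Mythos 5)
Your plan coincides with the paper's proof: split $W$ into the power-law pieces, observe that $\nabla(|\bx|^a/a)*\rho$ is trivially continuous since $a\ge 2$, dispose of $d=1$ because then $b-1>0$, and for $d\ge 2$ disintegrate the radial measure and estimate the angular integral for the radial derivative of $(-|\bx|^b/b)*\rho$ so that dominated convergence applies. One correction in the exponent bookkeeping: at $s=r$ the integrand is indeed $\sim\theta^{b+d-2}$ (because there $s-r\cos\theta=r(1-\cos\theta)\sim r\theta^2/2$), but this pointwise behavior is not the majorant you actually need for dominated convergence, and the condition $b+d-2>0$ that you cite does not arise from it — in fact $\theta^{b+d-2}$ would already be integrable under the weaker condition $b>1-d$. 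The correct uniform majorant is obtained by bounding
\begin{equation}
\bigl((s-r\cos\theta)^2+(r\sin\theta)^2\bigr)^{(b-2)/2}\,|s-r\cos\theta|
\,\le\,
\bigl((s-r)^2+2sr(1-\cos\theta)\bigr)^{(b-1)/2},
\end{equation}
and then using $2sr(1-\cos\theta)\gtrsim sr\theta^2$ together with the trivial lower bound by $(s-r)^2$ to get the envelope $C\min\bigl\{|s-r|^{b-1},\,(sr)^{(b-1)/2}\theta^{b-1}\bigr\}\theta^{d-2}$, which is how the paper proceeds. The small-$\theta$ tail of this envelope is $\sim\theta^{b+d-3}$, and it is integrability of that tail — requiring $b+d-2>0$, i.e. $b>2-d$ — that makes the dominated-convergence argument close. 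So your plan and threshold are correct, but the reason the threshold appears is the $\theta^{b+d-3}$ decay of the uniform majorant, not the $\theta^{b+d-2}$ behavior on the diagonal.
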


\begin{proof}

For the case $d=1$, we have $1<b<2$. Notice that $\nabla W(x) = \sgn(x)(|x|^{a-1}-|x|^{b-1})$ is continuous, and therefore $\nabla W*\rho$ is continuous.

For the case $2-d<b<2$ with $d\ge 2$ and $\rho$ being compactly-supported and radially-symmetric, there exists a measure $\tilde{\rho}$ supported on $[0,R]$ for some $R>0$ such that
\begin{equation}\label{phiradial}
\int_\Rd\phi(\by)\rho(\by)\rd{\by} = \int_{[0,R]} \int_{|\by|=1}\phi(r\by)\rd{S(\by)} \tilde{\rho}(r)\rd{r}
\end{equation}
for any continuous function $\phi$, where $\rd{S(\by)}$ denotes the surface measure on the unit sphere. 
We clearly have
\begin{equation}\label{masstrho}
\int_{[0,R]} \tilde{\rho}(r)\rd{r}=\frac{1}{|S^{d-1}|}
\end{equation}
by taking $\phi=1$, since $\rho$ has total mass 1. 
\eqref{phiradial} is also applicable to $W(\bx-\by)$ for fixed $\bx$ by an approximation argument on the potential, and gives
\begin{equation}
(W*\rho)(\bx) =\int_{[0,R]} \int_{|\by|=1}W(s\vec{e}_1-r\by)\rd{S(\by)} \tilde{\rho}(r)\rd{r},\quad s=|\bx|
\end{equation}

To analyze the continuity of $W*\rho$ for $W$ given by \eqref{Wab} with $a\ge 2$, we only need to consider the potential $W=-\frac{|\bx|^b}{b}$. Also, due to the radial symmetry of $W*\rho$, we only need to consider directional derivative along the radial direction, which is
\begin{equation}\label{dWrhor}
(\nabla W*\rho)(\bx)\cdot \frac{\bx}{|\bx|} = \int_{[0,R]} \int_{|\by|=1}\omega'(|s\vec{e}_1-r\by|)\frac{s\vec{e}_1-r\by}{|s\vec{e}_1-r\by|}\cdot \vec{e}_1\rd{S(\by)} \tilde{\rho}(r)\rd{r}
\end{equation}
for $\bx\ne 0$, where on the RHS we write $W(\bx)=\omega(|\bx|)$. Here \eqref{dWrhor} can be justified as long as the RHS is dominated by an $L^1$ function uniformly in a neighborhood of $s$, which we will prove in the rest of the proof. 
For $W=-\frac{|\bx|^b}{b}$, we have
\begin{equation}\begin{split}
\int_{|\by|=1}(-|s\vec{e}_1-r\by|^{b-1})&\frac{s\vec{e}_1-r\by}{|s\vec{e}_1-r\by|}\cdot \vec{e}_1\rd{S(\by)} \\ &= -|S^{d-2}|\int_{0}^\pi \big((s-r\cos\theta)^2+(r\sin\theta)^2\big)^{(b-2)/2}(s-r\cos\theta) |\sin \theta|^{d-2}\rd{\theta}.
\end{split}\end{equation}
Here the last integrand is dominated by 
\begin{equation}
\big((s-r\cos\theta)^2+(r\sin\theta)^2\big)^{(b-1)/2}=((s-r)^2 + 2rs(1-\cos\theta))^{(b-1)/2} \le C\min\{|s-r|^{b-1},rs|\theta|^{b-1}\}.
\end{equation}
Therefore the RHS of \eqref{dWrhor} with  is dominated by
\begin{equation}
C\int_{[0,R]} \int_{0}^\pi \min\{|s-r|^{b-1},rs|\theta|^{b-1}\}\theta^{d-2}\rd{\theta}  \tilde{\rho}(r)\rd{r},
\end{equation}
which is uniformly bounded (by a constant multiple of \eqref{masstrho}) for $s\in [\epsilon,R]$ for any $\epsilon>0$ as long as $b>2-d$. This justify \eqref{dWrhor}, and proves the continuity of $(\nabla W*\rho)(\bx)$ by dominated convergence, for $s=|\bx|\ne 0$.

Finally, if the assumption of Theorem \ref{lem_L1} is satisfied for $(a,b)$, then $E$ has the FLIC property, and then the global minimizer is unique, compactly supported and radially-symmetric. Therefore the previous argument can be applied to conclude that the global minimizer is mild. 
\end{proof}

Finally we recall the explicit construction \cite{CH} for the explicit steady state for power-law potentials defined in \eqref{Wab} with $a=2$ and $2-d<b<\min\{4-d,2\}$, given by:
\begin{equation}\label{WCH}
\rho_\infty(\bx) = A(R^2-|\bx|^2)^{1-\frac{b+d}{2}}\chi_{|\bx|\le R}
\end{equation}
where $A=\frac{-d\Gamma(\frac{d}{2})\sin\frac{(b+d)\pi}{2}}{(b+d-2)\pi^{\frac{d}{2}+1}}>0$, and $R$ is uniquely determined by the total mass condition $\int_\Rd \rho_\infty\rd{\bx}=1$. We also have the explicit formula \cite{CH} for the case $a=4$ and $2-d<b<\bar b<3-d$, given by:
\begin{equation}\label{WCH2}
\rho_\infty(\bx) = (A_1 R^2+ A_2 (R^2-|\bx|^2)) (R^2-|\bx|^2)^{1-\frac{b+d}{2}}\chi_{|\bx|\le R}
\end{equation}
with $A_1$, $A_2$ and $R$ uniquely determined by the total mass condition $\int_\Rd \rho_\infty\rd{\bx}=1$ and a second moment condition. Notice that the upper bound $\bar b$ is given by the relation $A_1+A_2=0$ for which the function given by \eqref{WCH2} touches 0 at the origin. We refer to \cite{CH} for further details.

\begin{lemma}\label{lem_WCH}
If $W$ is given by \eqref{Wab} with $a=2$ and $2-d<b<\min\{4-d,2\}$ or $a=4$ and $2-d<b<\bar b$, then $\rho_\infty$ defined in \eqref{WCH} or \eqref{WCH2} is a steady state with $\nabla^2 W*\rho_\infty$ being continuous.
\end{lemma}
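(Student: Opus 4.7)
The plan is to establish two things: first, that $\rho_\infty$ is a steady state in the sense of Definition \ref{steadystate}; and second, that $\nabla^2 V$ is continuous on $\Rd$, where $V = W * \rho_\infty$. For the steady-state claim, I would invoke \cite{CH} directly. The densities \eqref{WCH} and \eqref{WCH2} are constructed in \cite{CH} precisely so that $V$ is constant on $\overline{B_R} = \supp\rho_\infty$: the parameter $A$ (resp.\ $A_1, A_2$) together with $R$ is uniquely determined by the constancy requirement on $B_R$, the normalization $\int_\Rd \rho_\infty\rd{\bx} = 1$, and, in the $a=4$ case, the auxiliary second-moment condition. In particular $\nabla V \equiv 0$ on $\supp\rho_\infty$ as soon as $V\in C^1$, which will follow from the regularity analysis below, so $\rho_\infty$ is a steady state.

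For the regularity, I would decompose $W = W_a + W_b$ with $W_a(\bx) = |\bx|^a/a$ and $W_b(\bx) = -|\bx|^b/b$. Since $a \in \{2,4\}$ and $\rho_\infty$ is compactly supported with finite moments of all orders, $W_a * \rho_\infty$ is a polynomial in $\bx$ of degree at most $4$, whose Hessian is therefore smooth. For the Riesz-type part $V_b := W_b * \rho_\infty$, the situation inside $B_R$ is immediate: combining the constancy of $V$ on $\overline{B_R}$ with the explicit polynomial form of $W_a * \rho_\infty$ shows that $V_b$ coincides on $\overline{B_R}$ with a polynomial in $|\bx|^2$ (a constant in the $a=2$ case, affine in $|\bx|^2$ in the $a=4$ case), so $V_b$ is $C^\infty$ on the open ball. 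On the exterior $\{|\bx| > R\}$, differentiation under the integral sign is justified by the two matching integrability conditions: the pointwise bound $|\nabla^2 W_b(\bx)| \le C|\bx|^{b-2}$ is locally integrable because $b > 2-d$, and the boundary blow-up $\rho_\infty(\by) \sim (R-|\by|)^{1-(b+d)/2}$ is integrable because $b < 4-d$; a standard dominated-convergence argument then shows that $\nabla^2 V_b$ is continuous on the open exterior.

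The main obstacle is to match the Hessian across the sphere $\{|\bx| = R\}$. For this I would use the classical closed-form evaluation of the Riesz potential of a radial density of the form $(R^2-|\bx|^2)^{\sigma}\chi_{B_R}$ (as in \cite{CH} and the references therein), which yields an explicit expression for $V_b$ on $\{|\bx| \ge R\}$ in terms of an incomplete Beta / Gauss hypergeometric function of $R^2/|\bx|^2$. Direct inspection of this expression — using that $\sigma = 1 - (b+d)/2 > -1$ keeps all derivative coefficients finite — shows $C^2$-regularity up to and including $|\bx| = R$, and the matching of second-order derivatives with the polynomial-in-$|\bx|^2$ expression from inside $\overline{B_R}$. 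This gives $V \in C^2(\Rd)$, which is precisely the required continuity of $\nabla^2 W * \rho_\infty$; combined with Step 1, this completes the proof.
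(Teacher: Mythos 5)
Your proof takes a genuinely different route from the paper's. The paper handles the continuity of $\nabla^2 W_b * \rho_\infty$ globally in one stroke: it notes $\rho_\infty \in L^p$ for $p < \tfrac{2d}{b+d-2}$, uses the Hardy--Littlewood--Sobolev inequality to put $|\bx|^{b-2-\epsilon}*\rho_\infty$ in $L^q$ for all finite $q$, writes $\Delta W_b * \rho_\infty = c\,|\bx|^{-d+\epsilon}*\bigl(|\bx|^{b-2-\epsilon}*\rho_\infty\bigr)$ so that the outer convolution with $|\bx|^{-d+\epsilon}$ produces a continuous function, and then passes from $\Delta$ to the full Hessian by $L^p$-boundedness of the Riesz transform. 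That argument never has to treat the sphere $\{|\bx|=R\}$ as a separate case. Your approach is a region-by-region matching (polynomial inside $\overline{B_R}$, dominated convergence outside, explicit hypergeometric formula at the interface).

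The gap is in the boundary-matching step, which carries all of the difficulty and is asserted rather than proved. The claim that ``$\sigma = 1-(b+d)/2 > -1$ keeps all derivative coefficients finite'' is the condition for $V_b$ itself to be finite, not for $\nabla^2 V_b$ to be continuous across $\{|\bx|=R\}$. Indeed, at the excluded endpoint $b=2-d$ one has $\sigma=0$, the density is the indicator of the ball, and $W_b*\rho_\infty$ is the Newtonian potential of a uniform ball — which is $C^{1,1}$ but \emph{not} $C^2$ across the sphere, even though $\sigma>-1$ holds there. This shows the proposed sufficient condition is wrong, and that the claimed $C^2$-matching depends delicately on both $b$ and the exponent $\sigma$ (one must see a cancellation of the potentially singular $(|\bx|^2-R^2)^{\sigma+\gamma}$-type term, or verify its exponent exceeds $2$). ``Direct inspection'' does not discharge this. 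The remaining parts of your argument (steady state via \cite{CH}, polynomiality of $W_a*\rho_\infty$, polynomiality of $V_b$ on $\overline{B_R}$, interior regularity on $\{|\bx|>R\}$ by dominated convergence) are fine; if you want to pursue this route you need an actual computation of the exterior formula and its first and second radial derivatives at $r=R^+$, compared against the interior polynomial, or else switch to the paper's global harmonic-analysis argument which sidesteps the boundary entirely.
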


\begin{proof}
It is proved in \cite{CH} that $\rho_\infty$ defined in \eqref{WCH} or \eqref{WCH2} is a steady state.

It is clear that $\nabla^2(\frac{|\bx|^a}{a}) * \rho_\infty$ is continuous. To deal with $\nabla^2(\frac{|\bx|^b}{b}) * \rho_\infty$, first notice that $\rho_\infty \in L^p$ for any $p<\frac{2d}{b+d-2}$. Then by Hardy-Littlewood-Sobolev inequality,
\begin{equation}
|\bx|^{b-2-\epsilon}  * \rho_\infty \in L^q,\quad \forall 1\le q < \infty
\end{equation}
for $\epsilon>0$ small enough, by checking the index relation
\begin{equation}
\frac{b+d-2}{2d} < \frac{d-(2-b)}{d}
\end{equation}
for $b>2-d$.

Then notice that
\begin{equation}
\Delta (\frac{|\bx|^b}{b}) * \rho_\infty = (b+d-2) |\bx|^{b-2} * \rho_\infty = c|\bx|^{-d+\epsilon}*\Big( |\bx|^{b-2-\epsilon} * \rho_\infty\Big)
\end{equation}
where the last parenthesis is in $L^q$ for any $1\le q < \infty$. Therefore we see that $\Delta (\frac{|\bx|^b}{b}) * \rho_\infty$ is continuous by taking $q$ large enough.
The continuity of $\nabla^2 (\frac{|\bx|^b}{b}) * \rho_\infty$ follows by taking the Riesz transform on $\rho_\infty$ which is bounded on $L^p$. 
\end{proof}



%
%
%
%

Finally we prove Theorem \ref{thm_ab}.
\begin{proof}[Proof of Theorem \ref{thm_ab}]

We first notice that for the special case $(a,b)=(2,2-d)$, \cite[Theorem 3.4(i)]{CDM16} shows that any $d_\infty$-local minimizer is in $L^\infty$ with $W*\rho_\infty$ is $C^{1,1}$. Then the unique $d_\infty$-local minimizer is the characteristic function of a ball as shown in \cite[Theorem 2.1]{shu2021newtonian}, coinciding with the formula \eqref{WCH}. We also refer to \cite{F35} for the classical proof that the characteristic of the ball is the global minimizer. In the rest of the proof, we will assume $(a,b)\ne(2,2-d)$.

Let $\rho$ be a compactly supported $d_\infty$-local minimizer. When $d\ge 2$ and \eqref{thm_ab_1} is satisfied, $W$ is FLIC by Theorem \ref{lem_L1}. Then Theorem \ref{thm_Wirs} implies that $\rho$ is radially-symmetric.

If $(a,b)$ further satisfies $2-d\le b<2$, then the unique global minimizer $\rho_\infty$ is mild. In fact, for the Newtonian case $b=2-d$, \cite[Theorem 3.4(i)]{CDM16} shows that $W*\rho_\infty$ is $C^{1,1}$, and in particular, $\nabla W*\rho_\infty$ is continuous and thus $\rho_\infty$ is mild. For the case $2-d<b<\min\{4-d,2\}$, since $\rho_\infty$ is radially-symmetric, Lemma \ref{lem_rhoreg} shows that $\rho_\infty$ is mild.

If $(a,b)$ further satisfies \eqref{thm_ab_2}, by calculating
\begin{equation}
\Delta W(\bx) = (a+d-2)|\bx|^{a-2} - (b+d-2)|\bx|^{b-2}
\end{equation}
\begin{equation}
\Delta^2 W(\bx) = (a+d-2)(a-2)(a+d-4)|\bx|^{a-4} - (b+d-2)(b-2)(b+d-4)|\bx|^{b-4}
\end{equation}
we see that \eqref{thm_min_1} and \eqref{thm_min_2} are satisfied under the assumption \eqref{thm_ab_2} with $(a,b)\ne(2,2-d)$. Also, by Lemma \ref{lem_WHS}, $W$ satisfies \eqref{eq:hyp2} up to adding a constant. Moreover, the positive bound from below on the dimension of the support of $d_\infty$-local minimizers, due to \cite[Theorem 1]{BCLR2}, implies that $\rho_\infty$ does not contain an isolated Dirac at 0.
Therefore, if the $d_\infty$-local minimizer $\rho$ is mild, then Theorem \ref{thm_min0} together with Remark \ref{thm_min0_rem} allows to conclude that $\rho$ is the global minimizer.

Finally, if $a=2$ with $2-d<b<4-d$, then Lemma \ref{lem_WCH} implies that $\rho_\infty$ defined in \eqref{WCH} is a steady state with $\nabla^2W*\rho_\infty$ being continuous (and therefore $\nabla^2W*\rho_\infty=0$ on $\supp\rho_\infty=\widebar{B(0;R)}$). Then Theorem \ref{thm_min} and Theorem \ref{thm_W2g} implies that $\rho_\infty$ is the global minimizer.

If $a=4$ with $2-d<b<\bar b$, then Lemma \ref{lem_WCH} implies that $\rho_\infty$ defined in \eqref{WCH2} is a steady state with $\nabla^2W*\rho_\infty$ being continuous. In particular, $(\Delta W*\rho_\infty)(\bx)=0$ for $|\bx|=R$. Notice that $\Delta W$, viewed as a function of $r=|\bx|$, is increasing on $r\in (0,\infty)$. Therefore the radial function $\Delta W*\rho_\infty$ is increasing on $[R,\infty)$, and thus $(\Delta W*\rho_\infty)(\bx)\ge 0$ for any $|\bx|\ge R$. This implies that $\rho_\infty$ satisfies the condition \eqref{W2cond}, and then Theorem \ref{thm_W2g} implies that $\rho_\infty$ is the global minimizer.

\end{proof}

\begin{remark}
In the case $d\ge 3$, $a=2$, $b=4-d$, one can show that the steady state $\rho$ in the form of Dirac Delta on a sphere (with radius $R$) given by \cite{BalagueCarrilloLaurentRaoul} is the global minimizer. In fact, notice that $\Delta^2 W(\bx)=0$ for any $\bx\ne 0$, and therefore $\Delta^2 W*\rho=0$ in $B(0;R)$, which implies that $\Delta W*\rho$ is constant in $B(0;R)$ by radial symmetry. The steady state $\rho$ is mild by Lemma \ref{lem_rhoreg}, i.e., $\nabla W*\rho$ is continuous and vanishes for $|\bx|=R$. Then we obtain $\Delta W*\rho=0$ in $B(0;R)$, which implies that $W*\rho$ is constant in $\widebar{B(0;R)}$. Similar to the proof of Lemma \ref{lem_rhoreg}, one can show that $\Delta W*\rho$ is continuous. Then a similar argument as in the last paragraph of the proof of Theorem \ref{thm_ab} shows that $\Delta W*\rho\ge 0$ for any $|\bx|\ge R$. Therefore we see that $\rho$ is the global minimizer by Theorem \ref{thm_W2g}.
\end{remark}


\section{$d_\infty$-local minimizers of near power-law potentials in 1D are not fractal}

Consider a 1D potential of the form
\begin{equation}\label{Wb1}
W(x) = -\frac{|x|^b}{b} + W_1(x)
\end{equation}
where $1<b<2$ and $W_1$ is smooth. Applying the dimensionality result in \cite{BCLR2}, we know that $1\geq\textnormal{dim}(\supp\rho)\ge 2-b$ for all $d_\infty$-local minimizers. We now prove that the dimension of the support is actually the maximum. It is in this sense that we name these $d_\infty$-local minimizers not being fractal, although this does not exclude the case where $\supp\rho$ is the union of some closed intervals and a set with fractal dimension.

\begin{theorem}\label{thm_1d}
Let $\rho\in\PP(\R)$ be a $d_\infty$-local minimizer of \eqref{Wb1} which is supported inside $(-R,R)$, and satisfies $W*\rho\in C^1([-R,R])$. Then there exists $c_s>0$ depending on $b, R$ and $\|W_1\|_{C^4([-2R,2R])}$, such that $|\supp\rho| \ge c_s$. In particular, $\textnormal{dim}(\supp\rho) = 1$.
\end{theorem}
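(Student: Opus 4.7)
I combine a Frostman-type density upper bound on $\rho$ (from the local minimality, Step~1) with a fourth-order rigidity argument on the gaps of $\supp\rho$ (Step~2), and then conclude via a counting argument (Step~3). The idea is that $d_\infty$-local minimality forces $\rho$ to have no atoms and to satisfy a uniform Riesz bound, the repulsive singularity $-|x|^b/b$ prevents arbitrarily short gaps in $\supp\rho$, and together with the density bound a finite-union-of-intervals structure lower-bounds $|\supp\rho|$.

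\emph{Step 1: Frostman bound.} From Lemma~\ref{lemel1}, $V(x_0+h)+V(x_0-h)\ge 2V(x_0)=2C_\rho$ for every $x_0\in\supp\rho$ and $|h|$ small. Expanding the second difference as an integral against $\rho$ and isolating the singular piece $-|z|^b/b$ of $W$, I first rule out atoms (any atom at $x_0$ would produce a dominating $-h^b\rho(\{x_0\})$ term since $b<2$). For atomless $\rho$, letting $h\to 0$ yields the Riesz bound $\int |x_0-y|^{b-2}\,d\rho(y)\le \|W_1''\|_\infty/(b-1)$ for $\rho$-a.e.\ $x_0$. A layer-cake computation then converts this into the Frostman-type density bound $\rho(B(x_0,r))\le C_1 r^{2-b}$ for $\rho$-a.e.\ $x_0$ and $r\le r_0$, with $C_1,r_0$ depending only on $b$ and $\|W_1''\|_{C^0([-2R,2R])}$.

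\emph{Step 2: gap length lower bound.} For any gap $(x_1,x_2)\subset[-R,R]\setminus\supp\rho$ with $x_1,x_2\in\supp\rho$ and length $L\le 2\epsilon_0$, Lemma~\ref{lemel1} gives $V\ge C_\rho$ on $[x_1,x_2]$. Testing against $\phi(x)=(x-x_1)^2(x_2-x)^2$ and integrating by parts four times, using $V\in C^1$, $V(x_j)=C_\rho$, $V'(x_j)=0$, and $\phi^{(4)}=24$, I get
\begin{equation*}
\int_{x_1}^{x_2}V^{(4)}(x)\phi(x)\,dx \;=\; 24\int_{x_1}^{x_2}(V-C_\rho)\,dx \;\ge\; 0.
\end{equation*}
Writing $V^{(4)}=W^{(4)}*\rho$ with $W^{(4)}(x)=-(b-1)(b-2)(b-3)|x|^{b-4}+W_1^{(4)}(x)$ and noting $(b-1)(b-2)(b-3)>0$ for $b\in(1,2)$, this transforms into
\begin{equation*}
\int F(y)\,d\rho(y)\;\le\; C\|W_1^{(4)}\|_\infty L^5, \qquad F(y):=\int_{x_1}^{x_2}\phi(x)|x-y|^{b-4}\,dx.
\end{equation*}
Direct estimation gives $F(y)\asymp L^{b+1}$ when $y$ lies within distance $L$ of $\{x_1,x_2\}$. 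Combined with the density bound from Step~1 to quantify the $\rho$-mass near the gap, this forces $L\ge \eta_0$ for some $\eta_0=\eta_0(b,R,\|W_1\|_{C^4([-2R,2R])})>0$.

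\emph{Step 3 and main obstacle.} Step~2 implies $\supp\rho\cap[-R,R]$ is a union of at most $N:=1+\lceil 2R/\eta_0\rceil$ closed intervals $I_k$ with lengths $\ell_k$. Summing the Frostman bound of Step~1 and applying Jensen's inequality to the concave function $t\mapsto t^{2-b}$ on $[0,\infty)$ gives $1=\sum_k\rho(I_k)\le C_1\sum_k\ell_k^{2-b}\le C_1 N^{b-1}|\supp\rho|^{2-b}$, whence $|\supp\rho|\ge(C_1 N^{b-1})^{-1/(2-b)}=:c_s>0$. The subtle step is Step~2: the crude far-field lower bound $F(y)\ge (2R+L)^{b-4}L^5/30$ already yields $\int F\,d\rho\asymp L^5$, matching the upper bound and giving only a parameter constraint rather than a bound on $L$. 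Extracting a useful $L$-dependent estimate therefore requires genuinely exploiting the stronger blow-up $F(y)\asymp L^{b+1}$ for $y$ close to the endpoints, which in turn needs a finer quantitative control on the $\rho$-mass near the gap endpoints than the upper Frostman bound from Step~1 directly provides --- likely via an iteration or a refined averaging argument combining the two estimates.
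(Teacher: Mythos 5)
Your Step~3 accounting is sound (the H\"older/Jensen step, plus the absence of isolated points from \cite{BCLR2}, would indeed convert a uniform gap lower bound into a measure lower bound), but the plan hinges entirely on Step~2: a uniform bound $\eta_0>0$ on the length of every gap in $\supp\rho$. You flag yourself that this step does not close, and I do not think it can. The theorem only asserts $|\supp\rho|\ge c_s$, not that $\supp\rho$ is a finite union of intervals; the paper explicitly does not exclude that $\supp\rho$ is a union of some closed intervals together with a set of fractal dimension, which would have gaps of arbitrarily small length. Your fourth-order inequality gives an \emph{upper} bound on the $\rho$-mass near a gap in terms of the gap length $L$ (of the type $m\lesssim L^{4-b}$), and turning that into a lower bound on $L$ would require a quantitative \emph{lower} bound on that mass. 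The Frostman-type estimate from your Step~1 runs in the wrong direction, and neither Lemma~\ref{lemel1} nor Lemma~\ref{lem_elball} produces a lower mass bound near a gap. Incidentally, for a $d_\infty$-local minimizer you only know $V\ge V(x_0)$ near $x_0\in\supp\rho$, not $V(x_0)=C_\rho$; that constancy is the $d_2$ condition, though this slip is inessential to your argument.

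The paper's proof avoids this obstacle by never trying to lower-bound individual gaps. It decomposes $\supp\rho$ (Lemma~\ref{lem_1ddecomp}) into the connected-component closed intervals $\{I_k\}$ plus an open cover $\{J_l\}$ by intervals whose intersection with $\supp\rho$ is non-connected, and bounds the mass \emph{from above} in each: $\int_{I_k}\rho\lesssim|I_k|^{2-b}$ via $W''*\rho=0$ on $I_k$, and $\int_{J_l}\rho\lesssim|J_l|^{4-b}$ via the fourth-order maximum principle (Lemma~\ref{lem_d4}) -- the same $W^{(4)}$ sign information you use, but deployed only to bound mass, not gap length. Assuming $|\supp\rho|<c_s$ forces the $J_l$'s to carry little mass, so the $I_k$'s carry at least $3/4$. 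The key idea you are missing is the double-counting step: the ``heavy'' $I_k$ (those with $m_k\gtrsim|I_k|/c_s$) carry at least half the mass, each is surrounded by a vacuum zone of width $\sim m_k$ (again Lemma~\ref{lem_d4}), and these enlarged intervals are pairwise disjoint inside a bounded region. This gives $\sum m_k\lesssim R$ on one side and $\sum m_k\ge 1/2$ on the other, yielding the contradiction for $c_s$ small. The mass-versus-length inequalities are closed by this global packing argument, not by a local lower bound on $L$.
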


\begin{proof}

By \cite[Theorem 1]{BCLR2}, $\supp\rho$ does not contain any isolated point.

{\bf STEP 1}: Rough estimate of the local mass.
We first show that for every connected component of $\supp\rho$ which is a closed interval $I$, it satisfies
\begin{equation}\label{I2bound}
\int_I \rho(x)\rd{x} \le \frac{1}{b-1}\|W_1\|_{C^2}|I|^{2-b},
\end{equation}
where the $C^2$ norm is on $[-2R,2R]$ (and similar for other $C^2,C^4$ norms in this proof). 
In fact, take any fixed $x\in I$, we have
\begin{equation}
0 = (W''*\rho)(x) = -(b-1)(|\cdot|^{b-2}*\rho)(x) +  (W_1''*\rho)(x)
\end{equation}
since $W*\rho$ is a constant on $I$. Notice that
\begin{equation}
(|\cdot|^{b-2}*\rho)(x) =   \int_{[-R,R]} |x-y|^{b-2}\rho(y)\rd{y} \ge  \int_I |x-y|^{b-2}\rho(y)\rd{y} \ge |I|^{b-2}\int_I \rho(y)\rd{y}
\end{equation}
and 
\begin{equation}
\left|(W_1''*\rho)(x)\right| = \left| \int_{[-R,R]}  W_1''(x-y)\rho(y)\rd{y}\right| \le \|W_1\|_{C^2}
\end{equation}
and \eqref{I2bound} follows.

Then we show that for every open interval $J$ such that $J\cap\supp\rho$ is nonempty and not connected, it satisfies
\begin{equation}\label{J2bound}
\int_J \rho(x)\rd{x} \le \frac{1}{(b-1)(b-2)(b-3)}|J|^{4-b}\|W_1\|_{C^4}
\end{equation}
In fact, since $J\cap\supp\rho$ is nonempty and not connected, we may take $x_1,x_2\in J\cap\supp\rho$ with $x_1<x_2$ and $[x_1,x_2]\not\subseteq\supp\rho$. Then we may take a maximal open interval $(x_3,x_4)$ in  $[x_1,x_2]\backslash\supp\rho$.


Then for any $x\in (x_3,x_4)$, we compute
\begin{equation}\label{W4eq}
W''''(x)=-(b-1)(b-2)(b-3)|x|^{b-4}+W_1''''(x),\quad W''''*\rho = -(b-1)(b-2)(b-3)|\cdot|^{b-4}*\rho+W_1''''*\rho
\end{equation}
and estimate
\begin{equation}\begin{split}
(|\cdot|^{b-4}*\rho)(x) = & \int_{[-R,R]} |x-y|^{b-4}\rho(y)\rd{y} \ge \int_{J}|x-y|^{b-4}\rho(y)\rd{y} \ge |J|^{b-4} \int_J \rho(y)\rd{y}\\
\end{split}\end{equation}
and
\begin{equation}\label{W4C4}
|(W_1''''*\rho)(x)| \le \|W_1\|_{C^4}.
\end{equation}
If \eqref{J2bound} was not true, then $(W''''*\rho)(x)<0$ for any $x\in (x_3,x_4)$. Notice that $W\in C^1(\mathbb{R})$, and so is $W*\rho$. Therefore, applying Lemma \ref{lem_d4}
to $W*\rho$ on $[x_3,x_4]$, we deduce that either $x_3$ or $x_4$ is not a local minimum of $W*\rho$, and we get a contradiction against the $d_\infty$-local minimizer property of $\rho$ in view of Lemma \ref{lemel1}.

{\bf STEP 2}: Decomposition of the support.
Assume on the contrary that $|\supp\rho|< c_s$, with the constant $c_s>0$ to be determined. Then we apply the technical Lemma \ref{lem_1ddecomp}, that is postponed below, to $\supp\rho$ (which has no isolated points) with $\epsilon=c_s$. This gives us a decomposition of $\supp\rho$ into connected component intervals $\{I_1,I_2,\dots\}$ and a cover of intervals $\{J_1,J_2,\dots\}$ with the properties listed therein. In particular, we have $ \sum_k|I_k|+\sum_l |J_l| < |\supp\rho|+\epsilon < 2c_s$ from item 4 of Lemma \ref{lem_1ddecomp}.

By item 3 of Lemma \ref{lem_1ddecomp}, we  may apply the estimate \eqref{J2bound} for $J_1,J_2,\dots$ to get
\begin{equation}\begin{split}
\sum_l \int_{J_l}\rho(x)\rd{x} \le & \frac{\|W_1\|_{C^4}}{(b-1)(b-2)(b-3)}\sum_l|J_l|^{4-b}\le \frac{\|W_1\|_{C^4}}{(b-1)(b-2)(b-3)}(2c_s)^{3-b}\sum_l|J_l| \\
\le & \frac{\|W_1\|_{C^4}}{(b-1)(b-2)(b-3)}(2c_s)^{4-b}.
\end{split}\end{equation}
Therefore, with the condition
\begin{equation}\label{mscond0}
\frac{\|W_1\|_{C^4}}{(b-1)(b-2)(b-3)}(2c_s)^{4-b} \le \frac{1}{4},
\end{equation}
we get 
\begin{equation}
\sum_l \int_{J_l}\rho(x)\rd{x} \le \frac{1}{4}\,.
\end{equation}

Since $\{I_1,I_2,\dots\}\cup\{J_1,J_2,\dots\}$ covers $\supp\rho$, we have $\sum_k \int_{I_k}\rho\rd{x}+\sum_l \int_{J_l}\rho\rd{x} \ge \int_{[-R,R]}\rho\rd{x}= 1$, and then we get
\begin{equation}\label{sumik1}
\sum_k \int_{I_k}\rho(x)\rd{x} \ge \frac{3}{4}.
\end{equation}
Then we define
\begin{equation}\label{mkeq}
\delta_k = |I_k|, \quad m_k =  \int_{I_k} \rho(x)\rd{x} ,\quad S= \Big\{k: m_k \ge \frac{1}{4c_s}\delta_k\Big\}.
\end{equation}
Then $\sum_k \delta_k \le |\supp\rho| < c_s$ since $\{I_k\}$, as the connected component intervals of $\supp\rho$, are disjoint. Notice that
\begin{equation}
\sum_{k\notin S}m_k \le  \frac{1}{4c_s}\sum_{k\notin S}\delta_k \le  \frac{1}{4c_s}c_s = \frac{1}{4}.
\end{equation}
Combined with \eqref{sumik1}, we get
\begin{equation}
\sum_{k\in S} m_k \ge \frac{1}{2}.
\end{equation}
We will also use the fact that
\begin{equation}\label{eqms}
m_k \le \frac{1}{b-1}\|W_1\|_{C^2}|I_k|^{2-b}\le \frac{1}{b-1}\|W_1\|_{C^2}c_s^{2-b}=:m_s
\end{equation}
for every $k$, which can be seen by applying \eqref{I2bound} to $I_k$.

{\bf STEP 3}: 4-th order derivative estimate. Fix $k\in S$. Denote $I_k=[x_1,x_2]$ and then  $\delta_k=|I_k|=x_2-x_1$. Define
\begin{equation}
\tilde{I}_k = [x_1-C_1m_k ,x_2+C_1m_k ],
\end{equation}
where $C_1$ is a large constant to be determined. We first show 
\begin{equation}\label{W4rho2}
(W''''*\rho)(x)<0,\quad \forall x\in \tilde{I}_k\backslash \supp\rho
\end{equation}
under a suitable condition \eqref{mscond1}. In fact, by \eqref{W4eq}, $(W''''*\rho)(x)$ is decomposed into two parts, with the second part controlled by \eqref{W4C4}. To control the first part,
\begin{equation}\begin{split}
(|\cdot|^{b-4}*\rho)(x) = & \int_{[-R,R]} |x-y|^{b-4}\rho(y)\rd{y} \ge \int_{I_k}|x-y|^{b-4}\rho(y)\rd{y} \ge (\delta_k+C_1m_k)^{b-4}m_k \\
\ge & (4c_sm_k+C_1m_k)^{b-4}m_k = (4c_s+C_1)^{b-4}m_k^{b-3} \ge (4c_s+C_1)^{b-4}m_s^{b-3},
\end{split}\end{equation}
where the second inequality uses the fact that $|x-y|\le \delta_k+C_1m_k$ for $x\in\tilde{I}_k$ and $y\in I_k$, the third inequality uses the definition of $S$ in \eqref{mkeq}, and the last inequality uses $m_k\le m_s$ from \eqref{eqms}. Then \eqref{W4rho2} follows if we assume the condition
\begin{equation}\label{mscond1}
(b-1)(b-2)(b-3)(4c_s+C_1)^{b-4}m_s^{b-3} > \|W_1\|_{C^4}.
\end{equation}

{\bf STEP 4}: Vacuum regions and exclusion. Using the same argument as in STEP 1,  we claim that 
\begin{equation}\label{vacuum}
(\tilde{I}_k\backslash I_k)\cap \supp\rho = \emptyset,\quad \forall k\in S\,.
\end{equation}
Suppose not, then there exists $y\in (\tilde{I}_k\backslash I_k)\cap \supp\rho$, which we may assume to satisfies $x_2<y\le x_2+C_1m_k$ without loss of generality. Then $[x_2,y]\not\subseteq \supp\rho$ since $I_k=[x_1,x_2]$ is a connected component of $\supp\rho$. Then, in view of \eqref{W4rho2}, we may find a maximal open interval $(x_3,x_4)$ in $[x_2,y]\backslash \supp\rho$ to apply Lemma \ref{lem_d4} and get a contradiction, similar to the last paragraph of STEP 1.

For $I_k = [x_1,x_2]$, define
\begin{equation}
\bar{I}_k = [x_1-\frac{C_1}{3}m_k ,x_2+\frac{C_1}{3}m_k ]
\end{equation}
and we claim that $\{\bar{I}_k:k\in S\}$ are disjoint under a suitable condition \eqref{mscond2}. In fact, suppose $\bar{I}_k\cap\bar{I}_{k'}\ne\emptyset$ and $m_k\ge m_{k'}$, then by $\delta_{k'}\le 4c_sm_{k'}\le 4c_sm_{k}$,
\begin{equation}
\dist(I_k,I_{k'}) \le \frac{C_1}{3}(m_k+m_{k'}) \le \frac{2C_1}{3}m_k \le C_1m_k - 4c_sm_k\le C_1m_k - \delta_{k'},
\end{equation}
if one assume the condition
\begin{equation}\label{mscond2}
\frac{C_1}{3} \ge 4c_s.
\end{equation}
This implies that $I_{k'}\subseteq \tilde{I}_k$. Since $I_k$ and $I_{k'}$ are disjoint and $I_{k'}\cap \supp\rho\ne \emptyset$, we get a contradiction with \eqref{vacuum}.

The disjoint property of $\{\bar{I}_k\}_{k\in S}$ and the fact that $I_k\subseteq [-R,R]$ show that
\begin{equation}\label{mscontra1}
\sum_{k\in S} |\bar{I}_k| \le 2\Big(R+\frac{C_1}{3}m_s\Big)
\end{equation}
since $\bar{I}_k\subseteq [-(R+\frac{C_1}{3}m_s),(R+\frac{C_1}{3}m_s)]$. On the other hand,
\begin{equation}\label{mscontra2}
\sum_{k\in S} |\bar{I}_k| \ge \frac{2C_1}{3}\sum_{k\in S}m_k \ge \frac{C_1}{3}
\end{equation}
by \eqref{mkeq}. Recall from \eqref{eqms} that $m_s$ can be made arbitrarily small by taking $c_s$ small. To choose the parameters $C_1$ and $c_s$, we first choose $C_1=9R$, and then choose $c_s$ small enough so that $m_s$ is small enough to satisfy \eqref{mscond0}, \eqref{mscond1}, \eqref{mscond2} and $m_s<1/9$, to obtain a contradiction between \eqref{mscontra1} and \eqref{mscontra2}.

\end{proof}

\begin{lemma}\label{lem_1ddecomp}
For any closed set $A\subseteq (-R,R)$ with no isolated points and $\epsilon>0$, there exists a countable collection of intervals $\{I_1,I_2,\dots\}\cup\{J_1,J_2,\dots\}$ which covers $A$, satisfying
\begin{itemize}
\item $\{I_1,I_2,\dots,J_1,J_2,\dots\}$ are subsets of $(-R,R)$.
\item $\{I_k\}$ are the connected components of $A$ being closed intervals. $\{I_k\}$ is a finite or countable collection.
\item For every $l$, $J_l$ is an open interval and $J_l\cap A$ is nonempty and not connected.
\item $\sum_k |I_k|+\sum_l |J_l| < |A|+\epsilon$.
\end{itemize}
(Here we do not regard a single point as a closed interval.)
\end{lemma}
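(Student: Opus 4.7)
The plan is to split $A$ into its non-degenerate interval components (which will serve as the $I_k$) and the remaining ``singleton part'' $C := A \setminus \bigsqcup_k I_k$, and cover these two pieces separately. Since $A$ is a closed bounded subset of $\R$, each connected component of $A$ is a closed connected subset of $\R$, hence either a single point or a non-degenerate closed interval. Let $\{I_k\}$ enumerate those components that are non-degenerate closed intervals: they are pairwise disjoint and each contains a rational, so the collection is at most countable, and trivially $I_k\subseteq A\subseteq(-R,R)$. Setting $C := A \setminus \bigsqcup_k I_k$, this is exactly the union of the singleton components, and the disjoint decomposition $A = C \sqcup \bigsqcup_k I_k$ gives $|A| = |C| + \sum_k |I_k|$ by $\sigma$-additivity.

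Next I cover $C$. Being closed in $\R$ and bounded, $A$ is compact, and as it lies inside the open interval $(-R,R)$ there exists $\delta>0$ with $A\subseteq[-R+\delta,R-\delta]$, hence $C\subseteq[-R+\delta,R-\delta]$ as well. Outer regularity of Lebesgue measure applied to the Borel set $C$ then yields an open set $U$ with $C\subseteq U\subseteq(-R,R)$ and $|U|<|C|+\epsilon$. Writing $U$ as its canonical countable disjoint union of open intervals and discarding those that do not meet $C$ leaves a family $\{J_l\}$ of open subintervals of $(-R,R)$ that still covers $C$ and satisfies $\sum_l |J_l|\le|U|<|C|+\epsilon$.

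Finally I check the non-connectivity property and combine everything. For each $l$, pick $c\in J_l\cap C$. Since $\{c\}$ is a connected component of $A$, any connected subset of $A$ through $c$ reduces to $\{c\}$; and because $A$ has no isolated points, $c$ is a limit point of $A\setminus\{c\}$, so the open neighborhood $J_l$ of $c$ contains points of $A$ other than $c$. Thus $J_l\cap A$ is nonempty and strictly contains $\{c\}$, and therefore cannot be connected. Since $\{I_k\}$ covers $A\setminus C$ and $\{J_l\}$ covers $C$, the two families together cover $A$, and
\begin{equation}
\sum_k |I_k| + \sum_l |J_l| < \sum_k |I_k| + |C| + \epsilon = |A| + \epsilon.
\end{equation}
The argument is essentially routine; the only mildly subtle point is making sure each retained $J_l$ genuinely meets $C$ (rather than merely brushing $\bar C$ at an endpoint of some $I_k$), which is exactly why we obtain $\{J_l\}$ by trimming the canonical decomposition of an open neighborhood of $C$ and discarding the components that do not touch $C$.
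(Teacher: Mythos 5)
Your proposal is correct and follows essentially the same route as the paper: choose $\{I_k\}$ to be the non-degenerate interval components, cover the remainder $C=A\setminus\bigcup_k I_k$ by open intervals of total length $<|C|+\epsilon$ via outer regularity, discard those not meeting $C$, and verify non-connectivity. The only (cosmetic) difference is in that last verification: the paper argues that a connected $J_l\cap A$ would have to be a non-degenerate interval (using no isolated points) and hence lie inside some $I_k$, contradicting $J_l\cap C\ne\emptyset$, whereas you pick $c\in J_l\cap C$ and use directly that $\{c\}$ is a connected component of $A$ together with $c$ being non-isolated, so $J_l\cap A$ strictly contains $\{c\}$ yet every connected subset of $A$ through $c$ equals $\{c\}$; both are short and rely on the same two hypotheses.
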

\begin{proof}
We first take $\{I_k\}$ as the collection of all the connected components of $A$ being closed intervals, and this collection is pairwise disjoint, and either finite or countable. Denote $I=\bigcup_k I_k$, then
\begin{equation}
|A| = \sum_k |I_k| + |A\backslash I|\,.
\end{equation}
By the definition of the Lebesgue measure, there exists a countable collection of open intervals $\{J_1,J_2,\dots\}$ which are subsets of $(-R,R)$ and  cover $A\backslash I$, such that $\sum_{l=1}^\infty |J_l| < |A\backslash I|+\epsilon$. By deleting those $J_l$ with empty intersection with $A\backslash I$, we may assume that $J_l\cap (A\backslash I)\ne \emptyset$ for every $l$. 

Then for every $l$, we claim that the nonempty set $J_l\cap A$ is not connected. To see this, denote $J_l=(x_1,x_2)$. Suppose $J_l\cap A$ is connected. Since $A$ does not contain isolated points, $J_l\cap A$ has to be an interval, which is a subset of a connect component of $A$ being an interval. Therefore $J_l\cap A\subseteq I_k$ for some $k$, and we get a contradiction against $J_l\cap (A\backslash I)\ne \emptyset$.

\end{proof}

The main conclusion of this section is that if we were looking for fractal behavior on the support of global minimizers of the interaction energy, power-law potentials or their perturbations are not the right family of potentials at least in one dimension.


\section{Linear interpolation concavity and its consequences on local minimizers}

We say $W$ is \emph{linear-interpolation-concave} with size $\delta$, if there exists a nonzero function $\mu\in L^\infty(\Rd)$ with $\int_\Rd\mu\rd{\bx}=0$ and $\diam(\supp\mu) \le \delta$, such that $E[\mu] < 0$.
We say $W$ is \emph{infinitesimal-concave} if $W$ is linear-interpolation-concave with size $\delta$ for any $\delta > 0$.

\begin{theorem}\label{lem_concave}
Assume that the interaction potential $W$ satisfies \eqref{eq:hyp}. If $W$ is linear-interpolation-concave with size $\delta$, then for any $d_\infty$-local minimizer $\rho$ and $\epsilon_0>0$, $\{\bx: \rho(\bx)\ge \epsilon_0\}$ does not contain a ball of radius $\delta$. In particular, if $W$ is infinitesimal-concave, then for any $d_\infty$-local minimizer $\rho$ and $\epsilon_0>0$, $\{\bx: \rho(\bx)\ge \epsilon_0\}$ has no interior point.
\end{theorem}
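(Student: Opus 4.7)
I would argue by contradiction: suppose $\{\rho\ge\epsilon_0\}$ contains a ball $B(\bx_0;\delta)$. Translate the concavity witness $\mu\in L^\infty(\Rd)$, which satisfies $\diam(\supp\mu)\le\delta$, $\int_\Rd\mu\rd{\bx}=0$, and $E[\mu]<0$, so that $\supp\mu\subset B(\bx_0;\delta)$. Since $\rho\ge\epsilon_0$ on $\supp\mu$ and $\mu\in L^\infty$, the perturbations $\rho\pm t\mu$ are nonnegative probability measures for any $|t|\le\epsilon_0/\|\mu\|_{L^\infty}$, and bilinearity of $E$ gives
\begin{equation*}
E[\rho\pm t\mu]=E[\rho]\pm t\int_\Rd V\mu\rd{\bx}+t^2 E[\mu],\qquad V:=W\ast\rho.
\end{equation*}
Since $E[\mu]<0$, selecting the sign of $\pm$ so that the linear term is nonpositive (either sign works when $\int_\Rd V\mu\rd{\bx}=0$) yields $E[\rho\pm t\mu]<E[\rho]$ for every sufficiently small $t>0$.

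The crucial remaining step is to show $d_\infty(\rho,\rho\pm t\mu)=O(t)$ as $t\to 0^+$, so that for $t$ small the competitor lies in any prescribed $d_\infty$-neighborhood of $\rho$ and contradicts local minimality. This uses the uniform lower bound $\rho\ge\epsilon_0$ on $B(\bx_0;\delta)$ to absorb the perturbation via a small ambient displacement. Concretely, I would solve $-\Delta\phi=\mu$ on $B(\bx_0;\delta)$ with zero Dirichlet boundary data and set $F:=-\nabla\phi$ (extended by zero outside the ball), so that $\nabla\cdot F=\mu$, $F$ is bounded by standard elliptic regularity, and $\supp F\subseteq\overline{B(\bx_0;\delta)}$. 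Then define $v_s:=\mp tF/(\rho\pm st\mu)$ on the ball and $v_s\equiv 0$ outside; for $t\le\epsilon_0/(2\|\mu\|_{L^\infty})$ one has $\rho\pm st\mu\ge\epsilon_0/2$, so $\|v_s\|_{L^\infty}\le Ct$ with $C=C(\epsilon_0,\delta,\|\mu\|_{L^\infty})$. The continuity equation $\partial_s(\rho\pm st\mu)+\nabla\cdot((\rho\pm st\mu)v_s)=0$ identifies the flow $X_s$ of $v_s$ with the linear interpolant, i.e.\ $(X_s)_\#\rho=\rho\pm st\mu$, and $\|X_1-\operatorname{id}\|_{L^\infty}\le Ct$ gives $d_\infty(\rho,\rho\pm t\mu)\le Ct$, as required.

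The second assertion follows at once: if $W$ is infinitesimal-concave and $\{\rho\ge\epsilon_0\}$ possessed an interior point, it would contain a ball $B(\bx_0;r)$ for some $r>0$; infinitesimal-concavity then provides a witness of size $r$, and applying the first part with $\delta=r$ yields a contradiction.

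I expect the main technical obstacle to be the careful execution of the $d_\infty$-transport estimate: one has to take $F$ regular enough for the Cauchy--Lipschitz (or DiPerna--Lions) flow theory to apply, or else approximate $\mu$ by smooth functions and pass to the limit; verify that $\rho\pm st\mu$ stays uniformly positive along $s\in[0,1]$; and justify the identification of the linear interpolant with the push-forward under $X_s$ via uniqueness for the continuity equation with the constructed velocity field.
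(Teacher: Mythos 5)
Your proposal reaches the same conclusion via the same basic competitor $\rho\pm t\mu$ but deviates from the paper's argument in one genuinely interesting way, and it also fills a gap the paper leaves implicit. The paper establishes a separate Lemma (its Lemma~\ref{lem_elball}) showing that for a $d_\infty$-local minimizer, $V=W*\rho$ is constant a.e.\ on any ball contained in $\supp\rho$; this kills the linear term $\int V\mu\,\rd\bx$ outright, and then $E[\rho+t\mu]=E[\rho]+t^2E[\mu]<E[\rho]$ for either sign. Your sign-selection trick — note that both $\rho+t\mu$ and $\rho-t\mu$ are admissible for small $t$, and one of the two makes the linear term nonpositive, so the negative quadratic term wins — circumvents Lemma~\ref{lem_elball} entirely and is arguably more elementary. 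What the paper's route buys is the structural by-product that $V$ is locally constant on balls in the support; what yours buys is economy. Both are correct. You also supply a proof of the $d_\infty(\rho,\rho\pm t\mu)=O(t)$ estimate, which the paper asserts without proof, and your flow-map strategy (constant-speed velocity bounded by $Ct$ because $\rho$ is bounded below on the perturbation set) is the right idea.

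One concrete slip worth fixing: you impose \emph{Dirichlet} data $\phi=0$ on $\partial B(\bx_0;\delta)$ and then extend $F=-\nabla\phi$ by zero. With Dirichlet data, $\partial_\nu\phi$ is generically nonzero on $\partial B$, so the extension-by-zero produces a surface distribution: $\nabla\cdot F=\mu+(\partial_\nu\phi)\,\rd S_{\partial B}$ rather than $\mu$. The correct choice is the \emph{Neumann} problem $-\Delta\phi=\mu$, $\partial_\nu\phi=0$ on $\partial B$, which is solvable precisely because $\int\mu\,\rd\bx=0$; then $F\cdot\nu=0$ on $\partial B$, the extension has distributional divergence $\mu$, and moreover the velocity $v_s$ is tangent to $\partial B$, so the flow maps $B$ into $B$ and is the identity outside, which is exactly what you need. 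Your remaining caveats (Hölder rather than Lipschitz regularity of $F$ for $\mu\in L^\infty$, forcing either a mollification of $\mu$ or a DiPerna--Lions argument) are real and correctly identified; note that one may always mollify $\mu$ at the outset, since $E$ is continuous on bounded compactly supported densities and $\diam\supp(\mu*\psi_\eta)\le\diam\supp\mu+2\eta<\delta$ for small $\eta$, so the concavity witness can be taken smooth without loss of generality.
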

Here the meaning of the condition $\rho(\bx)\ge \epsilon_0$ is clear if $\rho$ is a continuous function. Otherwise, we may use the Radon-Nikodym decomposition to write $\rho=\rho_c+\rho_s$ where $\rho_c$ is an $L^1$ function and $\supp\rho_s$ has Lebesgue measure zero. Then $\{\bx: \rho(\bx)\ge \epsilon_0\}$ is interpreted as $\{\bx: \rho_c(\bx)\ge \epsilon_0\}$, and the conclusion of Theorem \ref{lem_concave} is that $\{\bx: \rho_c(\bx)\ge \epsilon_0\}$ does not contain a ball of radius $\delta$ for any representative for $\rho_c\in L^1$ (which may differ by a set with Lebesgue measure zero). 

In order to show Theorem \ref{lem_concave}, we need an improvement on the necessary condition for $d_\infty$-local minimizers in Lemma \ref{lemel1}.

\begin{lemma}\label{lem_elball}
Assume that the interaction potential $W$ satisfies \eqref{eq:hyp}, and $\rho$ is a $d_\infty$-local minimizer for the corresponding interaction energy. If $B(\bx;\delta)\subseteq \supp\rho$, then $V=W*\rho$ is a constant on $B(\bx;\delta)$ almost everywhere.
\end{lemma}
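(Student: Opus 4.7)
The plan is to upgrade the one-sided inequality of Lemma \ref{lemel1} to an equality via a Fubini symmetrization, then deduce constancy by a connectedness argument.

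Since $B(\bx;\delta) \subseteq \supp\rho$, the uniform radius $\epsilon_0>0$ from Lemma \ref{lemel1} applies at every $\by \in B(\bx;\delta)$, giving $V(\bz)\ge V(\by)$ for a.e.\ $\bz \in B(\by;\epsilon_0)$. First I would introduce
\[
F := \{(\by,\bz) \in B(\bx;\delta)^2 : |\by-\bz|<\epsilon_0,\ V(\bz) < V(\by)\}.
\]
Each $\by$-slice of $F$ is null by Lemma \ref{lemel1}, so $F$ is null in $\R^{2d}$. Applying Fubini in the other variable yields that for a.e.\ $\bz \in B(\bx;\delta)$ the set of $\by$ with $|\by-\bz|<\epsilon_0$, $\by \in B(\bx;\delta)$ and $V(\bz)<V(\by)$ is null. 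Combining this with Lemma \ref{lemel1} applied at $\bz$ (which lies in $\supp\rho$) produces a full-measure subset $G\subseteq B(\bx;\delta)$ such that for every $\bz \in G$, $V(\by)=V(\bz)$ for a.e.\ $\by\in B(\bz;\epsilon_0)\cap B(\bx;\delta)$.

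Next I would verify that the value $V(\bz)$ does not depend on the choice of $\bz\in G$. If $\bz_1,\bz_2\in G$ satisfy $|\bz_1-\bz_2|<\epsilon_0$, then by convexity of $B(\bx;\delta)$ the open set $B(\bz_1;\epsilon_0)\cap B(\bz_2;\epsilon_0)\cap B(\bx;\delta)$ contains a neighborhood of the midpoint and so has positive Lebesgue measure; on it $V$ equals both $V(\bz_1)$ and $V(\bz_2)$ a.e., forcing $V(\bz_1)=V(\bz_2)$. For arbitrary $\bz_1,\bz_2\in G$, since $G$ is dense in the connected set $B(\bx;\delta)$, I would choose points $\gamma(t_i)$ along the straight segment joining $\bz_1$ to $\bz_2$ with sufficiently small gaps and then pick $\bw_i \in G$ very close to each $\gamma(t_i)$, producing a chain $\bw_0=\bz_1,\bw_1,\dots,\bw_n=\bz_2$ in $G$ with consecutive distances less than $\epsilon_0$. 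Consequently $V$ takes a single constant value $c$ on $G$, and therefore $V = c$ a.e.\ on $B(\bx;\delta)$.

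The main technical obstacle is the careful bookkeeping of the exceptional null sets in the Fubini step: neither lower semicontinuity of $V$ nor the bare local-minimum property of Lemma \ref{lemel1} suffices on its own, because the a.e.\ inequality attaches a different null set to each base point $\by$, and there are uncountably many of them. The symmetrization through Fubini is what makes these null sets compatible with each other; after that, the chain/density step is routine and yields the asserted a.e.\ constancy.
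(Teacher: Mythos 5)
Your proof is correct, but it takes a genuinely different route from the paper's. You symmetrize the a.e.\ one-sided inequality of Lemma \ref{lemel1} via a Fubini/Tonelli argument on the Borel set $F$: since every $\by$-slice is null, $F$ itself is null, and hence a.e.\ $\bz$-slice is null, giving an a.e.\ two-sided equality around a.e.\ basepoint $\bz$ directly. You then conclude by a continuous chain argument through a full-measure set $G$ using the convexity and connectedness of the ball. The paper instead argues by contradiction: it picks a level $C_1$ splitting $B(\bx;\delta)$ into two positive-measure pieces $A_1=\{V<C_1\}$ and $A_2=\{V\ge C_1\}$, places a finite $\tfrac{\epsilon_0}{3\sqrt d}$-spaced grid $\{\bz_n\}$ whose $\tfrac{\epsilon_0}{2}$-balls cover a cube, and shows the one-sided local-minimum property of Lemma \ref{lemel1} propagates the statement "$A_1$ has null measure in the ball" from one grid cell to all adjacent ones, ultimately forcing $|A_1|=0$. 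Both arguments live entirely inside Lemma \ref{lemel1}; your Fubini step neatly dispenses with the contradiction and makes the compatibility of the basepoint-dependent null sets explicit, whereas the paper's grid argument avoids invoking Fubini and keeps the reasoning at the level of superlevel sets, which is perhaps closer in spirit to the obstacle-problem viewpoint it references. Either is a valid proof of the lemma.
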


\begin{proof}
We will prove an equivalent statement with $B(\bx;\delta)$ replaced by a cube $Q=[x_1,x_1+\delta]\times\cdots\times [x_d,x_d+\delta]$.
Let $\epsilon_0$ be as in Lemma \ref{lemel1}. By replacing $\epsilon_0$ with a smaller number, we may assume $\frac{\epsilon_0}{\sqrt{d}}\le \delta$. Take $\{\bz_n\}_{n=1}^N$ as the set of points in $B(\bx,\delta)$ with each coordinate in $\frac{\epsilon_0}{3\sqrt{d}}\mathbb{Z}$. Then for any $\by\in Q$, there exists some $\bz_n$ such that $|\by-\bz_n|\le \frac{\epsilon_0}{3}< \frac{\epsilon_0}{2}$, i.e., the collection of balls 
$\{B(\bz_n;\frac{\epsilon_0}{2})\}$ covers $Q$.

Suppose it is not true that $V=W*\rho$ is a constant on $Q$ almost everywhere. Then there exists $C_1$ such that both
\begin{equation}
A_1 = \{\bx\in Q:V(\bx)<C_1\}\quad \mbox{and} \quad A_2 = \{\bx\in Q:V(\bx)\ge C_1\}
\end{equation}
have positive Lebesgue measure. We claim that for any $n=1,\dots,N$,
\begin{equation}\label{claim_bz}
\text{if } B(\bz_n;\frac{\epsilon_0}{2})\cap A_2\ne\emptyset,\quad \text{then }|B(\bz_n;\frac{\epsilon_0}{2})\cap A_1|=0.
\end{equation}
In fact, if $B(\bz_n;\frac{\epsilon_0}{2})\cap A_2\ne\emptyset$, then we may take $\by\in B(\bz_n;\frac{\epsilon_0}{2})\cap A_2$. Applying Lemma \ref{lemel1} to the point $\by$ shows that $V(\by_1)\ge V(\by)\ge C_1$ for $\by_1\in B(\by;\epsilon_0)$ almost everywhere. Since $|\by-\bz_n|<\frac{\epsilon_0}{2}$, we have $B(\bz_n;\frac{\epsilon_0}{2})\subseteq B(\by;\epsilon_0)$, and the claim follows.

Since $A_2$ has positive Lebesgue measure, we may take $\by\in A_2$, and take $\bz_m$ with $\by\in B(\bz_m;\frac{\epsilon_0}{2})$. Applying \eqref{claim_bz}, we see that $|B(\bz_m;\frac{\epsilon_0}{2})\cap A_1|=0$. This implies that $B(\bz_{m'};\frac{\epsilon_0}{2})\cap  A_2\ne\emptyset$ for any $\bz_{m'}$ with $|\bz_m-\bz_{m'}|=\frac{\epsilon_0}{3\sqrt{d}}$, i.e., the closest neighbors of $\bz_m$, since $\bz_{m'}$ is an interior point of $B(\bz_m;\frac{\epsilon_0}{2})$. Applying \eqref{claim_bz} iteratively, we obtain $|B(\bz_n;\frac{\epsilon_0}{2})\cap A_1|=0$ for any $ n$. Since $\{B(\bz_n;\frac{\epsilon_0}{2})\}$ covers $Q$, we get $|A_1|=0$, contradicting the assumption that $A_1$ has positive measure.

\end{proof}

\begin{proof}[Proof of Theorem \ref{lem_concave}]
The second statement is clearly a consequence of the first one. We prove the first statement by contradiction. Suppose the contrary, then $\{\bx: \rho(\bx)\ge \epsilon_0\}$ contains a ball $B(\bx_0;\delta)$. Let $\mu$ be the function as in the definition of linear-interpolation-concavity. By translation, we may assume $\supp\mu \in B(\bx_0;\delta)$. Then define the family of probability measures 
\begin{equation}
\rho_\epsilon = \rho + \epsilon\mu,\quad 0<\epsilon<\frac{\epsilon_0}{\|\mu\|_{L^\infty}}\,.
\end{equation}
See Figure \ref{fig_rhoeps} for an illustration. Since the generated potential $W*\rho$ is constant in $B(\bx_0;\delta)$ almost everywhere by Lemma \ref{lem_elball} and $\mu$ is a mean-zero function supported in $B(\bx;\delta)$, we have $\int_\Rd (W*\rho)(\bx)\mu(\bx)\rd{\bx}=0$, and then
\begin{equation}
E[\rho_\epsilon] = E[\rho] + \epsilon \int_\Rd (W*\rho)(\bx)\mu(\bx)\rd{\bx} + \epsilon^2E[\mu] = E[\rho] + \epsilon^2E[\mu] < E[\rho]
\end{equation}
for any $\epsilon>0$. This contradicts the assumption that $\rho$ is a $d_\infty$-local minimizer since $d_\infty(\rho,\rho_\epsilon) \le C\epsilon$.
\end{proof}

\begin{figure}
\begin{center}
	\includegraphics[width=0.8\textwidth]{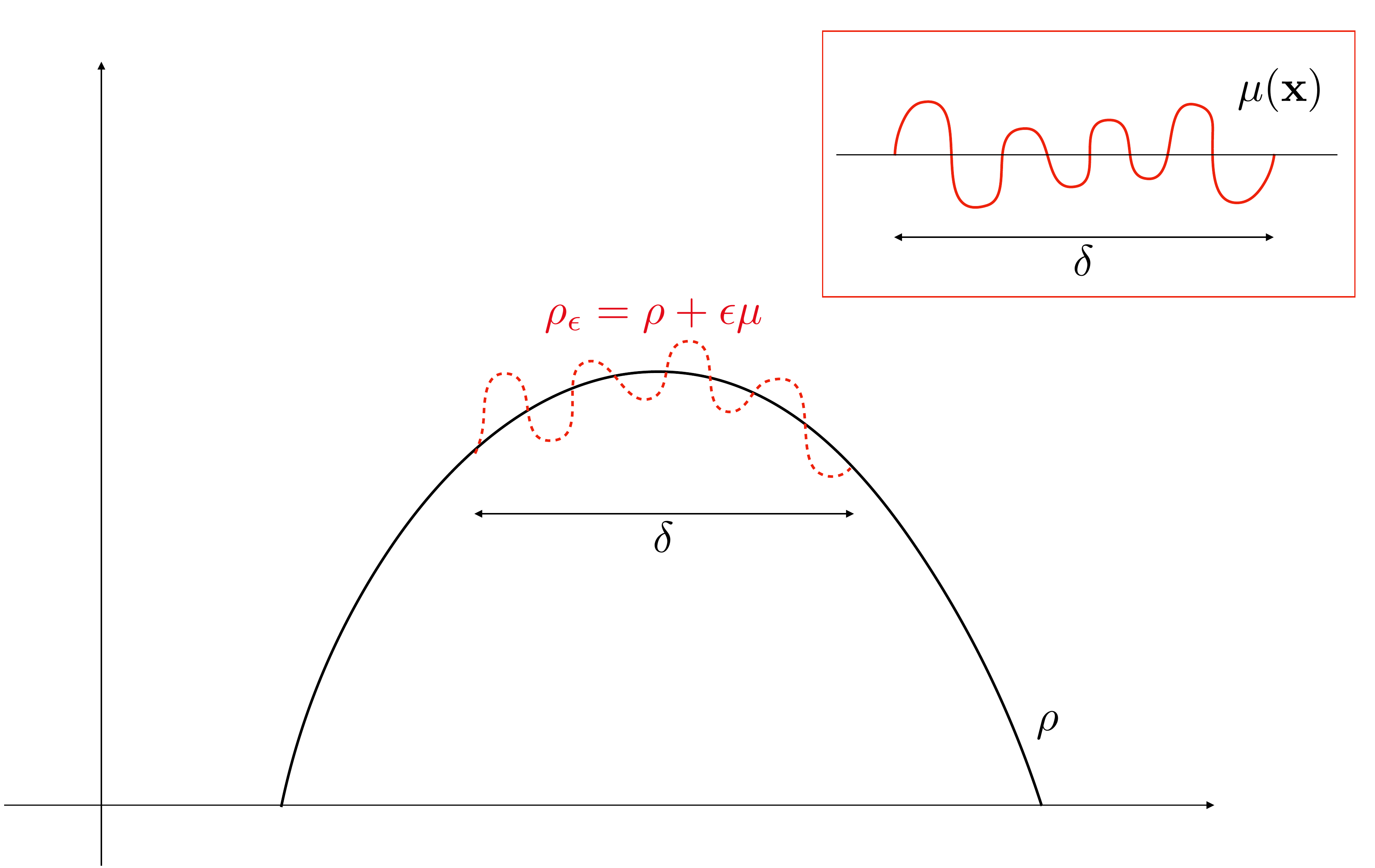}
	\caption{$\mu$ in the definition of linear-interpolation-concavity, and its application is the proof of Theorem \ref{lem_concave}.}
	\label{fig_rhoeps}
\end{center}
\end{figure}

Now we can give sufficient conditions for $W$ to be infinitesimal-concave.

\begin{lemma}\label{lem_inc}
Assume that the interaction potential $W$ is radially symmetric satisfying \eqref{eq:hyp}. Given parameters $\beta,\alpha,c_1,C_1>0$. For any $\delta>0$, there exists $R>0$, such that the conditions
\begin{itemize}
\item[1.] $W(r) \le C_1(1+r)^\beta$. 
\item[2.] $\hat{W}$ (as a distribution) is a function on any compact set inside $\mathbb{R}^d\backslash \{0\}$.
\item[3.] There exists an interval $J\subseteq \mathbb{R}_+$ with $|J|=R$ such that $\hat{W}(\xi) < -c_1R^{-\alpha},\,\forall |\xi|\in J$.
\end{itemize}
imply that $W$ is linear-interpolation-concave with size $\delta$. In particular, if in addition condition 3 is true for any $R\ge 1$, then $W$ is infinitesimal-concave.
\end{lemma}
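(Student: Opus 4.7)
Plan: The idea is to construct a compactly supported, mean-zero, $L^\infty$ test function $\mu$ whose Fourier transform $\hat\mu$ is concentrated in the annulus $\{|\xi|\in J\}$, so that the Parseval identity $E[\mu]=\frac{1}{2(2\pi)^d}\int_{\Rd}\hat W(\xi)|\hat\mu(\xi)|^2\rd\xi$ is dominated by the strongly negative contribution of $\hat W$ on $J$. Since $\diam(\supp\mu)\le\delta$, uncertainty rules out perfect Fourier concentration; instead I use a single modulated bump with carrier frequency $\xi_0$ at the midpoint of $J$, so that $\hat\mu$ places its $L^2$-mass in a ball of radius $\sim1/\delta$ around $\pm\xi_0$. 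Taking $R$ large enough that $\delta R\gg1$ forces these balls to sit well inside $\{|\xi|\in J\}$.

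Concretely, fix once and for all a real radial $\phi_0\in C_c^\infty(B(0,1/4))$ with $\int\phi_0=1$, scale it as $\phi(\bx):=\delta^{-d}\phi_0(\bx/\delta)\in C_c^\infty(B(0,\delta/4))$, and choose $\xi_0$ with $|\xi_0|=a+R/2$ where $J=[a,a+R]$. Set
\begin{equation*}
\mu(\bx):=\phi(\bx)\cos(\xi_0\cdot\bx)-\hat\phi(\xi_0)\phi(\bx),\qquad\hat\mu(\xi)=\tfrac12\hat\phi(\xi-\xi_0)+\tfrac12\hat\phi(\xi+\xi_0)-\hat\phi(\xi_0)\hat\phi(\xi).
\end{equation*}
Radiality of $\phi_0$ makes $\hat\phi(\cdot)=\hat\phi_0(\delta\,\cdot)$ real radial, so $\int\mu=\hat\phi(\xi_0)-\hat\phi(\xi_0)=0$ and $\int\bx\,\mu(\bx)\rd\bx=0$, which gives $|\hat\mu(\xi)|^2=O(|\xi|^4)$ near the origin. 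The Parseval formula for $E[\mu]$ is justified by mollifying $W_\varepsilon=W\ast\rho_\varepsilon$ and letting $\varepsilon\to0$: assumption~2 makes $\hat W$ a function on compact subsets of $\Rd\setminus\{0\}$, Schwartz decay of $\hat\phi$ handles infinity, and the $|\xi|^4$ vanishing of $|\hat\mu|^2$ at $0$ absorbs the $\sim|\xi|^{-d-\beta}$-type weak singularity of $\hat W$ there coming from assumption~1 (adding further polynomial moment-killing corrections to $\mu$ if $\beta$ is large, while keeping the modulated-bump structure and $\supp\mu\subset B(0,\delta/2)$).

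Split the Fourier integral at $\{|\xi|\in J\}$. On $B(\xi_0,R/4)\subset\{|\xi|\in J\}$, Schwartz decay of $\hat\phi_0$ at scale $1$ makes $\hat\phi(\xi_0)=\hat\phi_0(\delta\xi_0)$, $\hat\phi(\xi+\xi_0)$ and $\hat\phi(\xi)$ arbitrarily small for $\xi\in B(\xi_0,R/4)$ once $\delta R\gg1$, so $|\hat\mu(\xi)|^2\approx\tfrac14|\hat\phi(\xi-\xi_0)|^2$ there. After $\eta=\delta(\xi-\xi_0)$, and the analog near $-\xi_0$,
\begin{equation*}
\int_{|\xi|\in J}|\hat\mu|^2\rd\xi\;\ge\;c_\phi\;>\;0\quad\text{for }R\text{ large},
\end{equation*}
with $c_\phi\sim\tfrac12\delta^{-d}(2\pi)^d\|\phi_0\|_{L^2}^2$, and assumption~3 then bounds the $J$-contribution to $E[\mu]$ by $-c_1c_\phi R^{-\alpha}/(2(2\pi)^d)$. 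The remaining off-$J$ tail is the main obstacle: $|\hat\mu|^2$ decays faster than any polynomial away from $\pm\xi_0$ and vanishes to order $4$ at $0$, while $\hat W$ is at worst $|\xi|^{-d-\beta}$-singular at $0$, a bounded function on compact annuli disjoint from $0$ (assumption~2), and polynomially growing at infinity (from $W\le C_1(1+r)^\beta$). Splitting the complement of $J$ into a neighborhood of $0$, a bounded annulus disjoint from $0$, and a region at infinity, and matching the moment-vanishing of $\mu$ to the exponent $\beta$, shows the off-$J$ contribution is $O(R^{-N})$ for every $N$, hence $o(R^{-\alpha})$. Thus $E[\mu]<0$ once $R$ is large, proving $W$ is linear-interpolation-concave of size $\delta$; the infinitesimal-concave statement follows since $\delta>0$ was arbitrary and assumption~3 is then available for all $R\ge1$.
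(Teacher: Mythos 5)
Your construction is genuinely different from the paper's, and the comparison is instructive. The paper builds $\mu_1$ so that $\hat\mu_1$ is \emph{compactly supported} inside $\{\xi:|\xi|\in J\}$ (a Fourier-space bump of width $R/2$), so that $E[\mu_1]=\frac12\int\hat W|\hat\mu_1|^2\,\rd\xi$ is an honest convergent integral taken entirely over the region where condition~3 applies; the price is that $\mu_1$ is not compactly supported in physical space, and one must truncate it to $B(0,\delta/2)$ and bound $|E[\mu]-E[\mu_1]|$ via physical-space estimates using only $W(r)\le C_1(1+r)^\beta$. You instead put the compact support where you want it from the start (a modulated bump $\phi\cos(\xi_0\cdot\bx)$ minus a mean-correction, with $\supp\mu\subset B(0,\delta/4)$), which is cleaner on the physical side: no truncation, no $\lambda_N$-type correction. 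The price is that $\hat\mu$ is only Schwartz-concentrated near $\pm\xi_0$, never compactly supported, so the Parseval pairing $\langle\hat W,|\hat\mu|^2\rangle$ sees $\hat W$ on \emph{all} of $\Rd$, including at the origin and at infinity.

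Your estimate of the $J$-contribution is fine, and the near-origin contribution can indeed be killed by enforcing enough vanishing moments of $\mu$ (as you note). The real gap is the ``region at infinity'' of your off-$J$ tail. You claim the off-$J$ contribution is $O(R^{-N})$ for all $N$, but this presumes a quantitative bound on $\hat W$ on $\{\xi:|\xi|\notin J\}$, and the hypotheses supply none. Condition~2 says only that $\hat W$ is \emph{some} $L^1_{\mathrm{loc}}$ function on compacts away from $0$; condition~1 controls $W$, not $\hat W$, and does not imply any polynomial growth bound on $\hat W$ at infinity. The Schwartz decay of $|\hat\mu|^2$ off the carrier frequencies $\pm\xi_0$ cannot by itself control $\int_{|\xi|\notin J}\hat W|\hat\mu|^2\,\rd\xi$ against an unbounded $\hat W$. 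To make the off-$J$ piece rigorous you would have to write it as a distributional pairing and convert back to physical space, i.e.\ estimate $\int W(\bx)\,\check g(\bx)\,\rd\bx$ with $g=(1-\chi_J)|\hat\mu|^2$ using $W(r)\le C_1(1+r)^\beta$ and the decay of $\check g$ --- which is precisely the physical-space error estimate in the paper's proof. So the two arguments converge: the paper avoids the issue by making $\supp\hat\mu_1$ compact and paying in physical space; your argument avoids the physical-space truncation but must pay in physical space anyway to handle the Fourier tails. As written, the assertion that the off-$J$ contribution is $O(R^{-N})$ is unjustified and is the missing step.
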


\begin{proof}
We may assume $\delta\le1$ without loss of generality, and then start by requiring that $R\ge 1/\delta\ge1$.
We fix a smooth positive radial function $\phi(\xi)$ supported on $B(0;\frac{1}{2})$. Then for any $m>0$, there exists $C_m$ such that
\begin{equation}\label{checkphi}
|\check{\phi}(\bx)| \le C_m(1+|\bx|)^{-m}.
\end{equation}

We need to find a nonzero function $\mu\in L^\infty(\Rd)$ with $\int_\Rd\mu\rd{\bx}=0$ and $\diam(\supp\mu) \le \delta$, such that $E[\mu] < 0$. We define $\mu$ by
\begin{equation}
\mu = \chi_{B(0;\delta/2)}\cdot\Big(\mu_1 - \frac{1}{|B(0;\delta/2)|}\int_{B(0;\delta/2)}\mu_1\rd{\bx}\Big),\quad \mu_1 =  \cF^{-1}\Big(\phi(\frac{\cdot - \xi_J\vec{e}_1}{R})+\phi(\frac{\cdot + \xi_J\vec{e}_1}{R})\Big)
\end{equation}
where $\vec{e}_1=(1,0,\dots,0)^T$, and $\xi_J$ is the center of the interval $J=J(R)$ given as in condition 3. Notice that $\mu_1$ is real-valued since $\hat{\mu}_1$ is even, by definition. Also, $\mu_1$ is mean-zero on $\mathbb{R}^d$ since $\hat{\mu}_1(0)=0$, and $\mu$ is mean-zero by definition. See Figure \ref{fig_mu} for an illustration.

\begin{figure}
\begin{center}
	\includegraphics[width=0.95\textwidth]{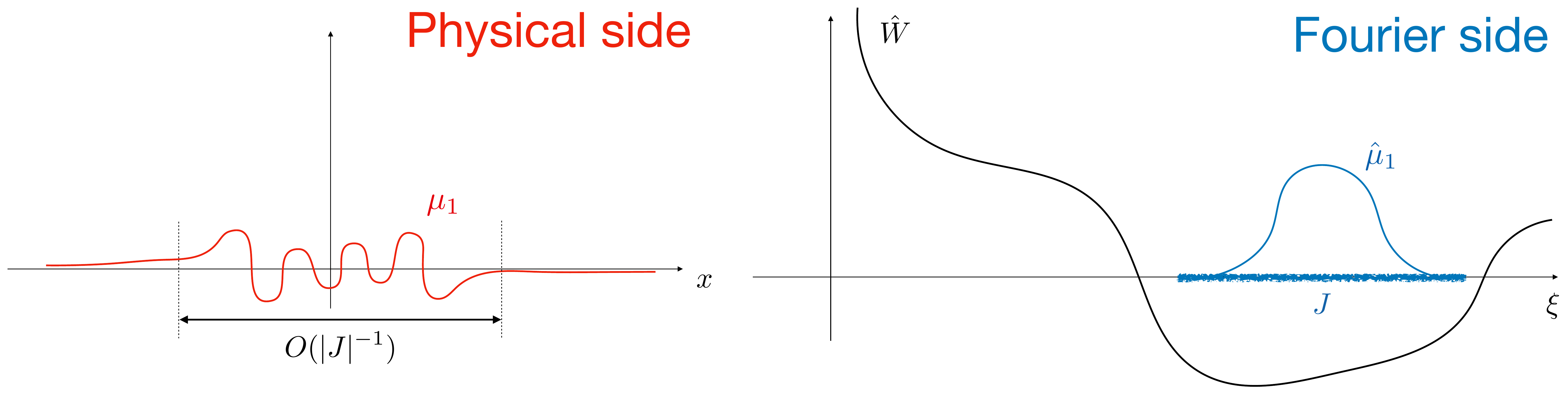}
	\caption{Construction of $\mu$ in the proof of Lemma \ref{lem_inc}. On the Fourier side $\hat{\mu}_1$ is supported on the radial interval $J$ where $\hat{W}$ is negative. On the physical side $\mu_1$ is basically supported on a ball of radius $O(|J|^{-1})$, with a small tail.}
	\label{fig_mu}
\end{center}
\end{figure}

{\bf STEP 1}: We first give a negative upper bound for $E[\mu_1]$. 
In fact, notice that 
\begin{equation}
\supp\Big(\phi(\frac{\cdot - \xi_J\vec{e}_1}{R})\Big) = B(\xi_J\vec{e}_1;\frac{R}{2}) \subseteq\{\xi: |\xi|\in J\}.
\end{equation}
The condition that $\hat{W}$ is a function inside $\mathbb{R}^d\backslash \{0\}$ allows us to apply the formula $E[\mu_1]=\tfrac{1}{2}\int_\Rd \hat{W}|\hat{\mu}_1|^2\rd{\xi}$ since $\mu_1$ is an $L^1$ function with sufficient decay at infinity and $0\notin\supp\hat{\mu}_1$. Therefore we get
\begin{equation}\label{Emu1}\begin{split}
E[\mu_1] = & \frac{1}{2}\int_{|\xi|\in J} \hat{W}(\xi)\left|\phi\Big(\frac{\xi - \xi_J\vec{e}_1}{R}\Big)+\phi\Big(\frac{\xi + \xi_J\vec{e}_1}{R}\Big)\right|^2\rd{\xi} = \int_{|\xi|\in J} \hat{W}(\xi)\left|\phi\Big(\frac{\xi - \xi_J\vec{e}_1}{R}\Big)\right|^2\rd{\xi}\\
 \le & -c_1R^{-\alpha}\int_{|\xi|\in J} \left|\phi\Big(\frac{\xi - \xi_J}{R}\Big)\right|^2\rd{\xi} = -c_1\|\phi\|_{L^2}^2R^{-\alpha+d}.
\end{split}\end{equation}

{\bf STEP 2}: Analyze the difference between $\mu_1$ and $\mu$ on the physical side. Fix a choice of $m$ with $m\ge\beta+\alpha+d+1$. Then \eqref{checkphi} implies
\begin{equation}
\Big|\cF^{-1}\Big(\phi(\frac{\cdot - \xi_J\vec{e}_1}{R})\Big)(\bx)\Big| = \Big|\cF^{-1}\Big(\phi(\frac{\cdot }{R})\Big)(\bx)\Big| \le CR^d(1+R|\bx|)^{-m}.
\end{equation}
Therefore, we deduce
\begin{equation}\label{eqmu1}
|\mu_1(\bx)| \le CR^d(1+R|\bx|)^{-m},\quad \|\mu_1\|_{L^1}\le CR^d\int_\Rd(1+R|\bx|)^{-m}\rd{\bx}=C.
\end{equation}
Combined with condition 1, this implies that
\begin{equation}\label{Wmu1}\begin{split}
|(W*\mu_1)(\bx)| \le & \int_{|\bx-\by|\le |\bx|}|\mu_1(\bx-\by)W(\by)|\rd{\by} + \int_{|\by|> |\bx|}|\mu_1(\by)W(\bx-\by)|\rd{\by} \\
\le & C(1+|\bx|)^\beta\|\mu_1\|_{L^1} + CR^d\int_{|\by|>|\bx|}(1+R|\by|)^{-m}(1+|\bx-\by|)^\beta\rd{\by}\\
\le & C(1+|\bx|)^\beta\|\mu_1\|_{L^1} + CR^d\int_\Rd(1+|\bx-\by|)^{-m}(1+|\bx-\by|)^\beta\rd{\by}\\
\le & CR^d(1+|\bx|)^\beta,
\end{split}\end{equation}
where the second last inequality uses the condition $R\ge 1$ and the fact $|\by|\ge |\bx-\by|/2$ for any $|\by|>|\bx|$, and the last inequality uses the definition of $m$ to get the finiteness of the integral.

Since $\mu_1$ is mean-zero on $\mathbb{R}^d$, we have
\begin{equation}\begin{split}
\left|\int_{B(0;\delta/2)}\mu_1\rd{\bx}\right| = & \left|\int_{B(0;\delta/2)^c}\mu_1\rd{\bx}\right| \le CR^d\int_{|\bx|>\delta/2} (1+R|\bx|)^{-m}\rd{\bx} \\
= & C\int_{|\bx|>\delta R/2} (1+|\bx|)^{-m}\rd{\bx} \le C(\delta R)^{-m+d} .
\end{split}\end{equation}
Therefore, we obtain
\begin{equation}\label{mumu1}\begin{split}
|\mu(\bx)-\mu_1(\bx)| \le & |\chi_{B(0;\delta/2)^c}(\bx)\mu_1(\bx)| + \left|\chi_{B(0;\delta/2)}(\bx)\frac{1}{|B(0;\delta/2)|}\int_{B(0;\delta/2)}\mu_1(\by)\rd{\by}\right| \\
\le & |\mu_1(\bx)|\chi_{B(0;\delta/2)^c}(\bx) + C(\delta R)^{-m+d}\delta^{-d}\chi_{B(0;\delta/2)}(\bx).
\end{split}\end{equation}
Notice that $(W*\chi_{B(0;\delta/2)})(\bx) \le C(1+|\bx|)^\beta$ for any $0<\delta<1$. Therefore, using the assumption $R\ge 1/\delta$ at the beginning of the proof,
\begin{equation}
W*\Big(C(\delta R)^{-m+d}\delta^{-d}\chi_{B(0;\delta/2)}(\bx)\Big)(\bx) \le C(\delta R)^{-m+d}(1+|\bx|)^\beta \le C\delta^{-d}(1+|\bx|)^\beta.
\end{equation}
Therefore, combined with \eqref{Wmu1} (which is also true if $\mu_1$ is replaced by $|\mu_1(\bx)|\chi_{B(0;\delta/2)^c}(\bx)$), we get
\begin{equation}
|(W*\mu)(\bx)| \le C\delta^{-d}R^d(1+|\bx|)^\beta,
\end{equation}
using $R\ge 1$ and $\delta\le 1$.
Finally, combining with \eqref{eqmu1}, \eqref{mumu1} and using $R\ge 1,\,\delta\le 1$,
\begin{equation}\begin{split}
|E[\mu]-E[\mu_1]| \le & \frac{1}{2}\int_\Rd |(W*\mu)(x)(\mu(\bx)-\mu_1(\bx))|\rd{\bx} + \frac{1}{2}\int_\Rd |(W*\mu_1)(\bx)(\mu(\bx)-\mu_1(\bx))|\rd{\bx} \\
\le & C\delta^{-d}R^d\int_\Rd(1+|\bx|)^\beta|(\mu(\bx)-\mu_1(\bx))|\rd{\bx}  \\
\le & C\delta^{-d}R^d\int_{B(0;\delta/2)^c}(1+|\bx|)^\beta|\mu_1(\bx)|\rd{\bx} + C\delta^{-2d}R^d(\delta R)^{-m+d}\int_{B(0;\delta/2)}(1+|\bx|)^\beta\rd{x} \\
\le & C\delta^{-d}R^{2d}\int_{B(0;\delta/2)^c}(1+R|\bx|)^\beta(1+R|\bx|)^{-m}\rd{\bx} + C\delta^{-2d}R^d(\delta R)^{-m+d}\delta^d\\
\le & C\delta^{-d}R^d(\delta R)^{-m+\beta+d}+ C\delta^{-d}R^d(\delta R)^{-m+d},
\end{split}\end{equation}
which implies
\begin{equation}\label{Emumu1}
\frac{|E[\mu]-E[\mu_1]|}{R^{-\alpha+d}} \le C(\delta^{-m+\beta}R^{-m+\beta+\alpha+d} + \delta^{-m}R^{-m+\alpha+d}).
\end{equation}
By the choice of $m$, all the above powers on $R$ are negative. Compared with \eqref{Emu1}, if $R$ large enough, one can guarantee the above RHS is no more than $c_1\|\phi\|_{L^2}^2/2$, and the conclusion is obtained.
\end{proof}

\begin{remark}
Notice that $m$ in the above proof can be chosen arbitrarily large. Therefore, when comparing the RHS of \eqref{Emumu1} and \eqref{Emu1}, for any $\epsilon>0$, one can choose $m$ such that $R\sim \delta^{-1-\epsilon}$.
\end{remark}


\section{Construction of infinitesimal-concave attractive-repulsive potentials}

In this section we aim to construct a class of infinitesimal-concave attractive-repulsive potentials. We start from the Riesz repulsion with quadratic attraction
\begin{equation}
W_0(\bx) = c_{d,\alpha}|\bx|^{\alpha-d} + \frac{|\bx|^2}{2},\quad c_{d,\alpha} = \frac{\Gamma((d-\alpha)/2)}{\pi^{d/2}2^\alpha \Gamma(\alpha/2)}
\end{equation}
where $0<\alpha<d+2$ is a parameter. Here $c_{d,\alpha}$ is chosen so that $\cF[c_{d,\alpha}|\bx|^{\alpha-d}] = |\xi|^{-\alpha}$ away from $\xi=0$, and notice that $c_{d,\alpha}>0$ when $0<\alpha<d$, and $c_{d,\alpha}<0$ when $d<\alpha<d+2$. When $\alpha=d$, we use the tradition $c_{d,\alpha}|\bx|^{\alpha-d}:=-\frac{2}{\pi^{d/2}2^\alpha \Gamma(\alpha/2)}\ln|\bx|$. Finally, in one dimension, we consider the ranges $0<\alpha\leq 1$ and $2\leq\alpha<3$, since for $1<\alpha<2$ we do not know if $\cF[c_{1,\alpha}|\bx|^{\alpha-1}] = |\xi|^{-\alpha}$ holds.

Then define the new potentials by adding a hierarchical sequence of perturbations: 
\begin{equation}\label{Wcons}
W(\bx) = W_0(\bx) - c_W\sum_{k=1}^\infty \lambda^{(\alpha-d)k} \exp\Big(-\frac{|\bx|^2}{2\lambda^{2k}}\Big)
\end{equation}
where $0<\lambda<1$ and $c_W>0$. 

\begin{theorem}\label{thm_cons1}
Let $W$ be given by \eqref{Wcons}. There exist $c(d,\alpha)$ and $C(d,\alpha)$ such that the following hold:
\begin{itemize}
\item If $c_W > c(d,\alpha)$, then $W$ satisfies the conditions in Lemma \ref{lem_inc} for any $R\ge 1$, with $\beta=2$ and the same $\alpha$. It follows that $W$ is infinitesimal-concave.
\item If $c_W < C(d,\alpha)$, then for sufficiently small $\lambda$ (depending on $d$, $\alpha$ and $C(d,\alpha)-c_W$), $W$ is repulsive for short distances and attractive for long distances: there exists $R_W>0$ such that $W'(r) < 0,\,\forall 0<r<R_W$ and $W'(r) > 0,\,\forall r>R_W$.
\end{itemize}
\end{theorem}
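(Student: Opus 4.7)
My approach is to work on the Fourier side for bullet one and on the derivative $W'(r)$ in real variables for bullet two. A direct computation shows that, for $\xi\neq 0$,
\[\hat W(\xi) = |\xi|^{-\alpha} - c_W(2\pi)^{d/2}\sum_{k=1}^\infty \lambda^{\alpha k}\exp\bigl(-\tfrac{1}{2}\lambda^{2k}|\xi|^2\bigr),\]
since the quadratic $|\bx|^2/2$ contributes only distributionally at the origin. This makes conditions~1 and~2 of Lemma~\ref{lem_inc} routine with $\beta=2$: near the origin $W$ is dominated by the Riesz term, at infinity by $|\bx|^2/2$, and $\hat W$ is manifestly a smooth function away from $\xi=0$. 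For condition~3, given $R\ge 1$, I would pick $k_0\in\N$ so that $|\xi_0|:=\sqrt{2}\,\lambda^{-k_0}$ lies in $[R,R/\lambda]$ and take $J:=[|\xi_0|,|\xi_0|+R]$. On $J$ one has $\lambda^{2k_0}|\xi|^2\in[2,8]$, so the single $k=k_0$ term bounds the sum from below by $\lambda^{\alpha k_0}e^{-4}=2^{\alpha/2}|\xi_0|^{-\alpha}e^{-4}\ge 2^{\alpha/2}\lambda^\alpha e^{-4}\,R^{-\alpha}$, whence
\[\hat W(\xi)\le R^{-\alpha}\bigl[1 - c_W(2\pi)^{d/2}\,2^{\alpha/2}\lambda^\alpha e^{-4}\bigr],\qquad |\xi|\in J.\]
This is $\le -c_1 R^{-\alpha}$ once $c_W$ exceeds an explicit threshold $c(d,\alpha)$ (to eliminate the $\lambda^\alpha$ factor one sums the full geometric tail $\sum_{k\ge k_0}\lambda^{\alpha k}$, absorbing it into $1/(1-\lambda^\alpha)$). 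Lemma~\ref{lem_inc} then gives infinitesimal concavity.

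For bullet two I would compute
\[W'(r) = r + (\alpha-d)c_{d,\alpha}\,r^{\alpha-d-1} + c_W\, r\sum_{k=1}^\infty \lambda^{(\alpha-d-2)k}\exp\bigl(-\tfrac{r^2}{2\lambda^{2k}}\bigr),\]
with $\alpha=d$ handled by the natural logarithmic limit. A sign check on $c_{d,\alpha}$ (positive for $\alpha<d$, negative for $\alpha>d$) gives $(\alpha-d)c_{d,\alpha}<0$ in every admissible regime, so the Riesz term is negative near $0$. Introducing $v_k:=r^2/\lambda^{2k}$ and $f(v):=v^{(d+2-\alpha)/2}e^{-v/2}$, the correction takes the clean form $c_W\, r^{\alpha-d-1}\sum_k f(v_k)$. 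Because $\{v_k\}$ is geometric with ratio $\lambda^{-2}$, when $\lambda$ is small at most $O(1)$ indices $k$ have $v_k$ near the maximizer of the bounded profile $f$, the small-$k$ tail is controlled by a geometric sum with ratio $\lambda^{d+2-\alpha}$, and the large-$k$ tail decays super-exponentially; hence $\sup_{r>0}\sum_k f(v_k)\le M_0(d,\alpha)$ for some $\lambda$-uniform constant, provided $\lambda$ is small enough. Consequently, for $c_W<|(\alpha-d)c_{d,\alpha}|/M_0$ the Riesz term dominates and $W'(r)<0$ for small $r$, while for large $r$ every Gaussian vanishes rapidly and $W'(r)\sim r>0$.

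The main obstacle is to upgrade these asymptotics into a \emph{unique} sign change at a single $R_W$. For this I would study $g(r):=W'(r)/r$, for which
\[g'(r) = (\alpha-d)(\alpha-d-2)c_{d,\alpha}\,r^{\alpha-d-3} - c_W\, r\sum_{k=1}^\infty \lambda^{(\alpha-d-4)k}\exp\bigl(-\tfrac{r^2}{2\lambda^{2k}}\bigr).\]
A case check shows $(\alpha-d)(\alpha-d-2)c_{d,\alpha}>0$ throughout the admissible range $0<\alpha<d+2$. Running the same $v_k$-substitution on the second sum (now with profile $v^{(d+4-\alpha)/2}e^{-v/2}$) bounds it uniformly in $r$ by $M_0'(d,\alpha)\,r^{\alpha-d-3}$ for $\lambda$ small, so lowering $c_W$ further below $(\alpha-d)(\alpha-d-2)c_{d,\alpha}/M_0'$ forces $g'(r)>0$ on $(0,\infty)$. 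Combined with $g(0^+)=-\infty$ (from the negative Riesz singularity dominating the correction in the same way as in the previous paragraph) and $g(\infty)=1$, strict monotonicity of $g$ produces a unique zero $R_W$, hence $W'<0$ on $(0,R_W)$ and $W'>0$ on $(R_W,\infty)$. The constant $C(d,\alpha)$ in the statement is then the minimum of the two thresholds produced above.
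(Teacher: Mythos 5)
Your first bullet has a gap. After your algebra, the condition for $\hat W(\xi)\le -c_1 R^{-\alpha}$ on $J$ becomes $c_W > 2^{-\alpha/2}\lambda^{-\alpha}e^4(2\pi)^{-d/2}$, which blows up as $\lambda\to 0$, while the theorem requires $c(d,\alpha)$ to be $\lambda$-independent. Your proposed fix of summing the geometric tail only replaces $\lambda^{\alpha k_0}e^{-4}$ by $\lambda^{\alpha k_0}e^{-4}/(1-\lambda^\alpha)$, a gain of a factor $(1-\lambda^\alpha)\le 1$ that does not cancel the $\lambda^{-\alpha}$. The loss actually comes from the slack in placing $|\xi_0|$: allowing $|\xi_0|\in[R,R/\lambda]$ means $\lambda^{\alpha k_0}=2^{\alpha/2}|\xi_0|^{-\alpha}$ can be as small as $2^{\alpha/2}\lambda^\alpha R^{-\alpha}$. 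The correct patch is to compare $\hat W$ against $|\xi|^{-\alpha}$ rather than against $R^{-\alpha}$: with $y:=\lambda^{2k_0}|\xi|^2\in[2,8]$ one has $\hat W(\xi)\le \lambda^{\alpha k_0}\bigl[y^{-\alpha/2}-(2\pi)^{d/2}c_W e^{-y/2}\bigr]$, which is uniformly negative on $[2,8]$ once $c_W>(2\pi)^{-d/2}\max_{y\in[2,8]}y^{-\alpha/2}e^{y/2}$, a $\lambda$-free threshold; the residual $\lambda^\alpha$ factor then lands harmlessly on the lemma's constant $c_1$ (which may depend on $\lambda$), not on the threshold for $c_W$. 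The paper avoids the whole issue by centering $J$ at $\sqrt{\alpha}\,\lambda^{-j}$ with width $2\epsilon\lambda^{-j}$, so that $y$ stays near the maximizer $y=\alpha$ of $y^{\alpha/2}e^{-y/2}$ and the sharp threshold $c(d,\alpha)=\alpha^{-\alpha/2}e^{\alpha/2}(2\pi)^{-d/2}$ emerges.

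Your second bullet takes a genuinely different and cleaner route. Rather than the paper's case split on $W''$ (convexity for $0<\alpha\le d+1$, a separate argument via the sign of $W_0'$ for $d+1<\alpha<d+2$), you show $g(r):=W'(r)/r$ is strictly increasing from $-\infty$ to $1$, which produces a single sign change at once and handles all admissible $\alpha$ uniformly. The sign checks $(\alpha-d)c_{d,\alpha}<0$ and $(\alpha-d)(\alpha-d-2)c_{d,\alpha}>0$ are correct, and bounding $\sum_k v_k^{(d+4-\alpha)/2}e^{-v_k/2}$ by a $\lambda$-uniform constant for small $\lambda$ works exactly as for $\sum_k f(v_k)$. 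The trade-off is that $g'>0$ requires a second upper bound on $c_W$ in addition to the one making $W'<0$ near the origin, so your $C(d,\alpha)$ is strictly smaller than the paper's $(e/(d+2-\alpha))^{(d+2-\alpha)/2}c_{d,\alpha}(d-\alpha)$. This is fine for the theorem as stated, but it shrinks, and may empty, the window $c(d,\alpha)<C(d,\alpha)$ that the subsequent remark and Corollary \ref{cor_frac} rely on, so you would need to re-check that range with your constants.
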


\begin{remark}
The explicit expressions of $c(d,\alpha)$ and $C(d,\alpha)$ are given in \eqref{cda} and \eqref{Cda} respectively. One can show that within the range of $\alpha$ stated before, $c(d,\alpha)<C(d,\alpha)$ if and only if
\begin{equation}\label{condfrac}
\frac{d+2}{2}<\alpha<d+2,\textnormal{ for }d\ge 2;\quad 2\le \alpha < 3,\textnormal{ for }d=1
\end{equation}
after some tedious but easy explicit computations. Therefore, for such $\alpha$, one can construct $W$ which is infinitesimal-concave and also repulsive for short distances and attractive for long distances.

\end{remark}

\begin{proof}
\

{\bf First claim}:
Item 1 of the conditions of Lemma \ref{lem_inc} is clear. To check items 2 and 3, we start by noticing that $\cF[|\bx|^2]= -(2\pi)^d\Delta\delta(\xi)$ is supported on $\{0\}$, and (as a distribution) $\cF[c_{d,\alpha}|\bx|^{\alpha-d}] = \frac{1}{|\xi|^\alpha},\,\forall \xi\ne 0$. Therefore, for any $\xi\ne 0$, 
\begin{equation}
\hat{W}(\xi) = \frac{1}{|\xi|^\alpha} - c_W\sum_{k=1}^\infty \cF \Big[\lambda^{(\alpha-d)k}\exp(-\frac{(\cdot)^2}{2\lambda^{2k}})\Big](\xi) = \frac{1}{|\xi|^\alpha} - (2\pi)^{d/2}c_W\sum_{k=1}^\infty\lambda^{\alpha k} \exp\Big(-\frac{\lambda^{2k}|\xi|^2}{2}\Big)
\end{equation}
where we use $\cF[\exp(-\frac{(\cdot)^2}{2})]=(2\pi)^{d/2}\exp(-\frac{|\xi|^2}{2})$. See Figure \ref{fig_kth} for an illustration. This shows item 2 of the conditions of Lemma \ref{lem_inc}. 

\begin{figure}
\begin{center}
	\includegraphics[width=0.95\textwidth]{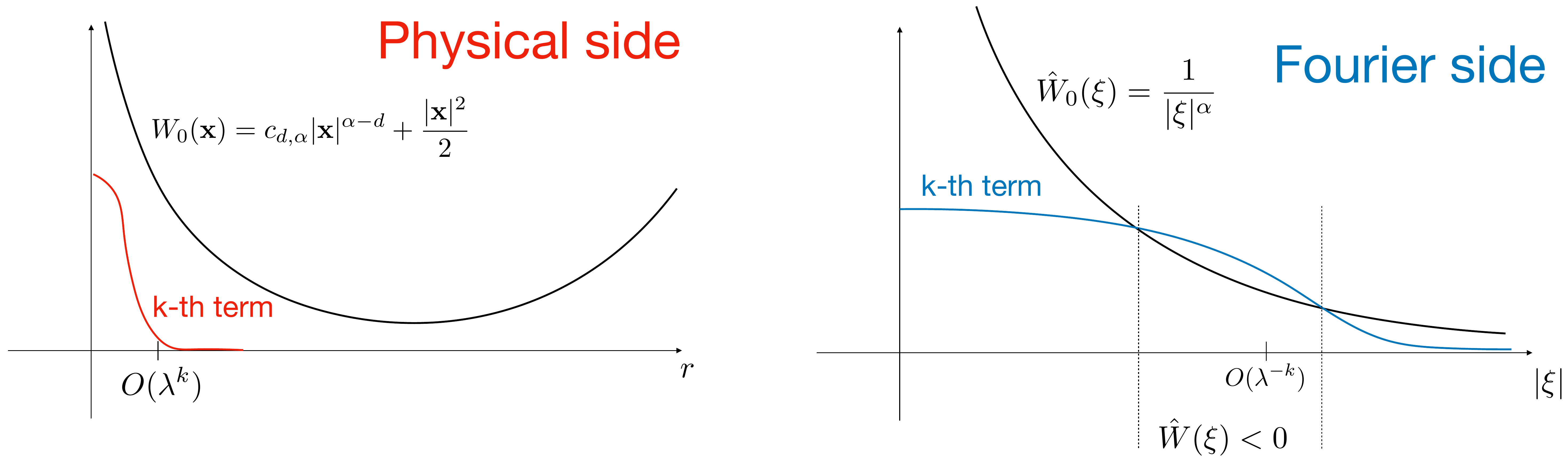}
	\caption{The $k$-th term in the construction of $W$. On the physical side, it is essentially supported in a ball of radius $O(\lambda^k)$, while on the Fourier side, it is essentially supported in a ball of radius $O(\lambda^{-k})$, and its size at $|\xi|\sim O(\lambda^{-k})$ is of the same order of $\hat{W}_0$, and $c_W$ is chosen such that $\hat{W}_0(\xi)$ is smaller than the $k$-th term at this scaling. }
	\label{fig_kth}
\end{center}
\end{figure}

Assume $c_W > c(d,\alpha)$ where $c(d,\alpha)$ will be given later in \eqref{cda}. For any fixed $j\in\mathbb{Z}_+$, we claim that 
\begin{equation}\label{claim_Wxi}
\hat{W}(\xi) < -c|\xi|^{-\alpha},\quad \mbox{for all $\xi$ with } (\sqrt{\alpha}-\epsilon)\lambda^{-j} \le |\xi| \le (\sqrt{\alpha}+\epsilon)\lambda^{-j}
\end{equation}
for some positive constants $\epsilon$ and $c$ independent of $j$. Then, since $\lambda<1$, item 3 of the conditions in Lemma \ref{lem_inc} follows since the length of the interval $R=2\epsilon \lambda^{-j}$ is arbitrarily large by taking $j$ large. 

To prove the claim \eqref{claim_Wxi}, notice that 
\begin{equation}
\hat{W}(\xi) \le \frac{1}{|\xi|^\alpha} - (2\pi)^{d/2}c_W\lambda^{\alpha j} \exp\Big(-\frac{\lambda^{2j}|\xi|^2}{2}\Big) = \lambda^{\alpha j}\psi_1(\lambda^{2j}|\xi|^2),\quad \psi_1(y):= \frac{1}{y^{\alpha/2}} - (2\pi)^{d/2}c_We^{-y/2}\,.
\end{equation}
Notice that the ratio of the two terms in $\psi_1$ is
\begin{equation}
\psi_2(y) := \frac{e^{-y/2}}{1/y^{\alpha/2}} = y^{\alpha/2}e^{-y/2}
\end{equation}
which achieves its maximum at $\psi_2(\alpha) = \alpha^{\alpha/2}e^{-\alpha/2}$ since $\psi_2'(y) = (\frac{\alpha}{2}-\frac{y}{2})y^{(\alpha-2)/2}e^{-y/2}$. Therefore, if 
\begin{equation}\label{cda}
c_W>c(d,\alpha):=\alpha^{-\alpha/2}e^{\alpha/2}(2\pi)^{-d/2}
\end{equation} 
then $\psi_1(y) \le -c < 0$ in a neighborhood of $\alpha$. It follows that $\psi_1(y) \le -c < 0$ if $\sqrt{y}\in (\sqrt{\alpha}-\epsilon,\sqrt{\alpha}+\epsilon)$ for some $\epsilon>0$, which implies \eqref{claim_Wxi}. 


{\bf Second claim}:

{\bf STEP 1}: We first analyze small $r=|\bx|$: 
\begin{equation}\label{dWsmall}\begin{split}
W'(r) = & -c_{d,\alpha}(d-\alpha)r^{\alpha-d-1} + r + c_W\sum_{k=1}^\infty (\lambda^{(\alpha-d-2)k} r)\exp\Big(-\frac{|\lambda^{-k} r|^2}{2}\Big) \\
= &  \left(-c_{d,\alpha}(d-\alpha) + r^{-\alpha+d+2} + c_W\sum_{k=1}^\infty \psi(\lambda^{-2k} r^2) \right)r^{\alpha-d-1} \\
\end{split}\end{equation}
where
\begin{equation}
\psi(y):=y^{(-\alpha+d+2)/2} e^{-y/2} .
\end{equation}
Since $\psi'(y) = (\frac{-\alpha+d+2}{2} - \frac{y}{2})y^{(-\alpha+d)/2}e^{-y/2}$, $\psi$ achieves its maximum at $$\psi(-\alpha+d+2) = (-\alpha+d+2)^{(-\alpha+d+2)/2}e^{-(-\alpha+d+2)/2} = ((-\alpha+d+2)/e)^{(-\alpha+d+2)/2} ,$$ and being increasing/decreasing on $[0,-\alpha+d+2]$ and $[-\alpha+d+2,\infty)$ respectively. 

Taking $\lambda$ small enough, we can guarantee that $\lambda\in[0,-\alpha+d+2]$ and $\lambda^{-1}\in[-\alpha+d+2,\infty)$.  Notice that for any $r>0$, $\{\lambda^{-2k}r^2\}_{k=1}^\infty \cap [\lambda,\lambda^{-1})$ has at most one element. Then
\begin{equation}\begin{split}
\sum_{k=1}^\infty \psi(\lambda^{-2k} r^2) \le & ((-\alpha+d+2)/e)^{(-\alpha+d+2)/2} + \sum_{k=-\infty}^0 \psi(\lambda^{1-2k})+ \sum_{k=0}^\infty \psi(\lambda^{-1-2k}) \\
\le & ((-\alpha+d+2)/e)^{(-\alpha+d+2)/2}+ C\sum_{k=-\infty}^0 \lambda^{(1-2k)(-\alpha+d+2)/2}+ C\sum_{k=0}^\infty \lambda^{1+2k} \\
\le & ((-\alpha+d+2)/e)^{(-\alpha+d+2)/2} + C(\lambda^{(-\alpha+d+2)/2} + \lambda),
\end{split}\end{equation}
where we use $\psi(y) \le y^{(-\alpha+d+2)/2}$ and $\psi(y) \le C/y$ to estimate the two summations respectively ($C$ only depending on $d$ and $\alpha$). Therefore, putting together \eqref{dWsmall} with the condition that 
\begin{equation}\label{Cda}
c_W<C(d,\alpha):=(e/(-\alpha+d+2))^{(-\alpha+d+2)/2}c_{d,\alpha}(d-\alpha),
\end{equation}
then there exists $r_1>0$ and $\lambda_1>0$ depending on $C(d,\alpha)-c_W$, such that $W'(r)<0$ for all $0<\lambda\le\lambda_1$ and $0<r\le r_1$, i.e., $W$ is repulsive on $(0,r_1]$.

{\bf STEP 2:} Then we analyze $W'(r)$ for $r\geq r_1$ fixed by the previous step. Notice that 
\begin{equation}\begin{split}
W''(r) = & c_{d,\alpha}(d-\alpha)(d+1-\alpha)r^{\alpha-d-2} + 1 + c_W\sum_{k=1}^\infty \lambda^{(\alpha-d-2)k} (1-\lambda^{-2k}r^2)\exp\Big(-\frac{|\lambda^{-k} r|^2}{2}\Big)\,. \\
\end{split}\end{equation}
We separate into two cases:

\begin{itemize}
\item If $0<\alpha\le d+1$, then $W_0''(r)=c_{d,\alpha}(d-\alpha)(d+1-\alpha)r^{\alpha-d-2} + 1 > 0$ for all $r>0$, and is bounded from below by 1 for $r\ge r_1$.  Notice that $ye^{-y/2}\le C_n y^{-n}$ for any $n>0$ and $y>0$. Using this with $y=\lambda^{-2k}r^2$ and $n=(-\alpha+d+3)/2$, we get
\begin{equation}\begin{split}
W''(r) \ge & 1 - c_WC_n\sum_{k=1}^\infty \lambda^{(\alpha-d-2)k} \lambda^{2nk}r^{-2n}\ge 1 - c_WC_nr_1^{-2n}\sum_{k=1}^\infty \lambda^{k}  
=  1 - c_WC_nr_1^{-2n}\frac{\lambda}{1-\lambda}
\end{split}\end{equation}
which is positive if $\lambda$ is small enough. Then $W(r)$ is convex as a function of $r$ on $[r_1,\infty)$, which implies that there exists only one point $R_W$ where $W'$ changes sign ($R_W$ always exists because $W'(r)<0$ for $r\in (0,r_1]$ and $W'(r)>0$ for large enough $r$).

\item If $d+1< \alpha<d+2$, then explicit calculation shows that there exists $0<r_2<r_3$ such that $W_0''(r)>0$ on $[r_2,\infty)$, and $W_0'(r)<0$ on $(0,r_3]$. If $r_1<r_3$, let $-w_1<0$ be an upper bound of $W_0'$ on $[r_1,r_3]$. Then for $r_1<r\le r_3$, \eqref{dWsmall} combined with the fact that $\psi(y)\le C/y$ gives
\begin{equation}\begin{split}
W'(r) \le & -w_1 + c_W\sum_{k=1}^\infty \psi(\lambda^{-2k} r^2) r^{\alpha-d-1}  \le -w_1 + c_WC\sum_{k=1}^\infty \lambda^{2k} r^{-2} r^{\alpha-d-1} \\
\le & -w_1 + c_WCr_1^{\alpha-d-3}\frac{\lambda^2}{1-\lambda^2}
\end{split}\end{equation}
which is negative if $\lambda$ is small enough. Therefore $W'(r)<0$ on $(r_1,r_3]$. Similar to the previous case,  there exists only one point $R_W\in [r_2,\infty)$ where $W'$ changes sign. This implies $W'(r)<0$ on $(0,R_W)$ and $W'(r)>0$ on $(R_W,\infty)$. If $r_1\ge r_3$, then one can get the conclusion similar to the previous case for $0<\alpha\le d+1$ since again $W_0''(r)$ is bounded below by a positive constant due to $r_2<r_3\leq r_1$.
\end{itemize}

\end{proof}

Finally we show that any $d_\infty$-local minimizer of the potential $W$ in \eqref{Wcons} does not collapse to Dirac masses.

\begin{proposition}\label{profisolated}
Let $W$ be given by \eqref{Wcons} and $C(d,\alpha)$ as in Theorem \ref{thm_cons1}. If $c_W<C(d,\alpha)$ and $\lambda$ is sufficiently small, then the support of any compactly supported $d_\infty$-local minimizer $\rho$ does not contain any isolated points.
\end{proposition}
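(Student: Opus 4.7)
The plan is to argue by contradiction. If $\bx_0\in\supp\rho$ is isolated then $\rho$ must carry a positive atom there, so we may write $\rho=m\delta_{\bx_0}+\rho_1$ with $m>0$ and $\supp\rho_1\cap B(\bx_0,\delta_0)=\emptyset$ for some $\delta_0>0$. The strategy is to exhibit competitors $\tilde\rho$ with $d_\infty(\tilde\rho,\rho)$ arbitrarily small and $E[\tilde\rho]<E[\rho]$, contradicting local minimality. The construction will spread half of the atom at $\bx_0$ equally to two nearby points along a chosen direction.

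If $W(0)=+\infty$ the contradiction is immediate since $E[\rho]\ge\tfrac{m^2}{2}W(0)=+\infty$, which handles the case $0<\alpha\le d$: a short computation on the series defining $W$ (parallel to STEP~1 of the proof of Theorem~\ref{thm_cons1}) shows that for $\lambda$ small the coefficient of $|\bx|^{\alpha-d}$ in $W$ near $\bx=0$ is positive, forcing $W(0)=+\infty$ when $\alpha\le d$. From now on assume $d<\alpha<d+2$, in which case $W$ is continuous at $0$ and $W(0)$ is finite. The key quantitative input will be the sharpening
\begin{equation*}
W(0)-W(r)\ \ge\ c\,r^{\alpha-d}\qquad\text{for all }r\in(0,r_1],
\end{equation*}
obtained by integrating $W'(r)\le -c\,r^{\alpha-d-1}$, which is readily read off from the expression for $W'(r)/r^{\alpha-d-1}$ manipulated in STEP~1 of the proof of Theorem~\ref{thm_cons1} once one keeps the estimates quantitative (using $c_W<C(d,\alpha)$ with $\lambda$ sufficiently small).

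For the competitor, fix a unit vector $\vec{e}_1$ and $0<\epsilon<\min\{\delta_0/2,R_W/2,r_1/2\}$ and set
\begin{equation*}
\tilde\rho\ =\ \rho+\mu_\epsilon,\qquad \mu_\epsilon\ :=\ \tfrac{m}{2}\bigl(\delta_{\bx_0+\epsilon\vec{e}_1}+\delta_{\bx_0-\epsilon\vec{e}_1}\bigr)-m\delta_{\bx_0},
\end{equation*}
so that $\tilde\rho\ge 0$ is a probability measure and $d_\infty(\tilde\rho,\rho)\le\epsilon$. By bilinearity $E[\tilde\rho]-E[\rho]=\int V\,d\mu_\epsilon+E[\mu_\epsilon]$ with $V=W*\rho$. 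Splitting $V=mW(\cdot-\bx_0)+V_1$, where $V_1=W*\rho_1$ is $C^\infty$ on $B(\bx_0,\delta_0/2)$ since $W\in C^\infty(\R^d\setminus\{0\})$ and $\supp\rho_1$ is separated from $\bx_0$, the radial symmetry of $W$ and a Taylor expansion of $V_1$ at $\bx_0$ give, with $\eta:=W(0)-W(\epsilon)>0$ and $W(r)$ denoting the radial profile,
\begin{equation*}
\int V\,d\mu_\epsilon\ =\ -m^2\eta+O(\epsilon^2),\qquad E[\mu_\epsilon]\ =\ \tfrac{m^2}{4}\bigl[3\eta+(W(2\epsilon)-W(\epsilon))\bigr].
\end{equation*}

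Since $W(2\epsilon)\le W(\epsilon)$ by the radial monotonicity on $(0,R_W)$, the second bracket in $E[\mu_\epsilon]$ is non-positive and can be dropped to obtain
\begin{equation*}
E[\tilde\rho]-E[\rho]\ \le\ -m^2\eta+\tfrac{3}{4}m^2\eta+O(\epsilon^2)\ =\ -\tfrac{1}{4}m^2\eta+O(\epsilon^2).
\end{equation*}
Combined with $\eta\ge c\,\epsilon^{\alpha-d}$ and $\alpha-d<2$, the first term dominates as $\epsilon\to 0$, giving $E[\tilde\rho]<E[\rho]$ for $\epsilon$ small, the required contradiction. The main technical obstacle is precisely this lower bound on $\eta$: one must sharpen the qualitative statement ``$W$ is repulsive for short distances'' of Theorem~\ref{thm_cons1} to the matching order-of-magnitude $W'(r)\asymp -r^{\alpha-d-1}$ near $r=0$, ensuring that the hierarchical perturbation has not flattened $W$ at the origin; this is visible already in the derivative expression manipulated in the proof of Theorem~\ref{thm_cons1} and requires only tracking constants quantitatively.
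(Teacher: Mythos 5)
Your argument is correct and takes a genuinely different route from the paper's. The paper works at the level of the first-order necessary condition: it invokes Lemma~\ref{lemel1}, writes $W*\rho=aW(\cdot-\bx_0)+\cI$ with $\cI$ smooth near $\bx_0$, and shows via the quantitative repulsion estimate $W(0)-W(r)\ge cr^{\alpha-d}$ that $\bx_0$ cannot be a local minimum of $W*\rho$; this requires a case split on whether $\nabla\cI(\bx_0)$ vanishes (and a separate immediate case when $W(0)=\infty$). You instead construct an explicit competitor $\tilde\rho=\rho+\mu_\epsilon$ and show directly that the energy decreases. The symmetric two-point split is a nice device: since $\mu_\epsilon$ is even about $\bx_0$, the first-order Taylor term of $V_1$ cancels automatically, so you never need to distinguish $\nabla V_1(\bx_0)=0$ from $\nabla V_1(\bx_0)\ne 0$. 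Both routes rely on exactly the same quantitative input, $W'(r)\le -cr^{\alpha-d-1}$ near $0$, which the paper also extracts from STEP~1 of the proof of Theorem~\ref{thm_cons1}. The paper's approach is marginally shorter because it piggybacks on Lemma~\ref{lemel1}; yours is more self-contained.

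One small caveat in your treatment of the case $0<\alpha\le d$, i.e. $W(0)=+\infty$. You assert that $E[\rho]=+\infty$ is an immediate contradiction. That is only a contradiction if one also knows that some $d_\infty$-close competitor has strictly smaller (in particular finite) energy, and your competitor $\tilde\rho$ still has Dirac atoms, hence $E[\tilde\rho]=+\infty$ as well. The cleanest fix is to do what the paper does here: invoke Lemma~\ref{lemel1} directly to note that $V(\bx_0)=+\infty$ while $V$ is finite at nearby points, which violates the necessary condition. Alternatively you could spread the atom over a small ball (rather than to two Dirac points) so that the competitor actually has finite energy, but then you must also control $E[\rho_1]$; the Lemma~\ref{lemel1} route avoids this entirely.
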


\begin{proof}
Assume on the contrary that there is an isolated point $\bx_0\in \supp\rho$, then there exists $\epsilon>0$ such that $\supp\rho\cap B(\bx_0;\epsilon)=\{\bx_0\}$, and $\rho = a\delta(\bx-\bx_0) + \rho\chi_{B(\bx_0;\epsilon)^c}$ for some $a>0$. Then for any $\bx$ with $|\bx-\bx_0|<\epsilon/2$,
\begin{equation}\begin{split}
(W*\rho)(\bx) = & aW(\bx-\bx_0) + \int_\Rd W(\by)\rho(\bx-\by)\chi_{B(\bx_0;\epsilon)^c}(\bx-\by)\rd{\by} \\
 = & aW(\bx-\bx_0) + \int_{|\by|\ge \epsilon/2} W(\by)\rho(\bx-\by)\chi_{B(\bx_0;\epsilon)^c}(\bx-\by)\rd{\by} =: aW(\bx-\bx_0) + \cI(\bx).
\end{split}\end{equation}
Notice that $W$ is a smooth function on $\{\by:|\by|\ge \epsilon/2\}$, and its derivatives are bounded 
on any compact subset. Therefore $\cI$ is a smooth function on $\bx\in B(\bx_0;\epsilon/2)$. 

On the other hand, by the second item of Theorem \ref{thm_cons1}, $W'(r)<0$ for $0<r<\epsilon_0$ if $\epsilon_0$ is small. Indeed, by its proof (STEP 1 of the second claim), one can show a quantitative version, i.e., there exists $c>0$ such that
\begin{equation}
W'(r)<-cr^{\alpha-d-1},\quad \forall 0<r<\epsilon_0.
\end{equation}
Then, either $W(0)=\infty$ (when $0<\alpha\le d$), or $W(0)<\infty$ (when $d<\alpha<d+2$) with
\begin{equation}\label{W0Wr}
W(0)-W(r) \ge c\int_0^r s^{\alpha-d-1}\rd{s} = cr^{\alpha-d}.
\end{equation}

Then we separate into the following cases:
\begin{itemize}
\item If $W(0)=\infty$, then $(W*\rho)(\bx_0)=\infty$, while $(W*\rho)(\bx)<\infty$ nearby. Then $\bx_0$ is not a local minimum of $W*\rho$, contradicting the assumption that $\rho$ is a $d_\infty$-local minimizer, in view of Lemma \ref{lemel1}.
\item If $W(0)<\infty$ and $\nabla \cI(\bx_0)\ne 0$, then for $0<\epsilon_1<\frac{\epsilon}{2|\nabla\cI(\bx_0)|}$, Taylor expansion of $\cI$ with \eqref{W0Wr} gives
\begin{equation}\begin{split}
& (W*\rho)(\bx_0-\epsilon_1\nabla\cI(\bx_0))-(W*\rho)(\bx_0)\\
= & a\Big(W(\epsilon_1|\nabla\cI(\bx_0)|)-W(0)\Big) + \nabla\cI(\bx_0)\cdot (-\epsilon_1\nabla\cI(\bx_0)) + O(|\epsilon_1\nabla\cI(\bx_0)|^2) \\
\le & -\epsilon_1|\nabla\cI(\bx_0)|^2(1+O(\epsilon_1)) < 0,
\end{split}\end{equation}
if $\epsilon_1$ is small enough, and we get a similar contradiction.
\item If $W(0)<\infty$ and $\nabla \cI(\bx_0)= 0$, then for $\bx\in B(\bx_0;\epsilon/2)$, Taylor expansion of $\cI$ with \eqref{W0Wr}  gives
\begin{equation}\begin{split}
(W*\rho)(\bx)-(W*\rho)(\bx_0) = & a\Big(W(|\bx-\bx_0|)-W(0)\Big) + O(|\bx-\bx_0|^2) \\
\le & -ac|\bx-\bx_0|^{\alpha-d} + O(|\bx-\bx_0|^2) < 0
\end{split}\end{equation}
if $|\bx-\bx_0|\ne 0$ is small enough, and we get a similar contradiction since $\alpha <d+2$.
\end{itemize}
\end{proof}

Finally, we can state our main result of this section. We say that $\rho\in\PP(\Rd)$ is \emph{almost fractal} if the support of $\rho$ does not contain isolated points and the interior of any superlevel set is empty.

\begin{corollary}\label{cor_frac}
Given a potential of the form \eqref{Wcons} with parameters satisfying \eqref{condfrac}, $c(d,\alpha)<c_W<C(d,\alpha)$, and $\lambda$ is sufficiently small, then any compactly supported $d_\infty$-local minimizer $\rho$ is almost fractal.
\end{corollary}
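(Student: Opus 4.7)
The plan is to observe that the corollary follows essentially by combining three results already established: Theorem \ref{thm_cons1} identifies the parameter regimes guaranteeing (a) infinitesimal-concavity and (b) a short-range repulsive/long-range attractive profile; Theorem \ref{lem_concave} then converts infinitesimal-concavity into emptiness of the interior of every superlevel set of a $d_\infty$-local minimizer; and Proposition \ref{profisolated} converts the repulsive-at-short-distance behaviour into absence of isolated points in the support.

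More concretely, first I would invoke \eqref{condfrac} to conclude that $c(d,\alpha)<C(d,\alpha)$, so that the interval $\big(c(d,\alpha),C(d,\alpha)\big)$ from which $c_W$ is selected is nonempty and consistent. The condition $c_W>c(d,\alpha)$, together with the first item of Theorem \ref{thm_cons1}, verifies the hypotheses of Lemma \ref{lem_inc} (with $\beta=2$ and the same exponent $\alpha$) for arbitrarily large $R$, hence $W$ is infinitesimal-concave. One also has to remark that $W$ satisfies \eqref{eq:hyp}: local integrability, lower boundedness, lower semicontinuity and symmetry of $W$ are easy to verify directly from the expression \eqref{Wcons}, since the series of Gaussian corrections is uniformly bounded in $\lambda$ and the singular piece $c_{d,\alpha}|\bx|^{\alpha-d}$ (or $-\ln|\bx|$ when $\alpha=d$) is lower semicontinuous and locally integrable in the admissible range of $\alpha$.

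Given these two facts, Theorem \ref{lem_concave} applies and yields that for every $\epsilon_0>0$ and every compactly supported $d_\infty$-local minimizer $\rho$, the superlevel set $\{\bx:\rho_c(\bx)\ge \epsilon_0\}$ has empty interior (with $\rho_c$ the absolutely continuous part of $\rho$ as in the remark after Theorem \ref{lem_concave}). This handles the superlevel-set half of the definition of almost fractal.

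For the absence of isolated points in $\supp\rho$, the hypothesis $c_W<C(d,\alpha)$ together with $\lambda$ small enough places us exactly in the situation of Proposition \ref{profisolated}, which then states that $\supp\rho$ contains no isolated point. The two assertions together give the definition of almost fractal, completing the proof. The only subtlety, and arguably the main conceptual point, is that the window $c(d,\alpha)<c_W<C(d,\alpha)$ is nonempty precisely under \eqref{condfrac}, which is why that range is imposed; no genuine new obstacle appears beyond checking that the smallness of $\lambda$ required by Proposition \ref{profisolated} is compatible with that required in the first item of Theorem \ref{thm_cons1} (both depend only on $d,\alpha$ and $C(d,\alpha)-c_W$, so they can be met simultaneously).
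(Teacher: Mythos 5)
Your proof is correct and follows essentially the same route as the paper: it combines Theorem \ref{thm_cons1} (to get infinitesimal-concavity), Theorem \ref{lem_concave} (to get empty interior of superlevel sets), and Proposition \ref{profisolated} (to get absence of isolated points), exactly as the paper does. The extra checks you perform (verifying \eqref{eq:hyp} and the nonemptiness of the $c_W$-window) are sound and simply make explicit what the paper leaves implicit; note only a minor slip that the smallness of $\lambda$ is required by Proposition \ref{profisolated} and by the second item of Theorem \ref{thm_cons1}, not by the first item, which holds for all $\lambda\in(0,1)$.
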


\begin{proof}
If \eqref{condfrac} holds, then there exists $c_W$ with $c(d,\alpha)<c_W<C(d,\alpha)$. Now, we apply Theorem \ref{thm_cons1}, Theorem \ref{lem_concave}, and Proposition \ref{profisolated} to conclude that the support of any compactly supported $d_\infty$-local minimizer $\rho$ does not contain isolated points and the superlevel sets do not contain any interior point.
\end{proof}


\section{Cantor set as steady state}

In the 1D case, we construct a potential $W$ such that the uniform distribution on some Cantor set is a steady state, satisfying the necessary condition \eqref{W2cond} for $d_2$-local minimizers. The main idea of this construction is to mimic the  structure of the potential as appeared in \eqref{Wcons} in a recursive hierarchical manner. We will define $W$ as a piecewise-quadratic potential so that it would be easier to verify its steady states compared to \eqref{Wcons}. The main strategy is to produce a potential that introduces some kind of concavity at a sequence of small scales.

\subsection{Steady state}

We will use a hierarchical construction of an interaction potential $W$, with a fixed positive number $M>2$ being the size ratio between adjacent layers, in correspondence with $\lambda^{-1}$ in the previous section. For notation convenience, we denote
\begin{equation}
a_{(j)}:=a M^{-j}
\end{equation}
for $a>0$ and $j\in\mathbb{Z}$.

We first define a uniform measure supported on a Cantor set inside $[0,1]$. Define the intervals $I_{k,l},\,k=0,1,\dots,\,l = 0,1,\dots,2^k-1$ iteratively by
\begin{equation}
I_{0,0} = [0,1],\quad I_{k+1,2l} = I_{k,l}^{\text{left}},\,I_{k+1,2l+1} = I_{k,l}^{\text{right}}
\end{equation}
where for an interval $I=[x_1,x_2]$,
\begin{equation}
I^{\text{left}} = [x_1,x_1+\frac{x_2-x_1}{M}],\quad I^{\text{right}} = [x_2-\frac{x_2-x_1}{M},x_2]\,.
\end{equation}
Then define the functions
\begin{equation}\label{rhok}
\rho_k = \Big(\frac{M}{2}\Big)^k \sum_{l=0}^{2^k-1} \chi_{I_{k,l}}
\end{equation}
which has total mass 1 and supported on $S_k=\bigcup_{l=0}^{2^k-1}I_{k,l}$. The weak limit of $\rho_k$, denoted as $\rho_\infty$, is the uniform distribution on a Cantor set $S=\bigcap_{k=0}^\infty S_k$.

We also introduce the following notation: for $A,B\subseteq \mathbb{R}$,
\begin{equation}
|A-B| = \{y\in\mathbb{R}:y=|x_1-x_2| \mbox{ for some }x_1\in A,\,x_2\in B\}\,.
\end{equation}

We first prove the following lemma, which shows that the possible pairwise distances of points in $S_k$ have a hierarchical structure.
\begin{lemma}\label{lem_x12}
Assume $M>3$. For any $k\ge 0$ and $l_1,l_2\in \{0,1,\dots,2^k-1\}$, $|I_{k,l_1}-I_{k,l_2}|$ is a subset of one of the following disjoint sets: $[0,1_{(k)}],\, [(M-2)_{(k)}, M_{(k)}],\,\dots,\,[(M-2)_{(1)}, M_{(1)}]$, and it is a subset of $[0,1_{(k)}]$ if and only if $l_1=l_2$.
\end{lemma}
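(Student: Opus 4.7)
The plan is to proceed by induction on $k$, mirroring the binary-tree recursion that defines the intervals $I_{k,l}$. The base case $k=0$ is immediate: the only possibility is $l_1 = l_2 = 0$, and $|I_{0,0} - I_{0,0}| = [0,1] = [0, 1_{(0)}]$.

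For the inductive step from $k$ to $k+1$, I would write $l'_i = 2 l_i + \epsilon_i$ with $\epsilon_i \in \{0,1\}$, so that $I_{k+1, l'_i} \subseteq I_{k, l_i}$, and split into three cases. If $l_1 = l_2$ and $\epsilon_1 = \epsilon_2$, the two level-$(k+1)$ intervals coincide and the difference set is $[0, 1_{(k+1)}]$. If $l_1 = l_2$ but $\epsilon_1 \ne \epsilon_2$, the intervals are the left and right pieces of $I_{k,l_1}$; writing $I_{k,l_1}=[x_1,x_2]$ with $x_2 - x_1 = M^{-k}$, the extremal differences are $(M-2)M^{-(k+1)}$ and $M\cdot M^{-(k+1)}$, yielding exactly $[(M-2)_{(k+1)}, M_{(k+1)}]$. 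If $l_1 \ne l_2$, the inductive hypothesis places $|I_{k,l_1} - I_{k,l_2}|$ inside some $[(M-2)_{(j)}, M_{(j)}]$ with $1 \le j \le k$, and the nestings $I_{k+1, l'_i} \subseteq I_{k, l_i}$ transfer this containment to $|I_{k+1, l'_1} - I_{k+1, l'_2}|$.

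Separately, I need to verify that the candidate intervals $[0, 1_{(k)}]$ and $\{[(M-2)_{(j)}, M_{(j)}]\}_{j=1}^{k}$ are pairwise disjoint, which is what makes the statement's dichotomy meaningful and supplies the ``only if'' direction of the iff-clause: when $l_1 \ne l_2$, the two cases above place the difference set inside $\bigcup_{j=1}^{k}[(M-2)_{(j)}, M_{(j)}]$, which must not meet $[0, 1_{(k)}]$. The disjointness reduces to the gap inequalities $1_{(k)} < (M-2)_{(k)}$ and $M_{(j)} < (M-2)_{(j-1)}$ for $2 \le j \le k$, each of which is equivalent to $M > 3$, the standing hypothesis.

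I do not foresee any conceptual obstacle; the proof is elementary bookkeeping of the self-similar construction. The only subtlety is the indexing convention, in particular distinguishing $1_{(k)} = M^{-k}$ from $M_{(j)} = M^{1-j}$, and being careful that the strict inequality $M > 3$ is invoked precisely at the disjointness step rather than anywhere inside the inductive containment itself.
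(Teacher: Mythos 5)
Your proof is correct and follows essentially the same inductive scheme as the paper's: both proceed by induction on $k$, splitting on whether the two intervals share a parent at level $k$, and both verify disjointness of the candidate intervals via $M>3$. (Minor slip: in your disjointness sentence for the inductive step the relevant interval is $[0,1_{(k+1)}]$, not $[0,1_{(k)}]$, but this does not affect the argument.)
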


\begin{proof}
First notice that $M>3$ implies that the intervals $[0,1_{(k)}],\, [(M-2)_{(k)}, M_{(k)}],\,\dots,\,[(M-2)_{(1)}, M_{(1)}]$ are disjoint. 

The situation $l_1=l_2$ is clear. For $l_1\ne l_2$, we use induction on $k$. The case $k=0$ is clear because the condition $l_1\ne l_2$ never happens. 

Suppose the conclusion is true for $k-1$. For $x_1\in I_{k,l_1}$ and $x_2\in I_{k,l_2}$, there holds
\begin{equation}
I_{k,l_1} \subseteq I_{k-1,\floor{l_1/2}},\quad I_{k,l_2} \subseteq I_{k-1,\floor{l_2/2}}\,.
\end{equation}
If $\floor{l_1/2}\ne \floor{l_2/2}$, then $|x_1-x_2| \in [(M-2)_{(j)}, M_{(j)}]$ for some integer $1\le j \le k-1$ depending on $\floor{l_1/2},\floor{l_2/2},k-1$ by the induction hypothesis, and this implies the conclusion. If $\floor{l_1/2}= \floor{l_2/2}=l$ then $I_{k,l_1}=I_{k-1,l}^{\text{left}}$ and $I_{k,l_2}=I_{k-1,l}^{\text{right}}$ (or the other way around). Then
\begin{equation}
|x_1-x_2| \le |I_{k-1,l}| = M^{-(k-1)} = M_{(k)}
\end{equation}
and 
\begin{equation}
|x_1-x_2| \ge \dist(I_{k-1,l}^{\text{left}},I_{k-1,l}^{\text{right}}) = (M-2)_{(k)}.
\end{equation}
This finishes the proof.
\end{proof}

Then we will define a sequence of potentials $W_k,\,k\ge 0$. In this subsection, we only partially  define them by requiring that $W_k$ is even and
\begin{equation}\label{Wk}
W_k'(x) =  \frac{M}{2M-4}(-\sgn(x) + 2x) + \left\{\begin{split} & 0,\quad |x|\le 1_{(k)} \\
& a_j\sgn(x) + b_jx,\quad (M-2)_{(j)} \le |x| \le  M_{(j)},\,j=1,\dots,k \\
& \text{smoothly connected},\quad \text{otherwise}\end{split}\right.
\end{equation}
where $a_j,b_j$ are constants to be determined. The above lemma shows that the unspecified `smoothly connected' part does not affect $(W_k'*\rho_k)(x),\,(W_k''*\rho_k)(x),\,x\in S_k=\supp\rho_k$, and therefore does not affect whether $\rho_k$ is a steady state of $W_k$. The constant in front of the Newtonian potential part in \eqref{Wk} is chosen for convenience to simplify computations in the next result.

We introduce the notation
\begin{equation}
\bar{\chi}_S(x) = \chi_S(|x|),
\end{equation}
for any $S\subseteq [0,\infty)$.

\begin{theorem}\label{thm_Wkss}
Assume $M>3$. Choose $a_j,b_j$ by 
\begin{equation}
a_j = -\Big(M-\frac{1}{2}\Big),\quad b_j=M^j,\quad j=1,\dots,k,
\end{equation}
then $\rho_k$ is a steady state of the interaction energy with the interaction potential $W_k$ satisfying \eqref{Wk}.
\end{theorem}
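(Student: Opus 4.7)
The plan is to induct on $k\ge 0$. The base case $k=0$ is immediate: $(W_0'*\rho_0)(x)=\tfrac{M}{2M-4}\bigl((1-2F_0(x))+(2x-1)\bigr)=0$ on $[0,1]$ since $F_0(x)=x$ and $\rho_0$ has total mass $1$ and center of mass $1/2$.

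For the inductive step, assume the statement for $k-1$ and write $W_k'=W_{k-1}'+\phi_k$, where $\phi_k(z)=a_k\sgn(z)+b_kz$ on $|z|\in[(M-2)_{(k)},M_{(k)}]$. The ``smoothly-connected'' portions contribute nothing, because Lemma~\ref{lem_x12} keeps all distances $|x-y|$ that arise in the computation inside the specified intervals. Decompose
$$\rho_k-\rho_{k-1}=\sum_{l=0}^{2^{k-1}-1}\delta_l,\qquad \delta_l:=(M/2)^k\bigl(\chi_{I_{k,2l}}+\chi_{I_{k,2l+1}}\bigr)-(M/2)^{k-1}\chi_{I_{k-1,l}},$$
so that $\supp\delta_l\subseteq I_{k-1,l}$. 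The two moment identities $\int\delta_l\,dy=0$ and $\int y\,\delta_l(y)\,dy=0$ (the latter because the two new halves are symmetric about the center of their parent $I_{k-1,l}$) are the pillar of the argument. What needs to be shown becomes
$$(W_k'*\rho_k)(x)=\underbrace{(W_{k-1}'*\rho_{k-1})(x)}_{=0\text{ by IH, as }S_k\subseteq S_{k-1}}+\sum_l (W_{k-1}'*\delta_l)(x)+(\phi_k*\rho_k)(x),$$
and so only the last two terms need to be handled for $x\in S_k$.

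For ``far'' $l\ne l^*$ (where $l^*$ is the unique index with $x\in I_{k-1,l^*}$), the claim is $(W_{k-1}'*\delta_l)(x)=0$. Indeed, by Lemma~\ref{lem_x12} at level $k-1$, if $x\in I_{k-1,l'}$ with $l'\ne l$, then for \emph{every} $y\in I_{k-1,l}$ the distance $|x-y|$ lies in a single range $[(M-2)_{(j^*)},M_{(j^*)}]$ with $j^*\le k-1$; hence $\sgn(x-y)$ is a fixed sign and $W_{k-1}'(x-y)=\tfrac{M}{2M-4}(-\sgn(x-y)+2(x-y))+\phi_{j^*}(x-y)$ is an affine function of $y$, whose integral against the doubly moment-vanishing $\delta_l$ is zero. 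Thus only the $l=l^*$ term survives.

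The heart of the proof is the cancellation $(W_{k-1}'*\delta_{l^*})(x)+(\phi_k*\rho_k)(x)=0$ for $x\in I_{k-1,l^*}$. Assume WLOG $x\in I_{k,2l^*}=[a,a+M^{-k}]$. Inside $I_{k-1,l^*}$ the kernel $W_{k-1}'(x-y)$ reduces to $\tfrac{M}{2M-4}(-\sgn(x-y)+2(x-y))$ (since $M>3$ keeps $|x-y|\le M^{-(k-1)}$ strictly below the $\phi_{k-1}$ support). The $2(x-y)$ piece integrates to $0$ against $\delta_{l^*}$ by first-moment vanishing, and direct integration of the $\sgn$-piece over the three characteristic functions comprising $\delta_{l^*}$ yields, after cancellations,
$$\int\sgn(x-y)\,\delta_{l^*}(y)\,dy=(x-a)(M-2)(M/2)^{k-1},$$
giving $(W_{k-1}'*\delta_{l^*})(x)=-(x-a)(M/2)^k$. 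Separately, $(\phi_k*\rho_k)(x)$ receives contributions only from the separation-$k$ partner $y\in I_{k,2l^*+1}$, and substituting $a_k=-(M-1/2)$, $b_k=M^k$ gives after integration $(\phi_k*\rho_k)(x)=(M/2)^k(x-a)$, which cancels. The symmetric case $x\in I_{k,2l^*+1}$ is essentially identical, with an extra $+2^{-k}$ in $(W_{k-1}'*\delta_{l^*})(x)$ and matching $-2^{-k}$ in $(\phi_k*\rho_k)(x)$, both coming from the $\sgn$-jump across $x=y$. The main obstacle lies precisely here: the $\sgn$-jump in the Newton part makes the two half-cases differ by an explicit constant of size $2^{-k}$, and the particular choice $a_k=-(M-1/2)$ is dictated by forcing the constant part of $\phi_k$ to reproduce that shift, while $b_k=M^k$ is dictated by matching the linear part, so that a single pair $(a_k,b_k)$ delivers cancellation on both halves simultaneously.
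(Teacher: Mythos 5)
Your proof is correct and rests on the same two pillars as the paper's: induction on $k$, Lemma~\ref{lem_x12} to localize the relevant kernel values to the specified ranges, and the zeroth/first moment cancellations of $\rho_k-\rho_{k-1}$ at the $I_{k-1,l}$ scale. The organization differs slightly: the paper verifies the steady-state property by showing $W_k''*\rho_k=0$ on the interval $I_{k,2l_0}$ plus $W_k'*\rho_k=0$ at the single left endpoint (which lets the $\bar\chi$-convolutions of $W_k''$ be evaluated cleanly as box filters), whereas you compute $W_k'*\rho_k$ pointwise from scratch after decomposing $\rho_k-\rho_{k-1}=\sum_l\delta_l$ and writing $W_k'=W_{k-1}'+\phi_k$; the far-$l$ terms then die because an affine kernel integrates to zero against each doubly moment-vanishing $\delta_l$. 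Your version avoids the second-derivative bookkeeping at the price of chasing $\sgn$-jumps explicitly, and it makes the local double-moment mechanism slightly more transparent; the paper's version has a cleaner algebraic skeleton. One small slip: in the case $x\in I_{k,2l^*+1}$ the shift relative to the other half is $M\cdot 2^{-k}$, not $2^{-k}$ (it still cancels against the matching term in $\phi_k*\rho_k$, so the conclusion is unaffected).
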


\begin{proof}
We proof by induction on $k$. The case $k=0$ is clear, since $W_0(x)=\frac{M}{2M-4}(-|x| + x^2)$ and $\rho_0=\chi_{[0,1]}$ is the well-known steady state for the Newtonian repulsion with quadratic attraction in one dimension. 

Suppose the conclusion holds for $k-1$. For $x\in I_{k,2l_0}$ for some $l_0$, by Lemma \ref{lem_x12}, any $y\in \supp\rho_k=S_k$ satisfies either $|x-y|\le 1_{(k)}$, or $|x-y|\in [(M-2)_{(j)},M_{(j)}]$ for some $1\le j \le k$. Therefore the `smoothly connected' part in \eqref{Wk} makes no contribution to $(W_k'*\rho_k)(x)$ or $(W_k''*\rho_k)(x)$, and there holds
\begin{equation}
(W_k'*\rho_k)(x) = \frac{M}{2M-4}\Big((-\sgn(\cdot) + 2(\cdot))*\rho_k\Big)(x) + \sum_{j=1}^k \Big(\Big(\bar{\chi}_{[(M-2)_{(j)},M_{(j)}]}\cdot (a_j\sgn(\cdot) + b_j(\cdot))\Big) * \rho_k\Big)(x)
\end{equation}
and
\begin{equation}\begin{split}
(W_k''*\rho_k)(x) = & \frac{M}{2M-4}\Big((-2\delta_0 + 2)*\rho_k\Big)(x) + \sum_{j=1}^k b_j\Big(\bar{\chi}_{[(M-2)_{(j)},M_{(j)}]}* \rho_k\Big)(x) \\
= & \frac{M}{2M-4}(-2\rho_k(x)+2) + \sum_{j=1}^k b_j\Big(\bar{\chi}_{[(M-2)_{(j)},M_{(j)}]}* \rho_k\Big)(x)
\end{split}\end{equation}
where $\delta_0$ denotes the Dirac delta function. To show the conclusion for $k$, i.e., $\rho_k$ is a steady state for $W_k$, it is equivalent to show that $W_k'*\rho_k$ vanishes in each $I_{k,2l_0}$ since the same result for $I_{k,2l_0+1}$ can be handled by symmetry. This is equivalent to show
\begin{equation}\label{Wkcond}
(W_k''*\rho_k)(x) = 0,\,\forall x\in I_{k,2l_0},\quad (W_k'*\rho_k)(x_1) = 0
\end{equation}
where $x_1$ denotes the left endpoint of $I_{k,2l_0}$. The induction hypothesis shows that \eqref{Wkcond} is true when $k$ is replaced by $k-1$ and $2l_0$ replaced by any $l$.

\

{\bf STEP 1}: show $(W_k''*\rho_k)(x) = 0,\,\forall x\in I_{k,2l_0}$. 

Since $(W_{k-1}''*\rho_{k-1})(x) = 0,\,\forall x\in I_{k-1,l_0}$ and $I_{k,2l_0}\subseteq I_{k-1,l_0}$, it suffices to show 
\begin{equation}\label{ddWkrho}\begin{split}
0 = & (W_k''*\rho_k)(x) - (W_{k-1}''*\rho_{k-1})(x) \\
= & \frac{M}{2M-4}(-2\rho_k(x) + 2\rho_{k-1}(x)) + \sum_{j=1}^k b_j\Big(\bar{\chi}_{[(M-2)_{(j)},M_{(j)}]}* \rho_k\Big)(x) -  \sum_{j=1}^{k-1} b_j\Big(\bar{\chi}_{[(M-2)_{(j)},M_{(j)}]}* \rho_{k-1}\Big)(x)\\
= & \frac{2M}{2M-4}\Big(-\Big(\frac{M}{2}\Big)^k + \Big(\frac{M}{2}\Big)^{k-1}\Big) +  b_k\Big(\bar{\chi}_{[(M-2)_{(k)},M_{(k)}]}* \rho_k\Big)(x) \\
& +  \sum_{j=1}^{k-1} b_j\Big(\bar{\chi}_{[(M-2)_{(j)},M_{(j)}]}* (\rho_k-\rho_{k-1})\Big)(x)\\
= & -\Big(\frac{M}{2}\Big)^k +  b_k\Big(\bar{\chi}_{[(M-2)_{(k)},M_{(k)}]}* \rho_k\Big)(x) +  \sum_{j=1}^{k-1} b_j\Big(\bar{\chi}_{[(M-2)_{(j)},M_{(j)}]}* (\rho_k-\rho_{k-1})\Big)(x).
\end{split}\end{equation}

Recall that $\supp\rho_k=S_k=\bigcup_{l=0}^{2^k-1}I_{k,l}$. Notice that if $l\ne 2l_0,2l_0+1$, we have
$$
\dist(I_{k,2l_0},I_{k,l}) \ge \dist(I_{k-1,l_0},I_{k-1,\floor{l/2}}) \ge (M-2)_{(k-1)} > 1_{(k-1)}= M_{(k)},
$$ 
and $|I_{k,2l_0}| = 1_{(k)}<(M-2)_{(k)}$. Therefore, in the convolution $\Big(\bar{\chi}_{[(M-2)_{(k)},M_{(k)}]}* \rho_k\Big)(x)$, the only contribution comes from $\rho_k|_{ I_{k,2l_0+1}}=(\frac{M}{2})^k\chi_{I_{k,2l_0+1}}$, which gives
\begin{equation}\begin{split}
\Big(\bar{\chi}_{[(M-2)_{(k)},M_{(k)}]}* \rho_k\Big)(x) = \Big(\frac{M}{2}\Big)^k\Big(\bar{\chi}_{[(M-2)_{(k)},M_{(k)}]}* \chi_{I_{k,2l_0+1}}\Big)(x) 
=  \Big(\frac{M}{2}\Big)^k |\chi_{I_{k,2l_0+1}}| = 2^{-k},
\end{split}\end{equation}
where the second equality follows from the fact that $|x-y|\in [(M-2)_{(k)},M_{(k)}]$ for any $x\in I_{k,2l_0}$ and $y\in I_{k,2l_0+1}$.

Next we will show that the last summation in \eqref{ddWkrho} vanishes. In fact, notice that the definition of $\rho_k$ in \eqref{rhok} implies that
\begin{equation}
\supp(\rho_k-\rho_{k-1}) = \supp\rho_{k-1} = \bigcup_{l=0}^{2^{k-1}-1} I_{k-1,l}
\end{equation}
and we have the zeroth and first moment conservation conditions in each $I_{k-1,l}$ as the following:
\begin{equation}\label{rhok1}
\int_{I_{k-1,l}} (\rho_k-\rho_{k-1})\rd{x} = \int_{I_{k-1,l}} (\rho_k-\rho_{k-1})x\rd{x} = 0,\quad l=0,1,\dots,2^{k-1}-1.
\end{equation}
Lemma \ref{lem_x12} gives that $|I_{k-1,l_0}-I_{k-1,l}|$ is a subset of one of the following disjoint sets: $[0,1_{(k-1)}],\, [(M-2)_{(k-1)}, M_{(k-1)}],\,\dots,\,[(M-2)_{(1)}, M_{(1)}]$. Using the zeroth moment conservation, we get 
\begin{equation}
\Big(\bar{\chi}_{[(M-2)_{(j)},M_{(j)}]}* (\rho_k-\rho_{k-1})|_{I_{k-1,l}}\Big)(x) = 0, \quad x\in I_{k,2l_0},
\end{equation}
for any $j=1,\dots,k-1$, and thus
\begin{equation}
 \sum_{j=1}^{k-1} b_j\Big(\bar{\chi}_{[(M-2)_{(j)},M_{(j)}]}* (\rho_k-\rho_{k-1})\Big)(x) = 0, \quad x\in I_{k,2l_0}.
\end{equation}
Therefore, we conclude that the RHS of \eqref{ddWkrho} is zero with the condition $b_k=M^k$.

\

{\bf STEP 2}: show $(W_k'*\rho_k)(x_1) = 0$. 

Similar to the previous step, it suffices to show 
\begin{equation}\label{Wk1est}\begin{split}
0 = & (W_k'*\rho_k)(x_1) - (W_{k-1}'*\rho_{k-1})(x_1) \\
= & \frac{M}{2M-4}\Big((-\sgn(x) + 2x)*(\rho_k-\rho_{k-1})\Big)(x_1)  + \sum_{j=1}^k \Big(\Big(\bar{\chi}_{[(M-2)_{(j)},M_{(j)}]}\cdot (a_j\sgn(x) + b_jx)\Big) * \rho_k\Big)(x_1) \\
& -  \sum_{j=1}^{k-1} \Big(\Big(\bar{\chi}_{[(M-2)_{(j)},M_{(j)}]}\cdot (a_j\sgn(x) + b_jx)\Big) * \rho_{k-1}\Big)(x_1) \\
= & \frac{M}{2M-4}\Big((-\sgn(x) + 2x)*(\rho_k-\rho_{k-1})\Big)(x_1)  + \Big(\Big(\bar{\chi}_{[(M-2)_{(k)},M_{(k)}]}\cdot (a_k\sgn(x) + b_kx)\Big) * \rho_k\Big)(x_1) \\
& +  \sum_{j=1}^{k-1} \Big(\Big(\bar{\chi}_{[(M-2)_{(j)},M_{(j)}]}\cdot (a_j\sgn(x) + b_jx)\Big) * (\rho_k-\rho_{k-1})\Big)(x_1) \\
\end{split}\end{equation}

First, by \eqref{rhok1}, $x*(\rho_k-\rho_{k-1}) = 0$. Therefore, by further using the moment conservation property \eqref{rhok1},
\begin{equation}\begin{split}
& \Big((-\sgn(x) + 2x)*(\rho_k-\rho_{k-1})\Big)(x_1) =  \Big((-\sgn(x))*(\rho_k-\rho_{k-1})\Big)(x_1) \\
= & -\Big(2\chi_{[0,\infty)}*(\rho_k-\rho_{k-1})\Big)(x_1) = 2\int_{-\infty}^{x_1}(\rho_k-\rho_{k-1})\rd{x} = 0,
\end{split}\end{equation}
since $x_1$, as the left endpoint of $I_{k,2l_0}$, is also the left endpoint of $I_{k-1,l_0}$.

Next, in the second quantity on the RHS of \eqref{Wk1est}, similar to STEP 1, the only contribution comes from $\rho_k|_{I_{k,2l_0+1}}$, and this term can be computed by
\begin{equation}\begin{split}
& \Big(\Big(\bar{\chi}_{[(M-2)_{(k)},M_{(k)}]}\cdot (a_k\sgn(x) + b_kx)\Big) * \rho_k\Big)(x_1) = \int_{I_{k,2l_0+1}}(a_k\sgn(x_1-x) + b_k(x_1-x))\rho_k(x)\rd{x} \\
= & -\Big(\frac{M}{2}\Big)^k \Big(a_k|I_{k,2l_0+1}| + b_k|I_{k,2l_0+1}| \cdot \Big(M-\frac{1}{2}\Big)|I_{k,2l_0+1}|\Big) \\
= & -2^{-k}\Big(a_k + b_k \Big(M-\frac{1}{2}\Big) M^{-k}\Big)\,.
\end{split}\end{equation}

Finally, the last quantity on the RHS of \eqref{Wk1est} is zero by \eqref{rhok1}, similar as before.

Therefore we conclude that the RHS of \eqref{Wk1est} is zero with the condition
\begin{equation}
a_k = -b_k \Big(M-\frac{1}{2}\Big) M^{-k} = -\Big(M-\frac{1}{2}\Big).
\end{equation}

\end{proof}

\subsection{The condition \eqref{W2cond} for $d_2$-local minimizer}

In this section, we specify our potential by defining the choice of `smooth connections' in \eqref{Wk} as
\begin{equation}
W_k'(x) = \frac{M}{2M-4} (-1 + 2x) + \omega_k(x) ,\quad x>0
\end{equation}
with
\begin{equation}\begin{split}
\omega_k(x)= \left\{\begin{array}{ll}  0, & 0<x\le (M-\alpha)_{(k)} \\[3pt]
(-\frac{3}{2}+\frac{3}{4}\cdot\frac{2}{\alpha-2}(M-2))  - \frac{3}{4}\cdot\frac{2}{\alpha-2}M^k x , & (M-\alpha)_{(k)} \le x \le (M-2)_{(k)} \\[3pt]
 -(M-\frac{1}{2})  + M^j x, & (M-2)_{(j)} \le x \le M_{(j)},\,j=1,\dots,k \\[3pt]
\frac{1}{2} , & 1_{(j)} \le x \le (M-\alpha)_{(j)},\,j=1,\dots,k-1 \\[3pt]
(-\frac{3}{2}+\frac{2}{\alpha-2}(M-2))  - \frac{2}{\alpha-2}M^j x , & (M-\alpha)_{(j)} \le x \le (M-2)_{(j)},\, j=1,\dots,k-1 \\[3pt]
\frac{1}{2}, & x\ge M_{(1)}=1 \\[3pt]
\end{array}\right.
\end{split}\end{equation}
where $2< \alpha \le M-1$ is a parameter to be determined. The smooth connections $\omega_k$ is extended as an odd function on $\mathbb{R}$, in correspondence to $W_k'$. Moreover, $\omega_k$ is a continuous, piecewise linear function, and 
\begin{equation}\begin{split}
\omega_k'(x)= \left\{\begin{array}{ll}  0, & 0<x\le (M-\alpha)_{(k)} \\[3pt]
- \frac{3}{4}\cdot\frac{2}{\alpha-2}M^k, & (M-\alpha)_{(k)} \le x \le (M-2)_{(k)} \\[3pt]
M^j, & (M-2)_{(j)} \le x \le M_{(j)},\,j=1,\dots,k \\[3pt]
0, & 1_{(j)} \le x \le (M-\alpha)_{(j)},\,j=1,\dots,k-1 \\[3pt]
- \frac{2}{\alpha-2}M^j, & (M-\alpha)_{(j)} \le x \le (M-2)_{(j)},\, j=1,\dots,k-1 \\[3pt]
0, & x\ge M_{(1)}=1 \\[3pt]
\end{array}\right.
\end{split}\end{equation}

Notice that for $k\ge 2$,
\begin{equation}\label{omega}\begin{split}
W_k''(x) & -W_{k-1}''(x) =  \omega_k'(x)-\omega_{k-1}'(x) = \left\{\begin{array}{ll}  
-\frac{3}{4}\cdot\frac{2}{\alpha-2}M^k  , & (M-\alpha)_{(k)} \le x \le (M-2)_{(k)} \\[3pt]
M^k , & (M-2)_{(k)} \le x \le M_{(k)} \\[3pt]
-\frac{1}{4}\cdot\frac{2}{\alpha-2}M^{k-1}  , & (M-\alpha)_{(k-1)} \le x \le (M-2)_{(k-1)} \\[3pt]
0, & \text{otherwise} \\[3pt]
\end{array}\right. \\
= & M^k \bar{\chi}_{[(M-2)_{(k)},M_{(k)}]} -\frac{1}{4}\cdot\frac{2}{\alpha-2}M^{k-1}\bar{\chi}_{ [(M-\alpha)_{(k-1)} , (M-2)_{(k-1)}]}  -\frac{3}{4}\cdot\frac{2}{\alpha-2}M^k \bar{\chi}_{[(M-\alpha)_{(k)} , (M-2)_{(k)}]}
\end{split}\end{equation}
is compactly supported on $\{(M-\alpha)_{(k)} \le |x| \le (M-2)_{(k-1)})\}$ with mean-zero on it.
See Figure \ref{fig_omegak} for an illustration.

\begin{figure}
\begin{center}
	\includegraphics[width=0.8\textwidth]{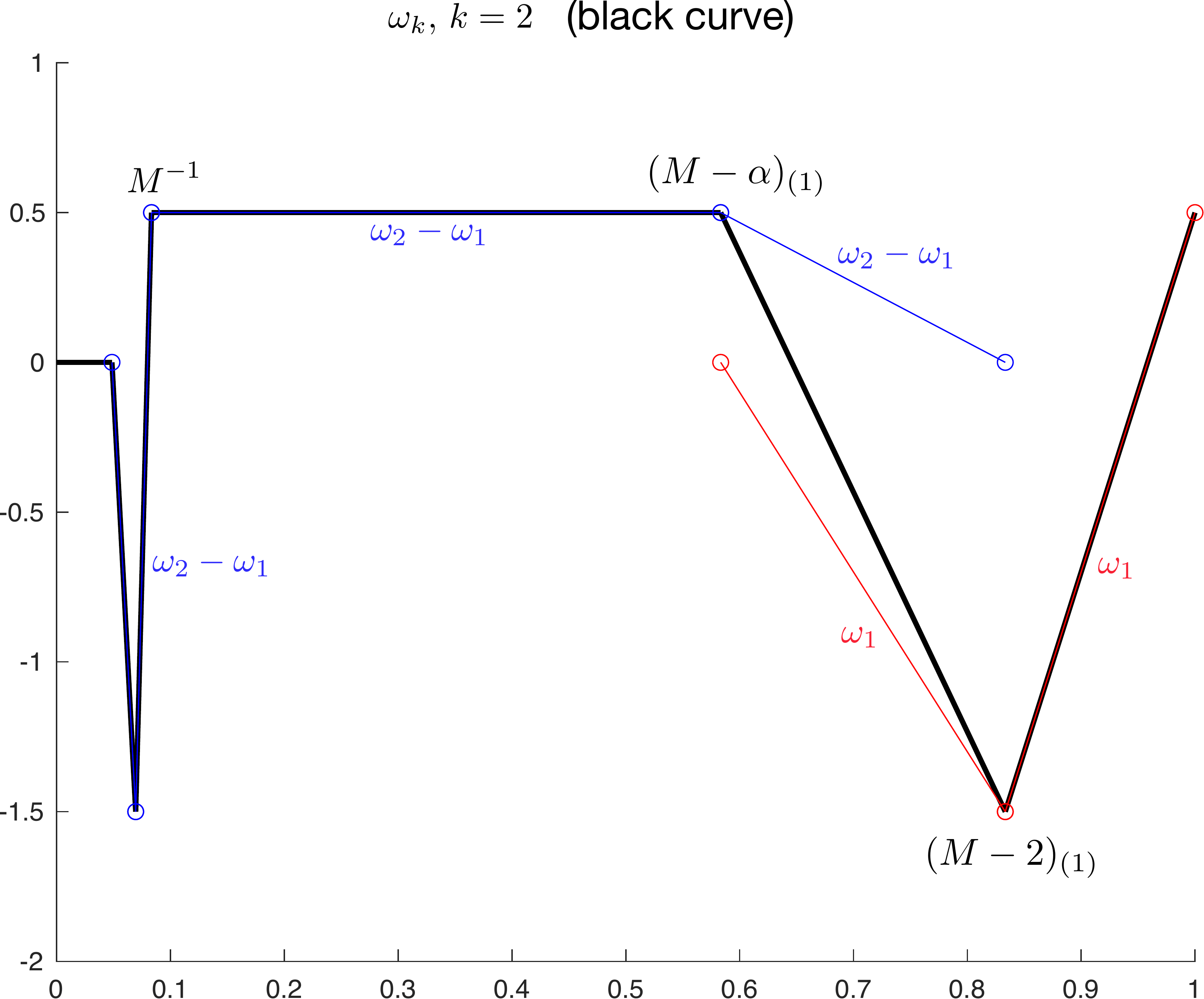}
	\caption{Decomposition of $\omega_k(x)=\omega_1(x) + \sum_{j=2}^k (\omega_j(x)-\omega_{j-1}(x))$, in the case $k=2$.}
	\label{fig_omegak}
\end{center}
\end{figure}

We first prove the necessary condition \eqref{W2cond} for $d_2$-local minimizers for $W_K$ and $\rho_K$ and those $x_0$ in the interval $(M^{-1},1-M^{-1})$, which corresponds to the inner interval of Figure \ref{fig_Wcantor}. This is the first step in a self-similar argument for the full condition \eqref{W2cond} for $\rho_K$.
\begin{proposition}\label{prop_M}
Let $W_K$ be given as above, and $\rho_K$ as given in \eqref{rhok}. If $M$ and $\alpha$ satisfy
\begin{equation}\label{prop_M_1}
\frac{1}{3}(M+2) < \alpha \le \frac{2}{5}(M-10)
\end{equation}
then for any integer $K \ge 1$,
\begin{equation}\label{prop_M_2}
(W_K*\rho_K )(x_0) - (W_K*\rho_K )(M^{-1}) \ge c(x_0)>0,\quad \forall x_0\in (M^{-1},1-M^{-1})\,.
\end{equation}
\end{proposition}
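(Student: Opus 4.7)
The plan is to show that the generated potential $V_K := W_K*\rho_K$ is strictly unimodal on the central gap $(M^{-1},1-M^{-1})$, attaining its unique maximum at the midpoint $1/2$. This immediately yields $V_K(x_0)-V_K(M^{-1})>0$ for every $x_0$ in the gap, with the quantitative constant furnished by $c(x_0)=\int_{M^{-1}}^{x_0} V_K'(y)\rd{y}$.

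First I would nail down the boundary data of $V_K'$. Since $\rho_K\in L^\infty$ is compactly supported and $W_K'\in L^1_{\mathrm{loc}}(\R)$, the convolution $V_K'=W_K'*\rho_K$ is continuous. The point $M^{-1}$ is the right endpoint of $I_{K,2^{K-1}-1}\subset\supp\rho_K$, so Theorem \ref{thm_Wkss} gives $V_K'(M^{-1})=0$. The symmetries of $W_K$ (even) and of $\rho_K$ (reflectively symmetric about $1/2$) make $V_K$ symmetric about $1/2$, whence $V_K'(1/2)=V_K'(1-M^{-1})=0$ as well. Decomposing $W_K'(x)=\tfrac{M}{2M-4}(-\sgn(x)+2x)+\omega_K(x)$ and using $(\sgn * \rho_K)(y)=0$ on the gap together with $\int z\,\rho_K(z)\rd{z}=1/2$, one obtains the explicit formula
\begin{equation*}
V_K'(y)=\tfrac{M}{M-2}\bigl(y-\tfrac12\bigr)+(\omega_K*\rho_K)(y),\qquad y\in(M^{-1},1-M^{-1}).
\end{equation*}

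The heart of the argument is to establish $V_K'(y)>0$ on $(M^{-1},1/2)$; strict negativity on $(1/2,1-M^{-1})$ then follows from the antisymmetry of $V_K'$ about $1/2$, and integrating $V_K'$ from $M^{-1}$ up to $x_0$ delivers the proposition. I would telescope $\omega_K=\omega_1+\sum_{k=2}^K(\omega_k-\omega_{k-1})$. The coarse piece $\omega_1*\rho_K$ is piecewise affine in $y$ and can be evaluated in closed form using the piecewise formula for $\omega_1$ together with the mass-$1/2$ portions of $\rho_K$ sitting on $I_{1,0}$ and $I_{1,1}$; combined with $\alpha\le 2(M-10)/5$ this coarse term produces a pointwise lower estimate $(\omega_1*\rho_K)(y)\ge\tfrac{M}{M-2}(1/2-y)+\delta(y)$ with a concrete positive slack $\delta(y)$ on $(M^{-1},1/2)$. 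For every $k\ge 2$, by \eqref{omega} the fine piece $(\omega_k-\omega_{k-1})*\rho_K$ is a signed combination of three characteristic-function convolutions supported in the annuli $\{|x|\in[(M-\alpha)_{(k)},(M-2)_{(k-1)}]\}$; the vanishing zeroth and first moments of $\rho_k-\rho_{k-1}$ on every $I_{k-1,l}$, coming from \eqref{rhok1}, make these convolutions vanish outside narrow boundary layers of width $\sim M^{-k+1}$ around the two endpoints of the gap and control their size inside those layers. Summing over $k\ge 2$ produces a geometric tail bounded by a quantity proportional to $1/(\alpha-2)$, which the hypothesis $\alpha>(M+2)/3$ keeps strictly below $\delta(y)$.

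The main obstacle is the multi-scale bookkeeping in the fine-scale step. Each $\omega_k-\omega_{k-1}$ splits into three ingredients of different slopes (a short dip of slope $-\tfrac{3}{2(\alpha-2)}M^k$, a ramp of slope $M^k$, and a longer dip of slope $-\tfrac{2}{\alpha-2}M^{k-1}$), and pairing them against the subintervals $I_{K,l}\subset \supp\rho_K$ requires a case analysis according to which annular shell around $y$ each subinterval lies in. The quantitative role of the parameter conditions $\tfrac{M+2}{3}<\alpha\le\tfrac{2(M-10)}{5}$ (which force $M>70$) is precisely to ensure that the partial cancellations forced by the moment identities \eqref{rhok1} leave a strictly positive margin, uniform in $K$, so that $V_K'(y)>0$ holds throughout $(M^{-1},1/2)$ with enough slack to integrate to the required $c(x_0)>0$.
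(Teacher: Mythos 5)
Your proposal breaks down at the very first structural claim: $V_K:=W_K*\rho_K$ is \emph{not} unimodal on $(M^{-1},1-M^{-1})$, and the midpoint $x=1/2$ is a local \emph{minimum}, not a maximum. To see this, note that for $x$ in a fixed neighborhood of $1/2$, every $y\in\supp\rho_K\subseteq I_{1,0}\cup I_{1,1}$ satisfies $|x-y|\in[M^{-1},1-\alpha/M]$ (using $\alpha<M/2$ and $M>4$, both forced by \eqref{prop_M_1}), and on that range $\omega_K'\equiv 0$. Hence $(W_K''*\rho_K)(x)=\tfrac{M}{M-2}(-\delta+1)*\rho_K=\tfrac{M}{M-2}>0$ there, since $x\notin\supp\rho_K$. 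Combined with $V_K'(1/2)=0$ from the reflection symmetry, this makes $1/2$ a strict local minimum, so $V_K'(x)<0$ on an open interval just to the left of $1/2$. The plan to integrate a pointwise-positive $V_K'$ from $M^{-1}$ to $x_0$ therefore cannot work; in fact $V_K'$ changes sign inside the gap.

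Moreover, the sign problem is not confined to a tiny boundary layer near $1/2$: the coarse term $\cK_2$ in the paper's STEP 2 shows that $(W_1''*\rho_K)(x)=\tfrac{M}{M-2}-\tfrac{3M}{4(\alpha-2)}$ on the long plateau $[2M^{-1},(\alpha-1)M^{-1}]$, which is strictly negative whenever $4(\alpha-2)<3(M-2)$ --- and that inequality is always satisfied under \eqref{prop_M_1}. So $V_K''$ itself is negative over a macroscopic chunk of the gap, and one cannot expect $V_K'$ to be monotone either. What saves the proposition is the representation $(W_K*\rho_K)(x_0)-(W_K*\rho_K)(M^{-1})=\int_{M^{-1}}^{x_0}(x_0-x)(W_K''*\rho_K)(x)\rd{x}$: the weight $(x_0-x)$ loads the integral toward the left endpoint where $V_K''>0$, and the paper's case analysis (decomposing into $\cJ_1,\dots,\cJ_5$, $\cK_1,\cK_2$, $\tilde{\cJ}_2$, matched against the moment cancellations \eqref{rhok1} and the parameter window \eqref{prop_M_1}) shows that this weighted integral stays positive even though the integrand changes sign. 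Your moment-cancellation intuition for the fine scales is on target, but replacing the claim ``$V_K'>0$ on $(M^{-1},1/2)$'' with ``$\int_{M^{-1}}^{x_0}(x_0-x)V_K''\rd{x}>0$'' is not a cosmetic reformulation here --- it is the step that makes the estimate true.
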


Notice that the possible choice of $\alpha$ is nonempty if $M$ is large enough. For example, $M=100$, $\alpha=35$.

We first give the following lemma for the convolution of a characteristic function with a compactly supported measure. Its proof is straightforward and thus omitted.
\begin{lemma}\label{lem_convo}
Let $\mu(x)$ be a non-negative measure supported on $I_1 = [a_1,b_1]$ and assume $\mu$ is symmetric around $(a_1+b_1)/2$. For another interval $I_2=[a_2,b_2]$ such that $a_2>0$ and $|I_2|\ge |I_1|$, $\mu*\chi_{I_2}$ is a non-negative function supported on $[a_1+a_2,b_1+b_2]$, given by
\begin{equation}
(\mu*\chi_{I_2})(x) = \left\{\begin{split} & \psi(x-(a_1+a_2)),\quad a_1+a_2 \le x \le b_1+a_2 \\
& |\mu|,\quad b_1+a_2 \le x \le a_1+b_2 \\
& \psi((b_1+b_2)-x),\quad a_1+b_2 \le x \le b_1+b_2
\end{split}\right.
\end{equation}
where $\psi(y):=\int_{a_1}^{a_1+y} \mu(y_1)\rd{y_1}$, $|\mu| = \psi(b_1-a_1)$. 
\end{lemma}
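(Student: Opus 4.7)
The plan is to unfold the convolution and identify, for each value of $x$, the range of $y$ over which $\mu$ is effectively integrated. Writing
\begin{equation*}
(\mu*\chi_{I_2})(x)=\int_{\mathbb{R}}\mu(y)\chi_{I_2}(x-y)\,\rd{y}=\int_{\max(a_1,\,x-b_2)}^{\min(b_1,\,x-a_2)}\mu(y)\,\rd{y},
\end{equation*}
one sees the integral is nonzero precisely when $\max(a_1,x-b_2)\le\min(b_1,x-a_2)$, which is equivalent to $x\in[a_1+a_2,b_1+b_2]$, giving the claimed support. The assumption $a_2>0$ just ensures $I_2$ lies to the right of zero; it is the hypothesis $|I_2|\ge|I_1|$ that is essential below, since it guarantees $b_2-a_2\ge b_1-a_1$, equivalently $a_1+b_2\ge b_1+a_2$, so the middle subinterval of the stated trichotomy is genuinely nonempty.

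The key step is to split the support into three regions and evaluate the effective limits in each:
\begin{itemize}
\item For $a_1+a_2\le x\le b_1+a_2$, the upper limit $x-a_2$ lies in $[a_1,b_1]$, while $x-b_2\le b_1+a_2-b_2\le a_1$ by $|I_2|\ge|I_1|$, so the integral reduces to $\int_{a_1}^{x-a_2}\mu(y)\,\rd{y}=\psi(x-(a_1+a_2))$.
\item For $b_1+a_2\le x\le a_1+b_2$, the bounds satisfy $x-a_2\ge b_1$ and $x-b_2\le a_1$, so the integral covers all of $I_1$ and equals $|\mu|$.
\item For $a_1+b_2\le x\le b_1+b_2$, one has $x-b_2\in[a_1,b_1]$ while $x-a_2\ge b_1$, yielding $\int_{x-b_2}^{b_1}\mu(y)\,\rd{y}$.
\end{itemize}

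The only step requiring a small trick is recasting the last expression as $\psi((b_1+b_2)-x)$. For this I will invoke the symmetry of $\mu$ about $(a_1+b_1)/2$: the change of variables $y\mapsto a_1+b_1-y$ maps the integration interval $[x-b_2,b_1]$ to $[a_1,(a_1+b_1+b_2)-x]$ and leaves $\mu$ invariant, so
\begin{equation*}
\int_{x-b_2}^{b_1}\mu(y)\,\rd{y}=\int_{a_1}^{a_1+(b_1+b_2-x)}\mu(y)\,\rd{y}=\psi(b_1+b_2-x).
\end{equation*}
Non-negativity of $\mu*\chi_{I_2}$ is immediate from non-negativity of $\mu$. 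I do not anticipate a serious obstacle here; the only care needed is to check that the three subintervals indeed cover $[a_1+a_2,b_1+b_2]$ without gaps, which is precisely what $|I_2|\ge|I_1|$ provides through the inequality $a_1+b_2\ge b_1+a_2$.
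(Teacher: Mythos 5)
Your proof is correct, and it supplies exactly the elementary computation the paper explicitly omits (the authors state the proof "is straightforward and thus omitted"), so there is no alternative argument in the paper to compare against. The case split based on the effective integration limits $[\max(a_1,x-b_2),\min(b_1,x-a_2)]$ is the natural approach, the inequality $a_1+b_2\ge b_1+a_2$ coming from $|I_2|\ge|I_1|$ is correctly identified as what makes the middle region nonempty, and the change of variables $y\mapsto a_1+b_1-y$ invoking symmetry of $\mu$ correctly recasts $\int_{x-b_2}^{b_1}\mu$ as $\psi(b_1+b_2-x)$; you are also right that $a_2>0$ plays no role in this computation and is only a standing hypothesis used elsewhere in the paper's application of the lemma.
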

The three pieces of the above function $\mu*\chi_{I_2}$ will be referred to as piece 1, piece 2 and piece 3 respectively.

\begin{proof}[Proof of Proposition \ref{prop_M}]


Since $\rho_K$ is a steady state for $W_K$ by Theorem \ref{thm_Wkss}, we have $(W_K'*\rho_K)(M^{-1})=0$ since $M^{-1}\in\supp\rho_K$. Then we write
\begin{equation}\label{Wx0m1}\begin{split}
(W_K*\rho_K )(x_0)-(W_K*\rho_K )(M^{-1}) = & \int_{M^{-1}}^{x_0} (W_K'*\rho_K )(x_1)\rd{x_1} = \int_{M^{-1}}^{x_0} \int_{M^{-1}}^{x_1}(W_K''*\rho_K )(x)\rd{x}\rd{x_1} \\
= & \int_{M^{-1}}^{x_0} (x_0-x)(W_K''*\rho_K )(x)\rd{x}
\end{split}\end{equation}
and for $x\in (M^{-1},1/2)$ which is outside $\supp\rho_K $,
\begin{equation}\label{eq:decomp}\begin{split}
(W_K''*\rho_K )(x) = & (W_1''*\rho_K )(x) + ((\omega_2'-\omega_1')*\rho_K )(x) + \sum_{k=3}^K ((\omega_k'-\omega_{k-1}')*\rho_K )(x).
\end{split}\end{equation}
We deal with each of these three terms in the next three steps starting from the last one.

\

{\bf STEP 1}: show that $ \int_{M^{-1}}^{x_0} (x_0-x)((\omega_k'-\omega_{k-1}')*\rho_K )(x)\rd{x}>0$ for any $3\le k \le K$.

We first decompose $\rho_K $ as
\begin{equation}
\rho_K  = \rho_K \chi_{I_{1,1}} + \sum_{j=2}^{k-1} \rho_K  \chi_{I_{j,2^{j-1}-2}} + \rho_K  \chi_{I_{k-1,2^{k-2}-1}}\,.
\end{equation}
Notice that for $x\in [M^{-1},\frac{1}{2}]$, we have $\dist(I_{1,1},x) \ge \frac{1}{2}-M^{-1}$, $\dist(I_{j,2^{j-1}-2},x) \ge (M-1)_{(j)},\,2\le j\le k-1$,  and $\supp(\omega_k'-\omega_{k-1}')\cap\mathbb{R}_+ \subseteq [(M-\alpha)_{(k)}, (M-2)_{(k-1)}]$. Therefore, we can check that the support of the density parts translated by $x$ are to the right of the support of $\omega_k'-\omega_{k-1}'$, leading to 
\begin{equation}
(\rho_K \chi_{I_{1,1}})*(\omega_k'-\omega_{k-1}'))(x) = (\rho_K \chi_{I_{j,2^{j-1}-2}})*(\omega_k'-\omega_{k-1}'))(x) =  0,\quad 2\le j \le k-1\,.
\end{equation}
Then we further decompose
\begin{equation}
\rho_K  \chi_{I_{k-1,2^{k-2}-1}} = \rho_K  \chi_{I_{k,2^{k-1}-2}} + \rho_K  \chi_{I_{k,2^{k-1}-1}}
\end{equation}
and then the term with $\bar{\chi}_{[(M-\alpha)_{(k)} , (M-2)_{(k)}]}$ in \eqref{omega} does not interact with $\rho_K  \chi_{I_{k,2^{k-1}-2}}$ for the same reason as above. Therefore
\begin{equation}\begin{split}
(\omega_k'-\omega_{k-1}')*\rho_K  = & M^k \bar{\chi}_{[(M-2)_{(k)},M_{(k)}]} * (\rho_K  \chi_{I_{k,2^{k-1}-2}})  \\
& -\frac{1}{4}\cdot\frac{2}{\alpha-2}M^{k-1}\bar{\chi}_{[(M-\alpha)_{(k-1)} , (M-2)_{(k-1)}]} * (\rho_K  \chi_{I_{k,2^{k-1}-2}}) \\
& + M^k \bar{\chi}_{[(M-2)_{(k)},M_{(k)}]} * (\rho_K  \chi_{I_{k,2^{k-1}-1}}) \\
& -\frac{3}{4}\cdot\frac{2}{\alpha-2}M^k \bar{\chi}_{[(M-\alpha)_{(k)} , (M-2)_{(k)}]} * (\rho_K  \chi_{I_{k,2^{k-1}-1}}) \\
&  -\frac{1}{4}\cdot\frac{2}{\alpha-2}M^{k-1}\bar{\chi}_{[(M-\alpha)_{(k-1)} , (M-2)_{(k-1)}]}* (\rho_K  \chi_{I_{k,2^{k-1}-1}})\\
:= & \cJ_1 + \cJ_2 + \cJ_3 + \cJ_4 + \cJ_5\,.
\end{split}\end{equation}
See Figure \ref{fig_J} for an illustration. 

\begin{figure}
\begin{center}
	\includegraphics[width=0.48\textwidth]{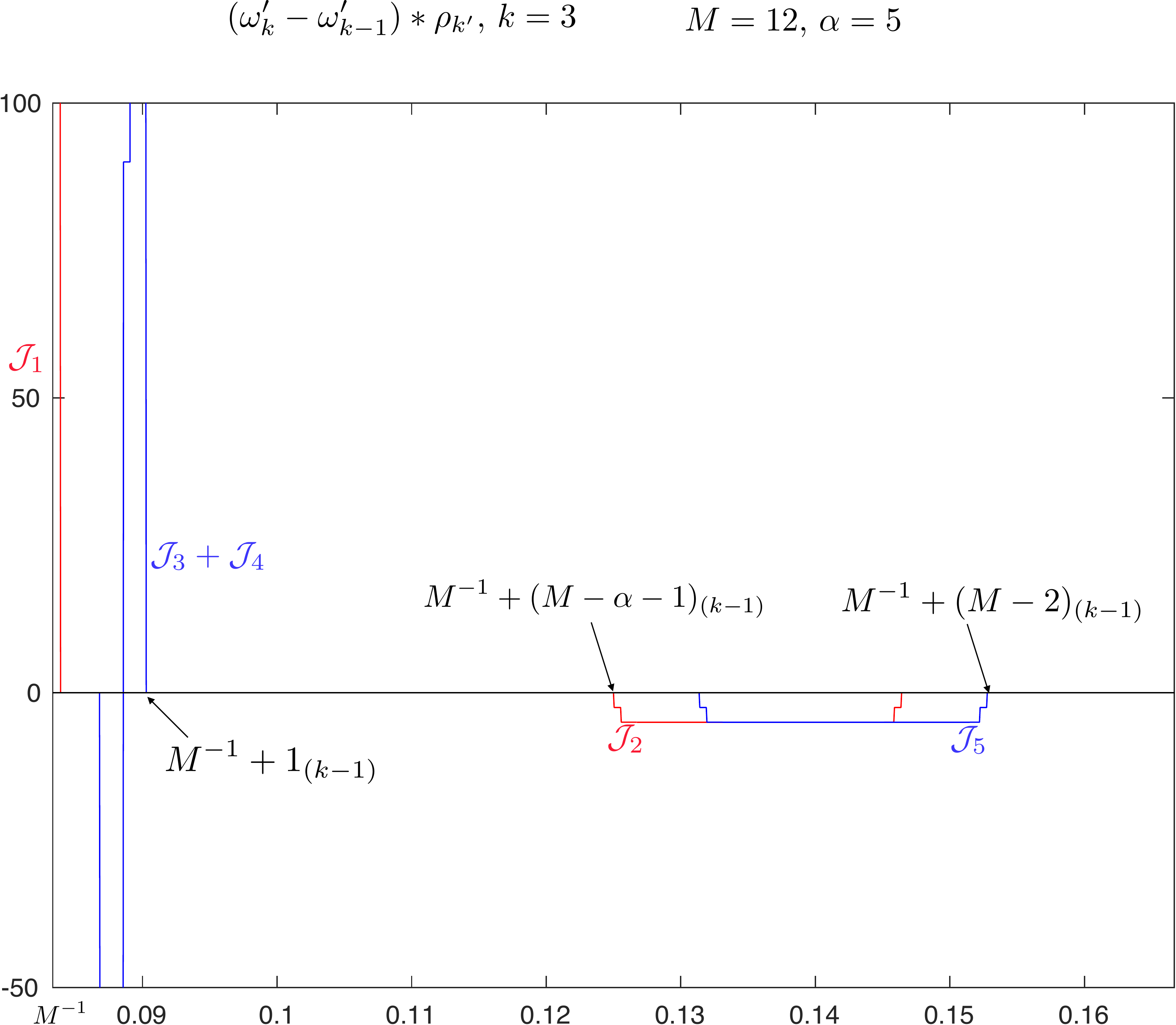}
	\includegraphics[width=0.48\textwidth]{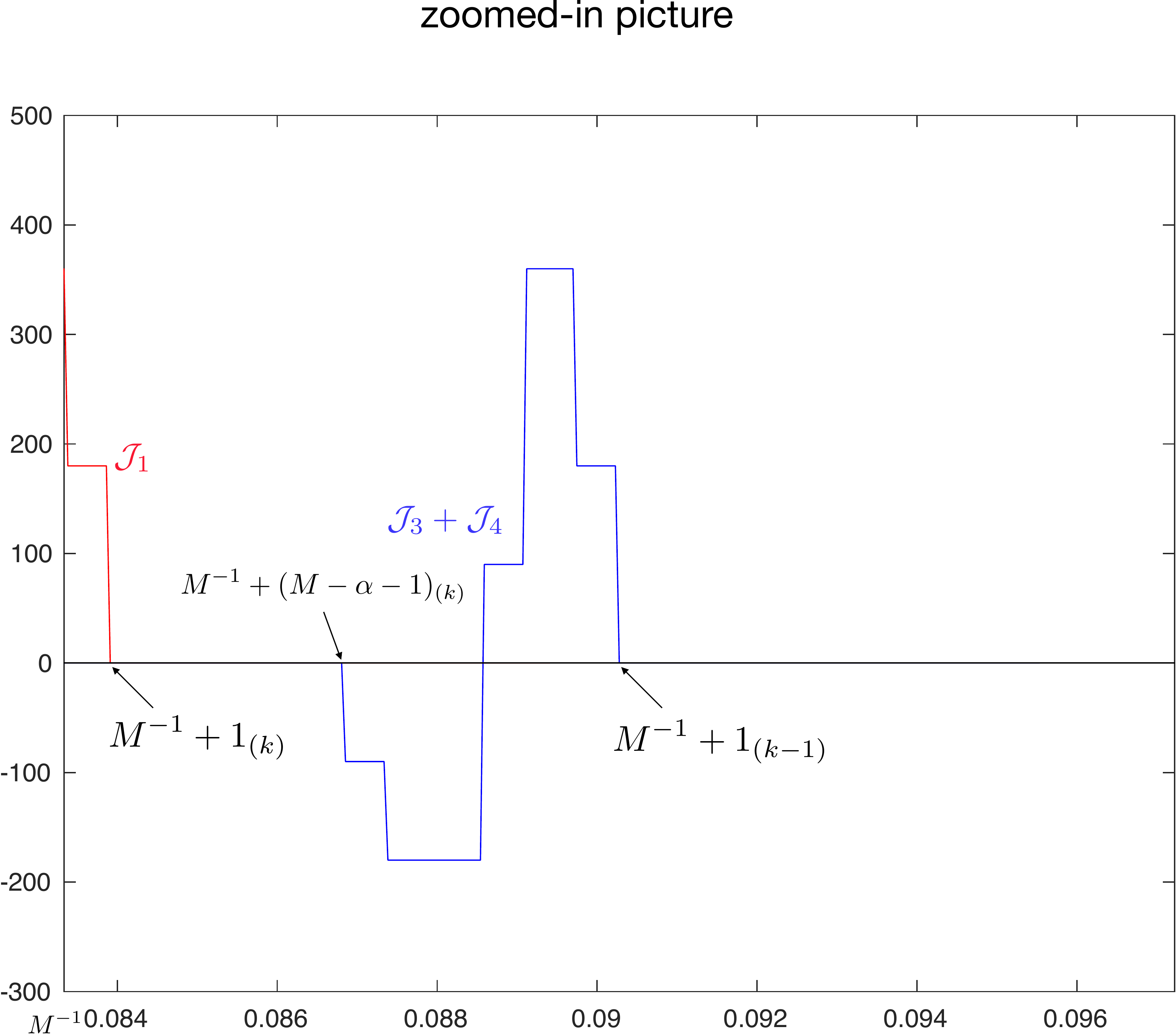}
	\caption{The decomposition of $(\omega_k'-\omega_{k-1}')*\rho_K $ into $\cJ_1,\dots,\cJ_5$.}
	\label{fig_J}
\end{center}
\end{figure}

Recall that we focus on $x\in [M^{-1},1/2]$, and the involved interval for $rho_K$ are $I_{k,2^{k-1}-2} = [M^{-1}-1_{(k-1)},M^{-1}-1_{(k-1)}+1_{(k)}]$ and $I_{k,2^{k-1}-1} = [M^{-1}-1_{(k)},M^{-1}]$.
Due to \eqref{rhok}, we get a localized accumulation function of $\rho_K $ for both intervals
\begin{equation}
\psi(y) = \int_{M^{-1}-1_{(k-1)}}^{M^{-1}-1_{(k-1)}+y}\rho_K (y_1)\rd{y_1} = \int_{M^{-1}-1_{(k)}}^{M^{-1}-1_{(k)}+y}\rho_K (y_1)\rd{y_1},\quad 0\le y \le 1_{(k)}
\end{equation}
as appeared in Lemma \ref{lem_convo}.
By symmetry, $\psi(y) + \psi(1_{(k)}-y)=\psi(1_{(k)}) = 2^{-k}$, which implies $\int_0^{1_{(k)}}\psi(y)\rd{y} = \frac{1}{2}\int_0^{1_{(k)}}(\psi(y)+\psi(1_{(k)}-y))\rd{y}=2^{-(k+1)}\cdot 1_{(k)}$. Then we analyze the five integrals $\cJ_1,\dots,\cJ_5$ separately.

{\bf Positive contribution from $\cJ_1$}:

$\supp\cJ_1 = [M^{-1},M^{-1}+1_{(k)}]$, and its expression only has piece 3.  Therefore
\begin{equation}\label{j1dec}
\cJ_1(x) = M^k\psi(M^{-1}+1_{(k)}-x)
\end{equation}
and
\begin{equation}
\int_{M^{-1}}^{M^{-1}+1_{(k)}}\cJ_1(x)\rd{x} = M^k2^{-(k+1)}\cdot 1_{(k)} = \frac{1}{2}\cdot 2^{-k}.
\end{equation}

{\bf Negative contribution from $\cJ_2$}:

$\supp\cJ_2 =  [M^{-1}+ (M-\alpha-1)_{(k-1)},M^{-1}+ (M-3)_{(k-1)} + 1_{(k)}]$, and its expression has all 3 full pieces. 
\begin{equation}\begin{split}
& \cJ_2(x) = -\frac{1}{4}\cdot\frac{2}{\alpha-2}M^{k-1}\\
& \cdot\left\{\begin{array}{ll} \psi(x-(M^{-1}+ (M-\alpha-1)_{(k-1)})), & M^{-1}+ (M-\alpha-1)_{(k-1)} \le x \le M^{-1}+ (M-\alpha-1)_{(k-1)}+1_{(k)} \\[3pt]
2^{-k}, & M^{-1}+ (M-\alpha-1)_{(k-1)}+1_{(k)} \le x \le M^{-1}+ (M-3)_{(k-1)} \\[3pt]
\psi(M^{-1}+ (M-3)_{(k-1)}+1_{(k)}-x), & M^{-1}+ (M-3)_{(k-1)} \le x \le M^{-1}+ (M-3)_{(k-1)}+1_{(k)}
\end{array}\right.
\end{split}\end{equation}
and
\begin{equation}
\int_{M^{-1}+ (M-\alpha-1)_{(k-1)}}^{M^{-1}+ (M-3)_{(k-1)} + M^{-k}}\cJ_2(y)\rd{y} = -\frac{1}{4}\cdot\frac{2}{\alpha-2}M^{k-1} \cdot 2^{-k}\cdot (\alpha-2)_{(k-1)} = -\frac{1}{2}\cdot 2^{-k}.
\end{equation}

{\bf Positive contribution from $\cJ_3$}:

$\supp\cJ_3 =  [M^{-1}+(M-3)_{(k)},M^{-1}+M_{(k)}]$, and its expression has all 3 full pieces. 
\begin{equation}
\cJ_2(x) =M^k\cdot \left\{\begin{array}{ll}  \psi(x-(M^{-1}+(M-3)_{(k)})), & M^{-1}+(M-3)_{(k)} \le x \le M^{-1}+(M-2)_{(k)} \\[3pt]
2^{-k}, & M^{-1}+(M-2)_{(k)} \le x \le M^{-1}+(M-1)_{(k)} \\[3pt]
\psi(M^{-1}+M_{(k)}-x), & M^{-1}+(M-1)_{(k)} \le x \le M^{-1}+M_{(k)}
\end{array}\right.
\end{equation}
and
\begin{equation}
\int_{M^{-1}+(M-3)_{(k)}}^{M^{-1}+M_{(k)}}\cJ_3(y)\rd{y} = M^k \cdot 2^{-k}\cdot 2_{(k)} = 2\cdot 2^{-k}.
\end{equation}

{\bf Negative contribution from $\cJ_4$}:

$\supp\cJ_4 =  [M^{-1}+(M-\alpha-1)_{(k)},M^{-1}+(M-2)_{(k)}]$, and its expression has all 3 full pieces. 
\begin{equation}\begin{split}
& \cJ_4(x) = -\frac{3}{4}\cdot \frac{2}{\alpha-2}M^k\\
& \cdot\left\{\begin{array}{ll}  \psi(x-(M^{-1}+(M-\alpha-1)_{(k)})), & M^{-1}+(M-\alpha-1)_{(k)} \le x \le M^{-1}+M^{-k}\cdot(M-\alpha) \\[3pt]
2^{-k}, & M^{-1}+M^{-k}\cdot(M-\alpha) \le x \le M^{-1}+M^{-k}\cdot(M-3) \\[3pt]
\psi(M^{-1}+(M-2)_{(k)}-x), & M^{-1}+M^{-k}\cdot(M-3) \le x \le M^{-1}+(M-2)_{(k)}
\end{array}\right.
\end{split}\end{equation}
and
\begin{equation}
\int_{M^{-1}+(M-\alpha-1)_{(k)}}^{M^{-1}+(M-2)_{(k)}}\cJ_4(y)\rd{y} = -\frac{3}{4}\cdot\frac{2}{\alpha-2}M^k \cdot 2^{-k}\cdot (\alpha-2)_{(k)} = -\frac{3}{2}\cdot 2^{-k}.
\end{equation}

{\bf Negative contribution from $\cJ_5$}:

$\supp\cJ_5 =  [M^{-1}+(M-\alpha)_{(k-1)}-1_{(k)},M^{-1}+(M-2)_{(k-1)}]$, and its expression has all 3 full pieces. 
\begin{equation}\begin{split}
& \cJ_5(x) = -\frac{1}{4}\cdot\frac{2}{\alpha-2}M^{k-1} \\
& \cdot \left\{\begin{array}{ll}  \psi(x-(M^{-1}+(M-\alpha)_{(k-1)}-1_{(k)})), & M^{-1}+(M-\alpha)_{(k-1)}-1_{(k)} \le x \le M^{-1}+(M-\alpha)_{(k-1)} \\[3pt]
2^{-k}, & M^{-1}+(M-\alpha)_{(k-1)} \le x \le M^{-1}+(M-2)_{(k-1)}-1_{(k)} \\[3pt]
\psi(M^{-1}+(M-2)_{(k-1)}-x), & M^{-1}+(M-2)_{(k-1)}-1_{(k)} \le x \le M^{-1}+(M-2)_{(k-1)} 
\end{array}\right.
\end{split}\end{equation}
and
\begin{equation}
\int_{M^{-1}+(M-\alpha)_{(k-1)}-1_{(k)}}^{M^{-1}+(M-2)_{(k-1)}}\cJ_5(y)\rd{y} = -\frac{1}{4}\cdot\frac{2}{\alpha-2}M^{k-1} \cdot 2^{-k}\cdot (\alpha-2)_{(k-1)} = -\frac{1}{2}\cdot 2^{-k} .
\end{equation}

{\bf STEP 1-1}: treat the range $M^{-1}\le x_0 \le M^{-1}+1_{(k)}$, and quantify the positive contribution from $\cJ_1$.

The assumption \eqref{prop_M_1} implies that
\begin{equation}\label{alpha1}
M>4,\quad \alpha \le \frac{M}{2}\,.
\end{equation}
Then it is clear that  
\begin{equation}
\supp\cJ_1 \cap \supp \cJ_i = \emptyset,\quad i=2,3,4,5.
\end{equation}
Therefore $ \int_{M^{-1}}^{x_0} (x_0-x)((\omega_k'-\omega_{k-1}')*\rho_K )(x)\rd{x}>0$ for $x_0\in \supp\cJ_1=[M^{-1},M^{-1}+1_{(k)}]$. Also, for $x_0>M^{-1}+1_{(k)}$, we have the positive contribution from $\cJ_1$ as
\begin{equation}\label{J1est}\begin{split}
\int_{M^{-1}}^{x_0} & (x_0-x)\cJ_1(x)\rd{x} 
=  \int_{M^{-1}}^{M^{-1}+1_{(k)}} (x_0-x)\cJ_1(x)\rd{x}\\
= & \int_{M^{-1}}^{M^{-1}+1_{(k)}} (M^{-1}+(\frac{1}{2})_{(k)}-x)\cJ_1(x)\rd{x} + (x_0-M^{-1}-(\frac{1}{2})_{(k)})\int_{M^{-1}}^{M^{-1}+1_{(k)}}\cJ_1(x)\rd{x} \\
= & \int_{M^{-1}}^{M^{-1}+1_{(k)}} (x-M^{-1}-(\frac{1}{2})_{(k)})\cJ_1(2M^{-1}+1_{(k)}-x)\rd{x} + \frac{1}{2}\cdot 2^{-k}\cdot (x_0-M^{-1}-(\frac{1}{2})_{(k)}) \\
= & \frac{1}{2}\int_{M^{-1}}^{M^{-1}+1_{(k)}} (M^{-1}+(\frac{1}{2})_{(k)}-x)(\cJ_1(x)-\cJ_1(2M^{-1}+1_{(k)}-x))\rd{x} + \frac{1}{2}\cdot 2^{-k}\cdot (x_0-M^{-1}-(\frac{1}{2})_{(k)}) \\
\ge & \frac{1}{2}\cdot 2^{-k}\cdot (x_0-M^{-1}-(\frac{1}{2})_{(k)})
\end{split}\end{equation}
where the inequality uses the decreasing property of $\cJ_1$ due to \eqref{j1dec} to get the positivity of the integrand.

{\bf STEP 1-2}: treat the range $M^{-1}+1_{(k)} \le x_0 \le M^{-1}+1_{(k-1)}=M^{-1}+M_{(k)}$.

For such $x_0$, \eqref{J1est} holds, and $x_0\notin \supp\cJ_2\cup\supp\cJ_5$ under the condition \eqref{alpha1}. Then the only negative contribution is from $\cJ_4$, whose support is $\supp\cJ_4 =  [M^{-1}+(M-\alpha-1)_{(k)},M^{-1}+(M-2)_{(k)}]$ and has integral $-\frac{3}{2}\cdot 2^{-k}$. Therefore it suffices to consider $x_0\in[M^{-1}+(M-\alpha-1)_{(k)},M^{-1}+M_{(k)}]$. Using the negativity and symmetry of $\cJ_4$, we have
\begin{equation}\begin{split}
\int_{M^{-1}}^{x_0}  (x_0-x)\cJ_4(x)\rd{x} \ge & \int_{M^{-1}}^{M^{-1}+M_{(k)}}  (M^{-1}+M_{(k)}-x)\cJ_4(x)\rd{x} = -\frac{3}{2}\cdot 2^{-k}(M^{-1}+M_{(k)}-x_c) \\
= & -\frac{3}{2}\cdot 2^{-k}(\frac{\alpha+3}{2})_{(k)}
\end{split}\end{equation}
where $x_c=M^{-1}+(M-\frac{\alpha+3}{2})_{(k)}$ is the center of $\supp \cJ_4$. 
For such $x_0$, \eqref{J1est} gives
\begin{equation}\label{J1int}
\int_{M^{-1}}^{x_0} (x_0-x)\cJ_1(x)\rd{x} \ge \frac{1}{2}\cdot 2^{-k}\cdot (M^{-1}+(M-\alpha-1)_{(k)}-M^{-1}-(\frac{1}{2})_{(k)}) = \frac{1}{2}\cdot 2^{-k}\cdot (M-\alpha-\frac{3}{2})_{(k)}
\end{equation}
Therefore, as long as
\begin{equation}\label{alphacond1}
M-\alpha-\frac{3}{2} \ge \frac{3}{2}(\alpha+3) \quad\Leftrightarrow\quad \alpha \le \frac{2}{5}(M-6)
\end{equation}
which is guaranteed by the assumption \eqref{prop_M_1}, we have $ \int_{M^{-1}}^{x_0} (x_0-x)(\cJ_1(x)+\cJ_4(x))\rd{x}\ge 0$ for $M^{-1}+1_{(k)} \le x_0 \le M^{-1}+1_{(k-1)}$.

{\bf STEP 1-3}: treat the range $M^{-1}+1_{(k-1)} \le x_0 \le 1/2$.

Notice that
\begin{equation}
\frac{\partial}{\partial x_0} \int_{M^{-1}}^{x_0} (x_0-x)\sum_{i=1}^5\cJ_i(x)\rd{x} = \int_{M^{-1}}^{x_0} \sum_{i=1}^5\cJ_i(x)\rd{x}
\end{equation}
For $M^{-1}+1_{(k-1)} \le x_0 \le 1/2$, $[M^{-1},x_0]$ contains all the positive contributions: $\supp\cJ_1\cup\supp\cJ_3$,  therefore
\begin{equation}
\int_{M^{-1}}^{x_0} \sum_{i=1}^5\cJ_i(x)\rd{x} \ge \int_{M^{-1}}^{1/2} \sum_{i=1}^5\cJ_i(x)\rd{x} = 0
\end{equation}
Therefore $\int_{M^{-1}}^{x_0} (x_0-x)\sum_{i=1}^5\cJ_i(x)\rd{x}$ is increasing in $x_0$ for $M^{-1}+1_{(k-1)} \le x_0 \le 1/2$, and its positivity follows from STEP 1-2.

\

{\bf STEP 2}: show that $ \int_{M^{-1}}^{x_0} (x_0-x)(W_1''*\rho_K )(x)\rd{x}>0$.

$\omega_1'$ is supported on $[(M-\alpha)_{(1)},1]\subseteq [1/2,1]$. Therefore for $x\in (M^{-1},1/2)$, the only possible contribution for $\omega_1'*\rho_K $ comes from $\rho_K \chi_{I_{1,1}}$, and then
\begin{equation}\begin{split}
W_1''*\rho_K  = & \frac{M}{M-2} + \omega_1'*(\rho_K \chi_{I_{1,1}})\\
= & \frac{M}{M-2} + M\bar{\chi}_{[(M-2)_{(1)},1]}*(\rho_K \chi_{I_{1,1}}) - \frac{3}{4}\cdot\frac{2}{\alpha-2}M\bar{\chi}_{[(M-\alpha)_{(1)},(M-2)_{(1)}]}*(\rho_K \chi_{I_{1,1}}) \\
= & \frac{M}{M-2} + \cK_1 + \cK_2.
\end{split}\end{equation}

Applying Lemma \ref{lem_convo} with $x$-axis reversed, $\cK_1\ge0$ is supported on $[M^{-1},2M^{-1}]$ (after intersecting with $[M^{-1},1/2]$), and only has piece 1, with expression given by
\begin{equation}
\cK_1(x) = M\psi(2M^{-1}-x),\quad \psi(y) = \int_0^y \rho_K (y_1)\rd{y_1}.
\end{equation}
By the symmetry $\psi(y)+\psi(M^{-1}-y)=1/2$, 
\begin{equation}
\int_{M^{-1}}^{2M^{-1}}\cK_1(x)\rd{x} = \frac{1}{2}\cdot \frac{M}{2}\cdot M^{-1} = \frac{1}{4}.
\end{equation}

Notice that $\alpha\ge4$ by the assumption \eqref{prop_M_1}. Therefore $\cK_2\le 0$ is supported on $[M^{-1},\alpha M^{-1}]\subseteq [M^{-1},1/2]$, and has all 3 full pieces. Again, Lemma \ref{lem_convo} gives
\begin{equation}\label{K2}
\cK_2(x) = -\frac{3}{4}\cdot\frac{2}{\alpha-2}M\cdot \left\{\begin{array}{ll} \psi(x-M^{-1}), & M^{-1} \le x \le 2M^{-1} \\[3pt]
\frac{1}{2}, & 2M^{-1} \le x \le (\alpha-1)M^{-1} \\[3pt]
\psi(\alpha M^{-1}-x), & (\alpha-1)M^{-1} \le x \le \alpha M^{-1} 
\end{array}\right.
\end{equation}
and
\begin{equation}
\int_{M^{-1}}^{\alpha M^{-1}}\cK_2(x)\rd{x} = -\frac{3}{4}\cdot\frac{2}{\alpha-2}M\cdot\frac{1}{2}\cdot (\alpha-2)M^{-1} = -\frac{3}{4}.
\end{equation}

Notice that $W_1''*\rho_K $ is positive at $M^{-1}$, decreasing on $[M^{-1},2M^{-1}]$, constant on $[2M^{-1},(\alpha-1)M^{-1}]$, increasing on $[(\alpha-1)M^{-1},\alpha M^{-1}]$, being a positive constant on $[\alpha M^{-1},1/2]$. It is clear that $W_1'*\rho_K$ vanishes at $1/2$ by symmetry, and
\begin{equation}\begin{split}
(W_1'*\rho_K)(M^{-1}) = & \frac{M}{2M-4}\int_{[0,1]}\big(-\sgn(M^{-1}-y)+2(M^{-1}-y)\big)\rho_K(y)\rd{y} \\
& + \int_{I_{1,1}} \big((M-\frac{1}{2})+M(M^{-1}-y)\big)\rho_K(y)\rd{y} \\
= & \frac{M}{2M-4}\Big(2M^{-1}-2\cdot \frac{1}{2}\Big) + \frac{1}{2}\Big(M-\frac{1}{2}+1-M\cdot \Big(1-\frac{1}{2M}\Big)\Big) =0
\end{split}\end{equation}
using the fact that the center of mass of $\rho_K$ and $\rho_K\chi_{I_{1,1}}$ are $\frac{1}{2}$ and $1-\frac{1}{2M}$ respectively. Then we see that $W_1*\rho_K $ achieves its minimum on $[M^{-1},1/2]$ at either $M^{-1}$ or $1/2$. Therefore, to show that $(W_1*\rho_K )(x_0)=\int_{M^{-1}}^{x_0} (x_0-x)(W_1''*\rho_K )(x)\rd{x}>0$, we only need to check it for $x_0=1/2$.

Similar to \eqref{J1est} for $\cK_1$ and the symmetry of the integrand for $\cK_2$, we can estimate 
\begin{equation}
\int_{M^{-1}}^{1/2} \Big(\frac{1}{2}-x\Big)(W_1''*\rho_K )(x)\rd{x} \geq \frac{M}{M-2}\cdot \frac{1}{2}\Big(\frac{1}{2}-M^{-1}\Big)^2 + \frac{1}{4}\cdot \Big(\frac{1}{2} - \frac{3}{2}M^{-1}\Big) - \frac{3}{4}\cdot \Big(\frac{1}{2}-\frac{\alpha+1}{2} M^{-1}\Big)
\end{equation}
where we use the fact that the center of $\supp\cK_1$ and $\supp\cK_2$ are $\frac{3}{2}M^{-1}$ and $\frac{\alpha+1}{2} M^{-1}$ respectively.  Its positivity is equivalent to
\begin{equation}\label{alphacond2}
\alpha > \frac{1}{3}(M+2)
\end{equation}
which is guaranteed by \eqref{prop_M_1}.

Now we show that 
\begin{equation}\label{W1cx0}
(W_1*\rho_K)(x_0)-(W_1*\rho_K)(M^{-1})= \int_{M^{-1}}^{x_0} (x_0-x)(W_1''*\rho_K )(x)\rd{x}\ge c(x_0)>0
\end{equation}
for any $x_0\in (M^{-1},1/2)$, with $c(x_0)$ independent of $K$. In fact,  using the monotone properties of $W_1*\rho_K$ we obtained, together with the positivity of $\int_{M^{-1}}^{x_0} (x_0-x)(W_1''*\rho_K )(x)\rd{x}$ at $x_0=1/2$, it suffices to show \eqref{W1cx0} for $x_0$ near $M^{-1}$. Since $\psi$ is increasing on $[0,M^{-1}]$ with $\psi(y)+\psi(M^{-1}-y)=1/2$, we have $\psi(2M^{-1}-x)\ge \psi(x-M^{-1})$ for $x\in [M^{-1},\frac{3}{2}M^{-1}]$. Together with the fact that $\alpha\ge 4$, we see that $\cK_1(x)\ge \cK_2(x)$ for $x\in [M^{-1},\frac{3}{2}M^{-1}]$, which implies $(W_1''*\rho_K)(x)\ge \frac{M}{M-2}$. Therefore we see \eqref{W1cx0} with $c(x_0) = \frac{M}{M-2}\int_{M^{-1}}^{x_0} (x_0-x)\rd{x} = \frac{M}{2(M-2)}(x_0-M^{-1})^2>0$ for $x_0\in (M^{-1},\frac{3}{2}M^{-1}]$.

\

{\bf STEP 3}: show that $ \int_{M^{-1}}^{x_0} (x_0-x)((\omega_2'-\omega_1')*\rho_K )(x)\rd{x}>0$ for $K\ge 2$.

Compared to STEP 1, $\cJ_1,\cJ_3,\cJ_4$ appear in exactly the same form. The terms $\cJ_2,\cJ_5$ are different because they involve convolutions of $\rho_K $ with $\bar{\chi}_{[(M-\alpha)_{(1)},(M-2)_{(1)}]}$ and thus have contributions from $\rho_K \chi_{I_{1,1}}$, since $[(M-\alpha)_{(1)},(M-2)_{(1)}]\subseteq [1/2,1]$. The new term corresponding to $\cJ_2+\cJ_5$ is
\begin{equation}
\tilde{\cJ}_2 = -\frac{1}{4}\cdot\frac{2}{\alpha-2}M\bar{\chi}_{[(M-\alpha)_{(1)},(M-2)_{(1)}]} * \rho_K  \chi_{I_{1,1}} = \frac{1}{3}\cK_2 \le 0
\end{equation} 
where $\cK_2$ is defined in \eqref{K2}. Then STEP 1-3 can be repeated as before. For STEP 1-1, we have the same estimate \eqref{J1est} for $\cJ_1$ with $x_0>M^{-1}+1_{(2)}$. For $x_0\in(M^{-1},M^{-1}+1_{(2)}]$, 
we have the lower bound 
\begin{equation}
\int_{M^{-1}}^{x_0} (x_0-x)\cJ_1(x)\rd{x} = \int_{M^{-1}}^{M^{-1}+1_{(2)}} \max\{x_0-x,0\}\cJ_1(x)\rd{x} \ge \int_{M^{-1}}^{x_0} (x_0-x)\frac{M^2}{8}\rd{x}
\end{equation}
by replacing $\cJ_1(x)$ with its average on $[M^{-1},M^{-1}+1_{(2)}]$, using the decreasing property of $\cJ_1$ and $\max(x_0-x,0)$. We also have the lower bound 
\begin{equation}
\int_{M^{-1}}^{x_0} (x_0-x)\tilde{\cJ}_2(x)\rd{x} \ge -\int_{M^{-1}}^{x_0} (x_0-x)\frac{1}{3}\cdot \frac{3}{4}\cdot\frac{2}{\alpha-2}M\cdot\frac{1}{2}\rd{x} = -\int_{M^{-1}}^{x_0} (x_0-x)\frac{2}{4(\alpha-2)}M\rd{x}
\end{equation}
using \eqref{K2} for $\tilde{\cJ}_2=\frac{1}{3}\cK_2$ and the upper bound $\psi(y)\le 1/2$. Since $\alpha\ge 4$ and $M\ge 4$ by \eqref{prop_M_1}, we get $\int_{M^{-1}}^{x_0} (x_0-x)(\cJ_1(x)+\tilde{\cJ}_2(x))\rd{x}>0$, and thus $\int_{M^{-1}}^{x_0} (x_0-x)((\omega_2'-\omega_1')*\rho_K)(x)\rd{x}>0$.

For STEP 1-2 (i.e., $x_0\in [M^{-1}+1_{(2)},2M^{-1}]\subseteq [M^{-1},\alpha M^{-1}]$), we have an extra negative term $\tilde{\cJ}_2$. By \eqref{K2}, we have $\tilde{\cJ}_2(x) \ge -\frac{1}{4}\cdot\frac{2}{\alpha-2}M\cdot \frac{1}{2}$ for any $x\in [M^{-1}+1_{(2)},2M^{-1}]$. Therefore its contribution in $ \int_{M^{-1}}^{x_0} (x_0-x)((\omega_2'-\omega_1')*\rho_K )(x)\rd{x}$ can be estimated by
\begin{equation}\begin{split}
\int_{M^{-1}}^{x_0} (x_0-x)\tilde{\cJ}_2(x)\rd{x} \ge &  \int_{M^{-1}}^{x_0} (x_0-x)\Big(-\frac{1}{4}\cdot\frac{2}{\alpha-2}M\cdot \frac{1}{2}\Big)\rd{x}
=  -\frac{1}{4(\alpha-2)}M\cdot \frac{1}{2}(x_0-M^{-1})^2 \\
\ge & -\frac{M}{8(\alpha-2)}(2M^{-1}-M^{-1})^2 =  -\frac{1}{2}\cdot 2^{-2}\cdot \Big(\frac{M}{\alpha-2}\Big)_{(2)}
\end{split}\end{equation}
where in the last equality we rewrite it in a similar form as the positive contribution \eqref{J1int}. Therefore, compared to the condition \eqref{alphacond1}, we have $ \int_{M^{-1}}^{x_0} (x_0-x)((\omega_2'-\omega_1')*\rho_K )(x)\rd{x}>0$ for $x_0\in [M^{-1}+1_{(2)},2M^{-1}]$ as long as a more restrictive condition
\begin{equation}\label{alphacond30}
M-\alpha-\frac{3}{2} \ge \frac{3}{2}(\alpha+3) + \frac{M}{\alpha-2}
\end{equation}
is satisfied. We claim that \eqref{alphacond30} is a consequence of \eqref{prop_M_1}. In fact, \eqref{prop_M_1} implies that $\alpha\ge 24$. Therefore $\frac{M}{\alpha-2}\le \frac{24}{22}\cdot\frac{M}{\alpha} \le \frac{24}{22}\cdot 3<4$ by \eqref{alphacond2}. Using this, we see that \eqref{alphacond30} can be guaranteed by $\alpha \le \frac{2}{5}(M-10)$ from \eqref{prop_M_1}.

Finally notice that the strictly positive contribution from $W_1$ (as in STEP 2) appears in any case of $K\ge 1$. Therefore we get \eqref{prop_M_2} with $c(x_0)$ independent of $K$.
\end{proof}

Next we use Proposition \ref{prop_M} and combined with self-similar arguments to obtain \eqref{W2cond} for $W_k*\rho_k$, that is the necessary condition for $d_2$-local minimizers.

\begin{proposition}\label{prop_rhok}
Assume $M$ and $\alpha$ satisfy \eqref{prop_M_1}, and let $k\ge 1$. Then $W_k*\rho_k$ is constant on $\supp\rho_k$, denoted as $c_k$, and for any $x_0\not\in \supp \rho_j$, there exists $c(x_0)>0$ such that 
\begin{equation}\label{prop_rhok_1}
(W_k*\rho_k)(x_0) - c_k \ge c(x_0),\quad \forall k\ge j\,.
\end{equation}
\end{proposition}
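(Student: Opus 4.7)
The plan is to combine Proposition \ref{prop_M} with the symmetry $x\mapsto 1-x$ and with a self-similar rescaling that allows Proposition \ref{prop_M} to act at every scale of the hierarchical structure of $(W_k,\rho_k)$.

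For the constancy claim, Theorem \ref{thm_Wkss} already gives that $W_k*\rho_k$ is locally constant on each connected component $I_{k,l}$; call this value $c_{k,l}$. Since $\rho_k$ is symmetric about $1/2$ and $W_k$ is even, $(W_k*\rho_k)(x)=(W_k*\rho_k)(1-x)$, which yields $c_{k,l}=c_{k,2^k-1-l}$. The proof of Proposition \ref{prop_M} actually produces a lower bound $c(x_0)$ that degenerates quadratically at the endpoints of the big gap: the estimate $(W_1''*\rho_K)(x)\ge M/(M-2)$ on $[M^{-1},3M^{-1}/2]$ from the end of Step 2 there gives $c(x_0)\ge\tfrac{M}{2(M-2)}(x_0-M^{-1})^2$ for $x_0$ close to $M^{-1}$, and a symmetric bound near $1-M^{-1}$. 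Continuity of $W_k*\rho_k$ therefore forces $c_{k,2^{k-1}}\ge c_{k,2^{k-1}-1}$, and combined with the pairwise symmetry this gives equality. Running exactly this argument self-similarly inside each parent interval $I_{j,l}$ (so that the two central children of $I_{j,l}$ carry the same constant) propagates equality through all levels and yields a single $c_k$.

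For the strict lower bound off the support, given $x_0\notin\supp\rho_j$ let $j'$ be the smallest index with $x_0\notin\supp\rho_{j'}$, so that $1\le j'\le j$ and $x_0\in I_{j'-1,l}\setminus(I_{j',2l}\cup I_{j',2l+1})$ for a unique $l$; i.e., $x_0$ sits in the big gap at level $j'$ inside the parent $I_{j'-1,l}$. Rescale by the affine map $x\mapsto M^{j'-1}(x-a_{j'-1,l})$, where $a_{j'-1,l}$ is the left endpoint of $I_{j'-1,l}$: under this, $\rho_k|_{I_{j'-1,l}}$ becomes (up to a global factor $2^{-(j'-1)}$) a copy of $\rho_{k-j'+1}$ on $[0,1]$, and the high-frequency bumps $\omega_m'-\omega_{m-1}'$ of $W_k$ for $m\ge j'$ rescale onto the corresponding bumps appearing in the decomposition \eqref{eq:decomp}. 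Starting from the Taylor-type identity $(W_k*\rho_k)(x_0)-c_k=\int_{b}^{x_0}(x_0-v)(W_k''*\rho_k)(v)\,dv$ with $b$ the right endpoint of $I_{j',2l}$ (so that $(W_k'*\rho_k)(b)=0$ by the steady-state property), I would decompose $(W_k''*\rho_k)(v)$ scale by scale and run the $\cJ_1,\dots,\cJ_5$ analysis of the proof of Proposition \ref{prop_M} on the rescaled gap. The contributions from scales $m<j'$ and from $\rho_k|_{(I_{j'-1,l})^c}$ are smooth on the rescaled gap and, together with the Newtonian/quadratic base piece $M/(M-2)$ coming from the delta-free part of $W_0''$, play the role of the positive $W_1''*\rho_K$ contribution at this level; Proposition \ref{prop_M} then yields $(W_k*\rho_k)(x_0)-c_k\ge c(x_0)>0$ uniformly in $k\ge j$.

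The main obstacle is that the rescaling is not strictly self-similar: the base $W_0(x)=\tfrac{M}{2M-4}(-|x|+x^2)$ does not scale onto itself, so the crucial $W_1$-contribution in Step 2 of the proof of Proposition \ref{prop_M} must at each level be replaced by a composite object built from the base $W_0$, the lowest remaining bump $\omega_{j'}$, and the smooth far-field piece coming from $\rho_k|_{(I_{j'-1,l})^c}$. The delicate verification is that, under the hypothesis $\tfrac13(M+2)<\alpha\le\tfrac25(M-10)$, this composite still satisfies the explicit positive-second-derivative estimate of Step 2, uniformly across scales $j'$ and across $k\ge j$. Once that is in place, the sign-balancing of the $\cJ_i$ decomposition goes through essentially verbatim at every scale.
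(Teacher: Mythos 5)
Your overall plan (self-similar rescaling plus a scale-by-scale application of Proposition \ref{prop_M}) is the right one, and it matches the paper's strategy in spirit, but two of its three steps do not close as you have set them up, and a third case is missing entirely. On the constancy claim, the argument ``symmetry about the center of $I_{j,l}$, so the two children carry the same constant, then propagate'' is only correct at the top level. For $j\ge 2$ the reflection symmetry of $W_k*\rho_k$ about the center of $I_{j,l}$ does \emph{not} follow from the global symmetry: split $\rho_k$ on, say, $I_{2,0}=[0,M^{-2}]$ into the near piece ($\rho_k\chi_{I_{2,0}}$), the sibling piece ($\rho_k\chi_{I_{2,1}}$), and the cousin piece ($\rho_k\chi_{I_{1,1}}$). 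The near piece is symmetric about $M^{-2}/2$, but the sibling piece contributes a quadratic whose vertex sits at $M^{-1}$, not at the midpoint $(M-1)_{(2)}$ of the relevant distance band $[(M-2)_{(2)},M_{(2)}]$, and only the \emph{combination} with the cousin piece restores the symmetry. So the local symmetry you want is true, but it is not elementary; it is exactly the content of the scaling identity \eqref{Wrhok0}, which the paper derives from the crucial cancellation that $W_k''*(\rho_k\chi_{I_{1,1}})$ is a constant ($\tfrac{M}{2M-4}+\tfrac{M}{2}$) on $I_{1,0}$ (because $W_k''$ is constant on the distance band $[(M-2)_{(1)},M_{(1)}]$), and that this constant combines with the rescaled near piece to give $\tfrac{M}{2}(W_{k-1}''*\rho_{k-1})(Mx)$. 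The point you flag as the ``main obstacle'' — that $W_0$ does not rescale onto itself — is precisely what this cancellation takes care of: $W_k$ alone is not self-similar, but $W_k*\rho_k$ restricted to $I_{1,0}$ \emph{is} an exact $\tfrac{1}{2M}$-rescaling of $W_{k-1}*\rho_{k-1}$. Once \eqref{Wrhok0} is in hand, both the constancy and the uniform-in-$k$ lower bound follow by iterating it $j'$ times and then applying Proposition \ref{prop_M} \emph{once}; there is no need to redo the $\cJ_1,\dots,\cJ_5$ bookkeeping with a modified base potential at each scale, which is the part you rightly flag as a gap but do not fill.

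Second, the decomposition ``$x_0\in I_{j'-1,l}\setminus(I_{j',2l}\cup I_{j',2l+1})$'' tacitly assumes $x_0\in[0,1]$; the proposition allows any $x_0\notin\supp\rho_j$, and the case $x_0\in(-\infty,0)\cup(1,\infty)$ needs separate treatment. The paper handles it (its STEP 2) by splitting into $x_0\in(M^{-1}-\tfrac12,0)$, where one reuses the sign analysis of Proposition \ref{prop_M} after observing that the $\tilde{\cJ}_2$ obstruction disappears since $\dist(x_0,I_{1,1})\ge(M-1)_{(1)}$, and $x_0\le M^{-1}-\tfrac12$, where one shows $(W_k'*\rho_k)(x_0)<0$ directly from explicit bounds on $\omega_k$ and the Newtonian/quadratic base. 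Your proof does not address this region at all.
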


See Figure \ref{fig_Wcantor} for an illustration.

\begin{figure}
\begin{center}
	\includegraphics[width=0.8\textwidth]{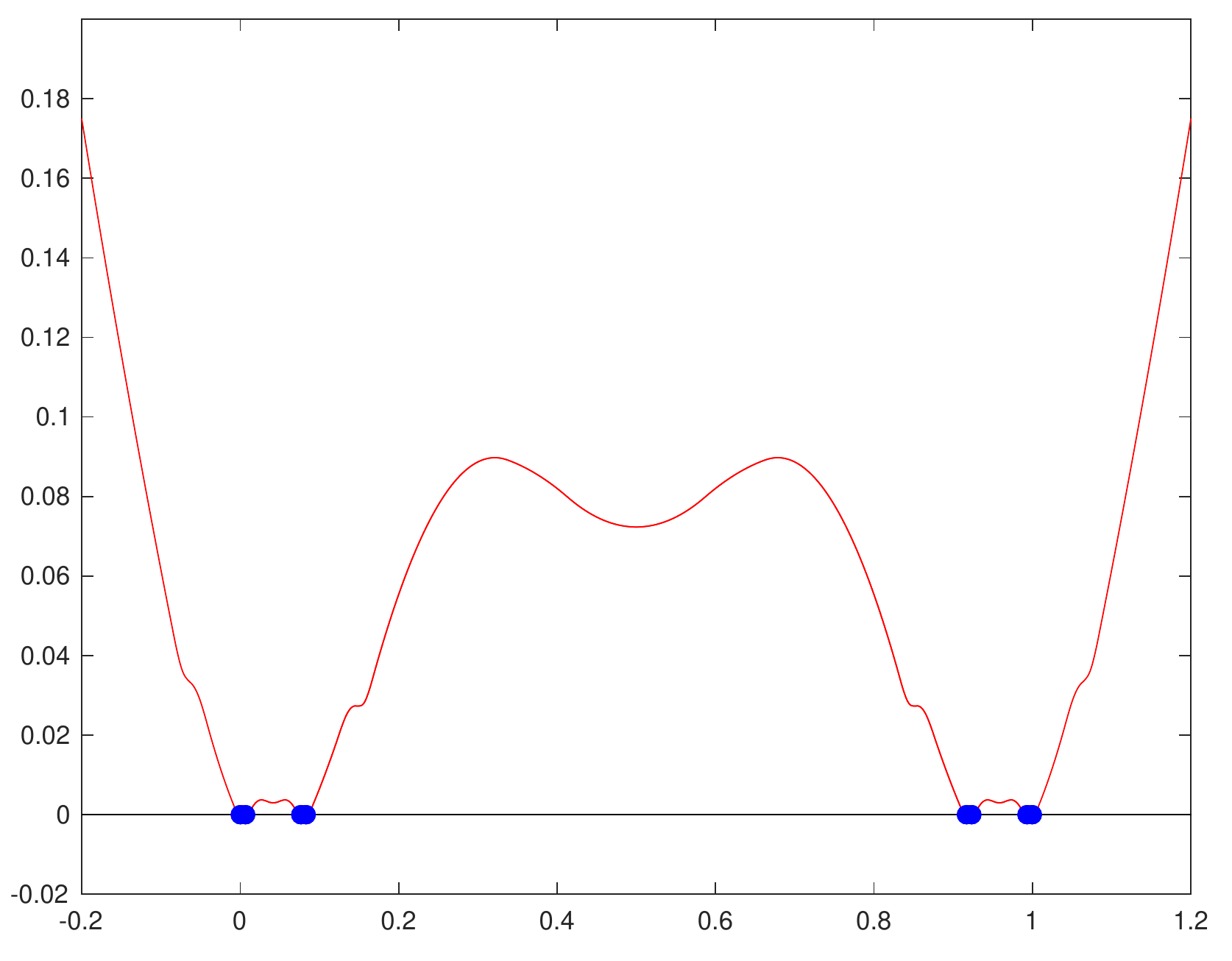}
	\caption{The potential $W_4*\rho_4$, with $M=12$ and $\alpha=5$. The blue dots indicate $\supp\rho_4$. One can see that $W_4*\rho_4$ is constant on $\supp\rho_4$ and larger elsewhere. Notice that $M$ and $\alpha$ do not satisfy the condition \eqref{prop_M_1}. Nevertheless, the conclusion of Proposition \ref{prop_rhok} still holds. }
	\label{fig_Wcantor}
\end{center}
\end{figure}

\begin{proof}

{\bf STEP 1}: Inside $[0,1]$.

We start by noticing the self-similar relations
\begin{equation}\label{selfsim}
\rho_k(x) = \frac{M}{2}\rho_{k-1}(M x),\quad \omega_k'(x) = M\omega_{k-1}'(M x),\quad \forall x\in [0,M^{-1}].
\end{equation}

We first use induction on $k$ to prove 
\begin{equation}\label{prop_rhok_2}
\mbox{ $W_k*\rho_k$ is constant $c_k$ on $\supp\rho_k$, and  }(W_k*\rho_k)(x) > c_k,\, \forall x\in [0,1]\backslash \supp\rho_k.
\end{equation}
For the case $k=1$, Theorem \ref{thm_Wkss} implies that $\rho_1$ is a steady state for $W_1$, i.e., $W_1*\rho_1$ is constant on $I_{1,0}$ and $I_{1,1}$, and these two constants are the same by symmetry. Combined with Proposition \ref{prop_M}, we obtain \eqref{prop_rhok_2} for $k=1$.

Suppose \eqref{prop_rhok_2} is true for $k-1$.
We first apply Proposition \ref{prop_M} to see that $(W_k*\rho_k)(x) > (W_k*\rho_k)(M^{-1})=c_k$ for $x\in(M^{-1},1-M^{-1})$. Notice that $\supp\rho_k\subseteq I_{1,0}\cup I_{1,1}$ and $\rho_k$ is symmetric about $1/2$.
By this symmetry, we can reduce ourselves to the interval $x\in I_{1,0}=[0,M^{-1}]$ to prove that \eqref{prop_rhok_2} holds for
$x\in[0,1]$, since the same conclusion will be true for $I_{1,1}=[1-M^{-1},1]$, and thus \eqref{prop_rhok_2} is proved for $k$.

Take $x\in I_{1,0}=[0,M^{-1}]$, and then we have
\begin{equation}\begin{split}
(\omega_k'*(\rho_k\chi_{I_{1,0}}))(x) = & \int_0^{M^{-1}}M\omega_{k-1}'(M (x-y))\cdot\frac{M}{2}\rho_{k-1}(M y)\rd{y} = \frac{M}{2}\int_0^1\omega_{k-1}'(M x-y_1)\rho_{k-1}(y_1)\rd{y_1} \\
= & \frac{M}{2}(\omega_{k-1}'*\rho_{k-1})(M x)
\end{split}\end{equation}
by \eqref{selfsim} and the change of variable $y_1=M y$. Combined with
\begin{equation}\begin{split}
((W_k''-\omega_k')*(\rho_k\chi_{I_{1,0}}))(x) = &\frac{M}{M-2}((-\delta+1)*(\rho_k\chi_{I_{1,0}}))(x) = -\frac{M}{M-2}\rho_k(x) + \frac{M}{M-2}\int_{I_{1,0}}\rho_k(y)\rd{y}\\
= & -\frac{M}{M-2}\rho_k(x)+\frac{M}{2M-4}\,,
\end{split}\end{equation}
we get
\begin{equation}\label{Wk10}
(W_k''*(\rho_k\chi_{I_{1,0}}))(x) = \frac{M}{2}(\omega_{k-1}'*\rho_{k-1})(M x)-\frac{M^2}{2M-4}\rho_{k-1}(M x)+\frac{M}{2M-4}\,.
\end{equation}

Since $|x-I_{1,1}|\subseteq [(M-2)_{(1)},M_{(1)}]$ on which $W_k''=\frac{M}{M-2}+M$, we have
\begin{equation}
(W_k''*(\rho_k\chi_{I_{1,1}}))(x) = \Big(\frac{M}{M-2}+M\Big)\int_{I_{1,1}}\rho_k(y)\rd{y}=\frac{M}{2M-4}+\frac{M}{2}\,.
\end{equation}
Adding with \eqref{Wk10}, we get
\begin{equation}
(W_k''*\rho_k)(x) = \frac{M}{2}(\omega_{k-1}'*\rho_{k-1})(M x)-\frac{M^2}{2M-4}\rho_{k-1}(M x)+\frac{M^2}{2M-4} = \frac{M}{2}(W_{k-1}''*\rho_{k-1})(M x).
\end{equation}
Together with $(W_k'*\rho_k)(0)=(W_{k-1}'*\rho_{k-1})(0)=0$ from Theorem \ref{thm_Wkss} and integrating twice, we conclude that
\begin{equation}\label{Wrhok0}
(W_k*\rho_k)(x)-(W_k*\rho_k)(0) = \frac{1}{2M}\big((W_{k-1}*\rho_{k-1})(M x)-(W_{k-1}*\rho_{k-1})(0)\big) .
\end{equation}
Notice that $\{Mx:x\in\supp\rho_k\cap I_{1,0}\}=\supp\rho_{k-1}$. Therefore, the induction hypothesis implies that $W_k*\rho_k$ is constant on $\supp\rho_k \cap [0,M^{-1}]$, calling it $c_k$, and $(W_k*\rho_k)(x)>c_k$ for any $x\in [0,M^{-1}]\backslash \supp\rho_k$.

For $x_0\in [0,1]\backslash \supp\rho_j$, to see that the difference $(W_k*\rho_k)(x_0)-c_k$ can be bounded from below uniformly in $k\ge j$, we notice that iteratively applying \eqref{Wrhok0} and its symmetric counterparts gives \begin{equation}
(W_k*\rho_k)(x_0)-(W_k*\rho_k)(0) = \Big(\frac{1}{2M}\Big)^{j'}\big((W_{k-j'}*\rho_{k-j'})(x_1)-(W_{k-j'}*\rho_{k-j'})(0)\big) 
\end{equation} 
for some $0\le j' \le j$ and $x_1\in [M^{-1},1-M^{-1}]$, for any $k\ge j$. Then the lower bound of  $(W_k*\rho_k)(x_0)-(W_k*\rho_k)(0)$ is given by \eqref{prop_M_2} for $x_1$, independent of $k$.

{\bf STEP 2}: Outside $[0,1]$.
Then we prove
\begin{equation}\label{prop_rhok_3}
(W_k*\rho_k)(x_0) > c_k,\quad \forall x_0\in (-\infty,0)\cup (1,\infty).
\end{equation}
It suffices to treat $x_0\in (-\infty,0)$ by symmetry. 

If $x_0\in (M^{-1}-\frac{1}{2},0)$, then we may write
\begin{equation}
(W_k*\rho_k)(x_0)-(W_k*\rho_k)(0)=\int_{x_0}^0 (x-x_0)(W_k''*\rho_k)(x)\rd{x}
\end{equation}
similar to \eqref{Wx0m1}. In this interval, we make use of the decomposition \eqref{eq:decomp} and analyse each term separately.

By STEP 1 of the proof of Proposition \ref{prop_M} and the symmetry, we get
\begin{equation}
\int_{x_0}^0 (x-x_0)((\omega_j'-\omega_{j-1}')*\rho_k)(x)\rd{x} > 0,\quad 3\le j \le k.
\end{equation}
For the contribution from $W_1'' = \frac{M}{M-2}(-\delta+1)+\omega_1'$ (corresponding to STEP 2 of the proof of Proposition \ref{prop_M}) where $\supp\omega_1'=[(M-\alpha)_{(1)},1]\subseteq[1/2,1]$, we have
\begin{equation}
(W_1''*\rho_k)(x) = \frac{M}{M-2}+(\omega_1'*(\rho_k\chi_{I_{1,1}}))(x) >0
\end{equation}
since $\dist(x_0,I_{1,1})\ge (M-1)_{(1)}$ and thus the last convolution only uses the nonzero values of $\omega_1'$ on $[(M-1)_{(1)},1]$ on which $\omega_1'=M>0$. Therefore
\begin{equation}
\int_{x_0}^0 (x-x_0)(W_1''*\rho_k)(x)\rd{x} > 0.
\end{equation}
For the contribution from $\omega_2'-\omega_1'$, compared to STEP 3 of the proof of Proposition \ref{prop_M}, we have the same terms $\cJ_1,\cJ_3,\cJ_4$ by symmetry, since they only involve $\rho_k\chi_{1,0}$. There is no contribution from $\rho_k\chi_{1,1}$ since $\supp(\omega_2'-\omega_1')\subseteq [(M-\alpha)_{(2)},(M-2)_{(1)}]$ and $\dist(x_0,I_{1,1})\ge (M-1)_{(1)}$. Therefore the negative term $\tilde{\cJ}_2$ is absent, and we get 
\begin{equation}
\int_{x_0}^0 (x-x_0)((\omega_2'-\omega_1')*\rho_k)(x)\rd{x} > 0.
\end{equation}
from STEP 3 of the proof of Proposition \ref{prop_M}. Therefore we conclude \eqref{prop_rhok_3} for  $x_0\in (M^{-1}-\frac{1}{2},0)$. The difference $(W_k*\rho_k)(x_0)-c_k$ can be bounded from below uniformly in $k$ because the positive contribution from $W_1$ appears in any case of $k\ge 1$.

If $x_0\le M^{-1}-\frac{1}{2}$, we will analyze $(W_k'*\rho_k)(x_0)$. We have
\begin{equation}
\Big(\frac{M}{2M-4}(-\sgn(x)+2x)*\rho_k\Big)(x_0)=\frac{M}{2M-4}\Big(1+2\Big(x_0-\frac{1}{2}\Big)\Big)=\frac{M}{M-2}x_0 \le -\frac{M}{M-2}\cdot \Big(\frac{1}{2}-\frac{1}{M}\Big) = -\frac{1}{2}
\end{equation}
using the fact that the center of mass of $\rho_k$ is $\frac{1}{2}$. For $\omega_k*\rho_k$, first notice that $\dist(x_0,I_{1,1}) > 1-M^{-1}+\frac{1}{2}-M^{-1}>1$, we have
\begin{equation}
(\omega_k*(\rho_k\chi_{I_{1,1}}))(x_0) = -\frac{1}{2}\int_{I_{1,1}}\rho_k\rd{x} = -\frac{1}{4}
\end{equation}
since $\omega_k=-\frac{1}{2}$ on $(-\infty,-1]$. Then notice that $x_0-x\in (-\infty,M^{-1}-\frac{1}{2}]$ for any $x\in I_{0,1}$. On $[\frac{1}{2}-M^{-1},\infty)\subseteq [M^{-1},\infty)$, the expression for $\omega_k$ shows that $\omega_k\ge -\frac{3}{2}$ on it. Therefore, by its odd property, $\omega_k\le \frac{3}{2}$ on $(-\infty,M^{-1}-\frac{1}{2}]$. Therefore
\begin{equation}
(\omega_k*(\rho_k\chi_{I_{0,1}}))(x_0) \le  \frac{3}{2}\int_{I_{1,1}}\rho_k\rd{x} = \frac{3}{4}\,.
\end{equation}
Combined with the above three estimates, we see that $(W_k'*\rho_k)(x_0)$ for any $x_0\le M^{-1}-\frac{1}{2}$. Combined with \eqref{prop_rhok_3} for $x_0\in (M^{-1}-\frac{1}{2},0)$, we get \eqref{prop_rhok_3} for $x_0\le M^{-1}-\frac{1}{2}$.
\end{proof}

Define the limiting potential $W_\infty$ by the pointwise limit
\begin{equation}
W_\infty'(x)=\lim_{k\rightarrow\infty} W_k'(x),\quad \forall x\ne 0.
\end{equation}
We remind the reader that the weak limit of $\rho_k$ is denoted by $\rho_\infty$, that is the uniform distribution on the Cantor set $S=\bigcap_{k=0}^\infty S_k$. The main theorem of this section asserts that $\rho_\infty$ is also a steady state for $W_\infty$ satisfying the $d_2$-local minimizer condition.

\begin{theorem}\label{thm_M}
For $M$ and $\alpha$ satisfying the condition \eqref{prop_M_1} in Proposition \ref{prop_M}, there holds
\begin{equation}\label{thm_M_1}
(W_\infty*\rho_\infty)(x)=c_\infty,\, \forall x\in\supp\rho_\infty,\quad (W_\infty*\rho_\infty)(x)> c_\infty,\, \forall x\notin\supp\rho_\infty
\end{equation}
for some constant $c_\infty$. Also, $W_\infty*\rho_\infty\in C^{1,\gamma}_{loc}$ for some $\gamma>0$. Therefore, $W_\infty'*\rho_\infty$ is defined pointwisely, satisfying $W_\infty'*\rho_\infty=0$ on $\supp\rho_\infty$, i.e., $\rho_\infty$ is a steady state for the interaction energy associated to the potential $W_\infty$.
\end{theorem}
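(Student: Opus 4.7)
I plan to pass the conclusions of Proposition \ref{prop_rhok} to the limit $k\to\infty$, relying on (i) uniform convergence $W_k\to W_\infty$ with an explicit rate, (ii) $d_\infty$-convergence $\rho_k\to\rho_\infty$ with the same rate, and (iii) $C^{1,\gamma}$ regularity of the limit $W_\infty*\rho_\infty$ established directly from the hierarchical construction.

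From the explicit formulas for $\omega_k$, the difference $\omega_k-\omega_{k-1}$ is piecewise linear, bounded by an absolute constant, and supported in $[-CM^{-(k-1)},CM^{-(k-1)}]$. Integrating against $[0,|x|]$ yields $\|W_k-W_{k-1}\|_\infty\leq CM^{-(k-1)}$, and summing the geometric series gives $\|W_k-W_\infty\|_\infty=O(M^{-k})$; moreover $W_\infty'(x)=\frac{M}{2M-4}(-\sgn(x)+2x)+\omega_\infty(x)$ is bounded on compacts, so $W_\infty$ is locally Lipschitz. Since $\rho_k$ and $\rho_\infty$ assign the same mass $2^{-k}$ to each cylinder $I_{k,l}$, transporting within each cylinder yields $d_\infty(\rho_k,\rho_\infty)\leq M^{-k}$. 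Combining these,
\begin{equation}
\|W_k*\rho_k-W_\infty*\rho_\infty\|_{L^\infty(K)}\leq \|W_k-W_\infty\|_\infty+L_K\,d_\infty(\rho_k,\rho_\infty)=O(M^{-k})
\end{equation}
on any compact $K$, where $L_K$ is the Lipschitz constant of $W_\infty$ near $K-\supp\rho_\infty$. Therefore $c_k=(W_k*\rho_k)(0)$ converges to $c_\infty:=(W_\infty*\rho_\infty)(0)$. The first identity in \eqref{thm_M_1} then follows from $(W_k*\rho_k)(x)=c_k$ for every $x\in S=\bigcap_k S_k=\supp\rho_\infty$. For $x\notin\supp\rho_\infty$, there is $k_0$ with $x\notin S_{k_0}$, and Proposition \ref{prop_rhok} supplies a lower bound $(W_{k'}*\rho_{k'})(x)-c_{k'}\geq c(x)>0$ uniform in $k'\geq k_0$, which passes to the limit.

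For the regularity, I decompose $W_\infty'(x)=\frac{M}{2M-4}(-\sgn(x)+2x)+\omega_1(x)+\sum_{k\geq 2}(\omega_k-\omega_{k-1})(x)$ and exploit the Frostman bound $\rho_\infty(B(x,r))\leq Cr^{\gamma}$ with $\gamma=\log 2/\log M>0$ inherited from the Cantor construction. The jump piece satisfies $|((-\sgn)*\rho_\infty)(x)-((-\sgn)*\rho_\infty)(y)|=\rho_\infty([y,x])\leq C|x-y|^\gamma$, and the quadratic piece is smooth. For each layer $k\geq 2$, $\omega_k-\omega_{k-1}$ is bounded by $C$, is $CM^k$-Lipschitz, and supported in $B(0,CM^{-(k-1)})$, so $(\omega_k-\omega_{k-1})*\rho_\infty$ admits both a trivial bound $C\cdot 2^{-k}$ (from $\|\omega_k-\omega_{k-1}\|_\infty$ and Frostman on the support) and a Lipschitz bound $C(M/2)^k|x-y|$ (combining the kernel's Lipschitz constant with the Frostman mass estimate). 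For $|x-y|\sim M^{-\ell}$, taking the minimum of these two bounds at each scale, the layers $k\leq \ell$ sum to $O(M^{-\ell}\sum_{k\leq\ell}(M/2)^k)=O(2^{-\ell})$ and the layers $k>\ell$ sum to $O(2^{-\ell})$, establishing $W_\infty'*\rho_\infty\in C^{0,\gamma}_{loc}$ and hence $W_\infty*\rho_\infty\in C^{1,\gamma}_{loc}$. Since this continuous function attains its minimum $c_\infty$ on $\supp\rho_\infty$, first-order optimality at every $x\in\supp\rho_\infty$ forces $(W_\infty'*\rho_\infty)(x)=0$, which is the steady-state property.

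The main obstacle is the regularity estimate: the interplay between the Lipschitz and $L^\infty$ bounds for $(\omega_k-\omega_{k-1})*\rho_\infty$ at each dyadic scale $M^{-k}$ works out precisely because the Frostman dimension of the Cantor measure matches the ratio $\log 2/\log M$. The remaining steps are technical consequences of the explicit hierarchical structure already exploited in Proposition \ref{prop_rhok}.
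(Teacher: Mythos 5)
Your proposal is correct and follows the paper's overall structure, but the regularity argument is a genuinely different route. For the pointwise convergence $W_k*\rho_k\to W_\infty*\rho_\infty$, the paper passes through the intermediate quantity $W_k*\rho_\infty$, exploiting the mass cancellation $\int_{I_{k,l}}(\rho_k-\rho_\infty)\rd{x}=0$ together with the local Lipschitz bound on $W_k$; you pass through $W_\infty*\rho_k$, using a $d_\infty$-optimal coupling within each cylinder $I_{k,l}$ and the local Lipschitz constant of $W_\infty$. These are two phrasings of the same estimate, and both give the required $O(M^{-k})$ rate uniformly on compacts. The real divergence is in the $C^{1,\gamma}$ step. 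The paper estimates $u(x)-u(x+\epsilon)$ with $u=-W_\infty'*\rho_\infty$ directly, splitting the convolution at a threshold $\kappa$: it uses $\|W_\infty''\|_{L^\infty}\lesssim 1/\kappa$ off a $\kappa$-neighborhood of the origin on the far part, and the Cantor separation $\rho_\infty((x-\kappa,x+\kappa))\lesssim 2^{-k(\kappa)}$ on the near part, then optimizes $\kappa$ against $\epsilon$ to get $\gamma=1-1/(1+\log 2/\log M)=\frac{\log 2}{\log M+\log 2}$. You instead telescope $W_\infty'=\frac{M}{2M-4}(-\sgn+2\cdot)+\omega_1+\sum_{k\ge 2}(\omega_k-\omega_{k-1})$, and for each increment trade the $L^\infty$ bound $\lesssim 2^{-k}$ against the Lipschitz bound $\lesssim (M/2)^k|x-y|$, with the crossover at scale $|x-y|\sim M^{-k}$ governed by the Frostman estimate $\rho_\infty(B(x,r))\le r^{\log 2/\log M}$; the jump of the Newtonian piece is handled cleanly by $|(\sgn*\rho_\infty)(x)-(\sgn*\rho_\infty)(y)|=2\rho_\infty((y,x))\lesssim|x-y|^{\log 2/\log M}$. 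Your method is structurally tied to the dyadic construction of $W$ rather than to a blunt $\kappa$-cutoff, and it yields the strictly larger exponent $\gamma=\log 2/\log M$. Minor points you should make explicit in a full write-up: the additive constants in $W_k$ must be pinned once so that $\|W_k-W_\infty\|_{L^\infty}\lesssim M^{-k}$ is meaningful (the paper notes this), and the Lipschitz bound for $(\omega_k-\omega_{k-1})*\rho_\infty$ requires observing that when $|x-y|\le M^{-k}$ the relevant $z$'s lie in a ball of radius $O(M^{-(k-1)})$, so the Frostman estimate applies; both are routine. Your closing observation — that a $C^1$ function attaining its minimum on $\supp\rho_\infty$ has vanishing derivative there, giving the steady-state property — is the natural way to fill in the paper's "Therefore".
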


\begin{proof}
It is clear that $W_\infty\in L^\infty_{loc}$ and therefore $W_\infty*\rho_\infty$ is well-defined since $\rho_\infty$ is compactly supported. We first claim that $W_k*\rho_k\rightarrow W_\infty*\rho_\infty$ pointwisely as $k\rightarrow\infty$. In fact, we write $(W_k*\rho_k)(x)- (W_\infty*\rho_\infty)(x) = ((W_k*\rho_k)(x)- (W_k*\rho_\infty)(x)) + ((W_k*\rho_\infty)(x)- (W_\infty*\rho_\infty)(x))$ and control the two terms separately.

For the first term, notice that
\begin{equation}\begin{split}
|(W_k*\rho_k)(x)- (W_k*\rho_\infty)(x)| = & \left|\int_\R W_k(x-y)(\rho_k(y)-\rho_\infty(y))\rd{y}\right| \\
\le & \sum_{l=0}^{2^k-1} \left|\int_{I_{k,l}} W_k(x-y)(\rho_k(y)-\rho_\infty(y))\rd{y}\right| \\
\end{split}\end{equation}
since $\rho_k$ and $\rho_\infty$ are supported inside $\supp\rho_k=\bigcup_{l=0}^{2^k-1} I_{k,l}$. Notice that we have the mass conservation property $\int_{I_{k,l}} (\rho_k(y)-\rho_\infty(y))\rd{y} = 0$. Therefore, denoting $y_0$ as the left endpoint of $I_{k,l}$,
\begin{equation}\begin{split}
\left|\int_{I_{k,l}} W_k(x-y)(\rho_k(y)-\rho_\infty(y))\rd{y}\right| = & \left|\int_{I_{k,l}} (W_k(x-y)-W_k(x-y_0))(\rho_k(y)-\rho_\infty(y))\rd{y}\right| \\
\le & \|W_k'\|_{L^\infty([x-1,x])} |I_{k,l}|\int_{I_{k,l}}|\rho_k(y)-\rho_\infty(y)|\rd{y} \\
\le & C(x) M^{-k}2^{-k},
\end{split}\end{equation}
where $C(x)$ denotes a constant depending on $x$, and it is independent of $k$ by the construction of $W_k$ as a locally Lipschitz function. Summing over $l$, we get
\begin{equation}\begin{split}
|(W_k*\rho_k)(x)- (W_k*\rho_\infty)(x)| \le C(x) M^{-k}.
\end{split}\end{equation}

For the second term, notice that $\|W_k'-W_{k-1}'\|_{L^\infty} \le C$ and $|\supp(W_k'-W_{k-1}')| \le CM^{-k}$  by construction. Therefore, by adding proper constants on $W_k$, we have $\|W_k-W_{k-1}\|_{L^\infty} \le CM^{-k}$. Then summing over $k,k+1,\dots$ gives $\|W_k-W_\infty\|_{L^\infty} \le CM^{-k}$. Therefore, we conclude
\begin{equation}
|(W_k*\rho_\infty)(x)- (W_\infty*\rho_\infty)(x)| \le \|W_k-W_\infty\|_{L^\infty}\cdot \int_\mathbb{R}\rho_\infty(y)\rd{y} \le CM^{-k}
\end{equation}
and the claimed convergence is proved.

Recalling Proposition \ref{prop_rhok} and applying this convergence to $x_1,x_2\in \supp\rho_\infty$, we see that $(W_\infty*\rho_\infty)(x_1)=(W_\infty*\rho_\infty)(x_2)$ since $\supp\rho_\infty\subseteq \supp\rho_k$ for any $k$. Applying this convergence to $x_1\in \supp\rho_\infty, x_2\notin \supp\rho_\infty$, we see that  $(W_\infty*\rho_\infty)(x_1)\le (W_\infty*\rho_\infty)(x_2) - c(x_2)$ with $c(x_2)>0$. This proves \eqref{thm_M_1}.

Next we prove the (local) H\"older continuity of the velocity field $u:=-W_\infty'*\rho_\infty$. Fix $R>0$ large. By construction, in:inbox 
\begin{equation}
\|W_\infty'\|_{L^\infty([-R,R])}\le C,\quad\|W_\infty''\|_{L^\infty([-R,R]\backslash(-\kappa,\kappa))}\le \frac{C}{\kappa}
\end{equation}
for any $\kappa>0$. 

Take $0<\epsilon<1/2$, $x\in [-(R-2),R-2]$ and write
\begin{equation}\begin{split}
u(x) & -u(x+\epsilon) =  \int_{[0,1]} (W_\infty'(x-y+\epsilon)-W_\infty'(x-y))\rho_\infty(y)\rd{y} \\
= & \int_{|x-y|\ge\kappa} (W_\infty'(x-y+\epsilon)-W_\infty'(x-y))\rho_\infty(y)\rd{y} + \int_{|x-y|<\kappa} (W_\infty'(x-y+\epsilon)-W_\infty'(x-y))\rho_\infty(y)\rd{y}
\end{split}\end{equation}
where $\kappa>2\epsilon$ is to be chosen. Since $x\in [-(R-2),R-2]$, $\supp\rho_\infty\subseteq[0,1]$ and $\epsilon<1/2$, all the arguments in $W_\infty'$ above are inside $[-R,R]$. Then we estimate the first integral by
\begin{equation}
\left|\int_{|x-y|\ge\kappa} (W_\infty'(x-y+\epsilon)-W_\infty'(x-y))\rho_\infty(y)\rd{y}\right| \le \epsilon \|W_\infty''\|_{L^\infty([-R,R]\backslash(-\kappa/2,\kappa/2)} \int_{[0,1]} \rho_\infty(y)\rd{y} \le \frac{C\epsilon}{\kappa}
\end{equation}
and the second integral by
\begin{equation}
\left|\int_{|x-y|<\kappa} (W_\infty'(x-y+\epsilon)-W_\infty'(x-y))\rho_\infty(y)\rd{y}\right| \le 2\|W_\infty'\|_{L^\infty([-R,R])}\int_{|x-y|<\kappa}\rho_\infty(y)\rd{y} \le C2^{-k}
\end{equation}
where $k=\floor{-\log_M \frac{2\kappa}{M-2}}$, and the last inequality follows from the facts that $I_{k,l}$ and $I_{k,l'}$ has distance at least $(M-2)_{(k)}$, and $(x-\kappa,x+\kappa)$ can intersect $I_{k,l}$ for at most one value of $l$ if $2\kappa \le (M-2)_{(k)}$.

Therefore
\begin{equation}
|u(x)-u(x+\epsilon)| \le C(\frac{\epsilon}{\kappa} + \kappa^{\frac{\ln 2}{\ln M}})
\end{equation}
Then take $\kappa = \max\{\epsilon^{1/(1+\frac{\ln 2}{\ln M})},2\epsilon\}$, we get the H\"older continuity of $u$ with $\gamma=1-1/(1+\frac{\ln 2}{\ln M})$.
\end{proof}

\section{Numerical simulations}

In this section, we numerically illustrate the main result about almost fractal behavior of the support of steady states in Theorem \ref{thm_cons1} of Section 8 and Theorem \ref{lem_concave} in Section 7. We show several numerical simulations with the potential $W$ constructed in \eqref{Wcons}. We consider a particle gradient flow in 2D:
\begin{equation}\label{xiode}
\dot{\bx}_i = -\frac{1}{N}\sum_{j\ne i} \nabla W(\bx_i-\bx_j),\quad i=1,\dots,N
\end{equation}
where $W$ is defined by 
\begin{equation}
W(\bx) = c_{d,\alpha}|\bx|^{\alpha-d} + C_2\frac{|\bx|^2}{2}- c_W\sum_{k=1}^K \lambda^{(\alpha-d)k} \exp(-\frac{|\bx|^2}{2\lambda^{2k}})
\end{equation}
We take the parameters 
\begin{equation}
N=2000,\quad \alpha = 3,\quad K = 7,\quad c_W = 0.25,\quad C_2 = 0.2\,.
\end{equation}
To solve \eqref{xiode}, we take the initial data as random points in $[0,0.5]^2$ with uniform distribution, and apply the fourth order Runge-Kutta method with time step $\Delta t=0.01$ to solve \eqref{xiode} numerically, and stop at final time $T=200$. See Figures \ref{fig_frac_1} and \ref{fig_frac_2} for the results at $T=200$, for the choices $\lambda=0.15$ and $\lambda=0.1$ respectively. Both figures show fractal behavior of the particle distributions. In Figure \ref{fig_frac_1} one can see 7 layers of the fractal structures, as marked on the pictures. In Figure \ref{fig_frac_2} one can only see 4 layers because the number of particles $N$ is not sufficiently large, and at Layer 4 each cluster has only two or three particles, which are not enough to resolve the next layer.

\begin{figure}
\begin{center}
	\includegraphics[width=0.95\textwidth]{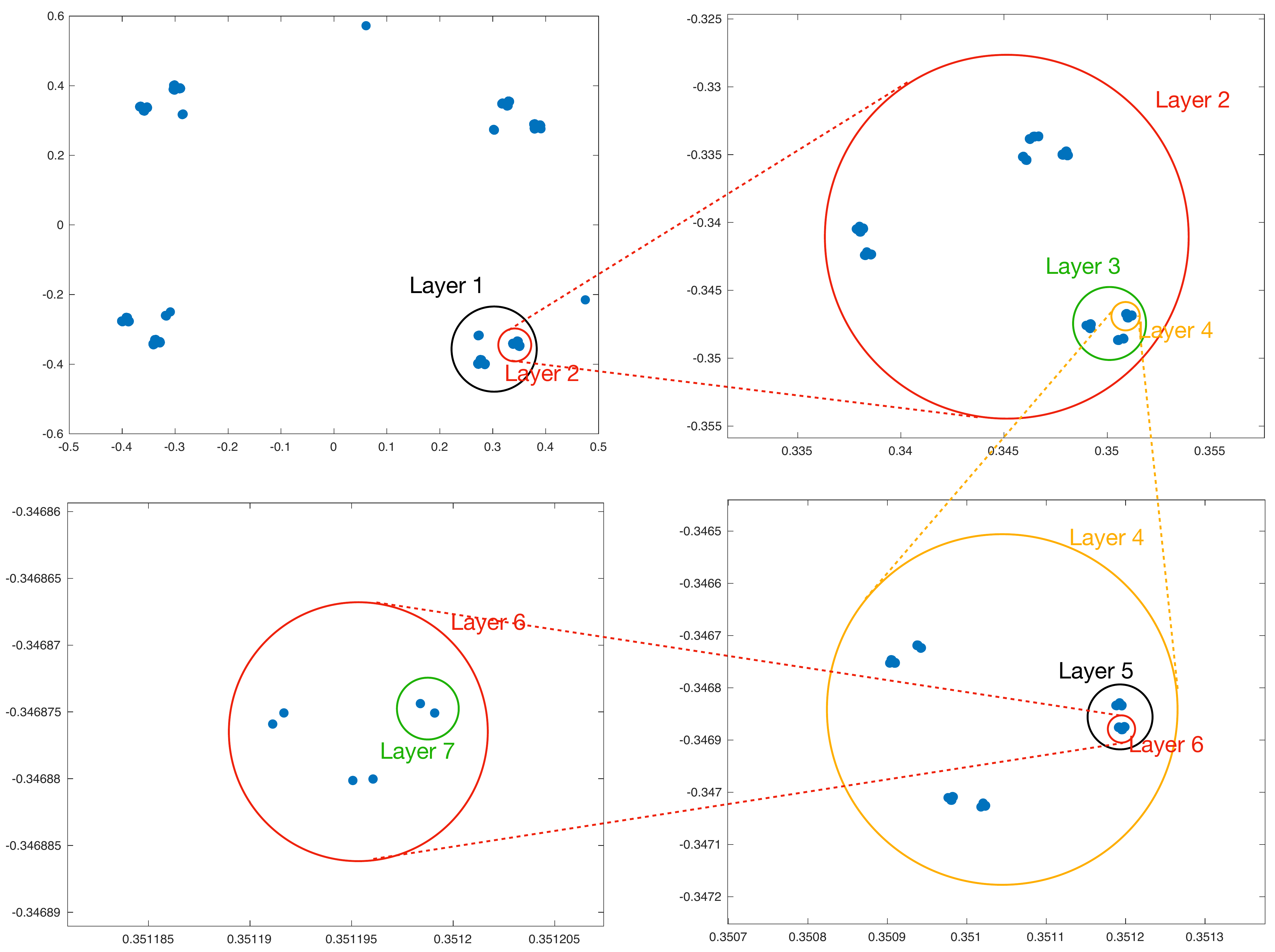}
	\caption{Large time simulation of \eqref{xiode} with $\lambda=0.15$.}
	\label{fig_frac_1}
\end{center}
\end{figure}
\begin{figure}
\begin{center}
	\includegraphics[width=0.95\textwidth]{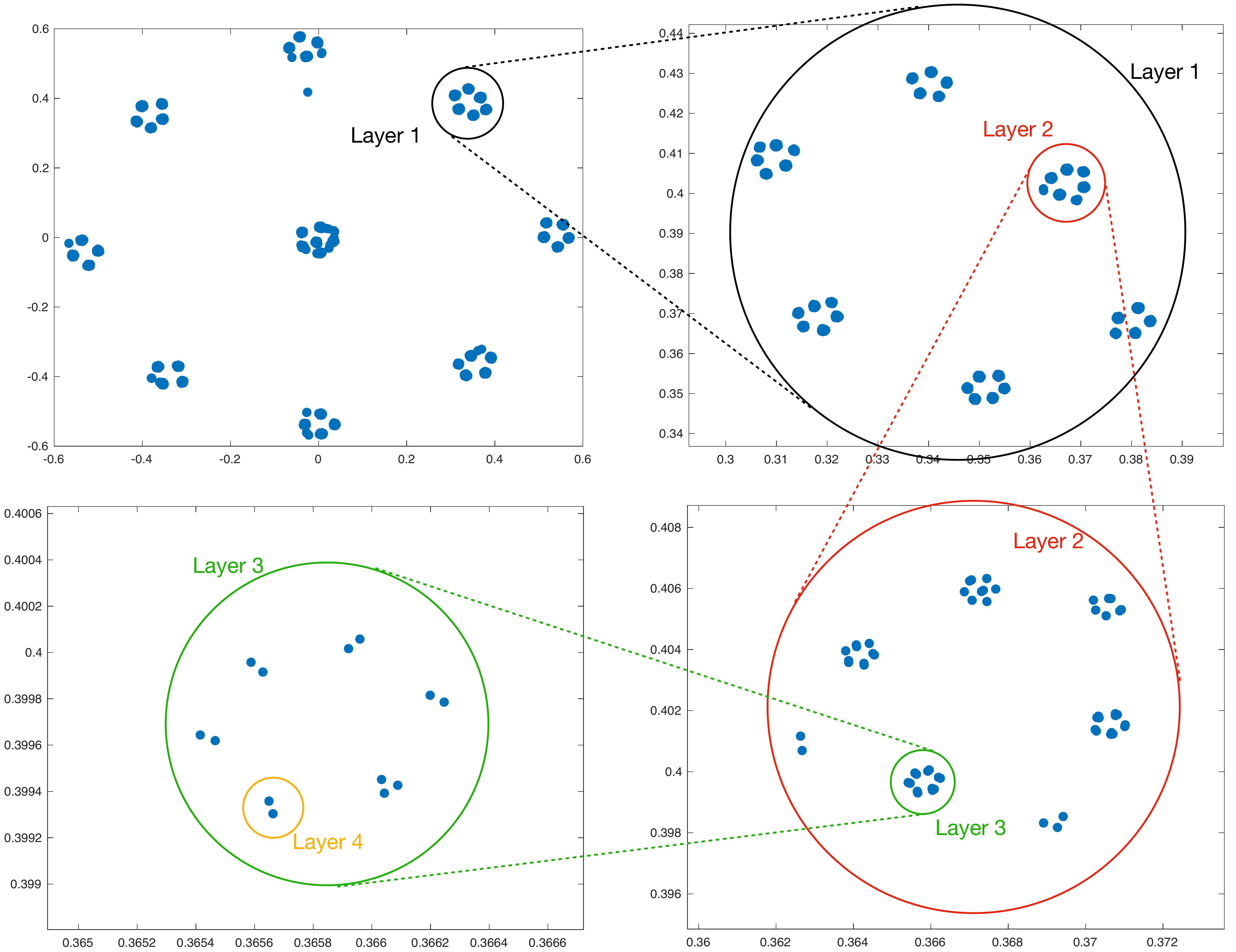}
	\caption{Large time simulation of \eqref{xiode} with $\lambda=0.1$.}
	\label{fig_frac_2}
\end{center}
\end{figure}

\section*{Acknowledgements}
JAC and RS were supported by the Advanced Grant Nonlocal-CPD (Nonlocal PDEs for Complex Particle Dynamics: Phase Transitions, Patterns and Synchronization) of the European Research Council Executive Agency (ERC) under the European Union's Horizon 2020 research and innovation programme (grant agreement No. 883363). JAC was also partially supported by the EPSRC grant number EP/T022132/1. RS was supported in part by NSF and ONR grants DMS1613911 and N00014-1812465.

\section*{Appendix: FLIC property of some 1D power-law potentials}

In this appendix we prove the FLIC property of the 1D potential $-|x|^b/b$ for $0\le b < 1$.
\begin{theorem}
Let the spatial dimension $d=1$. Let $W(x)=-|x|^b/b$  with $0<b<1$, or $W(x)=-\ln|x|$ (denoting $b=0$ in this case). Let $\mu\ne 0$ be a compactly supported signed measure on $\mathbb{R}$ with $\int |\mu| \rd{x}<\infty$ and $\int \mu\rd{x}=0$. Then
\begin{equation}
    \int_{\mathbb{R}} (W*\mu)(x)\mu(x)\rd{x} = c \int_{\mathbb{R}} |\xi|^{-b-1}|\hat{\mu}(\xi)|^2\rd{\xi} > 0.
\end{equation}
\end{theorem}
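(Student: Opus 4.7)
The plan is to apply a classical oscillatory integral representation of the kernel $W$ and then swap orders of integration to recognize the result as a weighted $L^2$ integral of $\hat\mu$. For $0<b<1$ the key identity is
\begin{equation}
|x|^b = 2c_b\int_0^\infty \frac{1-\cos(\xi x)}{\xi^{1+b}}\,d\xi, \qquad c_b := \Bigl(2\int_0^\infty \frac{1-\cos u}{u^{1+b}}\,du\Bigr)^{-1}>0,
\end{equation}
obtained by substituting $u=\xi|x|$ (both integrals converge absolutely for $0<b<1$). For the borderline case $b=0$ the substitute is the Frullani identity $-\ln|x|=\int_0^\infty (\cos(\xi x)-\cos\xi)/\xi\,d\xi$, valid for $x\ne 0$.

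After substituting these representations into $\iint W(x-y)\,d\mu(x)d\mu(y)$, I would apply Tonelli in the case $0<b<1$, using the pointwise bound $|1-\cos(\xi z)|\le \min(2,\xi^2 z^2/2)$ together with compact support of $\mu$ to get
\begin{equation}
\int_0^\infty \xi^{-1-b}\!\!\iint |1-\cos(\xi(x-y))|\,d|\mu|(x)d|\mu|(y)\,d\xi \le C\iint (x-y)^2 d|\mu|d|\mu|+C|\mu|(\mathbb R)^2<\infty.
\end{equation}
Swapping the orders, using $\int d\mu=0$ to eliminate the constant $1$ from $1-\cos$, and recognising $\iint \cos(\xi(x-y))\,d\mu(x)d\mu(y)=|\hat\mu(\xi)|^2$ (by expanding $\cos(\xi x)\cos(\xi y)+\sin(\xi x)\sin(\xi y)$ and using $\hat\mu(\xi)=\int e^{-i\xi x}d\mu$), I arrive at
\begin{equation}
\int (W*\mu)(x)\mu(x)\,dx = -\frac{1}{b}\iint |x-y|^b d\mu(x)d\mu(y) = \frac{c_b}{b}\int_{\mathbb R}\frac{|\hat\mu(\xi)|^2}{|\xi|^{1+b}}\,d\xi,
\end{equation}
with prefactor $c_b/b>0$. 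Strict positivity of the right-hand side is immediate because $\hat\mu$ is entire of exponential type (Paley--Wiener) and, since $\mu\ne 0$, not identically zero, so its zero set has Lebesgue measure zero.

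The main obstacle will be the logarithmic case $b=0$: the Frullani integrand $(\cos(\xi x)-\cos\xi)/\xi$ fails to be absolutely integrable at infinity, so a naive Fubini is illegitimate. I would circumvent this by mollifying, setting $\mu_n:=\mu*\psi_{1/n}\in C_c^\infty$, which is still compactly supported and has zero mean. For each $\mu_n$, $|\widehat{\mu_n}(\xi)|^2$ decays faster than any polynomial, so the preceding argument (with the Frullani identity in place of the $|x|^b$ representation) is rigorous and yields $\int (W*\mu_n)\mu_n\,dx=\int_0^\infty |\widehat{\mu_n}(\xi)|^2/\xi\,d\xi$. I would then pass $n\to\infty$: on the Fourier side, dominated convergence applies using $|\widehat{\mu_n}(\xi)|\le |\hat\psi_{1/n}(\xi)||\hat\mu(\xi)|$ together with the uniform bound $|\hat\mu(\xi)|^2\le C\min(1,\xi^2)$ coming from bounded total variation and zero mean; on the physical side, $\iint \ln|x-y|\,d\mu_n(x)d\mu_n(y)\to \iint \ln|x-y|\,d\mu(x)d\mu(y)$ by local integrability of the logarithm and a standard mollification argument (which also justifies why the right-hand side integral pairs well against $\mu$). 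The desired formula and positivity for $b=0$ then follow.
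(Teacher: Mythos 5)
Your approach is genuinely different from the paper's. The paper works on the physical side with the Hilbert transform: after mollifying $\mu$ and introducing the primitive $m(x)=\int_{(-\infty,x)}\mu$, it uses $\int fg = \int H[f]H[g]$, the scaling identity $H[W'](x)=c_1|x|^{b-1}$, and Plancherel; the sign of the resulting constant is then pinned down by testing with $\mu=\delta_0-\delta_1$. You instead use the Bochner/L\'evy--Khinchine subordination formula $|x|^b = 2c_b\int_0^\infty (1-\cos(\xi x))\xi^{-1-b}\,d\xi$ directly, verify Tonelli, and read off the constant $c_b/b>0$ without any auxiliary test measure. For $0<b<1$ your argument is complete and clean: the split of the $\xi$-integral at $\xi=1$ with the bound $\min(2,\xi^2z^2/2)$ gives absolute integrability since both $b>0$ and $b<1$ are used, and your strict-positivity argument via Paley--Wiener (entire $\hat\mu$, nontrivial, hence nonvanishing a.e.) is correct and more direct than the paper's.

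There is, however, a genuine gap in the $b=0$ case. You claim that after mollifying to $\mu_n=\mu*\psi_{1/n}$, the rapid decay of $\widehat{\mu_n}$ makes ``the preceding argument rigorous.'' But rapid decay of $\widehat{\mu_n}$ only controls the $\xi$-integral \emph{after} the interchange; it does nothing to restore Tonelli, because the iterated absolute integral
\[
\int_0^\infty \frac{1}{\xi}\iint\bigl|\cos(\xi(x-y))-\cos\xi\bigr|\,d|\mu_n|(x)\,d|\mu_n|(y)\,d\xi
\]
still diverges at $\xi=\infty$ (the integrand is bounded below by a constant for most large $\xi$, and $\int_1^\infty d\xi/\xi=\infty$). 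The Frullani integral is only conditionally convergent at infinity, so the swap you need is a Fubini-type swap for non-absolutely-convergent integrals, which needs a separate justification --- e.g., truncating to $\int_0^R$, where the interchange is elementary, and then showing $\iint\bigl(\int_0^R\cdots d\xi\bigr)\,d\mu_n\,d\mu_n\to\iint(-\ln|x-y|)\,d\mu_n\,d\mu_n$ as $R\to\infty$ (the tail $\int_R^\infty \cos(\xi z)\,d\xi/\xi$ is $O(1/(R|z|))$ by integration by parts, so the convergence is uniform away from the diagonal, and one must separately handle the small neighborhood of $z=0$, which requires a little work since $|z|^{-1}$ is not locally integrable in one dimension --- one needs the zero-mean structure of $\mu_n$ again here). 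Alternatively, one can avoid Frullani entirely by taking $b\to0^+$ in the $0<b<1$ identity: replace $-|x|^b/b$ by $-(|x|^b-1)/b$, which has the same energy since $\int\mu=0$, converges pointwise to $-\ln|x|$, and track that $c_b/b$ has a finite positive limit. As written, the $b=0$ step is not justified.
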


\begin{proof}
By a mollification argument, we may assume that $\mu$ is a smooth function. Let $m(x) = \int_{(-\infty,x)} \mu(y)\rd{y}$. Then $m$ is compactly supported smooth function. Denote $H$ as the Hilbert transform.

We first treat the case $0<b<1$. In this case
\begin{equation}
    \int_{\mathbb{R}} (W*\mu)(x)\mu(x)\rd{x} = -\int_{\mathbb{R}} (W'*\mu)(x)m(x)\rd{x} = -\int_{\mathbb{R}} H[W'*\mu](x)H[m](x)\rd{x},
\end{equation}
since $\int f g \rd{x} = \int H[f]H[g]\rd{x}$ for any real functions $f,g\in L^2$, and $W'*\mu=(-|\cdot|^{b-1}\sgn(\cdot))*\mu$ is in $L^2$ since $\mu$ is smooth, compactly supported and mean-zero. Notice that 
\begin{equation}
    H[W'](x) = H[-|\cdot|^{b-1}\sgn(\cdot)](x) = -\frac{1}{\pi}\pv \int_{\mathbb{R}} |y|^{b-1}\sgn(y)\frac{1}{x-y}\rd{y}
\end{equation}
is well-defined for any $x\ne 0$ since the last integral has a well-defined principle value near $y=x$ and decays like $|y|^{b-2}$ at infinity. Also, for any $\lambda\ne 0$,
\begin{equation}
    H[W'](\lambda x)  = -\frac{1}{\pi}\pv \int_{\mathbb{R}} |y|^{b-1}\sgn(y)\frac{1}{\lambda x-y}\rd{y} = |\lambda|^{b-1}H[W'](x),
\end{equation}
by a change of variable $y=\lambda x$. This shows
\begin{equation}
    H[W'](x) = c_1|x|^{b-1}, \quad c_1 = -\frac{1}{\pi}\pv \int_{\mathbb{R}} |y|^{b-1}\sgn(y)\frac{1}{1-y}\rd{y}.
\end{equation}
Therefore, we conclude
\begin{equation}\begin{split}
    \int_{\mathbb{R}} (W*\mu)(x)\mu(x)\rd{x} = &  -\int_{\mathbb{R}} (H[W']*\mu)(x)H[m](x)\rd{x} = -c_1 \int_{\mathbb{R}} (|\cdot|^{b-1}*\mu)(x)H[m](x)\rd{x} \\ = & -\frac{c_1c_2}{2\pi}\int_{\mathbb{R}} |\xi|^{-b}\hat{\mu}(\xi)\,i \,\sgn(\xi) \bar{\hat{m}}(\xi)\rd{\xi} 
    =  \frac{c_1c_2}{2\pi} \int_{\mathbb{R}} |\xi|^{-b-1}|\hat{\mu}(\xi)|^2\rd{\xi},
\end{split}\end{equation}
with $c_2>0$, using the fact that $|\cdot|^{b-1}*\mu$ is $L^2$ and $\hat{\mu}(\xi) = 2\pi i \xi \hat{m}(\xi)$.

Taking $\mu(x) = \delta(x)-\delta(x-1)$, it is clear that $\int (W*\mu)(x)\mu(x)\rd{x} = 2W(0)-2W(1)>0$. Therefore the coefficient $\frac{c_1c_2}{2\pi}>0$. This finishes the proof of the case $0<b<1$.

In the case $b=0$, the conclusion follows from
\begin{equation}\begin{split}
    \int_{\mathbb{R}} (W*\mu)(x)\mu(x)\rd{x} = & -\int_{\mathbb{R}} (W*\mu)'(x)m(x)\rd{x} = \pi\int_{\mathbb{R}} H[\mu](x)m(x)\rd{x} \\
    = & \pi \int_{\mathbb{R}} (-i)\sgn(\xi)\hat{\mu}(\xi) \bar{\hat{m}}(\xi)\rd{\xi}  = \frac{1}{2}\int_{\mathbb{R}} \frac{1}{|\xi|}|\hat{\mu}(\xi)|^2\rd{\xi}.
\end{split}\end{equation}

\end{proof}

\bibliographystyle{abbrv}
\bibliography{biblio}

\begin{thebibliography}{10}

\bibitem{BalagueCarrilloLaurentRaoul}
D.~Balagu\'e, J.~Carrillo, T.~Laurent, and G.~Raoul.
\newblock Nonlocal interactions by repulsive-attractive potentials: Radial
  ins/stability.
\newblock {\em Phys. D.}, 260:5--25, 2013.

\bibitem{BCLR2}
D.~Balagu{\'e}, J.~A. Carrillo, T.~Laurent, and G.~Raoul.
\newblock Dimensionality of local minimizers of the interaction energy.
\newblock {\em Arch. Ration. Mech. Anal.}, 209(3):1055--1088, 2013.

\bibitem{BertozziBrandman}
A.~L. Bertozzi and J.~Brandman.
\newblock Finite-time blow-up of {$L\sp \infty$}-weak solutions of an
  aggregation equation.
\newblock {\em Commun. Math. Sci.}, 8(1):45--65, 2010.

\bibitem{BertozziCarrilloLaurent}
A.~L. Bertozzi, J.~A. Carrillo, and T.~Laurent.
\newblock Blow-up in multidimensional aggregation equations with mildly
  singular interaction kernels.
\newblock {\em Nonlinearity}, 22(3):683--710, 2009.

\bibitem{BKHUB15}
A.~L. Bertozzi, T.~Kolokolnikov, H.~Sun, D.~Uminsky, and J.~von Brecht.
\newblock Ring patterns and their bifurcations in a nonlocal model of
  biological swarms.
\newblock {\em Commun. Math. Sci.}, 13(4):955--985, 2015.

\bibitem{BertozziLaurent}
A.~L. Bertozzi and T.~Laurent.
\newblock Finite-time blow-up of solutions of an aggregation equation in
  {$\bold R\sp n$}.
\newblock {\em Comm. Math. Phys.}, 274(3):717--735, 2007.

\bibitem{BertozziLaurentLeger}
A.~L. Bertozzi, T.~Laurent, and F.~L{\'e}ger.
\newblock Aggregation and spreading via the {N}ewtonian potential: the dynamics
  of patch solutions.
\newblock {\em Math. Models Methods Appl. Sci.}, 22(suppl. 1):1140005, 39,
  2012.

\bibitem{BertozziLaurentRosado}
A.~L. Bertozzi, T.~Laurent, and J.~Rosado.
\newblock {$L\sp p$} theory for the multidimensional aggregation equation.
\newblock {\em Comm. Pure Appl. Math.}, 64(1):45--83, 2011.

\bibitem{BCT18}
A.~Burchard, R.~Choksi, and I.~Topaloglu.
\newblock Nonlocal shape optimization via interactions of attractive and
  repulsive potentials.
\newblock {\em Indiana Univ. Math. J.}, 67(1):375--395, 2018.

\bibitem{CCP15}
J.~A. Ca\~{n}izo, J.~A. Carrillo, and F.~S. Patacchini.
\newblock Existence of compactly supported global minimisers for the
  interaction energy.
\newblock {\em Arch. Ration. Mech. Anal.}, 217(3):1197--1217, 2015.

\bibitem{CV2}
L.~Caffarelli and J.~L. Vazquez.
\newblock Nonlinear porous medium flow with fractional potential pressure.
\newblock {\em Arch. Ration. Mech. Anal.}, 202(2):537--565, 2011.

\bibitem{CV1}
L.~A. Caffarelli and J.~L. V\'{a}zquez.
\newblock Asymptotic behaviour of a porous medium equation with fractional
  diffusion.
\newblock {\em Discrete Contin. Dyn. Syst.}, 29(4):1393--1404, 2011.

\bibitem{CDM16}
J.~A. Carrillo, M.~G. Delgadino, and A.~Mellet.
\newblock Regularity of local minimizers of the interaction energy via obstacle
  problems.
\newblock {\em Comm. Math. Phys.}, 343(3):747--781, 2016.

\bibitem{CDP19}
J.~A. Carrillo, M.~G. Delgadino, and F.~S. Patacchini.
\newblock Existence of ground states for aggregation-diffusion equations.
\newblock {\em Anal. Appl. (Singap.)}, 17(3):393--423, 2019.

\bibitem{CDFLS}
J.~A. Carrillo, M.~DiFrancesco, A.~Figalli, T.~Laurent, and D.~Slep{\v{c}}ev.
\newblock Global-in-time weak measure solutions and finite-time aggregation for
  nonlocal interaction equations.
\newblock {\em Duke Math. J.}, 156(2):229--271, 2011.

\bibitem{CFP}
J.~A. Carrillo, A.~Figalli, and F.~S. Patacchini.
\newblock Geometry of minimizers for the interaction energy with mildly
  repulsive potentials.
\newblock {\em Ann. IHP}, 34:1299--1308, 2017.

\bibitem{Carillo09_Kinetic_Attraction-Repulsion}
J.~A. Carrillo, M.~Fornasier, G.~Toscani, and F.~Vecil.
\newblock Particle, kinetic, and hydrodynamic models of swarming.
\newblock {\em Mathematical Modelling of Collective Behavior in Socio-Economic
  and Life Sciences}, pages 297--336, 2010.

\bibitem{CG21}
J.~A. Carrillo and R.~S. Gvalani.
\newblock Phase {T}ransitions for {N}onlinear {N}onlocal
  {A}ggregation-{D}iffusion {E}quations.
\newblock {\em Comm. Math. Phys.}, 382(1):485--545, 2021.

\bibitem{CGPS20}
J.~A. Carrillo, R.~S. Gvalani, G.~A. Pavliotis, and A.~Schlichting.
\newblock Long-time behaviour and phase transitions for the {M}c{K}ean-{V}lasov
  equation on the torus.
\newblock {\em Arch. Ration. Mech. Anal.}, 235(1):635--690, 2020.

\bibitem{CHVY19}
J.~A. Carrillo, S.~Hittmeir, B.~Volzone, and Y.~Yao.
\newblock Nonlinear aggregation-diffusion equations: radial symmetry and long
  time asymptotics.
\newblock {\em Invent. Math.}, 218(3):889--977, 2019.

\bibitem{CHMV18}
J.~A. Carrillo, F.~Hoffmann, E.~Mainini, and B.~Volzone.
\newblock Ground states in the diffusion-dominated regime.
\newblock {\em Calc. Var. Partial Differential Equations}, 57(5):Paper No. 127,
  28, 2018.

\bibitem{CH}
J.~A. Carrillo and Y.~Huang.
\newblock Explicit equilibrium solutions for the aggregation equation with
  power-law potentials.
\newblock {\em Kinet. Relat. Models}, 10(1):171--192, 2017.

\bibitem{MR3090592}
J.~A. Carrillo, A.~Klar, S.~Martin, and S.~Tiwari.
\newblock Self-propelled interacting particle systems with roosting force.
\newblock {\em Math. Models Methods Appl. Sci.}, 20(suppl. 1):1533--1552, 2010.

\bibitem{CMMRSV}
J.~A. Carrillo, J.~Mateu, M.~G. Mora, L.~Rondi, L.~Scardia, and J.~Verdera.
\newblock The ellipse law: {K}irchhoff meets dislocations.
\newblock {\em Comm. Math. Phys.}, 373(2):507--524, 2020.

\bibitem{CMMRSV2}
J.~A. Carrillo, J.~Mateu, M.~G. Mora, L.~Rondi, L.~Scardia, and J.~Verdera.
\newblock The equilibrium measure for an anisotropic nonlocal energy.
\newblock {\em Calc. Var. Partial Differential Equations}, 60(3):Paper No. 109,
  2021.

\bibitem{CT2}
K.~Craig and I.~Topaloglu.
\newblock Aggregation-diffusion to constrained interaction: minimizers \&
  gradient flows in the slow diffusion limit.
\newblock {\em Ann. Inst. H. Poincar\'{e} Anal. Non Lin\'{e}aire},
  37(2):239--279, 2020.

\bibitem{DYY20}
M.~G. Delgadino, X.~Yan, and Y.~Yao.
\newblock Uniqueness and nonuniqueness of steady states of
  aggregation-diffusion equations.
\newblock {\em Communications on Pure and Applied Mathematics}, n/a(n/a).

\bibitem{DCBC}
M.~R. D'Orsogna, Y.-L. Chuang, A.~L. Bertozzi, and L.~S. Chayes.
\newblock Self-propelled particles with soft-core interactions: patterns,
  stability, and collapse.
\newblock {\em Phys. Rev. Lett.}, 96(10):104302, 2006.

\bibitem{frank2021minimizers}
R.~L. Frank.
\newblock Minimizers for a one-dimensional interaction energy, 2021.

\bibitem{frank2016aliquid}
R.~L. Frank and E.~H. Lieb.
\newblock A liquid-solid phase transition in a simple model for swarming, based
  on the no flat-spots' theorem for subharmonic functions.
\newblock {\em arXiv preprint arXiv:1607.07971}, 2016.

\bibitem{frank2019proof}
R.~L. Frank and E.~H. Lieb.
\newblock Proof of spherical flocking based on quantitative rearrangement
  inequalities.
\newblock {\em arXiv preprint arXiv:1909.04595}, 2019.

\bibitem{F35}
O.~Frostman.
\newblock Potentiel d'equilibre et capacit\'e des ensembles.
\newblock {\em Ph.D. thesis, Facult\'e des Sciences de Lund}, 1935.

\bibitem{gutleb2020computing}
T.~S. Gutleb, J.~A. Carrillo, and S.~Olver.
\newblock Computing equilibrium measures with power law kernels, 2020.

\bibitem{MR2256869}
D.~D. Holm and V.~Putkaradze.
\newblock Formation of clumps and patches in self-aggregation of finite-size
  particles.
\newblock {\em Phys. D}, 220(2):183--196, 2006.

\bibitem{kaib}
G.~Kaib.
\newblock Stationary states of an aggregation equation with degenerate
  diffusion and bounded attractive potential.
\newblock {\em SIAM J. Math. Anal.}, 49(1):272--296, 2017.

\bibitem{MR3143990}
T.~Kolokolnikov, J.~A. Carrillo, A.~Bertozzi, R.~Fetecau, and M.~Lewis.
\newblock Emergent behaviour in multi-particle systems with non-local
  interactions [{E}ditorial].
\newblock {\em Phys. D}, 260:1--4, 2013.

\bibitem{PhysRevE.84.015203}
T.~Kolokolnikov, H.~Sun, D.~Uminsky, and A.~L. Bertozzi.
\newblock Stability of ring patterns arising from two-dimensional particle
  interactions.
\newblock {\em Phys. Rev. E}, 84:015203, Jul 2011.

\bibitem{Lieb81}
E.~H. Lieb.
\newblock Thomas-{F}ermi and related theories of atoms and molecules.
\newblock {\em Rev. Modern Phys.}, 53(4):603--641, 1981.

\bibitem{LM21}
T.~Lim and R.~J. McCann.
\newblock Isodiametry, {V}ariance, and {R}egular {S}implices from {P}article
  {I}nteractions.
\newblock {\em Arch. Ration. Mech. Anal.}, 241(2):553--576, 2021.

\bibitem{lopes2017uniqueness}
O.~Lopes.
\newblock Uniqueness and radial symmetry of minimizers for a nonlocal
  variational problem.
\newblock {\em Commun. Pure Appl. Anal.}, 18(5):2265--2282, 2019.

\bibitem{McCann97}
R.~McCann.
\newblock A convexity principle for interacting gases.
\newblock {\em Adv. Math.}, 128:153--179, 1997.

\bibitem{MRS19}
M.~G. Mora, L.~Rondi, and L.~Scardia.
\newblock The equilibrium measure for a nonlocal dislocation energy.
\newblock {\em Comm. Pure Appl. Math.}, 72(1):136--158, 2019.

\bibitem{ST97}
E.~B. Saff and V.~Totik.
\newblock {\em Logarithmic potentials with external fields}, volume 316 of {\em
  Grundlehren der Mathematischen Wissenschaften [Fundamental Principles of
  Mathematical Sciences]}.
\newblock Springer-Verlag, Berlin, 1997.
\newblock Appendix B by Thomas Bloom.

\bibitem{shu2020equilibration}
R.~Shu.
\newblock Equilibration of aggregation-diffusion equations with weak
  interaction forces, 2020.

\bibitem{shu2020tightness}
R.~Shu.
\newblock Tightness of radially-symmetric solutions to 2d aggregation-diffusion
  equations with weak interaction forces, 2020.

\bibitem{shu2021newtonian}
R.~Shu and E.~Tadmor.
\newblock Newtonian repulsion and radial confinement: convergence towards
  steady state, 2021.

\bibitem{SST15}
R.~Simione, D.~Slep\v{c}ev, and I.~Topaloglu.
\newblock Existence of ground states of nonlocal-interaction energies.
\newblock {\em J. Stat. Phys.}, 159(4):972--986, 2015.

\bibitem{SunUminskyBertozzi}
H.~Sun, D.~Uminsky, and A.~L. Bertozzi.
\newblock Stability and clustering of self-similar solutions of aggregation
  equations.
\newblock {\em J. Math. Phys.}, 53(11):115610, 18, 2012.

\bibitem{To04}
G.~Toscani.
\newblock Kinetic and hydrodynamic models of nearly elastic granular flows.
\newblock {\em Monatsh. Math.}, 142(1-2):179--192, 2004.

\bibitem{von2012soccer}
J.~H. von Brecht and D.~Uminsky.
\newblock On soccer balls and linearized inverse statistical mechanics.
\newblock {\em Journal of nonlinear science}, 22(6):935--959, 2012.

\end{thebibliography}

\end{document}